\documentclass[11pt]{article}

\usepackage[margin=1in]{geometry}
\usepackage[T1]{fontenc}
\usepackage{lmodern}
\usepackage{amsmath,amssymb,amsthm,mathtools}
\usepackage{mathrsfs}
\usepackage{microtype}
\usepackage{enumitem}
\usepackage{xcolor}
\newif\ifrevisionmarks
\revisionmarkstrue

\usepackage[colorlinks=true,linkcolor=blue,citecolor=blue,urlcolor=blue]{hyperref}
\usepackage[nameinlink,capitalise]{cleveref}
\usepackage{etoolbox}
\newtheorem{theorem}{Theorem}[section]
\newtheorem{proposition}[theorem]{Proposition}
\newtheorem{lemma}[theorem]{Lemma}
\newtheorem{corollary}[theorem]{Corollary}
\theoremstyle{definition}
\newtheorem{definition}[theorem]{Definition}

\theoremstyle{remark}
\newtheorem{remark}[theorem]{Remark}

\newcommand{\T}{\mathbb T}
\newcommand{\Rr}{\mathbb R}
\newcommand{\Pp}{\mathcal P}
\newcommand{\PN}{\mathcal P_N}
\newcommand{\Qq}{\mathcal Q}
\newcommand{\F}{\mathcal F_\beta}
\newcommand{\Wass}{\mathsf W_2}
\newcommand{\Ent}{\operatorname{Ent}}
\newcommand{\Crit}{\operatorname{Crit}}
\newcommand{\dist}{\operatorname{dist}}
\newcommand{\dd}{\,\mathrm d}
\newcommand{\e}{\mathrm e}
\newcommand{\Hh}{\mathsf H}
\newcommand{\Vq}{\mathsf V}
\newcommand{\Gg}{\mathcal G}

\newcommand{\norm}[1]{\left\lVert #1\right\rVert}
\newcommand{\abs}[1]{\left\lvert #1\right\rvert}
\newcommand{\ip}[2]{\left\langle #1,#2\right\rangle}

\numberwithin{equation}{section}

\title{Metastability and Sharp Propagation-of-Chaos Thresholds
for Mean-Field Interacting Diffusions on the Circle}
\author{
Sayan Banerjee%
\thanks{Department of Statistics and Operations Research,
University of North Carolina at Chapel Hill,
318 Hanes Hall, CB\# 3260,
Chapel Hill, NC 27599-3260, USA.
Email: \texttt{sayan@email.unc.edu}.}
}
\date{}

\begin{document}
\maketitle

\begin{abstract}
We study metastability and long-time propagation-of-chaos for mean-field interacting diffusions on the circle whose free-energy landscape has multiple local minima. Working modulo rotations, we identify sharp exponential time scales for persistence in metastable wells, as well as thresholds governing validity and breakdown of the McKean--Vlasov approximation. Assuming finitely many critical rotation orbits and distinct local-minimum energies, we show that, for initial data in the attraction basin of a nonglobal minimum, metastability and propagation-of-chaos hold uniformly over deterministic times up to \(e^{N(D-\varepsilon)}\) and fail at every deterministic sequence growing faster than \(e^{N(D+\varepsilon)}\). Here \(N\) is the number of particles and \(D\) is the depth of the maximal energy cycle, in the sense of Freidlin--Wentzell, having that minimum as its bottom. The same threshold governs running averages of the squared approximation error in a stronger pathwise sense, with corresponding metastability and propagation-of-chaos bounds for suitably aligned time-averaged empirical measures. When the global-minimum orbit is unique, we establish uniform-in-time propagation-of-chaos in its attraction basin without assuming finitely many critical orbits. The proofs develop the Freidlin--Wentzell cycle analysis for the empirical measure process on the rotation quotient. The key technical steps are to pass from the fixed-horizon large deviation estimates of Dawson and G\"artner to the analysis of excursions of unbounded duration between metastable wells, and to develop this analysis into sharp cycle-exit estimates and localization bounds at deterministic times. We give a checkable Fourier criterion for finiteness of the critical orbits and explicit examples with multiple wells, and discuss applications to the noisy Kuramoto model and conditional consequences for noisy transformer interactions. We also investigate how phase transitions in the free-energy landscape, induced by varying the noise strength, affect metastability and propagation-of-chaos. We provide, to our knowledge, the first sharp exponential-time analysis of the Dawson--G\"artner `tunnelling phenomenon' for general smooth interactions, with two-sided probability bounds and matching propagation-of-chaos thresholds.\newline

\noindent\textbf{2020 Mathematics Subject Classification.}
Primary 60K35; Secondary 65C35, 60F10, 60J60.\\

\smallskip
\noindent\textbf{Keywords.}
Mean-field diffusions; McKean--Vlasov equation; interacting particle systems;
propagation-of-chaos; metastability; large deviations;
free-energy landscape; cycle hierarchy; phase transitions.

\end{abstract}

\section{Introduction}

In this article, we investigate the long-time behavior of a mean-field system of interacting diffusions on the circle. Such systems arise in several areas, like the stochastic Kuramoto model for coupled oscillators \cite{BertiniGiacominPakdaman2010,BertiniGiacominPoquet2014}, the noisy Hegselmann–Krause model for opinion dynamics \cite{HegselmannKrause2002,CarrilloEtAl2020}, and noisy transformer models \cite{ShalovaSchlichting2026,BalasubramanianBanerjeeRigollet2025}.  Although large-particle asymptotics on fixed time intervals are well understood for such systems, many questions remain about their long-time behavior and relation to that of the `McKean--Vlasov' equations (see \eqref{eq:MV}) governing their mean-field limit. A central question is to determine the time scales over which the McKean--Vlasov equation accurately approximates the empirical measure of the particle system, in the sense of propagation-of-chaos modulo rotations defined below. We study this question in settings where the empirical measure exhibits \emph{metastability}, reflecting the presence of multiple wells in the nonconvex McKean--Vlasov free-energy landscape. We obtain a detailed description of the time spent in metastable wells and the transitions between them, and establish sharp criteria for the validity and breakdown of propagation-of-chaos.

\subsection{The model, quotient state space and free-energy}
\label{sec:model}

For \(N\geq1\), consider a system of $N$ interacting particles on $\T=\Rr/(2\pi\mathbb Z)$ whose joint angular evolution is prescribed by the following system of real-valued stochastic differential equations (SDE):
\begin{equation}
\label{eq:particle-system}
    \dd X_t^{i,N}
    =
    -\frac1N\sum_{j=1}^N
       W'\bigl(X_t^{i,N}-X_t^{j,N}\bigr)\,\dd t
    +\sqrt{2\beta^{-1}}\,\dd B_t^i,
    \qquad 1\leq i\leq N.
\end{equation}
The potential function $W: \T \rightarrow \mathbb{R}$ (identified with its periodic lift onto $\mathbb{R}$) captures the mean-field type drift interaction between the particles and the inverse temperature $\beta>0$ regulates the strength of the noise, which acts on the particles through independent standard Brownian motions $B_1,\dots,B_N$. Throughout, we assume
\begin{equation}
\label{eq:W-assumption}
    W\in C^4(\T),
    \qquad
    W(-x)=W(x).
\end{equation}
We can also view the system $(X^{1,N},\dots,X^{N,N})$ as a Langevin diffusion on $\T^N$. Define the \(N\)-particle Hamiltonian
\begin{equation}
\label{eq:HN}
    \mathcal H_N(x_1,\ldots,x_N)
    :=
    \frac1{2N}\sum_{i,j=1}^N W(x_i-x_j).
\end{equation}
Evenness of \(W\) gives
\(
    \partial_{x_i}\mathcal H_N
    =
    \frac1N\sum_{j=1}^N W'(x_i-x_j),
\)
and hence \eqref{eq:particle-system} is the reversible gradient
diffusion
\[
   \dd \mathbf X_t^N
    =
    -\nabla\mathcal H_N(\mathbf X_t^N)\,\dd t
    +\sqrt{2\beta^{-1}}\,\dd \mathbf B_t^N.
\]
Here $\mathbf X_t^N=(X_t^{1,N},\ldots,X_t^{N,N})$ and
$\mathbf B_t^N=(B_t^1,\ldots,B_t^N)$.
Write $\lambda(\dd x):=(2\pi)^{-1}\dd x$ for normalized Lebesgue
measure on $\T$. The invariant joint density of this system with respect to
$\lambda^{\otimes N}$ is
proportional to \(\exp(-\beta\mathcal H_N)\). 

The particle joint locations are encoded by the empirical measure
\[
    \mu_t^N:=\frac1N\sum_{i=1}^N\delta_{X_t^{i,N}}.
\]
We view $\mu^N$ as a random variable on the continuous path-space $\mathcal{C}\left([0,\infty) : \Pp(\T)\right)$; here $\Pp(\T)$ denotes the space of probability measures on $\T$ equipped with the Wasserstein-2 distance:
\[
 \Wass(\mu,\nu)
 :=\left(
   \inf_{\gamma\in\Pi(\mu,\nu)}
   \int_{\T\times\T}d_\T(x,y)^2\,\gamma(\dd x,\dd y)
 \right)^{1/2}, \qquad \mu,\nu \in \Pp(\T),
\]
where $\Pi(\mu,\nu)$ is the set of probability measures on
$\T\times\T$ with first marginal $\mu$ and second marginal $\nu$,
and
\(
 d_\T(x,y):=\min_{k\in\mathbb Z}|x-y+2\pi k|
\)
is the geodesic distance on $\T=\Rr/(2\pi\mathbb Z)$. 

Since the interaction depends only on
differences of positions, if all the initial particle locations are rotated by some angle $\theta \in \T$, then the new particle locations shift by the same angle $\theta$ at all subsequent times. This motivates us to work on the quotient space
\begin{equation*}
 \Qq:=\Pp(\T)/\T,
\end{equation*}
in which two probability measures are identified if one is derived by a
rotation of the other. More precisely, for \(\theta\in\T\), define
\[
    R_\theta\mu:=(x\mapsto x+\theta)_\#\mu.
\]
Identify two probability measures \(\mu,\nu\in\Pp(\T)\)
if \(\nu=R_\theta\mu\). Equip $\Qq$ with the metric
\[
 d_{\Qq}([\mu],[\nu])
 :=\inf_{\theta\in\T}\Wass(\mu,R_\theta\nu).
\]
Rotations are isometries for $\Wass$, so this definition does not
depend on the chosen representatives. The infimum is attained
because $\T$ is compact and the rotation action is continuous.
In particular, if $[\mu_n]\to[\mu]$, there exist $\theta_n\in\T$
such that
\[
 \Wass(R_{\theta_n}\mu_n,\mu)
 =d_{\Qq}([\mu_n],[\mu])\longrightarrow0.
\]
The space $\Qq$ is compact, being the continuous image of the
compact space $\Pp(\T)$.

Mean-field limits of systems given by \eqref{eq:particle-system} are classical; see, for example,
\cite[Chapter~I, Sections~1--2]{Sznitman1991}. Suppose that the initial configurations are independent of the
driving Brownian motions and that $\mu_0^N$ converges in probability
to a deterministic $\mu_0\in\Pp(\T)$. Then it is well-known that on compact time intervals $[0,T]$, $T< \infty$, $\mu^N$ converges in probability, as $\mathcal{C}\left([0,T] : \Pp(\T)\right)$ valued random variables, to a deterministic measure-valued process $\mu$ which, for every $t>0$, has a density $\rho_t$ with respect to $\lambda$ satisfying the \emph{McKean--Vlasov equation}:
\begin{equation}
\label{eq:MV}
    \partial_t\rho
    =
    \beta^{-1}\partial_{xx}\rho
    +\partial_x\bigl(\rho\,W'*\mu\bigr)
    =
    \partial_x\left[
       \rho\,\partial_x
       \bigl(\beta^{-1}\log\rho+W*\mu\bigr)
    \right],
\end{equation}
with initial condition $\mu_0$.
The McKean--Vlasov equation has recently garnered renewed interest owing to its connection with optimal transport. Indeed, it is the Wasserstein-2 gradient flow of the \emph{free-energy functional}:
\begin{equation}
\label{eq:free-energy}
    \F(\mu)
    :=
    \beta^{-1}\Ent(\mu\mid\lambda)
    +\frac12\iint_{\T^2}W(x-y)\,\mu(\dd x)\mu(\dd y),
\end{equation}
where \(\F(\mu)=+\infty\) when \(\mu\not\ll\lambda\).
The gradient-flow and path-large-deviation properties for the analogous model on the torus are discussed in \cite{GvalaniSchlichting2020}. Observe that, by translation invariance,
\[
    \F(R_\theta\mu)=\F(\mu),
    \qquad
    S_t(R_\theta\mu)=R_\theta S_t\mu,
\]
where \(S_t\) is the semiflow generated by \eqref{eq:MV}.

The free-energy and the semiflow descend to \(\Qq\). We also use $S_t$ for the induced semiflow on $\Qq$, writing
$S_t[\mu]:=[S_t\mu]$; when a distinction is needed, the quotient
semiflow is denoted by $\overline S_t$.

All subsets of \(\Qq\) appearing below are identified, when necessary,
with their rotation-invariant lifts to \(\Pp(\T)\).  Accordingly, a
hitting time of a set in \(\Qq\) is evaluated by asking whether
\([\mu_t^N]\) belongs to that set.  This convention avoids repeatedly
writing the quotient map.

We write $\Crit(\F)$ for the set of critical points of $\F$
under the unit-mass constraint:
\[
 \Crit(\F):=
 \left\{
 \mu=\rho\lambda\in\Pp(\T):
 \rho\in C^2(\T),\ \rho>0,\quad
 \beta^{-1}\log\rho+W*\mu
 \text{ is constant on }\T
 \right\},
\]
where $(W*\mu)(x):=\int_\T W(x-y)\,\mu(\dd y)$.
These are precisely the stationary measures of
\eqref{eq:MV}, equivalently those satisfying
$S_t\mu=\mu$ for every $t\ge0$.
The corresponding set of critical rotation orbits is
\[
 \Crit(\F)/\T
 :=\bigl\{[\mu]:\mu\in\Crit(\F)\bigr\}\subset\Qq.
\]
Except where explicitly stated otherwise (in particular, in the
global-basin results of Theorems~\ref{thm:poc-global}
and~\ref{thm:poc-running-averages}\textup{(iii)}, the corresponding
global-basin case of Corollary~\ref{cor:poc-averaged-measures},
and the Gibbs comparison estimates of
Lemmas~\ref{lem:uniform-uphill-endpoint}
and~\ref{lem:poc-global-maximal}), our
standing landscape hypothesis is
\begin{equation}
\label{eq:finite-critical-orbits}
    \Crit(\F)/\T
    \quad\text{is finite}.
\end{equation}
Some sufficient conditions for this to hold are given in Proposition~\ref{prop:app-finite}.

\subsection{Related work and comparison}
\label{subsec:related-work}

\paragraph{Propagation-of-chaos over long time horizons.}
The approximation of weakly interacting particle systems by nonlinear
diffusions and partial differential equations of the form \eqref{eq:MV} goes back to McKean~\cite{McKean1966}; see
Sznitman~\cite{Sznitman1991} and
Chaintron and Diez~\cite{ChaintronDiez2022} for systematic accounts of
propagation-of-chaos (PoC). On a fixed time interval, this approximation
is well understood for regular interactions. Extending it to time
horizons that grow with the number of particles requires additional
stability, since the constants in finite-time estimates typically grow
with time. Classical uniform-in-time results for granular media
equations exploit convexity and convergence to equilibrium
\cite{Malrieu2003,CattiauxGuillinMalrieu2008}. Reflection and synchronous
couplings also yield uniform-in-time PoC for suitable nonconvex
confinement and sufficiently weak interactions
\cite{DurmusEberleGuillinZimmer2020}.

Entropy methods provide another approach to this problem.
Jabin and Wang~\cite{JabinWang2018} obtain quantitative PoC estimates
through relative entropy, including for singular interactions.
Lacker~\cite{Lacker2023} develops a hierarchical entropy method giving
sharp estimates for fixed particle marginals, and Lacker and
Le Flem~\cite{LackerLeFlem2023} obtain its uniform-in-time counterpart
under suitable log-Sobolev and interaction assumptions.
In the context of optimal-transport-inspired sampling algorithms, namely Stein variational gradient descent (SVGD) and its Langevin-regularized version, Balasubramanian, Banerjee,
and Korba~\cite{BalasubramanianBanerjeeKorba2026} and
Banerjee and Kim~\cite{BanerjeeKim2026} combine quantitative
finite-time PoC estimates, derived through coupling or
moving-product relative-entropy methods in the spirit
of~\cite{JabinWang2018}, with entropy-based quantitative
convergence to the target, to obtain uniform-in-time PoC bounds.

For kinetic mean-field systems, Guillin and
Monmarch\'e~\cite{GuillinMonmarche2021} combine uniform relaxation
estimates with finite-time PoC to furnish uniform-in-time bounds in
nonconvex landscapes. In particular, they establish and utilize a uniform-in-particle-number log-Sobolev inequality for the associated Gibbs type joint invariant distribution. The connection between phase transitions,
log-Sobolev inequalities for the particle Gibbs measures, and
uniform-in-time PoC is studied in detail by Delgadino, Gvalani,
Pavliotis, and Smith~\cite{DelgadinoGvalaniPavliotisSmith2023}. Among other results, it is shown there (see \cite[Theorem 3.3]{DelgadinoGvalaniPavliotisSmith2023}) that the presence of a critical point for the associated free-energy that is not a global minimizer implies a decaying log-Sobolev constant in the particle number.  

When the free energy has several local minima, stability near one
minimum can coexist with rare transitions of the particle system to
`wells' around other minima. Local convergence results for the nonlinear equation \eqref{eq:MV}
remain available in this setting: Monmarch\'e and
Reygner~\cite{MonmarcheReygner2026} establish local nonlinear
log-Sobolev inequalities and quantitative convergence near stationary
solutions, including examples with a double-well confinement.
However, convergence of the nonlinear equation to a local minimum
does not by itself determine how long the finite-particle system
tracks that limit. In particular, a nonglobal minimum can attract the
mean-field trajectory even though the particle system eventually
leaves its neighborhood.

Continuous symmetries introduce a further issue. Bertini, Giacomin, and
Poquet~\cite{BertiniGiacominPoquet2014} study the long-time dynamics of the supercritical
noisy Kuramoto model, which corresponds to $W(x) =-K \cos x$, with $\beta K>2$, in \eqref{eq:particle-system}. They show that the empirical measure quickly approaches and
remains near the synchronization manifold, formed by the one-parameter family of rotations of the `clustered' stationary solution to \eqref{eq:MV}, while its phase (informally thought of as the rotational center of mass) undergoes
Brownian motion on time scales of order $N$. In particular, when the initialization of the empirical measure process approaches a stationary solution to \eqref{eq:MV} as $N \to \infty$, the limiting McKean--Vlasov trajectory remains fixed
but the pre-limit process can exhibit appreciable motion along the rotation orbit at times of order $N$. Coppini~\cite{Coppini2022} proves pathwise proximity to
the synchronized manifold on subexponential time horizons, also for
suitable interaction graphs. Delarue and Tse~\cite{DelarueTse2025}
obtain uniform-in-time weak PoC on the torus and, for the supercritical
Kuramoto model, an $O(N^{-1})$ weak-error estimate for smooth
rotation-invariant observables. These results motivate our use of
the rotation quotient $\Qq$, in which motion along an orbit is
identified. 

\paragraph{Large deviations and the free-energy landscape.}
The rare fluctuations responsible for transitions between wells around local minima are
described by a path-space large deviations principle for the empirical measure.
Dawson and G\"artner~\cite{DawsonGartner1987} establish such
a principle for weakly interacting diffusions, with
speed $N$ and a dynamical action measuring the cost of departures
from the McKean--Vlasov evolution. Budhiraja, Dupuis, and
Fischer~\cite{BudhirajaDupuisFischer2012} develop a general approach
based on variational representations and weak convergence of
controlled systems. In the reversible setting, Dawson and
G\"artner~\cite{DawsonGartner1989} relate the dynamical action,
quasipotentials, and the free-energy functional governing equilibrium
large deviations. These results provide the probabilistic foundation
for interpreting free-energy barriers as transition costs for the empirical measure process in the large deviations sense.

For periodic McKean--Vlasov equations, the structure of stationary
states (equivalently, critical points of the free energy) is studied by Carrillo, Gvalani,
Pavliotis, and Schlichting~\cite{CarrilloEtAl2020}, including an analysis of first-order bifurcations and phase transitions. Further light is shed on the geometry of stationary states, including higher-order bifurcations, periodicity, connections between bifurcations and discontinuous phase transitions, and the global free-energy minimizing branch, by Balasubramanian, Banerjee and Rigollet \cite{BalasubramanianBanerjeeRigollet2025}.  

Gvalani and Schlichting~\cite{GvalaniSchlichting2020} prove a `mountain
pass theorem' in the space of probability measures and apply it to
the McKean--Vlasov free energy. Under its hypotheses, the theorem identifies a critical point at the minimax free-energy level separating two metastable wells. When there is a discontinuous phase transition of the free-energy at some $\beta=\beta_c$,
they identify an intervening critical level of higher energy between the coexisting global minima (one of them the uniform distribution) and use the path-space
LDP of \cite{DawsonGartner1987} to show that the probability of a transition from the uniform state
to the clustered minimum state is exponentially small in the particle number $N$ on a fixed time horizon. This gives an appealing geometric picture of the free-energy landscape and its effect on rare transitions, but does not quantify the sharp transition thresholds between wells which are typically exponentially large in $N$. Indeed, for this, one cannot directly apply fixed-horizon large deviation estimates. Quantifying this sharp transition is one of our main goals.

\paragraph{Metastability and its relation to propagation-of-chaos.}
Metastability loosely refers to the tendency of a stochastic system to appear stationary over long periods of time before making an abrupt, rare transition to another state, where it can again persist for a long time. These persistent states, called metastable states, typically correspond to fixed points of the associated noiseless system.
The foundational framework for this analysis is the Freidlin--Wentzell
theory~\cite[Chapter~6]{FreidlinWentzell2012}. For small-noise
diffusions, through a multiscale path-space large deviations analysis, it lays down the so-called `cycle hierarchy', which groups metastable states into cycles by the energy barrier needed to be overcome in moving from one state to the other. This theory, which is most well-developed for finite-dimensional diffusions, furnishes sharp exponential time scales for transitions from one metastable state to another.

Dawson and G\"artner~\cite{DawsonGartner1986} already announced, without proof, such metastable phenomena for certain mean-field particle systems in the large-$N$ regime, which they described as `tunnelling'.
Quantitative transition-time results have subsequently been obtained
for specific continuous-spin models. In particular, Bashiri and
Menz~\cite{BashiriMenz2021} use potential theory to derive estimates on the expected transition time between metastable wells (Eyring--Kramers-type formula) for a continuous mean-field model with
double-well confinement and quadratic attractive interaction, with a relative
error tending to zero in a joint large-$N$, low-temperature regime.
There is also a substantial small-noise theory for the nonlinear McKean--Vlasov
diffusion itself (whose marginal laws satisfy \eqref{eq:MV}); Herrmann, Imkeller, and
Peithmann~\cite{HerrmannImkellerPeithmann2008} prove a large deviation
principle and a Kramers-type exit law for self-stabilizing diffusions. Adams, Engel, and Gvalani~\cite{AdamsEngelGvalani2026} establish,
in the low-temperature limit with a fixed number of particles,
a separation between rapid relaxation to a quasi-stationary
droplet and exponentially slow escape.
In our setting, the particle noise is fixed and the smallness parameter
arises from empirical fluctuations as $N\to\infty$; see \eqref{eq:smallNemp}. For mean-field jump processes on a finite state space, Yasodharan and
Sundaresan~\cite{YasodharanSundaresan2023} develop a cycle analysis
yielding sharp exponential estimates for exit times and convergence
to equilibrium. Journel and Le Bris~\cite{JournelLeBris2025} prove uniform-in-time
propagation-of-chaos for the Curie--Weiss model conditioned on
maintaining positive magnetization. These works provide finite-dimensional precedents
for the analysis of metastability in the large-particle limit.

A particularly close connection between metastability and PoC is
established by Monmarch\'e~\cite{Monmarche2025}. For particles with
quadratic attractive interaction, he proves quantitative chaos
estimates on exponentially long horizons near a stable stationary
state, together with exit times that are exponentially large in the particle number $N$ and approximately exponentially distributed after normalization, for suitable metastable domains. His argument modifies the dynamics
outside the selected domain so that the modified particle Gibbs
measure satisfies a log-Sobolev inequality uniformly in $N$.
This gives a rigorous form of metastable PoC; identifying matching
upper and lower exponents for the mean exit time is left
as an open problem there.

\paragraph{Comparison with our work.}
To our knowledge, the present work provides the first sharp exponential-time analysis of the Dawson--G\"artner tunnelling phenomenon~\cite{DawsonGartner1986} for mean-field diffusions with general smooth interaction potentials, under suitable landscape hypotheses. The closest precursors, Bashiri--Menz~\cite{BashiriMenz2021} and Monmarch\'e~\cite{Monmarche2025}, consider quadratic attractive interaction potentials where the interaction drift depends on the empirical distribution only through its mean, which partly simplifies the analysis. Under more general interactions, we have to work directly with empirical measures (modulo rotations) which live on an infinite-dimensional space. In~\cite[Theorem II]{BashiriMenz2021}, two-sided estimates on the expected transition time between metastable wells are derived, where the upper and lower bounds differ only up to multiplicative constants independent of $N$. However, this involves initialization from specially chosen last-exit distributions, which exhibit concentration on hyperplanes of prescribed mean. These expectation bounds also do not by themselves yield a two-sided probability statement. In comparison, our cycle-exit estimates give two-sided probability bounds with matching sharp exponents, uniformly over initial empirical measures in prescribed neighborhoods of the local minima. Moreover, we upgrade our exit estimates to localization estimates at deterministic times, which gives explicit deterministic thresholds for the validity and breakdown of propagation-of-chaos, a question that was left open in~\cite{Monmarche2025}.

A difference from the previous works described above is that they mostly work on $\mathbb{R}^d$, whereas our particle system lives on the circle $\T$. This choice is motivated by the specific applications we have in mind (eg. the Kuramoto model and noisy transformers), but it also buys us a number of technical advantages through compactness. For non-compact state spaces, we believe that several ideas from this paper will be applicable, but this is left for future work.

\section{Main Results}\label{sec:main-results}

We now state our main results. We first introduce some key objects. 

Let
\[
    \mathfrak M=\{\mathcal M_1,\ldots,\mathcal M_J\}
\]
be the collection of all local-minimum orbits of \(\F\) in \(\Qq\).

Write $f_i=\F(\mathcal M_i)$ and
$F_{\min}=\min_{\nu\in\Pp(\T)}\F(\nu)$.
For a local-minimum orbit $\mathcal M_i$, its deterministic basin of attraction is
\begin{equation}\label{eq:doa}
 \mathcal A_i=\{q\in\Qq:d_{\Qq}(S_tq,\mathcal M_i)\longrightarrow0\}.
\end{equation}
For $1 \le i,j \le J$, define the \emph{communication height} between $\mathcal M_i, \mathcal M_j$ by
\begin{equation}\label{eq:pairwise-communication-height}
 \Phi_{ij} :=
    \inf_{\substack{\gamma\in C([0,1];\Qq)\\
                    \gamma(0)=\mathcal M_i,\ \gamma(1)=\mathcal M_j}}
       \sup_{s\in[0,1]}\F(\gamma(s)).
\end{equation}
As we will see later, $\Phi_{ij}$ can be informally viewed as the height of the `minimal saddle' (in the large deviations sense) encountered by the (quotient) empirical measure process in transitioning from the attraction basin of $\mathcal M_i$ to the basin of $\mathcal M_j$.
For every \(i\) such that \(A_i:=\{j:f_j<f_i\}\) is nonempty,
define
\begin{equation}\label{eq:descent-height-definition}
 h_i:=\min_{j\in A_i}\Phi_{ij},
 \qquad
 D_i:=\beta(h_i-f_i).
\end{equation}
$D_i$ determines the exponential time scale for
reaching a lower-energy basin from a neighborhood of
$\mathcal M_i$. Set
\begin{equation}\label{eq:maximal-bottom-cycle}
 C^{max}_i=\{i\}\cup\{j\ne i:\Phi_{ij}<h_i\}.
\end{equation}
$C^{max}_i$ can be viewed as the `maximal metastable well' in which the empirical measure lives before it moves to a lower energy basin. $D_i$ will be called the `depth' of the cycle $C^{max}_i$.

Also set
\[
    \PN(\T)
    :=
    \left\{
       \frac1N\sum_{i=1}^N\delta_{x_i}:x_i\in\T
    \right\},
\]
which is the space the empirical measure lives on.
For $\nu\in\PN(\T)$, write $\mathbb P_\nu$ for the empirical
process law started at $\mu_0^N=\nu$.  Permutation symmetry makes this
law independent of the labeling of the initial configuration.

\subsection{Metastability and phase transition}
The following result quantifies a sharp exponential time metastability threshold for the empirical measure process $[\mu^N_t]$. It shows that, when started sufficiently close to a nonglobal minimum orbit $\mathcal M_i$, the process $[\mu^N_t]$ remains close in law to $\mathcal M_i$ for approximate time $e^{ND_i}$ (in the exponential scale). After this time, the process moves to the vicinity of a lower energy orbit and metastability around $\mathcal M_i$ ceases to hold.
\begin{theorem}[The sharp metastable window]
\label{prop:verified-MT}
Assume \eqref{eq:W-assumption} and
\eqref{eq:finite-critical-orbits}, and suppose that the
local-minimum values $f_1,\ldots,f_J$ are pairwise distinct. For a nonglobal minimum index $i$, set
$\mathcal L_i=\{\mathcal M_j:f_j<f_i\}$.
For every $r>0$ and $0<\varepsilon<D_i$ there are
$\eta,c,C>0$ such that
\begin{equation}\label{eq:verified-M1}
 \sup_{d_{\Qq}([\nu],\mathcal M_i)\le\eta}
 \sup_{0\le t\le e^{N(D_i-\varepsilon)}}
 \mathbb P_\nu\left(d_{\Qq}([\mu_t^N],\mathcal M_i)>r\right)
 \le Ce^{-cN}.
\end{equation}
For every $r,\varepsilon>0$, possibly with different $\eta,c,C$,
\begin{equation}\label{eq:verified-M2}
 \sup_{d_{\Qq}([\nu],\mathcal M_i)\le\eta}
 \sup_{t\ge e^{N(D_i+\varepsilon)}}
 \mathbb P_\nu\left(\dist_{\Qq}([\mu_t^N],\mathcal L_i)>r\right)
 \le Ce^{-cN}.
\end{equation}
Both suprema over initial measures are restricted to $\PN(\T)$, and
both bounds hold for all sufficiently large $N$. 
\end{theorem}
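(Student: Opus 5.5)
The plan is to run a Freidlin--Wentzell cycle analysis for the quotient process $[\mu^N_t]$ on the compact space $\Qq$, with $N^{-1}$ playing the role of the small-noise parameter. There are four ingredients:
\begin{enumerate}[label=(\alph*)]
\item Transfer the Dawson--G\"artner path LDP to $\Qq$ by the contraction principle along the continuous quotient map, uniformly over initial points in $\PN(\T)$ converging in $d_{\Qq}$.
\item Identify the quasipotential between critical orbits with the free-energy barrier. Using reversibility \cite{DawsonGartner1989}, the minimal action to go from $\mathcal M_i$ to $\mathcal M_j$ over unbounded times equals $\beta(\Phi_{ij}-f_i)$ whenever $\Phi_{ij}>f_i$. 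For the upper bound one follows time-reversed gradient-flow paths up to a minimax critical orbit, which exists by the mountain-pass theorem of \cite{GvalaniSchlichting2020} together with finiteness \eqref{eq:finite-critical-orbits}. For the lower bound, the action is bounded below by $\beta$ times the free-energy increment.
\item Establish uniform attraction: for every $\delta>0$ there is $T$ such that $d_{\Qq}(S_T q,\Crit(\F)/\T)<\delta$ for all $q\in\Qq$. This follows from compactness, the Lyapunov property of $\F$, finiteness of critical orbits, and instantaneous regularization of \eqref{eq:MV}.
\item Obtain fast return: by (a) and (c), from any point the process reaches a small neighborhood of some critical orbit within time $T$ with probability $1-e^{-cN}$. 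From a neighborhood of a non-minimum critical orbit it leaves in bounded time with probability bounded below.
\end{enumerate}

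With these ingredients I build the usual Markov chain on small spheres around critical orbits and derive cycle-exit estimates for $C_i^{\max}$. Let $G$ be a neighborhood of the cycle, namely a connected component of $\{\F<h_i-\kappa\}$ containing $\mathcal M_i$, enlarged slightly. Starting within $\eta$ of $\mathcal M_i$, the exit time $\tau$ from $G$ satisfies
\[
\mathbb P_\nu\bigl(\tau<e^{N(D_i-\varepsilon)}\bigr)\le e^{-cN},
\qquad
\mathbb P_\nu\bigl(\tau>e^{N(D_i+\varepsilon)}\bigr)\le e^{-cN}.
\]
Moreover, at exit the process enters the basin of some $\mathcal M_j\in\mathcal L_i$ with overwhelming probability. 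This uses that every other well $j\in C_i^{\max}$ has shallower descent within the cycle, since $\Phi_{ij}<h_i$. It also uses distinctness of the $f_j$, which rules out ties at the minimal saddle and ensures the cycle's bottom is uniquely $\mathcal M_i$. The lower bound on $\tau$ comes from splitting $[0,e^{N(D_i-\varepsilon)}]$ into $e^{N(D_i-\varepsilon/2)}$ excursions of length $O(1)$. Each excursion escapes with probability at most $e^{-N(D_i-\varepsilon/4)}$ by (b) and the LDP upper bound. The upper bound on $\tau$ uses independent trials: each trial escapes in time $T_0$ with probability at least $e^{-N(D_i+\varepsilon/2)}$ by the LDP lower bound along the optimal path, and fast return (d) restarts the trial near $\mathcal M_i$. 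Handling excursions of unbounded duration is where (c) and (d) enter. They show that paths spending long times away from critical orbits cost action growing linearly in time, as in \cite[Ch.~6]{FreidlinWentzell2012}.

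For \eqref{eq:verified-M1} I still have to pass from staying in $G$ to being within $r$ of $\mathcal M_i$ at each fixed $t$. Within $G$ the process may visit other wells $j\in C_i^{\max}$, but only briefly, because the cost of reaching them is positive while the return to $\mathcal M_i$ has lower cost. I would prove a localization bound: at a deterministic time $t$, the chance of lying outside $B_r(\mathcal M_i)$ is at most the probability that the last visit to $B_\eta(\mathcal M_i)$ before $t$ was followed by an excursion to distance $r$ that has not yet returned. Using the Markov property at time $t-T$ together with (c) and (d), this is $e^{-cN}$. Treating the residual time windows near shallower wells is where I expect the main difficulty.

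For \eqref{eq:verified-M2} I apply the same argument by induction down the energy ordering. After $\tau$ the process is near some $\mathcal M_j\in\mathcal L_i$. If $\mathcal M_j$ is global, it stays near $\mathcal L_i$ forever with overwhelming probability, since the exit cost is infinite relative to lower levels. Otherwise it can only move to yet lower orbits, and these still lie in $\mathcal L_i$. Return to the higher well $\mathcal M_i$ or to others outside $\mathcal L_i$ from a cycle containing a lower bottom costs strictly more than its time scale permits. A stationary comparison with the Gibbs measure, which concentrates on $\mathcal L_i$ at exponential rate by distinctness of energies, controls all $t\ge e^{N(D_i+\varepsilon)}$ uniformly.
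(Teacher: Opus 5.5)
\textbf{There is a genuine gap.} Your exit-time bounds for $C_i^{max}$ rest on ingredient (b), and (b) is false in general. The identity $\Vq(\mathcal M_i,\mathcal M_j)=\beta(\Phi_{ij}-f_i)$ fails whenever the minimax route passes through intermediate wells. Reversed flow from the minimax saddle only brings you to a minimum adjacent to that saddle. Getting from there to $\mathcal M_j$ requires further uphill segments whose actions add, while $\Phi_{ij}$ records only the maximal level. See Remark~\ref{rem:false-global-qp} and the two-well illustration in Remark~\ref{rem:two-well-illustration}: there the natural one-shot path costs $12\beta$, but the cycle depth is $11\beta$. This is exactly the regime the theorem addresses when $C_i^{max}\ne\{i\}$.

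\emph{Late exit.} Your independent-trials step is false as stated. By the LDP upper bound, escape from near $\mathcal M_i$ within a fixed time $T_0$ has probability at most about $e^{-N\Vq(\mathcal M_i,G^c)}$. This one-shot exponent can strictly exceed $D_i$, so you do not reach $\mathcal L_i$ by time $e^{N(D_i+\varepsilon)}$, and \eqref{eq:verified-M2} is not obtained.

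\emph{Early exit.} Your block union bound needs a per-block escape bound $e^{-N(D_i-\varepsilon/4)}$ that is uniform in the state at the start of each block. That fails: from near a higher well $\mathcal M_j$, $j\in C_i^{max}$, escape costs only about $\beta(h_i-f_j)<D_i$.

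\emph{What the exponent really is.} $D_i=d(C_i^{max})$ is not the action of any single path. It arises from many toggles among the wells of the cycle, where each cheap escape attempt from a higher well is paid for by the rarity of visiting it. The paper captures this with the full cycle machinery:
\begin{itemize}
\item uniform two-sided gate-transition bounds $e^{-N(\Delta_{ij}\pm\varepsilon)}$, conditional on the past; these need Lemma~\ref{lem:uniform-shortening-arbitrary-gate-points}, since bounded-action excursions can have unbounded duration near critical orbits;
\item the reversal identity $\beta f_i+\Delta_{ij}=\beta f_j+\Delta_{ji}$, which makes the comparison chain $Q_N$ reversible with graph communication heights $\beta\Phi_{ij}$ (Lemma~\ref{lem:direct-graph-heights});
\item finite-state cycle estimates for $Q_N$ transferred to the non-Markov label process, plus a capacity bound on the mean descent time (Lemma~\ref{lem:conditional-FW});
\item clock lemmas that convert excursion counts into physical time.
\end{itemize}

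\textbf{The fixed-time step you flag also needs a tool you have not supplied.} A fixed lookback window cannot place $X_t^N$ near $\mathcal M_i$, because holding times near a higher well of the cycle are themselves exponentially long. The paper instead proves Lemma~\ref{lem:uniform-uphill-endpoint}. A Girsanov coupling of two configurations at distance $O(\sqrt N\eta)$ costs only $e^{O(N\eta^2)}$. Stationarity of the Gibbs law and its static LDP with rate $\beta(\F-F_{\min})$ then give $\sup_{t\ge0}\mathbb P_\nu(X_t^N\in A)\le Ce^{-cN}$ for every closed $A$ with $\inf_A\F>f_i$. The paper applies this to neighborhoods of the other wells of $C_i^{max}$, which are strictly higher by distinctness. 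It combines this with localization in law near some minimum at every $t\ge e^{\kappa N}$, and with the early-exit bound to exclude wells outside the cycle; together these give \eqref{eq:verified-M1}.

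\textbf{Problems in your argument for \eqref{eq:verified-M2}.}
\begin{itemize}
\item The claims that the process stays near $\mathcal L_i$ ``forever'', or that exit from the global well has infinite cost, are false over an unbounded time window: the process is ergodic and revisits $\mathcal M_i$. Only fixed-time bounds hold. The paper obtains them from the same Gibbs comparison started near a lower minimum (Lemma~\ref{lem:lower-energy-persistence}), applied after the descent time via the strong Markov property.
\item Exit from $C_i^{max}$ need not land in $\mathcal L_i$: a higher well $k\notin C_i^{max}$ with $\Phi_{ik}=h_i$ can be entered first. This is why the paper bounds the hitting time of $A_i$ directly.
\end{itemize}

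\textbf{Smaller points.}
\begin{itemize}
\item Your $G$, a component of $\{\F<h_i-\kappa\}$, is not open in $\Qq$, since $\F$ is only lower semicontinuous, and it contains no empirical measure.
\item Ingredient (c) as stated, closeness to the critical set at a fixed time $T$, is false for trajectories that linger near saddles. What holds is entry into small gates within a bounded time.
\end{itemize}
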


\begin{remark}[Localization and excursions to other wells]
Estimate~\eqref{eq:verified-M1} concerns the one-time distributions
of the empirical measure, uniformly over deterministic observation
times. The supremum over time is outside the probability; the
estimate does not assert that
\[
 \mathbb P_\nu\left(
   \sup_{0\le t\le e^{N(D_i-\varepsilon)}}
   d_{\Qq}([\mu_t^N],\mathcal M_i)>r
 \right)\longrightarrow0.
\]
When $C_i^{max}$ contains other minima, the empirical process may
visit neighborhoods of these wells and return to $\mathcal M_i$
on intermediate time scales below $e^{ND_i}$.
Since $\mathcal M_i$ is the unique bottom of $C_i^{max}$, all
these other minima have strictly higher free-energy.
Such excursions occur at random times, and the holding times
around these higher-energy minima are exponentially shorter
than the overall metastable time scale \(e^{ND_i}\)
associated with \(\mathcal M_i\). Consequently, this picture is compatible with
an exponentially small probability of being away from
$\mathcal M_i$ at any specified deterministic time $t\le e^{N(D_i-\varepsilon)}$, as in
\eqref{eq:verified-M1}.
The depth $D_i$ governs escape toward lower-energy wells,
rather than the first excursion to another well within
$C_i^{max}$.
\end{remark}

\subsection{Pointwise propagation-of-chaos results near metastable wells}

We now apply Theorem~\ref{prop:verified-MT} to initial empirical
measures converging to a law in the attraction basin of a minimum.
This gives sharp estimates for the time horizon over which propagation-of-chaos holds, or, in other words, the discrepancy in mean-field approximation of the empirical measure by the McKean--Vlasov limit, vanishes as $N \to \infty$. It also shows that, beyond these time horizons (in the exponential scale), this approximation fails in an appropriate sense. 

Throughout this section, assume that the initial
configuration, which may be random, is independent of the driving
Brownian motions. We also assume that the initial empirical measure $\mu_0^N$ satisfies
\begin{equation}\label{eq:poc-initial}
 \Wass(\mu_0^N,\mu_0)\xrightarrow{\mathbb P}0
 \quad\text{for a deterministic }\mu_0\in\Pp(\T),
\end{equation}
and write $\mu_t=S_t\mu_0$.

These empirical-measure statements do not assert convergence of
unrotated fixed-particle marginals to a deterministic product law.
In particular, a common rotational phase may fluctuate on long
time scales even for exchangeable configurations \cite{BertiniGiacominPoquet2014,Coppini2022}.
No independence or exchangeability of the initial particles is
required; in particular, both deterministic configurations and
i.i.d.\ initialization are allowed. $\mathbb E$ and $\mathbb P$ will denote the expectation and probability under the joint randomness of the initialization and evolution.

\begin{theorem}[The sharp metastable window for propagation-of-chaos]
\label{thm:poc-metastable}
Assume \eqref{eq:W-assumption}, and \eqref{eq:finite-critical-orbits} and that
$f_1,\ldots,f_J$ are pairwise distinct. Fix a nonglobal minimum $i$
and suppose \eqref{eq:poc-initial} holds with $[\mu_0]\in\mathcal A_i$.
Let $D_i$, defined in \eqref{eq:descent-height-definition}, be the depth of the maximal cycle $C_i^{max}$ from
\eqref{eq:maximal-bottom-cycle}. 
\begin{enumerate}[label=\textup{(\roman*)}]
\item For every $0<\varepsilon<D_i$,
\begin{equation}\label{eq:poc-metastable-before}
 \sup_{0\le t\le e^{N(D_i-\varepsilon)}}
 \mathbb E\,d_{\Qq}([\mu_t^N],[\mu_t])^2\longrightarrow0.
\end{equation}
Equivalently, for every $\delta>0$,
\[
 \sup_{0\le t\le e^{N(D_i-\varepsilon)}}
 \mathbb P\bigl(d_{\Qq}([\mu_t^N],[\mu_t])>\delta\bigr)\longrightarrow0.
\]
\item Define the positive constant
\[
 \delta_i:=
 \frac14\dist_{\Qq}(\mathcal M_i,\mathcal L_i),
 \qquad
 \mathcal L_i=\{\mathcal M_j:f_j<f_i\}.
\]
For every $\varepsilon>0$,
\begin{equation}\label{eq:poc-metastable-after}
 \sup_{t\ge e^{N(D_i+\varepsilon)}}
 \mathbb P\bigl(d_{\Qq}([\mu_t^N],[\mu_t])\le\delta_i\bigr)\longrightarrow0.
\end{equation}
In particular, the approximation of the empirical measure process $[\mu_t^N]$ by the McKean--Vlasov trajectory $[\mu_t]$ arising as the mean-field limit (in the sense of Lemma \ref{lem:poc-finite-time}) fails
at every deterministic sequence $t_N\ge e^{N(D_i+\varepsilon)}$.
\end{enumerate}
\end{theorem}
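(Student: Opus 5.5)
The plan is to combine three ingredients: finite-time propagation of chaos (Lemma~\ref{lem:poc-finite-time}, assumed to give $\sup_{t\le T}\mathbb E\,\Wass(\mu_t^N,\mu_t)^2\to0$ for each fixed $T$ under \eqref{eq:poc-initial}), deterministic convergence $d_{\Qq}(S_t[\mu_0],\mathcal M_i)\to0$ since $[\mu_0]\in\mathcal A_i$, and the metastability estimates of Theorem~\ref{prop:verified-MT}. Since $\Qq$ is compact, $d_{\Qq}$ is bounded by $\operatorname{diam}\Qq\le\pi$. Hence convergence of the second moment is equivalent to convergence in probability, uniformly in $t$, and it suffices to bound probabilities.

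\emph{Part (i).} Fix $\delta>0$. Take $r=\delta/2$ and let $\eta$ be given by Theorem~\ref{prop:verified-MT} for this $r$ and the given $\varepsilon$; we may assume $\eta\le\delta/4$. Choose a deterministic $T$ with $d_{\Qq}([\mu_t],\mathcal M_i)\le\eta/2$ for all $t\ge T$. Two regimes in $t$ then have to be handled.
\begin{itemize}[label=$\circ$]
\item For $t\le T$, the finite-time lemma gives $\mathbb P(d_{\Qq}([\mu^N_t],[\mu_t])>\delta)\to0$ uniformly.
\item At time $T$, the event $G_N=\{d_{\Qq}([\mu_T^N],[\mu_T])\le\eta/2\}$ has probability tending to one, and on $G_N$ we have $d_{\Qq}([\mu_T^N],\mathcal M_i)\le\eta$.
\end{itemize}
For $T\le t\le e^{N(D_i-\varepsilon)}$, apply the Markov property at time $T$. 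The process is Markov in the particle configuration, and its law depends only on the empirical measure. Moreover, the initial data are independent of the Brownian motions. Conditionally on $\mathcal F_T$ and on $G_N$, the probability that $d_{\Qq}([\mu_t^N],\mathcal M_i)>\delta/2$ is therefore at most
\[
\sup_{d_{\Qq}([\nu],\mathcal M_i)\le\eta}\ \sup_{s\le e^{N(D_i-\varepsilon)}}\mathbb P_\nu\bigl(d_{\Qq}([\mu^N_s],\mathcal M_i)>\delta/2\bigr)\le Ce^{-cN},
\]
by \eqref{eq:verified-M1}, using $s=t-T\le e^{N(D_i-\varepsilon)}$. Combining this with $d_{\Qq}([\mu_t],\mathcal M_i)\le\eta/2\le\delta/2$ and the triangle inequality gives $\mathbb P(d_{\Qq}([\mu^N_t],[\mu_t])>\delta)\le \mathbb P(G_N^c)+Ce^{-cN}\to0$, uniformly in $t$.

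\emph{Part (ii).} Using the same $T$ and $G_N$, apply \eqref{eq:verified-M2} with $r=\delta_i$ and $\varepsilon/2$, and possibly a smaller $\eta$. Let $t\ge e^{N(D_i+\varepsilon)}$. For large $N$ we have $t-T\ge e^{N(D_i+\varepsilon/2)}$, so on $G_N$ the conditional probability that $\dist_{\Qq}([\mu^N_t],\mathcal L_i)>\delta_i$ is at most $Ce^{-cN}$. Now suppose $T$ is large enough that $d_{\Qq}([\mu_t],\mathcal M_i)\le\delta_i$ for all $t\ge T$. If $\dist_{\Qq}([\mu^N_t],\mathcal L_i)\le\delta_i$, then
\[
d_{\Qq}([\mu^N_t],[\mu_t])\ge \dist(\mathcal M_i,\mathcal L_i)-\delta_i-\delta_i=2\delta_i>\delta_i.
\]
Here $\mathcal L_i$ is a finite union of compact orbits, so the distances are attained and $\delta_i>0$. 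This gives $\mathbb P(d_{\Qq}([\mu^N_t],[\mu_t])\le\delta_i)\le\mathbb P(G_N^c)+Ce^{-cN}\to0$, uniformly over $t\ge e^{N(D_i+\varepsilon)}$.

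\emph{Main obstacle.} The delicate step is the Markov restart at the deterministic time $T$. It requires that the initial data be independent of the Brownian motions, that the uniform bounds of Theorem~\ref{prop:verified-MT} apply to the random starting measure $\mu_T^N\in\PN(\T)$ through conditioning, and that the time shift by the fixed $T$ is absorbed into the exponential windows. For the shift, I would reduce $\varepsilon$ slightly in part (ii), while in part (i) the inequality $t-T\le t$ already suffices. Everything else is bookkeeping with triangle inequalities on $\Qq$.
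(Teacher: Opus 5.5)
Your proposal is correct and follows essentially the same route as the paper. The paper also restarts via the Markov property at a fixed time $t_0$, chosen so that finite-time approximation (Lemma~\ref{lem:poc-finite-time}) and deterministic attraction place $[\mu_{t_0}^N]$ in the $\eta$-neighborhood of $\mathcal M_i$ with probability tending to one. It then applies the uniform bounds \eqref{eq:verified-M1} and \eqref{eq:verified-M2} (the latter with margin $\varepsilon/2$ and radius $\dist_{\Qq}(\mathcal M_i,\mathcal L_i)/4=\delta_i$, exactly your choice) and concludes with the triangle inequality in $\Qq$.
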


\begin{remark}[Role of the distinctness assumption]
\label{rem:poc-distinctness}
Pairwise distinctness of the minimum energies is imposed for
simplicity. For a fixed nonglobal minimum $i$, the proofs of
Theorems \ref{prop:verified-MT} and~\ref{thm:poc-metastable} only require
\begin{equation}\label{eq:pairrelax}
 f_j=f_i,\quad j\ne i
 \quad\Longrightarrow\quad
 \Phi_{ij}\ge h_i.
\end{equation}
Indeed, this condition ensures that $\mathcal M_i$ is the unique
bottom of $C_i^{max}$: every other minimum in that cycle has energy
strictly greater than $f_i$. This is the energy gap used in the
localization argument before the threshold. Equal energies among
the other minima are allowed.

Moreover, keeping the other hypotheses, the conclusions in \eqref{eq:verified-M2} and
\eqref{eq:poc-metastable-after} do not require any distinctness
assumption. Their proofs use only the time to reach a strictly
lower minimum and the subsequent localization near the set
$\mathcal L_i$.

Without any such assumption ensuring a unique bottom of $C_i^{max}$, we still obtain quantitative estimates for metastability and propagation-of-chaos (and its failure), but a sharp common threshold at depth $D_i$ is no longer guaranteed. Define
\[
 h_i^{\le}
 :=\min_{\substack{j\ne i\\ f_j\le f_i}}\Phi_{ij},
 \qquad
 D_i^{\le}:=\beta(h_i^{\le}-f_i).
\]
The same argument, applied to the cycle containing $i$
immediately below height $h_i^{\le}$, proves \eqref{eq:verified-M1} and
\eqref{eq:poc-metastable-before} with $D_i^{\le}$ in place of
$D_i$. The conclusions in \eqref{eq:verified-M2} and \eqref{eq:poc-metastable-after} still hold with $D_i$. These two depths may differ: the condition \eqref{eq:pairrelax} provides a sufficient condition for them to coincide. When the two depths differ, then for times between $e^{ND_i^{\le}}$ and $e^{ND_i}$, transitions between
equal-energy wells can destroy concentration near a single
minimum before a strictly lower minimum is reached.

\end{remark}

An application of the large-deviations theory for the Gibbs-type stationary measure of the $N$-particle system (see Lemma~\ref{lem:uniform-uphill-endpoint}) gives the following \emph{uniform-in-time} propagation-of-chaos in the attraction basin of the global minimum orbit.

\begin{theorem}[Uniform-in-time propagation-of-chaos in the global basin]
\label{thm:poc-global}
Suppose $\F$ has a unique global-minimum orbit $\mathcal M_*$ with attraction basin $\mathcal A_*$ defined as in \eqref{eq:doa},
and \eqref{eq:poc-initial} holds with $[\mu_0]\in\mathcal A_*$.
Then
\begin{equation}\label{eq:poc-global}
 \sup_{t\ge0}\mathbb E\,d_{\Qq}([\mu_t^N],[\mu_t])^2\longrightarrow0.
\end{equation}
Equivalently, for every $\delta>0$,
$\sup_{t\ge0}\mathbb P(d_{\Qq}([\mu_t^N],[\mu_t])>\delta)\to0$.
This theorem does not require finitely many critical orbits,
distinct energies of the other minima, or an initial entropy bound.
\end{theorem}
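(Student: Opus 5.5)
The plan is to split the time axis into a finite window, where the McKean--Vlasov trajectory has already entered a small neighbourhood of $\mathcal M_*$, and a long-time regime, handled by a uniform-in-time bound from the Gibbs measure. Write $\Pi_N$ for the invariant law of $[\mu^N]$, namely the image of the $N$-particle Gibbs measure $\propto e^{-\beta\mathcal H_N}$. Fix $\delta>0$.

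\textbf{Step 1 (deterministic part).} Since $[\mu_0]\in\mathcal A_*$, there is $T_0$ with $d_{\Qq}(S_t[\mu_0],\mathcal M_*)\le\delta/4$ for all $t\ge T_0$. On $[0,T]$ for any fixed $T\ge T_0$, the finite-time propagation of chaos (Lemma~\ref{lem:poc-finite-time}, valid under \eqref{eq:poc-initial}) gives
\[
\sup_{t\le T}\mathbb P\bigl(d_{\Qq}([\mu_t^N],[\mu_t])>\delta/4\bigr)\to0 .
\]
Hence it suffices to show
\[
\limsup_N\sup_{t\ge T}\mathbb P\bigl(d_{\Qq}([\mu_t^N],\mathcal M_*)>\delta/2\bigr)=0
\]
for a suitable $T$. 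Because $d_{\Qq}$ is bounded ($\Qq$ is compact), convergence in probability upgrades to the $L^2$ statement \eqref{eq:poc-global}.

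\textbf{Step 2 (from the law at time $T$ to the Gibbs measure).} Condition on $\mathcal F_T$. By the Markov property and permutation symmetry, the law of $[\mu^N_{T+s}]$ is $\mathbb P_{\mu_T^N}$ evaluated at time $s$. At time $T$, $[\mu_T^N]$ lies with high probability in a small ball $B_\eta(\mathcal M_*)$. I would then invoke Lemma~\ref{lem:uniform-uphill-endpoint}, the Gibbs comparison estimate, which I expect to say the following. For each $r>0$ there are $\eta,c>0$ such that
\[
\sup_{[\nu]\in B_\eta(\mathcal M_*)\cap\PN}\ \sup_{s\ge0}\mathbb P_\nu\bigl(d_{\Qq}([\mu_s^N],\mathcal M_*)>r\bigr)\to0 .
\]
If it is phrased differently, I would prove it directly. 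Using $\F-F_{\min}>0$ off $\mathcal M_*$ (unique minimiser orbit, lower semicontinuity, compactness), the Gibbs LDP \cite{DawsonGartner1989} gives $\Pi_N(B_r(\mathcal M_*)^c)\le e^{-cN}$. Stationarity then gives, for any initial law $m$ with density $g=dm/d\Pi_N$,
\[
\mathbb P_m\bigl([\mu_s^N]\in B_r^c\bigr)=\int P_s\mathbf 1_{B_r^c}\,g\,d\Pi_N\le \|g\|_{L^2(\Pi_N)}\,\Pi_N(B_r^c)^{1/2}.
\]

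\textbf{Step 3 (the main obstacle).} The difficulty is that $\mu_T^N$ is close to a point mass in configuration space relative to $\Pi_N$. Step 2 needs a smoothed initial law whose density with respect to the Gibbs measure is at most $e^{aN}$ with $a$ small compared with the LDP rate $c$ of $B_r^c$, and with no entropy assumption on $\mu_0^N$. I would run the dynamics for one unit of time and use a Harnack or heat-kernel lower bound for the $N$-particle diffusion on $\T^N$. This gives $p_1(x,\cdot)\ge e^{-N a(x)}$, but only a crude $a(x)$ of order $\beta\|W\|_\infty$, which is too large.

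The fix is to make $a$ small by restricting the comparison. Take the Gibbs measure conditioned on $B_\eta(\mathcal M_*)$, together with the fact that configurations whose empirical measures lie in $B_\eta$ have $\beta\mathcal H_N/N$ close to that at $\mathcal M_*$. One then compares the heat kernel of the process killed on leaving $B_{r}$ against the conditioned Gibbs density, obtaining a density ratio of $e^{o_\eta(1)N}$. The exponent $o_\eta(1)$ beats $c(r)$ once $\eta$ is small.

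If this local Harnack comparison fails, the fallback is the cycle machinery behind \eqref{eq:verified-M1}, applied to the bottom of the whole landscape. However, that route needs \eqref{eq:finite-critical-orbits}, which the theorem explicitly avoids, so the Gibbs-density route is the one I would push.
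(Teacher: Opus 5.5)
Your Steps~1--2 are exactly the paper's proof, and Step~3 is unnecessary. Lemma~\ref{lem:uniform-uphill-endpoint}, applied to the closed set $A=\{q:d_{\Qq}(q,\mathcal M_*)\ge r\}$ (where $\inf_A\F>F_{\min}$ by compactness, lower semicontinuity and uniqueness of the global orbit, as you note), gives precisely the uniform-in-time bound you expect. It does so with rate $Ce^{-cN}$ and without \eqref{eq:finite-critical-orbits}, so entering $B_\eta(\mathcal M_*)$ at a fixed time via Lemma~\ref{lem:poc-finite-time} and conditioning there finishes the argument.

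Be aware that the Step~3 fallback would not work as stated. The time-one law from a single labelled configuration has relative entropy of order $cN$ with respect to the conditioned Gibbs law, with $c>0$ essentially the mutual information between a particle's positions at times $0$ and $1$, so $c$ does not vanish as $\eta\to0$. Closeness of $\beta\mathcal H_N/N$ controls only the energy factor, not this labelling mismatch. The paper's lemma avoids densities altogether: it matches $x$ to a Gibbs-typical $y$ by a permutation and common rotation with $d_N(x,gy)^2\le4N\eta^2$, then compares $\mathbb P_x$ with $\mathbb P_{gy}$ on events after time one through a Girsanov shift costing only $e^{4c_0N\eta^2}$ after Cauchy--Schwarz.
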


We also obtain the following propagation-of-chaos result in the pathwise sense, over subexponential time horizons. We call a local-minimum orbit $\mathcal M_i$
\emph{asymptotically stable} if its attraction basin
$\mathcal A_i$ contains a neighborhood of $\mathcal M_i$
and it is Lyapunov stable: for every $r>0$, there exists
$\delta>0$ such that
\[
 d_{\Qq}(q,\mathcal M_i)<\delta
 \quad\Longrightarrow\quad
 d_{\Qq}(S_tq,\mathcal M_i)<r
 \qquad\text{for all }t\ge0.
\]
In the present setting, these properties ensure the existence
of arbitrarily small `stable gates' in the sense of
Definition~\ref{def:stable-gate}.

\begin{theorem}[Pathwise propagation-of-chaos on subexponential horizons]
\label{thm:poc-pathwise}
Let $\mathcal M_i$ be an asymptotically stable local-minimum orbit,
and suppose \eqref{eq:poc-initial} holds with
$[\mu_0]\in\mathcal A_i$. For every deterministic sequence $T_N>0$
such that $\log(1+T_N)=o(N)$,
\begin{equation}\label{eq:poc-pathwise}
 \mathbb E\left[
   \sup_{0\le t\le T_N}d_{\Qq}([\mu_t^N],[\mu_t])^2
 \right]\longrightarrow0.
\end{equation}
In particular, under \eqref{eq:finite-critical-orbits} this holds
for every local minimum, without distinctness of the minimum
energies.
\end{theorem}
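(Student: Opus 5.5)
We split the horizon $[0,T_N]$ into a deterministic relaxation phase and a long phase spent near the well. Set $\mathcal M=\mathcal M_i$. Since $[\mu_0]\in\mathcal A_i$, the deterministic trajectory $[\mu_t]$ converges to $\mathcal M$.

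\textbf{Choice of the stable gate.} Fix $r>0$ small. By asymptotic stability (Lyapunov stability together with a neighborhood contained in $\mathcal A_i$), choose $\delta\le r$ with
\[
 d_{\Qq}(q,\mathcal M)<\delta \quad\Longrightarrow\quad d_{\Qq}(S_tq,\mathcal M)<r \ \text{ for all } t\ge0 .
\]
Then choose a deterministic $T_0$ such that $d_{\Qq}([\mu_t],\mathcal M)<\delta/2$ for all $t\ge T_0$. Lyapunov stability applied at time $T_0$ gives this bound from $T_0$ onward.

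\textbf{Finite horizon.} On $[0,T_0]$ we invoke the finite-time propagation-of-chaos of Lemma~\ref{lem:poc-finite-time} in its pathwise form. Using \eqref{eq:poc-initial}, $\sup_{t\le T_0}\Wass(\mu^N_t,\mu_t)\to0$ in probability, and hence $\sup_{t\le T_0}d_{\Qq}([\mu^N_t],[\mu_t])\to0$ in probability. In particular, with probability tending to one, $d_{\Qq}([\mu^N_{T_0}],\mathcal M)<\delta$. Since $d_{\Qq}$ is bounded by $\pi$ on the compact space $\Qq$, convergence in probability upgrades to convergence of second moments. The same boundedness handles the final passage from probability to expectation of the squared supremum.

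\textbf{Long horizon.} By the Markov property at time $T_0$, it suffices to show the following uniform statement:
\[
 \sup_{\nu\in\PN(\T),\ d_{\Qq}([\nu],\mathcal M)\le\delta}\ \mathbb P_\nu\Bigl(\sup_{0\le t\le T_N} d_{\Qq}([\mu^N_t],\mathcal M)>2r\Bigr)\longrightarrow0 .
\]
On the complement of this event, for $t\ge T_0$ both $[\mu^N_t]$ and $[\mu_t]$ lie within $2r$ of $\mathcal M$. Hence $d_{\Qq}([\mu^N_t],[\mu_t])\le 2r+\mathrm{diam}$ of the orbit neighborhood. This forces us to take the neighbourhood of the single orbit $\mathcal M$ in $\Qq$, which is just a point in the quotient: $\mathcal M$ is one rotation orbit, hence one point of $\Qq$. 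The bound is therefore $\le 4r$. Since $r$ is arbitrary, the theorem follows.

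The uniform statement is a Freidlin--Wentzell exit estimate, with the time split into blocks of fixed length $T_1$. Since $\mathcal M$ is a strict local minimum of $\F$ in the quotient, it is isolated. Hence the annulus $\{\delta\le d_{\Qq}(\cdot,\mathcal M)\le 2r\}$, for $r$ small, has
\[
 \F\ \ge\ \F(\mathcal M)+\kappa \quad\text{for some } \kappa>0 .
\]
Using the Dawson--G\"artner fixed-horizon LDP, uniform over initial data in the compact set $\{d_{\Qq}\le\delta\}$, together with the stable gate property, we would show two things on each block. First, from the $\delta$-ball the process returns to the $\delta/2$-ball before time $T_1$ with overwhelming probability. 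Second, the probability of exiting the $2r$-ball during one block is at most $e^{-cN}$, where $c>0$ is uniform in the starting point. The positive cost comes from the action lower bound: reaching distance $2r$ from the gate requires a positive action, because the deterministic flow stays within $r$.

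Chaining $\lceil T_N/T_1\rceil$ blocks via the Markov property and a union bound gives an exit probability of at most $(T_N/T_1+1)e^{-cN}\to0$, because $\log(1+T_N)=o(N)$. No quasipotential sharpness is needed here, only some $c>0$. Under \eqref{eq:finite-critical-orbits}, every local-minimum orbit is isolated and hence asymptotically stable by the gradient (Lyapunov function $\F$) structure, which gives the final claim.

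\textbf{Main obstacle.} The hard step is the uniform one-block bound: a lower bound on the rate function for paths leaving the $2r$-ball from anywhere in the $\delta$-ball, uniform over initial empirical measures in $\PN(\T)$. This requires lower semicontinuity of the Dawson--G\"artner action together with compactness of $\Qq$, working through the quotient map. I would first try to prove it by contradiction: a sequence of paths with action tending to zero would converge to a flow trajectory that exits, contradicting the stable gate property.
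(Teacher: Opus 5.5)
Your proposal is correct and follows essentially the paper's route: finite-time approximation (Lemma~\ref{lem:poc-finite-time}) up to a fixed time, then the uniform one-block exit estimate of Lemma~\ref{lem:local-block-endpoint} (no zero-action path in the closed exit event, compact action sublevel sets, uniform LDP upper bound), chained over $\lceil T_N/T_1\rceil$ blocks to get $C(1+T_N)e^{-cN}\to0$, and finally the triangle inequality. Two small adjustments: for deterministic chaining, the per-block event should be ``leaves the outer ball, or lies outside the starting neighbourhood at the end of the block'' (which uniform attraction makes costly) rather than ``returns before time $T_1$''; and the energy gap on the annulus is not needed, since the cost comes from the zero-action-path argument, not from the free energy.
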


\subsection{Sharp pathwise propagation-of-chaos results for running averages}\label{sec:pathav}

The pointwise propagation-of-chaos results in Theorem~\ref{thm:poc-metastable} and Theorem~\ref{thm:poc-global} are obtained in the distributional sense with the supremum in time taken outside the probability (or expectation), and the pathwise result in Theorem~\ref{thm:poc-pathwise} is over a subexponential horizon. In the following results, we obtain sharp pathwise propagation-of-chaos thresholds, with the \emph{supremum inside the probability or expectation}, for time-averaged quantities involving the empirical measures. Theorem~\ref{thm:poc-running-averages} obtains such results for the time-averaged squared distance between the empirical measure and its McKean--Vlasov limit. The upper bounds are then translated into propagation-of-chaos results for (suitably rotated) time-averaged empirical measure paths, and associated occupation measures on $\Qq$ (elements of $\Pp(\Qq)$), in Corollary~\ref{cor:poc-averaged-measures}.

\begin{theorem}[Pathwise propagation-of-chaos results for running averages]
\label{thm:poc-running-averages}
\leavevmode\par
\begin{enumerate}[label=\textup{(\roman*)}]
\item Under the hypotheses of Theorem~\ref{thm:poc-metastable},
for every $0<\varepsilon<D_i$,
\begin{equation}\label{eq:poc-average-before}
 \mathbb E\left[
   \sup_{0<T\le e^{N(D_i-\varepsilon)}}\frac1T\int_0^Td_{\Qq}([\mu_t^N],[\mu_t])^2\,\dd t
 \right]\longrightarrow0.
\end{equation}
\item Under the same hypotheses, set $a_i:=\delta_i^2/2$.
For every $\varepsilon>0$,
\begin{equation}\label{eq:poc-average-after}
 \sup_{T\ge e^{N(D_i+\varepsilon)}}
 \mathbb P\Bigl(\frac1T\int_0^Td_{\Qq}([\mu_t^N],[\mu_t])^2\,\dd t\le a_i\Bigr)\longrightarrow0.
\end{equation}
In particular,
$$\mathbb P\left(\sup_{0<T\le e^{N(D_i+\varepsilon)}}\frac1T\int_0^Td_{\Qq}([\mu_t^N],[\mu_t])^2\,\dd t>a_i\right)
\to1.$$
\item Under the hypotheses of Theorem~\ref{thm:poc-global},
\begin{equation}\label{eq:poc-average-global}
 \mathbb E\left[\sup_{T>0}\frac1T\int_0^Td_{\Qq}([\mu_t^N],[\mu_t])^2\,\dd t\right]\longrightarrow0.
\end{equation}
\end{enumerate}
In (i) and (iii), the random suprema also converge to zero in
probability. 
\end{theorem}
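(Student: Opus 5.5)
The plan is to derive all three parts from the pointwise results, Theorems~\ref{thm:poc-metastable} and~\ref{thm:poc-global}, together with the pathwise statement on subexponential horizons, Theorem~\ref{thm:poc-pathwise}. The averaging is meant to convert the "supremum outside" bounds into "supremum inside" bounds. Write $g_N(t):=d_{\Qq}([\mu_t^N],[\mu_t])^2$, which is bounded by $\pi^2$ since $\Qq$ has diameter at most $\pi$. For (i) and (iii), we split the supremum over $T$ at a subexponential horizon $T_0=T_{0,N}\to\infty$, say $T_0=N$.

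For $T\le T_0$ we bound $\frac1T\int_0^T g_N\le \sup_{t\le T_0}g_N(t)$. Its expectation tends to zero by Theorem~\ref{thm:poc-pathwise}, which applies because $\mathcal M_i$ is asymptotically stable under \eqref{eq:finite-critical-orbits} as stated, and because the global minimum orbit in (iii) is likewise asymptotically stable. This step needs checking, and I would cite the stable-gate existence argument of Definition~\ref{def:stable-gate}.

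For $T_0<T\le T_{\max}$, with $T_{\max}=e^{N(D_i-\varepsilon)}$ in (i) and $T_{\max}=\infty$ in (iii), we use a dyadic decomposition. Set $I_k=[2^kT_0,2^{k+1}T_0]$. For $T\in[2^kT_0,2^{k+1}T_0]$,
\[
\frac1T\int_0^Tg_N\le \frac{T_0}{T}\sup_{t\le T_0}g_N+\frac{1}{T}\sum_{l\le k}\int_{I_l}g_N \le \sup_{t\le T_0}g_N+ 2\,\max_{l\le k}\frac{1}{|I_l|}\int_{I_l}g_N .
\]
The sup over $T$ is therefore controlled by $\sup_l |I_l|^{-1}\int_{I_l}g_N$. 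Bounding a maximum over about $N$ dyadic blocks in expectation by a sum would lose a factor $N$, so this naive route is too weak. Instead I would use the sharper pointwise-in-law structure. By Theorem~\ref{prop:verified-MT}\,\eqref{eq:verified-M1} and its proof, outside an event of probability $Ce^{-cN}$ the process stays near $\mathcal M_i$ at times of any fixed exponential grid, and the exit/localization estimates give pathwise control on short windows. The concrete plan is as follows. Partition $[T_0,T_{\max}]$ into unit intervals; there are at most $e^{N(D_i-\varepsilon)}$ of them. On each unit interval, use the Markov property at its left endpoint together with a fixed-horizon Dawson--G\"artner / finite-time estimate (Lemma~\ref{lem:poc-finite-time}). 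This shows that if $[\mu^N]$ starts within $\eta$ of $\mathcal M_i$, it stays within $r$ throughout the interval except with probability $e^{-c'N}$, where $c'$ is uniform.

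Here lies the main obstacle: a union bound over $e^{N(D_i-\varepsilon)}$ intervals requires the failure probability per interval to be $e^{-N(D_i-\varepsilon/2)}$, which \eqref{eq:verified-M1} does not give. I would avoid the union bound by working with expectations instead. The quantity $\frac1T\int_0^T\mathbf 1\{d_{\Qq}([\mu^N_t],\mathcal M_i)>r\}\dd t$ has expectation at most $Ce^{-cN}$ for every $T$. A maximal inequality is then obtained through the dyadic decomposition: the maximum over dyadic blocks is at most the sum over blocks of block averages, weighted by $2^{-(k-l)}$. Rather than sum, I would use
\[
\sup_T \frac1T\int_0^T h\le 2\sum_l 2^{-l}\cdot\frac{1}{2^{-l}}\dots
\]
More simply, $\sup_{T\ge T_0}\frac1T\int_0^T h\le \frac{1}{T_0}\int_0^{T_0}h+\sum_{k}\frac{1}{2^kT_0}\int_{I_k}h$. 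The $k$-th term has expectation at most $2Ce^{-cN}$, and there are $O(N)$ terms in (i), giving $O(Ne^{-cN})\to0$. For (iii) there are infinitely many blocks, so there I would instead use stationarity. Near $\mathcal M_*$, Lemma~\ref{lem:uniform-uphill-endpoint} should give per-time probabilities of being away that are $e^{-cN}$ uniformly, and the blocks with $2^kT_0\ge e^{KN}$ are handled by comparison with the Gibbs measure, under which block averages concentrate. Finally, on the good set, $[\mu_t]\to\mathcal M_*$ or $\mathcal M_i$ deterministically, so for $t\ge T_0$ we have $g_N(t)\le (r+o(1))^2$. Combining the pieces and letting $r\to0$ gives \eqref{eq:poc-average-before} and \eqref{eq:poc-average-global}.

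For (ii), fix $T\ge e^{N(D_i+\varepsilon)}$. For $t\ge e^{N(D_i+\varepsilon/2)}$, we have $[\mu_t]$ within $\delta_i$ of $\mathcal M_i$ for large $N$, while by \eqref{eq:verified-M2} (applied after reaching the $\eta$-neighborhood in $O(1)$ time via Lemma~\ref{lem:poc-finite-time}) the empirical measure is within $\delta_i$ of $\mathcal L_i$ with probability $1-Ce^{-cN}$. Hence $g_N(t)\ge (2\delta_i)^2$ on that event. By Fubini, the expected fraction of times in $[0,T]$ where $g_N<4\delta_i^2$ is at most $e^{-N\varepsilon/2}+Ce^{-cN}+o(1)$. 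Markov's inequality then shows that $\frac1T\int_0^Tg_N\ge 4\delta_i^2(1-\text{small})>a_i$ with probability tending to one, uniformly in $T$. The "in particular" clause follows by taking $T=e^{N(D_i+\varepsilon)}$.
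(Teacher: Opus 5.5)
Your argument for (ii) is the paper's: pointwise failure at times $\ge e^{N(D_i+\varepsilon/2)}$, then Fubini and Markov. Your final route for (i) is essentially Lemma~\ref{lem:poc-dyadic}: you sum block averages over $O(N)$ dyadic scales against the pointwise bound $Ce^{-cN}$ from \eqref{eq:verified-M1}. Two repairs are needed there.
\begin{enumerate}
\item The bound $Ce^{-cN}$ holds only after conditioning on the entrance event $E_N=\{d_{\Qq}(X^N_{t_0},\mathcal M_i)\le\eta\}$ at a fixed time $t_0$. Unconditionally you get $\mathbb P(E_N^c)+Ce^{-cN}$, and $O(N)\,\mathbb P(E_N^c)$ need not vanish.
\item The claim that $g_N(t)\le(r+o(1))^2$ for $t\ge T_0$ ``on the good set'' does not hold uniformly in $t$ over exponential horizons, because excursions occur. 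What you actually use is $g_N(t)\le 2r^2+2B_0^2\mathbf 1\{d_{\Qq}(X^N_t,\mathcal M_i)>r\}+2d_{\Qq}(q_t,\mathcal M_i)^2$, integrated in time.
\end{enumerate}

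The genuine gap is in (iii). With an infinite horizon there are infinitely many dyadic blocks, so your sum of block averages has infinite expectation, since every block contributes a fixed amount. No uniform-in-time bound on one-time probabilities, including Lemma~\ref{lem:uniform-uphill-endpoint}, controls $\sup_{T>0}$ by itself. For example, a process that is bad exactly on $[U,2U]$, with $U$ log-uniform on $[1,e^M]$, has one-time probabilities $O(1/M)$ but running-average supremum at least $1/2$. So your sentence that blocks beyond $e^{KN}$ ``are handled by comparison with the Gibbs measure, under which block averages concentrate'' carries the entire difficulty, and it is unproved.

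The missing ingredient is a horizon-free maximal inequality. The paper supplies it as follows.
\begin{itemize}
\item Under the stationary Gibbs-started path law $\mathbf Q_N$, the maximal ergodic theorem gives $\mathbf Q_N\bigl(\sup_{T>0}T^{-1}\int_0^T Z_t\,\dd t>b\bigr)\le \mathbb E Z_0/b$.
\item With $Z=\mathbf 1\{d_{\Qq}(X^N_t,\mathcal M_*)\ge\rho\}$ and the static Gibbs LDP, this bound is $O(e^{-c_\rho N})$ for the whole maximal event at once.
\item This single path event depends only on the path after time one. It is transferred to deterministic starts near $\mathcal M_*$ by the Girsanov--matching comparison \eqref{eq:poc-gibbs-path-comparison}, with $4c_0\eta^2<c_\rho/2$; see Lemma~\ref{lem:poc-global-maximal}.
\end{itemize}
A concentration route could also work, but it would need a quantitative mixing input you have not supplied. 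One option is a Holley--Stroock spectral gap of order $e^{-KN}$ combined with Chebyshev on blocks of length $\ge e^{K'N}$, $K'>K$.

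Finally, in (iii) you should not invoke Theorem~\ref{thm:poc-pathwise} on $[0,N]$. Part (iii) does not assume \eqref{eq:finite-critical-orbits}, and asymptotic stability of $\mathcal M_*$ is only established under that assumption (Lemma~\ref{lem:poc-minimum-stability}). Use a fixed $t_0$ and Lemma~\ref{lem:poc-finite-time} instead.
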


\subsubsection{Time averages of measures}

To pass from averaged squared distances to distances between time-averages of measures, first
align the empirical measures with the deterministic reference.
Choose a jointly measurable minimizing rotation $\theta_N(t)$
and set
\begin{equation}\label{eq:poc-alignment}
 \widetilde\mu_t^N:=R_{\theta_N(t)}\mu_t^N,
 \qquad
 \Wass(\widetilde\mu_t^N,\mu_t)^2=d_{\Qq}([\mu_t^N],[\mu_t])^2.
\end{equation}
Such a choice exists by measurable selection: $\T$
is compact, the rotation action is continuous and the $\Wass$ cost is jointly continuous in both measures. Define, with integrals of measures understood
weakly,
\[
 \overline{\widetilde\mu}_T^N
 :=\frac1T\int_0^T\widetilde\mu_t^N\,\dd t,\qquad
 \overline\mu_T:=\frac1T\int_0^T\mu_t\,\dd t.
\]
There is also a construction requiring no choice of rotations:
the occupation measures on $\Qq$,
\[
 \Xi_T^N:=\frac1T\int_0^T\delta_{[\mu^N_t]}\,\dd t,\qquad
 \Xi_T:=\frac1T\int_0^T\delta_{[\mu_t]}\,\dd t.
\]
Write $\mathsf W_{2,\Qq}$ for the Wasserstein-2 distance on
$\Pp(\Qq)$ with ground metric $d_{\Qq}$.

\begin{corollary}[Bounds for averaged measures]
\label{cor:poc-averaged-measures}
For every $T>0$, pathwise,
\begin{align}
 d_{\Qq}\bigl(
   [\overline{\widetilde\mu}_T^N],[\overline\mu_T]\bigr)^2
 &\le
 \Wass\bigl(\overline{\widetilde\mu}_T^N,\overline\mu_T\bigr)^2
 \le \frac1T\int_0^Td_{\Qq}([\mu_t^N],[\mu_t])^2\,\dd t,\label{eq:poc-aligned-averages}\\
 \mathsf W_{2,\Qq}(\Xi_T^N,\Xi_T)^2
 &\le \frac1T\int_0^Td_{\Qq}([\mu_t^N],[\mu_t])^2\,\dd t.\label{eq:poc-occupation-averages}
\end{align}
Consequently, each quantity on the left satisfies the same
vanishing expectation bound, with a supremum inside the
expectation, as \eqref{eq:poc-average-before} and
\eqref{eq:poc-average-global} in their respective regimes.
\end{corollary}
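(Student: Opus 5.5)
The corollary is a deterministic, pathwise statement, so I will prove it by coupling arguments at each fixed time followed by convexity of optimal-transport cost under mixtures. The first inequality in \eqref{eq:poc-aligned-averages} is immediate from the definition of $d_{\Qq}$: take $\theta=0$ in the infimum. For the second, note that \eqref{eq:poc-alignment} gives, for each $t$, an optimal coupling $\gamma_t\in\Pi(\widetilde\mu_t^N,\mu_t)$ with cost $d_{\Qq}([\mu_t^N],[\mu_t])^2$. I would then form the averaged plan
\[
 \overline\gamma_T:=\frac1T\int_0^T\gamma_t\,\dd t .
\]
Its marginals are $\overline{\widetilde\mu}_T^N$ and $\overline\mu_T$, so by linearity its cost is $\frac1T\int_0^T d_{\Qq}([\mu_t^N],[\mu_t])^2\dd t$. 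This yields the bound by joint convexity of $\Wass^2$.

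The one technical point, and the only place needing care, is the measurability of $t\mapsto\gamma_t$, which is needed for $\overline\gamma_T$ to make sense. I would handle it by measurable selection. The set-valued map sending $t$ to the set of optimal plans between $\widetilde\mu_t^N$ and $\mu_t$ has nonempty compact values. It is measurable because $t\mapsto(\widetilde\mu_t^N,\mu_t)$ is measurable (the rotation $\theta_N(t)$ was chosen jointly measurably) and the optimal-plan correspondence is upper hemicontinuous in the marginals. Kuratowski--Ryll-Nardzewski then provides a measurable selection.

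Alternatively, the selection can be avoided entirely. Write $\Wass^2$ as a supremum over Kantorovich dual pairs $(\varphi,\psi)$ with $\varphi(x)+\psi(y)\le d_\T(x,y)^2$. Then
\[
 \int\varphi\,\dd\overline{\widetilde\mu}_T^N+\int\psi\,\dd\overline\mu_T
\]
is the time average of quantities each bounded by $\Wass(\widetilde\mu_t^N,\mu_t)^2$, so taking the supremum over $(\varphi,\psi)$ gives the claim. Here measurability in $t$ is automatic because $\varphi,\psi$ are continuous, and the weak integrals are defined by testing against continuous functions. I would use this duality route as the primary argument.

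For \eqref{eq:poc-occupation-averages}, the same idea works one level up, on $\Qq\times\Qq$. The plan
\[
 \frac1T\int_0^T\delta_{([\mu_t^N],[\mu_t])}\,\dd t
\]
couples $\Xi_T^N$ and $\Xi_T$, and its cost with ground metric $d_{\Qq}$ is exactly the averaged squared distance. Measurability is trivial here, since $t\mapsto([\mu^N_t],[\mu_t])$ is continuous.

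Finally, the "consequently" clause follows by monotonicity. Each left-hand side is bounded pathwise, for every $T$, by $\frac1T\int_0^T d_{\Qq}^2\,\dd t$. Taking $\sup_T$ inside the expectation over the relevant ranges ($0<T\le e^{N(D_i-\varepsilon)}$, respectively all $T>0$) and invoking \eqref{eq:poc-average-before} and \eqref{eq:poc-average-global} from Theorem~\ref{thm:poc-running-averages} gives the vanishing bounds. No step is a real obstacle; the only subtlety is the measurability/weak-integral bookkeeping, which the duality argument disposes of.
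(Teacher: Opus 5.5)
Your proof is correct. For the first inequality in \eqref{eq:poc-aligned-averages}, for \eqref{eq:poc-occupation-averages} (the averaged Dirac coupling on $\Qq\times\Qq$), and for the ``consequently'' clause (monotonicity plus Theorem~\ref{thm:poc-running-averages}), it coincides with the paper's argument.

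For the second inequality in \eqref{eq:poc-aligned-averages}, the paper takes what you call your fallback route. It selects optimal couplings $\gamma_t^N$ of $\widetilde\mu_t^N$ and $\mu_t$ jointly measurably in $(t,\omega)$, citing \cite[Corollary~5.22]{Villani2009}. It then averages them into an explicit plan between $\overline{\widetilde\mu}_T^N$ and $\overline\mu_T$.

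Your primary route via Kantorovich duality is genuinely different. It uses only the trivial (weak-duality) inequality at each fixed $t$, and strong duality once, at the pair of averaged measures. So it needs no selection theorem. It does need measurability of $t\mapsto\widetilde\mu_t^N$. That comes from the joint measurability of $\theta_N$ and is already implicit in the definition of the weak integral $\overline{\widetilde\mu}_T^N$. It does not come from continuity of $\varphi,\psi$ alone, as your phrasing suggests. In effect, you prove Jensen's inequality for continuous mixtures of the functional $\Wass^2$ by writing it as a supremum of affine functionals.

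The paper's route pays for the measurable-selection input with an explicit averaged transport plan. That is the same device it then reuses verbatim for the occupation measures.
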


\begin{remark}[Unaligned averages and the failure statement]
\label{rem:poc-unaligned}
The alignment in \eqref{eq:poc-aligned-averages} is necessary.
For a nonuniform measure $\nu$, let $\nu_t=R_{2\pi t}\nu$
and $\zeta_t=\nu$ on $[0,1]$. Their quotient distance is zero
at every time, but
\[
 \int_0^1\nu_t\,\dd t=\lambda,\qquad
 \int_0^1\zeta_t\,\dd t=\nu,\qquad
 d_{\Qq}([\lambda],[\nu])>0.
\]
Thus an average of quotient distances does not control the
quotient distance between unaligned averages.
Also, \eqref{eq:poc-average-after} proves failure for the
averaged squared error. The upper bounds
\eqref{eq:poc-aligned-averages}--\eqref{eq:poc-occupation-averages}
alone imply no corresponding lower bound for distances between
averaged measures.
\end{remark}

\subsection{Outline of the proofs}
\label{subsec:proof-outline}
The starting point for the above results is to view the empirical measure process $\mu^N_t$ informally as a small-noise process whose effective noise variance is of order $N^{-1}$, or whose fluctuation amplitude is of order $N^{-1/2}$. Indeed, for $\varphi\in C^2(\T)$, It\^o's formula gives
\begin{equation}\label{eq:smallNemp}
 \dd\big[\langle\mu^N,\varphi\rangle\big]_t
 =\frac{2}{\beta N^2}\sum_{i=1}^N
   |\varphi'(X_t^{i,N})|^2\,\dd t
 =\frac{2}{\beta N}
   \langle\mu_t^N,|\varphi'|^2\rangle\,\dd t,
\end{equation}
where $[\cdot]$ denotes quadratic variation. As discussed before, the Freidlin--Wentzell cycle method
\cite[Chapter~6]{FreidlinWentzell2012}, originally constructed for finite-dimensional small-noise diffusions, gives sharp transition estimates between metastable states through a detailed excursion theory and large-deviations analysis. The path-space large deviations principle (LDP) for $\mu^N_t$ over any given compact time horizon $[0,T]$ is well-understood due to the work of Dawson--G\"artner \cite{DawsonGartner1987,DawsonGartner1989} (see also \cite{GvalaniSchlichting2020}). Our key technical contribution is to exploit the Dawson--G\"artner LDP over long time-horizons and use it to extend the Freidlin--Wentzell theory to the infinite-dimensional quotient space $\Qq$. 
 
We outline the principal steps and the difficulties
they address.

\paragraph{Finite-horizon large deviations and uniformity.}

We specialize the Dawson--G\"artner path-space LDP to the present
periodic model, with the distributional and control domain
specified in Section~\ref{sec:LDP}; see
\cite{DawsonGartner1987,BudhirajaDupuisFischer2012} and the
torus formulation in \cite[Theorem~5.9]{GvalaniSchlichting2020}. Its speed is $N$, and its dynamical action is
\[
 I_{[0,T]}(\mu)
 =\frac{\beta}{4}\int_0^T
   \|\partial_t\mu_t-\Gg(\mu_t)\|_{\dot H^{-1}(\mu_t)}^2\,\dd t,
\]
with the domain and weighted norm specified in
Section~\ref{sec:LDP}; here $\Gg$ is the right-hand side of
\eqref{eq:MV}. Compactness of $\T$ identifies the uniform weak
and uniform Wasserstein path topologies. The sequential LDP for
converging initial empirical measures then yields the uniform
upper bounds and locally uniform lower bounds of
Proposition~\ref{prop:uniform-LDP}. This uniformity is essential
when restarting the process at random entrance points of gates,
which are small neighborhoods of critical orbits. It also allows
atomic initial measures, without an initial entropy assumption.

\paragraph{Connecting the free-energy geometry to action costs of paths.}
The action--energy identity in Lemma~\ref{lem:action-energy}
shows that reversing a deterministic measure-valued trajectory solving \eqref{eq:MV} costs $\beta$
times its free-energy increase. This links local transition action costs
to the free-energy landscape. Moreover, due to the gradient flow structure of \eqref{eq:MV} the free-energy decreases along its solution trajectories. This energy dissipation (see Lemma~\ref{lem:energy-dissipation}), along with finiteness of the number of critical orbits and parabolic smoothing, is used to prove convergence of deterministic solutions of \eqref{eq:MV}
to a single critical orbit and asymptotic stability of the local
minima.

A key obstacle is that Wasserstein closeness alone does not
control free-energy, and empirical measures have infinite
entropy relative to $\lambda$, even when they are close to a smooth measure in Wasserstein distance. This differs from the classical
finite-dimensional gradient setting of
\cite{FreidlinWentzell2012}, where the potential is finite and
continuous in the state variable. Thus, special care is required when connecting nearby points in a local minimum neighborhood by short paths of small action. This can be done either when the points are close in the $C^1(\T)$ topology (often achieved by `deterministic smoothing' via the flow \eqref{eq:MV}), or one point is a local minimum and the free-energy of the other is close to that of this minimum. See Step 2 in the proof of Lemma~\ref{lem:singleton-exit-cost} and Step 1 in the proof of Lemma~\ref{lem:uniform-shortening-arbitrary-gate-points} for such applications. Thus, to understand long-time behavior, solely tracking excursions of the empirical measure process between Wasserstein neighborhoods of the local minima is insufficient. One also needs to control auxiliary excursions between `low-energy cores', where the energy dissipation bounds control the excursion durations outside these cores; see the proof of Lemma~\ref{lem:uniform-shortening-arbitrary-gate-points}.

\paragraph{From fixed horizons to stopped excursions.}
A fixed-horizon LDP does not directly estimate a transition whose
duration is unbounded. To this end, we prove a uniform path-shortening result, Lemma~\ref{lem:uniform-shortening-arbitrary-gate-points}.
This key estimate shows that for any $A< \infty$ and $\varepsilon>0$, any path with action at most $A$
can be replaced by one with the same endpoints and duration bounded
only in terms of $A$ and $\varepsilon>0$, at an additional action
cost of at most $\varepsilon$. This lets us utilize the finite-horizon LDP to paths on unbounded time intervals.
Combining this with deterministic recurrence properties of solutions to \eqref{eq:MV} and connector constructions culminates in Lemma~\ref{lem:uniform-stopped-excursion}, which shows that the probability of transition from a (sufficiently small) gate around $\mathcal M_i$ to another small gate around $\mathcal M_j$ is approximately $e^{-N\Delta_{ij}}$, where $\Delta_{ij}$, defined in \eqref{eq:limiting-direct-cost-variational}, gives the `direct cost', namely, the minimum
action required to connect $\mathcal M_i$ to $\mathcal M_j$
while avoiding all local minimum orbits in the interior
of the path. 

Remark~\ref{rem:FWcomp} gives a comparison of our techniques with those in \cite{FreidlinWentzell2012}.

\paragraph{The cycle hierarchy and the physical clock.}
For the long-time excursion analysis, recording only the successive labels of the visited gates around the local minima loses information about
the entrance points, so the resulting `gate label process' need not be Markov.
However, by the preceding analysis, its transition probabilities satisfy uniform exponential
bounds conditional on the entire past (see Lemma~\ref{lem:gate-transition-reduction}). We compare these bounds
with the reference Markov chain whose off-diagonal transition
probabilities are
\[
 Q_N(i,j)=\frac1{2J}e^{-N\Delta_{ij}},\qquad i\ne j,
\]
with diagonal entries chosen so that each row sums to one.
Comparing actions of reversed paths gives
\[
 E_{ij}:=\beta f_i+\Delta_{ij}
        =\beta f_j+\Delta_{ji}=E_{ji},
\]
so this chain is reversible with invariant weights proportional
to $e^{-N\beta f_i}$. 
This reversibility links the quasipotential to the free-energy
landscape.
In particular, the well-understood cycle hierarchy of 
finite-state reversible Markov chains \cite{Catoni1999,CirilloNardiSohier2015} allows us to quantify the number of excursions required by the empirical measure process to `exit a cycle', that is, leave a neighborhood of the collection of local minima whose internal free-energy communication heights are strictly less than that required to connect to an external minimum. Lemma~\ref{lem:conditional-FW}
transfers the finite-state cycle estimates
to the actual gate process using its conditional transition
bounds; Markovianity of the labels is not required.

To move from excursion count to elapsed physical time, we
prove two complementary bounds: individual excursions cannot
last too long (Lemma~\ref{lem:physical-gate-clock}), and too many excursions cannot occur in a short
time interval (Lemma~\ref{lem:fast-gate-clock}), both measured in the exponential scale in $N$. Together, these bounds yield
Theorem~\ref{prop:two-sided-physical-cycles}: the exit time from
a proper cycle $C$ is approximately
\[
 e^{Nd(C)},\qquad
 d(C)=\beta\bigl(\min_{i\in C, \,j\notin C}
       \Phi_{ij}-\min_{i \in C}f_i\bigr),
\]
where $\min_{i\in C, \,j\notin C}
       \Phi_{ij}$ is its exit height and $\min_{i \in C}f_i$ is its bottom energy.
The cycle hierarchy accounts for repeated visits and repeated
escape attempts among internal wells. Consequently, the exit
exponent is determined by the cycle depth $d(C)$; it is a truly `multiscale' object and need not equal
the action of a single controlled path, which may accumulate
several separate uphill costs (see Remark~\ref{rem:two-well-illustration} and discussion before it).

\paragraph{Deterministic time localization and Gibbs LDP.}
To prove our main results, we need to move from random cycle-exit times to localization estimates at deterministic times.
We combine the cycle-exit estimates with uniform recurrence and confinement estimates for neighborhoods of local minima, and use a
comparison technique with the stationary Gibbs law of the joint particle system, whose large deviations are well-understood. This gives localization estimates for the empirical measure, both pathwise and in law, at
deterministic observation times
(Section~\ref{sec:endpoint-lemmas}).
In particular, before the cycle-exit threshold, the law remains concentrated near
$\mathcal M_i$, the unique bottom of $C_i^{max}$, even though
sample paths may visit other wells within this cycle.
After the threshold, it concentrates near the collection
$\mathcal L_i$ of strictly lower minima.
These conclusions give \eqref{eq:verified-M1} and
\eqref{eq:verified-M2}, proving
Theorem~\ref{prop:verified-MT}.

\paragraph{Propagation-of-chaos and running averages.}
The localization results, combined with finite-time propagation-of-chaos estimates, yield the sharp metastable PoC window and failure beyond it (Theorem~\ref{thm:poc-metastable}).
The Gibbs comparison also gives uniform-in-time PoC in the basin
of the unique global-minimum orbit, without assuming finitely many
critical orbits (Theorem~\ref{thm:poc-global}), while local confinement
gives pathwise PoC on subexponential horizons
(Theorem~\ref{thm:poc-pathwise}).
Finally, maximal estimates yield convergence of running averages
uniformly over the metastable window, and over all averaging
horizons in the global basin
(Theorem~\ref{thm:poc-running-averages}).
Aligning rotations then gives the corresponding bounds for
time-averaged measures
(Corollary~\ref{cor:poc-averaged-measures}).

\section{Applications and explicit interaction potentials}
\label{sec:applications}

We now present some applications of our results. In Section~\ref{subsec:app-kuramoto}, we apply them to the noisy Kuramoto model, where $W$ has only one Fourier mode and the energy landscape is well-understood, and recover existing results from the literature. In Section~\ref{subsec:app-finite}, we provide a checkable Fourier criterion for establishing the finiteness of the number of critical orbits. In Section~\ref{subsec:app-multiple-wells}, we connect our metastability and propagation-of-chaos results with phase transitions in the free-energy landscape. Finally, in Section~\ref{subsec:app-transformer}, we give applications to the noisy transformer model.

\subsection{The noisy Kuramoto model}
\label{subsec:app-kuramoto}

Take $W(x)=-K\cos x$, with $K>0$, and put $A=\beta K$. In the subcritical regime $A \le 2$, the uniform distribution gives the unique stationary measure for the associated McKean--Vlasov equation \eqref{eq:MV}. In the
supercritical regime $A>2$, the stationary measures are $\lambda$ and
the synchronized family
\begin{equation}\label{eq:app-kuramoto-stationary}
 \mathcal M_* = \{\rho_\psi\lambda:\psi\in\T\},
 \qquad
 \rho_\psi(x)=\frac{\exp\{Ar\cos(x-\psi)\}}{I_0(Ar)},
 \qquad
 r=\frac{I_1(Ar)}{I_0(Ar)}>0.
\end{equation}
Here
\[
 I_k(z):=\int_\T e^{z\cos x}\cos(kx)\,\lambda(\dd x),
 \qquad k\ge0,
\]
denote the modified Bessel functions of the first kind,
and the positive solution $r$ is unique; see \cite{BertiniGiacominPakdaman2010}. The synchronized family $\mathcal M_*$ is the unique
global-minimum orbit; see
\cite[Proposition~6.1]{CarrilloEtAl2020}.
For $\beta K>2$, the linearization of \eqref{eq:MV} at the
uniform measure $\lambda$ has the positive eigenvalue
$K/2-\beta^{-1}$. Thus $\lambda$ is linearly unstable
in this regime; see also \cite[Section~3.2]{CarrilloEtAl2020}.
Write
$m_k(\mu):=\int_\T e^{ikx}\,\mu(\dd x)$. If $m_1(\mu_0)\ne0$,
the deterministic solution approaches $\mathcal M_*$; see
\cite[Proposition~4.1]{DelarueTse2025}.
Consequently, Theorem~\ref{thm:poc-global} gives
\begin{equation}\label{eq:app-kuramoto-global}
 \sup_{t\ge0}\mathbb E\,
 d_{\Qq}([\mu_t^N],[\mu_t])^2\longrightarrow0
\end{equation}
under the empirical initial convergence assumption
\eqref{eq:poc-initial}. There is only one local minimum in $\Qq$, so the metastability result in Theorem~\ref{thm:poc-metastable} for nonglobal minima is not applicable in this case.

Delarue and Tse~\cite[Theorem~4.3]{DelarueTse2025} obtain the stronger
quantitative weak estimate
\begin{equation}\label{eq:app-dt-weak}
 \sup_{t\ge0}
 \left|\mathbb E\Phi(\mu_t^N)-\Phi(\mu_t)\right|
 \le \frac{C_{\Phi,\eta}}{N}
\end{equation}
for their regularity class of smooth rotation-invariant functionals $\Phi$,
with i.i.d.\ initial particles of law $\mu_0$ satisfying
$|m_1(\mu_0)|\ge\eta>0$. Their Corollary~4.5 gives convergence results in negative Sobolev norms, after rescaling space and time
to match our conventions,
\begin{equation}\label{eq:app-dt-distance}
 \mathbb E\inf_{\nu\in\mathcal M_*}
       \|\mu_t^N-\nu\|_{H^{-s}}^2
 \le \frac CN+Ce^{-ct},
 \qquad \frac12<s\le1.
\end{equation}
This implies \eqref{eq:app-kuramoto-global} on the common class of
initial conditions. Indeed, on the compact space $\Pp(\T)$ both
$H^{-s}$ distance and $\Wass$ metrize weak convergence and are
uniformly equivalent. Thus \eqref{eq:app-dt-distance} implies
\[
 \lim_{T\to\infty}\limsup_{N\to\infty}
 \sup_{t\ge T}
 \mathbb E\,d_{\Qq}([\mu_t^N],\mathcal M_*)^2=0.
\]
Combine this with $d_{\Qq}([\mu_t],\mathcal M_*)\to0$ and
finite-time propagation-of-chaos on $[0,T]$.

Conversely, \eqref{eq:app-kuramoto-global} implies
\[
 \sup_{t\ge0}
 \left|\mathbb E\Phi(\mu_t^N)-\Phi(\mu_t)\right|
 \longrightarrow0
\]
for every continuous rotation-invariant functional $\Phi$,
by uniform continuity on the compact quotient $\Qq$.
Thus, on the common class of initial conditions, the
qualitative conclusions are equivalent. This equivalence
does not recover the quantitative $N^{-1}$ weak-error bound.

Coppini~\cite[Theorem~2.3]{Coppini2022} proves proximity of the empirical measure process in probability to the
synchronized manifold $\mathcal M_*$ in $H^{-1}$, provided the initial empirical measure approaches this manifold in a suitable sense as $N \to \infty$, uniformly along the sample path
up to times $T_N$ with $\log(1+T_N)=o(N)$. His result includes
interaction graphs satisfying a quantitative connectivity condition.
For the complete graph, it is the Kuramoto analogue of
Theorem~\ref{thm:poc-pathwise}:
\begin{equation}\label{eq:app-kuramoto-pathwise}
 \mathbb E\left[
  \sup_{0\le t\le T_N}
  d_{\Qq}([\mu_t^N],[\mu_t])^2
 \right]\longrightarrow0.
\end{equation}
The extension from initialization near a synchronized profile to its
basin uses finite-time approximation and deterministic attraction,
as explained before his theorem. Compactness converts convergence in
probability in $H^{-1}$ into the bounded Wasserstein expectation in
\eqref{eq:app-kuramoto-pathwise}. 

\subsection{A Fourier criterion for finitely many critical orbits}
\label{subsec:app-finite}

Consider
\begin{equation}\label{eq:app-fourier-potential}
 W(x)=-\sum_{k\ge1}a_k\cos(kx),
 \qquad a_1>0,
 \qquad \sum_{k\ge1}k^4|a_k|<\infty.
\end{equation}
The other coefficients may have either sign. An additive constant
in $W$ has no effect. The summability assumption guarantees
$W\in C^4(\T)$, as required in \eqref{eq:W-assumption}.

\begin{proposition}[A contraction criterion for finiteness]
\label{prop:app-finite}
For \eqref{eq:app-fourier-potential}, set
\[
 A=\beta a_1,\qquad B=\beta|a_2|,
 \qquad R=\beta\sum_{k\ge3}|a_k|,
 \qquad
 q_0=B\left(1-\frac1{2\cosh(A)I_0(B)}\right).
\]
Either of the following conditions implies that
$\Crit(\F)/\T$ is finite:
\begin{align}
 B+R&<1; \label{eq:app-simple-condition}\\
 e^{2R}(q_0+R)&<1. \label{eq:app-refined-condition}
\end{align}
In particular, for a two-harmonic potential ($a_k=0$ for $k \ge 3$), $R=0$ and the refined condition is
\begin{equation}\label{eq:app-two-mode-condition}
 \beta|a_2|\left(
 1-\frac1{2\cosh(\beta a_1)I_0(\beta|a_2|)}
 \right)<1.
\end{equation}
\end{proposition}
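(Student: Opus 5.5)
The plan is to reduce the critical-point equation to a fixed-point problem for the Fourier moments. We then use a contraction argument to eliminate all modes except the first, which leaves a single real-analytic scalar equation whose zeros must be finite. By the definition of $\Crit(\F)$, a critical point has the form
\[
\rho=Z^{-1}\exp(-\beta W*\mu)=Z^{-1}\exp\Bigl(\beta\sum_{k\ge1}a_k\,\mathrm{Re}\bigl(\overline{m_k(\mu)}e^{ikx}\bigr)\Bigr),
\]
where $m_k(\mu)=\int_\T e^{ikx}\,\mu(\dd x)$. Conversely, any sequence $m=(m_k)_{k\ge1}$ with $|m_k|\le1$ and $m=G(m)$ yields a critical point. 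Here $G_k(m):=\int e^{ikx}\rho_m(x)\,\lambda(\dd x)$ and $\rho_m$ is the Gibbs density above built from $m$. Rotating $\mu$ multiplies $m_k$ by $e^{ik\theta}$. So modulo rotations we may normalize $m_1=r\in[0,1]$ to be real and nonnegative. When $r>0$ this normalization fixes the rotation completely.

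\emph{Step 1 (contraction in the higher modes).} Fix $r$ and view $\widehat m=(m_k)_{k\ge2}$ as ranging over the closed unit ball of $\ell^\infty$. Consider the map $\widehat m\mapsto(G_k(r,\widehat m))_{k\ge2}$. Its derivative in the direction of $m_j$ is $\beta a_j$ times the covariance under $\rho_m$ of $e^{ikx}$ with $\cos(jx)$ or $\sin(jx)$. Both functions are bounded by $1$, so Cauchy--Schwarz bounds this covariance by $1$. Summing over $j\ge2$ gives a Lipschitz constant of at most $B+R$ in $\ell^\infty$, uniformly in $r$. Under \eqref{eq:app-simple-condition} the map is therefore a contraction. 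It has a unique fixed point $\widehat m(r)$, and by the implicit function theorem (or uniform convergence of Picard iterates) $\widehat m(r)$ depends real-analytically on $r$. This works because the map extends holomorphically to a small complex neighbourhood of $[0,1]$, and the contraction constant stays below $1$ there by continuity.

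\emph{Step 2 (scalar reduction).} Critical orbits with $m_1\neq0$ correspond bijectively to the zeros $r\in(0,1]$ of $g(r):=r-G_1(r,\widehat m(r))$. This function is real-analytic on a neighbourhood of $[0,1]$. It is not identically zero, because $G_1<1$ for every positive density, so $g(1)>0$. Hence $g$ has finitely many zeros in $[0,1]$. If $m_1=0$, the contraction at $r=0$ has the fixed point $\widehat m=0$, since $G(0)=0$ by symmetry. By uniqueness this is the only fixed point, so the only such critical point is $\lambda$. Altogether $\Crit(\F)/\T$ is finite.

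\emph{Step 3 (refined condition).} The main obstacle is sharpening the crude covariance bound in Step 1 to obtain \eqref{eq:app-refined-condition}. I would write $\rho_m\propto e^{V_{12}}e^{V_{\ge3}}$, where $\|V_{\ge3}\|_\infty\le R$. Changing measure from the two-mode Gibbs density $\propto e^{V_{12}}$ distorts covariances by at most a factor $e^{2R}$, since the ratio of the densities lies between $e^{-2R}$ and $e^{2R}$. The $j\ge3$ terms still contribute $R$. For the $a_2$ term, I would bound $\sup_k|\mathrm{Cov}(e^{ikx},\cos 2x)|$ under the two-mode density by $1-\bigl(2\cosh(A)I_0(B)\bigr)^{-1}$. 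This should come from the variance bound $\mathrm{Var}\le1-(\text{mass lower bound})$, using the lower bound on the density $e^{-A-B}/Z$ with $Z\le 2\cosh(A)I_0(B)$-type normalisation estimates. Checking this elementary but delicate inequality is the step I expect to require the most care. If it fails as stated, I would first try bounding the variance via $\mathbb E[\cos^2 2x]\le 1-\int\sin^2(2x)\rho$ together with the explicit lower bound on $\rho$. With that bound in hand, the contraction constant is $e^{2R}(q_0+R)<1$, and Steps 1--2 go through unchanged. The two-harmonic case \eqref{eq:app-two-mode-condition} is then the specialization $R=0$.
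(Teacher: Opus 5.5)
Your Steps 1--2 are a valid alternative route to the simple condition \eqref{eq:app-simple-condition}. You work with the moments $(m_k)_{k\ge2}$ in the unit ball of $\ell^\infty$, whereas the paper uses a weighted $\ell^2$ space of potential coefficients. The derivative in a direction $h$ is $\operatorname{Cov}_\rho\bigl(e^{ikx},\sum_{j\ge2}\beta a_j\mathrm{Re}(\overline{h_j}e^{ijx})\bigr)$. Since $|\mathrm{Re}(\overline{h_j}e^{ijx})|\le|h_j|$ (combine the real and imaginary directions this way rather than separately), Cauchy--Schwarz gives the Lipschitz constant $B+R$. One small repair is needed: $G_1$ is complex in general, so ``$G_1<1$'' is not meaningful. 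Either use $\mathrm{Re}\,g(1)=1-\int\cos x\,\rho\,\dd\lambda>0$, noting that the zeros of $g$ lie among those of $\mathrm{Re}\,g$. Or, as the paper does, use the reflection $x\mapsto-x$ and uniqueness of the fixed point to see that $\widehat m(r)$ is real, so that $g$ is real.

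The genuine gap is Step 3, so the refined condition \eqref{eq:app-refined-condition} (and hence the two-harmonic criterion) is not proved. In $\ell^\infty$ the operator norm is governed by cross-covariances $\operatorname{Cov}_\rho(e^{ikx},v\cdot e_2)$, $|v|\le1$, over all $k$, not by variances. Cauchy--Schwarz only gives $\sqrt{\operatorname{Var}(e^{ikx})}\,\sqrt{\kappa}$ with $\kappa:=1-(2\cosh(A)I_0(B))^{-1}$. You cannot upgrade this to $\kappa$ by also bounding $\operatorname{Var}(e^{ikx})$ by $\kappa$, since this fails for odd $k$. For example, with $u=0$ and $z_2=(1,0)$ one has $\operatorname{Var}(\cos x)=\tfrac12(1+I_1(1)/I_0(1))\approx0.72$, while $\kappa=1-1/(2I_0(1))\approx0.61$.

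The natural $\ell^\infty$ constant is therefore about $e^{R}\sqrt{Bq_0}+R$. This exceeds $1$ in cases covered by the statement. Take $R=0$, $A=2$, $B=1.1$, i.e.\ the paper's $W=-\cos x-\tfrac{11}{20}\cos 2x$ at $\beta_*$: then $q_0\approx0.990<1$ but $\sqrt{Bq_0}\approx1.04$. The inequality you postpone, $\sup_{k,|v|\le1}|\operatorname{Cov}_\rho(e^{ikx},v\cdot e_2)|\le e^{2R}\kappa$, has no argument behind it.

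Two other steps in your sketch also fail. First, a density ratio in $[e^{-2R},e^{2R}]$ transfers variances, through $\operatorname{Var}(f)=\inf_c\mathbb E(f-c)^2$, but not covariances. Second, the pointwise lower bound $\rho\ge e^{-A-B}/Z_0$ only yields $1-e^{-A-B}/(2Z_0)$, not $1-1/(2Z_0)$.

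Two ideas are missing.
\begin{enumerate}
\item Pass to potential coefficients $z_k=\beta a_k m_k$ with the Hilbert norm $\sum_k|z_k|^2/b_k$. There the derivative is conjugate to $S\operatorname{Cov}(G)$, where $G=(\sqrt{b_k}e_k)_{k\ge2}$ and $S$ is a sign isometry. Its norm is exactly $\sup_{\|\xi\|\le1}\operatorname{Var}(\langle\xi,G\rangle)$, so only variances enter.
\item Symmetrise: pair $x$ with $x+\pi$, then $x$ with $x+\pi/2$. The second pairing flips $z_2\cdot e_2$ but preserves $(v\cdot e_2)^2$. Together with $\cosh\ge1$ this gives $\int\bigl(1-(v\cdot e_2)^2\bigr)e^{u\cos x+z_2\cdot e_2}\,\dd\lambda\ge\tfrac12$. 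With $Z_0\le\cosh(A)I_0(B)$, this yields $B\operatorname{Var}_{\rho^{(0)}}(v\cdot e_2)\le q_0$.
\end{enumerate}
Minkowski for standard deviations and Cauchy--Schwarz then give $q_0+R$ under the two-mode law. The $e^{2R}$ transfer of variances finishes the proof.
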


\subsection{Transition points, metastability and propagation-of-chaos}
\label{subsec:app-multiple-wells}

Consider the potential in \eqref{eq:app-fourier-potential}, and set
\[
 a:=\max_{k\ge1}a_k>0,\qquad \beta_*:=\frac2a.
\]
It is convenient to use the rescaled free-energy
\begin{equation}\label{eq:app-scaled-energy}
 \mathcal E_\beta(\mu):=\beta\F(\mu)
 =\Ent(\mu\mid\lambda)-\frac\beta2
       \sum_{k\ge1}a_k|m_k(\mu)|^2,
 \qquad
 m_k(\mu):=\int_\T e^{ikx}\,\mu(\dd x).
\end{equation}
Thus $\mathcal E_\beta(\lambda)=0$. Multiplication by $\beta>0$
does not change the critical points or the local and global minima.
The value $\beta_*$ is the linear-stability threshold of $\lambda$:
the second variation of $\mathcal E_\beta(\cdot)$ is positive definite for $\beta<\beta_*$,
has a nontrivial kernel at $\beta=\beta_*$, and has a negative
direction for $\beta>\beta_*$. Consequently, under suitable assumptions, there exist bifurcating branches of nonuniform stationary solutions emerging from the uniform solution as $\beta$ varies in a neighborhood of $\beta_*$; see \cite{CarrilloEtAl2020,BalasubramanianBanerjeeRigollet2025}.

We use the phase-transition terminology of Chayes--Panferov
\cite[Section~2.4]{ChayesPanferov2010} and Carrillo--Gvalani--Pavliotis--Schlichting
\cite[Definitions~5.1 and~5.4]{CarrilloEtAl2020}.
A \emph{transition point} $\beta_c>0$ satisfies:
\begin{enumerate}[label=\textup{(\alph*)}]
\item $\lambda$ is the unique global minimizer for every
      $0<\beta<\beta_c$;
\item $\lambda$ is a global minimizer at $\beta=\beta_c$;
\item a nonuniform global minimizer exists for every $\beta>\beta_c$.
\end{enumerate}
For the present class of potentials, this point exists, is unique,
and is equivalently given by
\begin{equation}\label{eq:app-transition-point}
 \beta_c:=\inf\left\{\beta>0:
       \min_{\mu\in\Pp(\T)}\mathcal E_\beta(\mu)<0\right\},
 \qquad 0<\beta_c\le\beta_*.
\end{equation}
In particular, $\lambda$ is not a global minimizer for
$\beta>\beta_c$; see
\cite[Propositions~5.2 and~5.3]{CarrilloEtAl2020}.

The transition is \emph{continuous} if $\lambda$ is the unique
global minimizer at $\beta_c$ and every family of global
minimizers $\rho_\beta\lambda$, $\beta>\beta_c$, satisfies
\begin{equation}\label{eq:app-continuous-transition}
 \lim_{\beta\downarrow\beta_c}
       \|\rho_\beta-1\|_{L^1(\lambda)}=0.
\end{equation}
It is \emph{discontinuous} if at least one of these two conditions
fails. Here the densities are relative to the normalized measure
$\lambda$. We keep this definition separate from the additional
inequality $\beta_c<\beta_*$, which is what guarantees a nonempty
interval of metastability below the linear-stability threshold.

We record the consequences of phase transition in metastability and propagation-of-chaos in the following proposition.

\begin{proposition}[Transition points, metastability and propagation-of-chaos in attraction basins]
\label{prop:app-coexistence}
Suppose \eqref{eq:app-fourier-potential} holds. At every parameter
value to which the conclusions below are applied, assume that
$\Crit(\F)/\T$ is finite, for example by
Proposition~\ref{prop:app-finite}.
Write
\[
 \mathcal A_\lambda
 :=\{[\mu]\in\Qq:d_{\Qq}([S_t\mu],[\lambda])\longrightarrow0
                       \text{ as }t\to\infty\}.
\]
For the stochastic conclusions, impose \eqref{eq:poc-initial}
with $[\mu_0]\in\mathcal A_\lambda$.
All statements are for a fixed $\beta$ in the given range.
\begin{enumerate}[label=\textup{(\roman*)}]
\item \emph{Continuous transition.}
If the transition is continuous, then $\beta_c=\beta_*$.
For every $\beta\in(0,\beta_c]$, $\lambda$ is the unique global
minimizer and its orbit is asymptotically stable in the Wasserstein topology. In particular,
its basin of attraction contains a neighborhood of $[\lambda]$; stability,
uniform attraction on compact subsets of the basin, and the
existence of arbitrarily small stable gates follow from
Lemma~\ref{lem:poc-minimum-stability}.
Theorem~\ref{thm:poc-global} and
Theorem~\ref{thm:poc-running-averages}\textup{(iii)} give
\begin{equation}\label{eq:app-transition-global-poc}
 \sup_{t\ge0}\mathbb E\,d_{\Qq}([\mu_t^N],[\mu_t])^2\longrightarrow0,
 \qquad
 \mathbb E\left[\sup_{T>0}\frac1T\int_0^Td_{\Qq}([\mu_t^N],[\mu_t])^2\,\dd t\right]\longrightarrow0.
\end{equation}
Furthermore, Theorem~\ref{thm:poc-pathwise} gives
\begin{equation}\label{eq:app-transition-pathwise-poc}
 \mathbb E\left[\sup_{0\le t\le T_N}d_{\Qq}([\mu_t^N],[\mu_t])^2\right]
       \longrightarrow0
 \quad\text{whenever}\quad \log(1+T_N)=o(N).
\end{equation}
Corollary~\ref{cor:poc-averaged-measures} supplies the corresponding
uniform bounds for the aligned time-averaged empirical measures
and the occupation measures on $\Qq$.

\item \emph{Discontinuous transition.}
Suppose $\beta_c < \beta_*$. For every $\beta\in(\beta_c,\beta_*)$, $\lambda$ is a strict
nonglobal local minimum and at least one nonuniform
global-minimum orbit exists. Every local-minimum orbit is
asymptotically stable by Lemma~\ref{lem:poc-minimum-stability}.
Thus, by Theorem~\ref{thm:poc-pathwise}, \eqref{eq:app-transition-pathwise-poc} holds for initial conditions converging, modulo rotations, to a point in $\mathcal A_\lambda$; the same conclusion holds in the basin
of any other local minimum. To quantify metastability of the uniform phase, let $i$ index
the uniform well, so that $\mathcal M_i=[\lambda]$ and
$f_i=\F(\lambda)=0$, and set
\[
 h_i:=\min_{j:f_j<0}\Phi_{ij},\qquad
 D_i:=\beta h_i>0,\qquad
 \mathcal L_i:=\{\mathcal M_j:f_j<0\}.
\]
Assume additionally that the local-minimum energies are pairwise
distinct, or, more generally, that \eqref{eq:pairrelax} holds
for this $i$. Theorem~\ref{prop:verified-MT} then gives a sharp
metastability threshold for the uniform phase.
For every $r>0$ and $0<\varepsilon<D_i$, there exist
$\eta,c,C>0$ such that, uniformly over
$\nu\in\PN(\T)$ with $d_{\Qq}([\nu],[\lambda])\le\eta$,
\begin{align}
 &\sup_{0\le t\le e^{N(D_i-\varepsilon)}}
 \mathbb P_\nu\left(
   d_{\Qq}([\mu_t^N],[\lambda])>r
 \right)\le C e^{-cN},
 \label{eq:app-transition-localization-before}\\
 &\sup_{t\ge e^{N(D_i+\varepsilon)}}
 \mathbb P_\nu\left(
   \dist_{\Qq}([\mu_t^N],\mathcal L_i)>r
 \right)\le C e^{-cN},
 \label{eq:app-transition-localization-after}
\end{align}
for all sufficiently large $N$. The second estimate holds
for every $\varepsilon>0$, with possibly different constants.

Under the same assumptions, Theorem~\ref{thm:poc-metastable}
gives
\begin{align}
 &\sup_{0\le t\le e^{N(D_i-\varepsilon)}}
       \mathbb E\,d_{\Qq}([\mu_t^N],[\mu_t])^2\longrightarrow0,
       &&0<\varepsilon<D_i,
       \label{eq:app-transition-poc-before}\\
 &\sup_{t\ge e^{N(D_i+\varepsilon)}}
       \mathbb P\bigl(d_{\Qq}([\mu_t^N],[\mu_t]) \le\delta_i\bigr)
       \longrightarrow0,
       &&\varepsilon>0,
       \label{eq:app-transition-poc-after}
\end{align}
where
$\delta_i=\frac14\dist_{\Qq}
(\mathcal M_i,\{\mathcal M_j:f_j<f_i\})>0$. More generally, Remark~\ref{rem:poc-distinctness} still gives metastability and propagation-of-chaos up to
the smaller depth $D_i^{\le}$ and failure after depth $D_i$.
Theorem~\ref{thm:poc-running-averages}\textup{(i)--(ii)} give
the corresponding convergence and failure statements for the time average
$\frac1T\int_0^Td_{\Qq}([\mu_t^N],[\mu_t])^2\,\dd t$, and Corollary~\ref{cor:poc-averaged-measures} transfers
the convergence bound to the averaged measures.
\end{enumerate}
\end{proposition}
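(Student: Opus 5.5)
Proposition~\ref{prop:app-coexistence} contains almost no new analysis; it specializes the general theorems to $\mathcal M_i=[\lambda]$. The plan is to check the landscape facts each theorem needs and then cite it. Three inputs come first. The first is the characterization \eqref{eq:app-transition-point} of $\beta_c$ with $\beta_c\le\beta_*$, together with the fact that $\lambda$ is not a global minimizer for $\beta>\beta_c$ (both from \cite{CarrilloEtAl2020}). The second is the second-variation computation at $\lambda$. Expanding $\mu=(1+\varepsilon\phi)\lambda$ with $\int\phi\,\dd\lambda=0$ and writing $\phi$ in Fourier modes, one gets
\[
 \tfrac{\dd^2}{\dd\varepsilon^2}\mathcal E_\beta\Big|_{\varepsilon=0}
 =\sum_{k\ge1}\bigl(1-\tfrac{\beta a_k}{2}\bigr)|\hat\phi_k|^2\cdot c_k
\]
for positive constants $c_k$. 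This is positive definite with a uniform gap exactly when $\beta<\beta_*$. The third is Lemma~\ref{lem:poc-minimum-stability}. Under \eqref{eq:finite-critical-orbits} it upgrades a strict local minimum orbit to an asymptotically stable one with arbitrarily small stable gates.

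\textbf{Part (i).} Argue by contradiction that $\beta_c=\beta_*$. If $\beta_c<\beta_*$, then at some $\beta\in(\beta_c,\beta_*)$ the measure $\lambda$ is a strict local minimum, by the uniform gap plus the entropy control of the remainder for nearby densities. At the same time a nonuniform global minimizer $\rho_\beta$ exists with $\mathcal E_\beta(\rho_\beta)<0$. Letting $\beta\downarrow\beta_c$, continuity of the transition forces $\rho_\beta\to1$ in $L^1$. Strict local minimality with a gap uniform in $\beta$ near $\beta_c$ would then give $\mathcal E_\beta(\rho_\beta)>0$ for $\beta$ close to $\beta_c$, a contradiction. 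This step needs an $L^1$ (rather than $C^2$) neighborhood statement. I expect to obtain it from the fixed-point equation $\rho=e^{-\beta W*\rho}/Z$: this makes $L^1$-closeness of critical points to $1$ imply $C^2$-closeness, so the quadratic expansion applies. I expect this to be the main obstacle.

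Once $\beta_c=\beta_*$ holds, for $\beta\le\beta_c$ the measure $\lambda$ is the unique global minimizer, using condition (a) of the definition for $\beta<\beta_c$ and continuity of the transition at $\beta_c$. Asymptotic stability follows from Lemma~\ref{lem:poc-minimum-stability}. At $\beta=\beta_*$ the Hessian is degenerate, but Lemma~\ref{lem:poc-minimum-stability} only needs an isolated local minimum orbit, and isolation follows from uniqueness plus finiteness of critical orbits. The estimates \eqref{eq:app-transition-global-poc} then follow from Theorems~\ref{thm:poc-global} and~\ref{thm:poc-running-averages}(iii). The estimate \eqref{eq:app-transition-pathwise-poc} follows from Theorem~\ref{thm:poc-pathwise}, and the averaged-measure bounds come from Corollary~\ref{cor:poc-averaged-measures}.

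\textbf{Part (ii).} For $\beta\in(\beta_c,\beta_*)$, the positive-definite Hessian makes $[\lambda]$ a strict local minimum orbit; it is a single point of $\Qq$, being rotation invariant. Since $\beta>\beta_c$, it is nonglobal, and $\mathcal E_\beta(\lambda)=0$ gives $f_i=0$. By \eqref{eq:app-transition-point}, the global minimum value is negative, so $\mathcal L_i\ne\emptyset$ and $h_i$ is well defined. To see $D_i=\beta h_i>0$, note that any path from $[\lambda]$ to a well of negative energy must leave a small neighborhood of the strict minimum. Its free energy on the boundary of that neighborhood is bounded below by a positive constant; here I use lower semicontinuity of $\F$ and compactness of $\Qq$, applied to the Wasserstein sphere. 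Asymptotic stability of all local minima again comes from Lemma~\ref{lem:poc-minimum-stability}, which yields the pathwise claim via Theorem~\ref{thm:poc-pathwise}. Under distinctness, or \eqref{eq:pairrelax}, the estimates \eqref{eq:app-transition-localization-before}--\eqref{eq:app-transition-poc-after} are verbatim instances of Theorems~\ref{prop:verified-MT} and~\ref{thm:poc-metastable} with $\mathcal M_i=[\lambda]$. The general case follows from Remark~\ref{rem:poc-distinctness}, and the running-average statements from Theorem~\ref{thm:poc-running-averages}(i)--(ii) and Corollary~\ref{cor:poc-averaged-measures}.
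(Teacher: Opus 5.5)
Your overall strategy is the paper's: verify the landscape facts for $[\lambda]$, then invoke Lemma~\ref{lem:poc-minimum-stability} and the general theorems. In part (i) you depart from the paper by proving, rather than citing, that a continuous transition forces $\beta_c=\beta_*$; the paper cites \cite{ChayesPanferov2010,CarrilloEtAl2020} for this. Your argument is valid and self-contained. Global minimizers solve \eqref{eq:app-gibbs}, so $L^1$-closeness to $1$ upgrades to uniform $C^2$-closeness. The gap $1-\beta a/2>0$ is uniform for $\beta$ near $\beta_c<\beta_*$, which gives $\mathcal E_\beta(\rho_\beta)\ge0$ and contradicts $\mathcal E_\beta(\rho_\beta)<0$. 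You also rightly need no local analysis at $\beta_*$ in part (i), since global minimality implies local minimality in every topology.

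The gap is in part (ii), and in your sphere argument for $D_i>0$. You infer from the positive-definite second variation that $[\lambda]$ is a strict local minimum orbit. However, membership in $\mathfrak M$, Lemma~\ref{lem:poc-minimum-stability}, and Theorems~\ref{prop:verified-MT} and~\ref{thm:poc-metastable} all require local minimality with respect to $d_{\Qq}$, i.e. in the Wasserstein topology. A $\Wass$-ball around $\lambda$ contains measures such as $(1-\delta)\lambda+\delta\nu$ with $\nu$ singular, and densities $(1+\phi)$ with $\phi$ not small in $L^\infty$. For these the expansion $\Ent\approx\frac12\int\phi^2\,\dd\lambda$ with controlled remainder is unavailable. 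Your $L^1\Rightarrow C^2$ upgrade from part (i) does not help, because it applies only to critical points, not to arbitrary competitors. Your sphere argument for $h_i>0$ presupposes $\F>0$ on a punctured $d_{\Qq}$-ball, so it inherits the same gap.

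The paper closes this with the entropy duality bound $\Ent(\mu\mid\lambda)\ge\int g\,\dd\mu-\log\int e^g\,\dd\lambda$, taking $g=\frac2aK_+*(\mu-\lambda)$ with $K_+=\sum_k a_k^+\cos(k\cdot)$. The test function $g$ is uniformly small on a whole Wasserstein neighborhood, and it satisfies $\int g\,\dd\mu=2Q/a$ and $\int g^2\,\dd\lambda\le2Q/a$. Hence $\mathcal E_\beta(\mu)\ge(\frac{1-\eta}{a}-\frac\beta2)Q$ on that neighborhood, and when $Q=0$ strictness follows from $\Ent(\mu\mid\lambda)>0$.

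Alternatively, you can keep your Hessian argument and add the smoothing step used in Lemma~\ref{lem:poc-minimum-stability}. The map $\overline S_1$ sends a small $d_{\Qq}$-ball around $[\lambda]$ into a small $C^2$-neighborhood, so dissipation gives $\F(\mu)\ge\F(S_1\mu)\ge\F(\lambda)$. Equality forces zero dissipation on $(0,1]$, hence $\mu=\lambda$. Once Wasserstein local minimality is established by either route, your $h_i>0$ argument and the remaining citations go through.
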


\begin{remark}[Checkable conditions for transition]
Checkable conditions for continuous and discontinuous phase transitions are demonstrated in \cite[Proposition~5.8, Theorem~5.11 and Theorem~5.19]{CarrilloEtAl2020} and \cite[Theorem~2.6 and Corollary~2.1]{BalasubramanianBanerjeeRigollet2025}. A particularly simple checkable sufficient condition for a discontinuous phase transition is the
subcritical-bifurcation criterion of \cite[Corollary~2.1(i)]{BalasubramanianBanerjeeRigollet2025}:
\begin{equation}\label{eq:app-discontinuous-fourier}
 a_1>0,\qquad a_k<a_1\quad(k\ge2),\qquad
 \frac{a_1}{2}<a_2<a_1.
\end{equation}
Their coupling parameter $\kappa$ is $\beta$ here, and their potential is $-W$. Their criterion gives a
discontinuous transition with $\beta_c<2/a_1$.
\end{remark}

The endpoint $\beta_*$ cannot in general be included in
part~\textup{(ii)}. For example, take
\[
 W(x)=-\cos x-\frac{11}{20}\cos(2x),\qquad \beta_*=2.
\]
For sufficiently small $\varepsilon$, the probability densities
\[
 \rho_\varepsilon(x)
 :=1+\varepsilon\cos x+\frac59\varepsilon^2\cos(2x)
\]
are positive and converge uniformly to $1$, but expansion of
the entropy in \eqref{eq:app-scaled-energy} gives
\begin{equation}\label{eq:app-spinodal-counterexample}
 \mathcal E_2(\rho_\varepsilon\lambda)
 =-\frac{\varepsilon^4}{288}+O(\varepsilon^6)<0
 \qquad (0<|\varepsilon|\text{ sufficiently small}).
\end{equation}
Thus $\lambda$ is not a local minimum at $\beta_*$.
This example satisfies Proposition~\ref{prop:app-finite} even
at $\beta_*$, as the bounds below show. Inclusion of this endpoint
in part~\textup{(ii)} therefore requires a separate local-minimality
assumption at $\beta_*$.

\begin{corollary}[An explicit family with finitely many critical orbits and multiple wells]
\label{cor:app-explicit-family}
Suppose \eqref{eq:app-fourier-potential} holds and
\begin{equation}\label{eq:app-explicit-coefficients}
 \frac12<\frac{a_2}{a_1}\le\frac{11}{20},
 \qquad
 \sum_{k\ge3}|a_k|\le\frac{a_1}{1000}.
\end{equation}
Then $\beta_*=2/a_1$ and there is a discontinuous transition
at $\beta_c<\beta_*$. There are finitely many critical rotation orbits
for every $0<\beta\le\beta_*$. For
$\beta\in(\beta_c,\beta_*)$, they include the strict nonglobal
minimum $\lambda$ and at least one nonuniform global-minimum orbit. The subexponential pathwise conclusion of
Proposition~\ref{prop:app-coexistence}\textup{(ii)} therefore applies,
as do the bounds with the two depths $D_i^{\le}$ and $D_i$ in
Remark~\ref{rem:poc-distinctness}. The sharp common threshold $D_i$
and its running-average version follow if the uniform minimum
satisfies \eqref{eq:pairrelax}, in particular if the local-minimum
energies are pairwise distinct.

\end{corollary}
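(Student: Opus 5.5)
The plan is to verify the hypotheses of Proposition~\ref{prop:app-coexistence}\textup{(ii)} and Proposition~\ref{prop:app-finite} directly from \eqref{eq:app-explicit-coefficients}, and then read off the conclusions. First I identify $\beta_*$. Since $a_2\le\frac{11}{20}a_1<a_1$ and $|a_k|\le\sum_{k\ge3}|a_k|\le a_1/1000<a_1$ for $k\ge3$, the maximum $a=\max_k a_k$ equals $a_1$, so $\beta_*=2/a_1$. Next, the coefficients satisfy \eqref{eq:app-discontinuous-fourier}: $a_1>0$, $a_k<a_1$ for all $k\ge2$, and $\frac{a_1}{2}<a_2<a_1$. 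Hence the subcritical-bifurcation criterion of \cite[Corollary~2.1(i)]{BalasubramanianBanerjeeRigollet2025} gives a discontinuous transition with $\beta_c<2/a_1=\beta_*$.

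Second, I establish finiteness of $\Crit(\F)/\T$ for every $0<\beta\le\beta_*$ via the refined condition \eqref{eq:app-refined-condition}. The simple condition \eqref{eq:app-simple-condition} is not enough, since $B=\beta a_2$ can reach $1.1$. In the notation of Proposition~\ref{prop:app-finite}, for $\beta\le 2/a_1$ we have $A=\beta a_1\le2$, $B=\beta a_2\le\frac{11}{20}A\le 1.1$, and $R\le 2/1000$. The map
\[
(A,B)\mapsto q_0=B\left(1-\frac1{2\cosh(A)I_0(B)}\right)
\]
is increasing in both arguments, because $B$, $\cosh A$ and $I_0(B)$ all increase. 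So is $R\mapsto e^{2R}(q_0+R)$. It therefore suffices to check the worst case $A=2$, $B=1.1$, $R=0.002$. Numerically,
\begin{itemize}
\item $\cosh 2\approx3.762$ and $I_0(1.1)\approx1.326$, so $2\cosh(A)I_0(B)\approx9.98$;
\item hence $q_0\approx1.1\times0.8998\approx0.990$;
\item and $e^{0.004}(0.990+0.002)\approx0.996<1$.
\end{itemize}
This is the step I expect to be the main obstacle. The margin is only about $0.4\%$, so the Bessel and $\cosh$ values need rigorous two-sided truncation bounds rather than decimal approximations. A lower bound for $I_0(1.1)$ from the first three terms of its power series, together with $\cosh2\ge3.76$, should suffice.

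Third, I identify the wells for $\beta\in(\beta_c,\beta_*)$. Since $\beta<\beta_*$, the second variation of $\mathcal E_\beta$ at $\lambda$ is positive definite. Combined with finiteness of the critical orbits and Lemma~\ref{lem:poc-minimum-stability}, as already invoked in Proposition~\ref{prop:app-coexistence}\textup{(ii)}, this makes $[\lambda]$ a strict, asymptotically stable local-minimum orbit. Since $\beta>\beta_c$, \eqref{eq:app-transition-point} gives $\min\mathcal E_\beta<0=\mathcal E_\beta(\lambda)$. A global minimizer exists by compactness of $\Pp(\T)$ and lower semicontinuity of $\F$, and it is necessarily nonuniform, so $\lambda$ is a nonglobal minimum.

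With these facts, Proposition~\ref{prop:app-coexistence}\textup{(ii)} and Remark~\ref{rem:poc-distinctness} apply verbatim. They give the subexponential pathwise conclusion, the bounds with the two depths $D_i^{\le}$ and $D_i$, and, under \eqref{eq:pairrelax} (in particular under pairwise distinct minimum energies), the sharp common threshold $D_i$ together with its running-average version from Theorem~\ref{thm:poc-running-averages}.
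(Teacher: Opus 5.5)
Your route is the paper's: check \eqref{eq:app-discontinuous-fourier} to get $\beta_c<2/a_1=\beta_*$, verify \eqref{eq:app-refined-condition} at the monotone worst case $A=2$, $B=11/10$, $R=1/500$ for all $0<\beta\le\beta_*$, and then invoke Propositions~\ref{prop:app-finite} and~\ref{prop:app-coexistence}.

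\textbf{The gap is in the numerical step you single out as the crux: its inequalities point the wrong way.}
\begin{itemize}
\item As you note, the left side of \eqref{eq:app-refined-condition} increases with $\cosh(A)I_0(B)$. Certifying that it is $<1$ therefore needs \emph{upper} bounds on $\cosh 2$, $I_0(11/10)$ and $e^{1/250}$.
\item Truncating the positive-term power series of $I_0(1.1)$ gives a lower bound, and so does $\cosh 2\ge 3.76$. Together they bound $2\cosh(2)I_0(11/10)$ from below, hence bound the left side only from below. That proves nothing.
\item The fix is to control the series tails. Bound the remainders of $\cosh 2=\sum_k 4^k/(2k)!$ and $I_0(1.1)=\sum_k (0.3025)^k/(k!)^2$ by geometric series. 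This gives the paper's $\cosh 2<3.763$ and $I_0(11/10)<1.327$, and with $e^{1/250}<1.00401$ the left side is $<0.996$.
\item The margin is less delicate than your $0.4\%$ suggests. Any certified bound $2\cosh(2)I_0(11/10)\le 10.3$ already suffices, while the true value is about $9.98$.
\end{itemize}

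A smaller point concerns $\lambda$ being a strict local minimum. Positive definiteness of the second variation at $\lambda$ does not by itself give strict local minimality in the Wasserstein topology on this infinite-dimensional space. The paper obtains it from the entropy variational bound \eqref{eq:app-uniform-local} in the proof of Proposition~\ref{prop:app-coexistence}. You should cite part (ii) of that proposition for this fact rather than derive it from the Hessian. Lemma~\ref{lem:poc-minimum-stability} needs local minimality as an input; it does not supply it.
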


One concrete potential in this family is
\begin{equation}\label{eq:app-explicit-potential}
 W(x)=-\cos x-\frac{11}{20}\cos(2x)
       -\frac1{250}\sum_{k\ge3}2^{-k}\cos(kx).
\end{equation}
Its tail has total coefficient mass $1/1000$ and decays
exponentially. Setting the tail to zero gives the two-mode
example above. 

\subsubsection{Noisy transformer interactions and consequences for propagation-of-chaos}
\label{subsec:app-transformer}

The noisy mean-field transformer model is motivated by
continuous-depth descriptions of self-attention, in which token
representations evolve through interactions weighted by their
similarity; see
\cite[Section~1.2.2]{BalasubramanianBanerjeeRigollet2025}.
The dynamics are given as follows:
\[
 \dd\theta_i
 =-\frac1N\sum_{j=1}^N
   e^{\alpha\cos(\theta_i-\theta_j)}
   \sin(\theta_i-\theta_j)\,\dd t
   +\sqrt{2\beta^{-1}}\,\dd B_i,
 \qquad 1\le i\le N,
\]
where the $\theta_i$ represent angular coordinates of the tokens on the circle.
The drift promotes alignment, while the noise disperses the
representations. Here $\alpha>0$ controls the dependence of the
interaction weights on similarity, and $\beta>0$ is the inverse
noise strength.

This system is precisely \eqref{eq:particle-system} with
\begin{equation}\label{eq:app-transformer-potential}
 W_\alpha(x)=-\frac{e^{\alpha\cos x}-I_0(\alpha)}{\alpha},
 \qquad
 a_k(\alpha)=\frac{2I_k(\alpha)}{\alpha},
\end{equation}
where $I_k$ is the modified Bessel function of the first kind.
The additive constant makes $W_\alpha$ mean zero and does not
affect the dynamics or minimizers. For fixed $\alpha$, the
Fourier coefficients decay faster than exponentially.

By \cite[Theorem~2.7]{BalasubramanianBanerjeeRigollet2025}, there exist $0 < \alpha_- < \alpha_+$ such that the associated phase transition is continuous for $\alpha \le \alpha_-$ and discontinuous for $\alpha> \alpha_+$. Moreover, $\alpha_+$ admits the explicit choice:
\[
 \alpha_+ := \inf\left\lbrace \alpha>0: \frac{I_2(\alpha)}{I_1(\alpha)}>\frac12\right\rbrace.
\]
Proposition~\ref{prop:app-coexistence} then provides conditional
metastability and propagation-of-chaos conclusions, provided
finiteness of the critical orbits is verified separately.
In the discontinuous-transition regime, a sharp common threshold
for the uniform well additionally requires \eqref{eq:pairrelax};
without it, the conclusions use the two depths
$D_i^{\le}$ and $D_i$.

The rest of the paper is organized as follows.
Sections~\ref{sec:barriers}--\ref{sec:deterministic} develop the
geometric, large-deviation, and deterministic ingredients of the
analysis.
Sections~\ref{sec:cycle-hierarchy}--\ref{sec:two-sided-cycles}
construct the cycle hierarchy and establish transition and
cycle-exit estimates, while Section~\ref{sec:endpoint-lemmas}
proves localization at deterministic times.
These estimates are used in Sections~\ref{sec:verify-assumption-one}
and~\ref{sec:metastable-poc} to prove the metastability and
propagation-of-chaos results, respectively.
Section~\ref{sec:applicationsproof} contains the proofs of the
application results from Section~\ref{sec:applications}.

\section{Communication heights and attracting gates}
\label{sec:barriers}

For nonempty sets \(A,B\subset\Qq\), we extend the definition \eqref{eq:pairwise-communication-height} of the free-energy
communication height as
\begin{equation}
\label{eq:communication-height}
    \Hh(A,B)
    :=
    \inf_{\substack{\gamma\in C([0,1];\Qq)\\
                    \gamma(0)\in A,\ \gamma(1)\in B}}
       \sup_{s\in[0,1]}\F(\gamma(s)).
\end{equation}
By abusing notation, we will subsequently write $x$ for singleton sets $\{x\}$ in $\Qq$. We will also write $A\Subset B$, for subsets $A,B$ of $\Qq$, when the closure $\overline A$ is compact
and contained in $B$. In the sequel, we will often say `deterministic trajectory/flow from $\nu$' to denote the solution path of \eqref{eq:MV} initiated at $\nu$.

We now introduce a terminology that will be used several times in the sequel.

\begin{definition}[Stable gate]
\label{def:stable-gate}
For a critical orbit \(\mathcal K\), any open neighborhood in the associated rotation-quotient Wasserstein-2 topology of $\Qq$ will be referred to as a \emph{gate}. Further, suppose \(\mathcal K\) is a stable critical point (equivalently, a local minimum of the free-energy), and let
\(U\subset\Qq\) be an open neighborhood contained in the
deterministic basin of attraction of \(\mathcal K\).
A \emph{stable gate} for \(\mathcal K\) inside \(U\) is an open set \(G\subset\Qq\) such that
\[
    \mathcal K\Subset G\Subset U,
    \qquad
    \partial G\cap \Crit(\F)/\T=\varnothing,
\]
and such that
\[
    S_t(\overline G)\subset U
    \qquad\text{for every }t\geq0,
\]
where \(S_t\) is the deterministic McKean--Vlasov semiflow.
Moreover, the attraction to \(\mathcal K\) is uniform over
\(\overline G\): for every neighborhood \(O\) of
\(\mathcal K\), there exists \(T_O<\infty\) such that
\[
    S_t(\overline G)\subset O
    \qquad\text{for every }t\geq T_O.
\]
\end{definition}

The existence of arbitrarily small stable gates around any local minimum orbit is proved in
Lemma~\ref{lem:poc-minimum-stability}.

\begin{remark}[A stable gate may replace its minimum]
\label{rem:gate-height}
If $G$ is a stable gate for $\mathcal M_j$, then
$\Hh(\mathcal M_i,G)=\Hh(\mathcal M_i,\mathcal M_j)$.
One inequality follows from $\mathcal M_j\subset G$.  For the reverse
inequality, continue a path ending in $G$ along the deterministic flow
to $\mathcal M_j$.  This continuation does not increase free-energy;
its infinite tail has a continuous reparametrization on a compact interval (see the proof of Proposition~\ref{prop:local-quasipotential}).
\end{remark}

For \(A\subset\Qq\), define
\begin{equation}
\label{eq:hitting-times}
  \tau_A^N
    :=
    \inf\{t\geq0:[\mu_t^N] \in A\}.
\end{equation}

\section{Finite-horizon large deviations}
\label{sec:LDP}

In this section, we record the path-space large deviations results of \cite{DawsonGartner1987} and its equivalent control-theoretic representation (see \cite{BudhirajaDupuisFischer2012}). Proposition~\ref{prop:uniform-LDP} records some uniform consequences of the LDP that will be crucially used in our proofs.

\subsection{The weighted negative Sobolev norm and the action}
 Let
\[
    \mathcal D'(\T):=(C^\infty(\T))'
\]
be the space of distributions on the circle. 
For a signed distribution \(s\in\mathcal D'(\T)\) satisfying
\(\langle s,1\rangle=0\), and for \(\mu\in\Pp(\T)\), define the
\emph{unscaled weighted negative Sobolev norm} by
\begin{equation}
\label{eq:Hminus1}
    \norm{s}_{\dot H^{-1}(\mu)}^2
    :=
    \sup_{\varphi\in C^\infty(\T)}
    \left\{
        2\ip{s}{\varphi}
        -
        \int_\T|\varphi'(x)|^2\,\mu(\dd x)
    \right\} =
\sup_{\substack{\varphi\in C^\infty(\mathbb T)\\
                 \int_{\mathbb T}|\varphi'|^2\,d\mu>0}}
\frac{|\langle s,\varphi\rangle|^2}
     {\displaystyle\int_{\mathbb T}|\varphi'|^2\,d\mu}.
\end{equation}
The value is understood to be \(+\infty\) if the supremum is
unbounded.
This norm has an equivalent minimal-flux representation.  If
\(
    \norm{s}_{\dot H^{-1}(\mu)}<\infty,
\)
then there exists a unique minimal velocity field
\[
    v_s\in
    \operatorname{Tan}_{\mu}\Pp_2(\T)
    :=
    \overline{
        \left\{
            \partial_x\varphi:
            \varphi\in C^\infty(\T)
        \right\}
    }^{\,L^2(\mu)}
\]
such that
\begin{equation}
\label{eq:static-flux-representation}
    s=-\partial_x(\mu v_s)
    \qquad\text{in }\mathcal D'(\T).
\end{equation}
Equivalently,
\[
    \langle s,\varphi\rangle
    =
    \int_\T v_s\,\varphi'\,\dd\mu
    \qquad
    \text{for every }\varphi\in C^\infty(\T).
\]
Moreover,
\begin{equation}
\label{eq:weighted-Hminus1-flux}
\begin{aligned}
    \norm{s}_{\dot H^{-1}(\mu)}^2
    =
    \int_\T|v_s|^2\,\dd\mu                                      
    =
    \min_{\substack{v\in L^2(\mu)\\
                    -\partial_x(\mu v)=s}}
        \int_\T|v|^2\,\dd\mu.
\end{aligned}
\end{equation}
Indeed, the map $\varphi' \rightarrow \ip{s}{\varphi}$ in \eqref{eq:Hminus1} extends to a bounded linear functional on
the \(L^2(\mu)\)-closure of the gradients, and
\eqref{eq:static-flux-representation}--\eqref{eq:weighted-Hminus1-flux}
follow from the Riesz representation theorem.

Thus \(\norm{s}_{\dot H^{-1}(\mu)}^2\) is the least kinetic energy of
a velocity field whose \(\mu\)-weighted flux has divergence \(-s\).
In particular, if
\[
    s=\partial_t\mu_t
\]
for a Wasserstein absolutely continuous curve, then
\[
    \partial_t\mu_t+\partial_x(\mu_tv_t)=0
\]
is the continuity equation and the minimal field \(v_t\) is the
Wasserstein tangent velocity.  See
\cite[Theorem~8.3.1 and the discussion around (8.0.2)]{AGS2008}
and \cite[Theorem~5.14]{Santambrogio2015}.

For \(\mu\in\Pp(\T)\), define
\begin{equation}
\label{eq:G-def}
    \Gg(\mu)
    :=
    \beta^{-1}\partial_{xx}\mu
    +
    \partial_x\bigl(\mu(W'*\mu)\bigr)
    \in\mathcal D'(\T).
\end{equation}
Equivalently,
\begin{equation}
\label{eq:G-def-weak}
    \langle\Gg(\mu),\varphi\rangle
    =
    \beta^{-1}\int_\T\varphi''\,\dd\mu
    -
    \int_\T\varphi'(W'*\mu)\,\dd\mu,
    \qquad
    \varphi\in C^\infty(\T).
\end{equation}
If \(\mu=\rho\lambda\), with \(\rho\) smooth and strictly positive,
then
\[
    \Gg(\mu)
    =
    \partial_x\left[
        \rho\,\partial_x
        \bigl(
            \beta^{-1}\log\rho+W*\mu
        \bigr)
    \right]\lambda.
\]
Thus the McKean--Vlasov equation \eqref{eq:MV} is
\[
    \partial_t\mu_t=\Gg(\mu_t).
\]

For finite $T>0$, set
\[
    \mathcal X_T:=C([0,T];\Pp(\T)),
\]
equipped with the uniform Wasserstein topology.  Following
\cite[Definition~4.1]{DawsonGartner1987}, we say that
\(\mu\in\mathcal X_T\) is \emph{absolutely continuous as a
distribution-valued path} if there exist a neighborhood \(U\) of
\(0\) in \(C^\infty(\T)\) and an absolutely continuous function
\(H:[0,T]\to\Rr\) such that
\begin{equation}
\label{eq:distributional-absolute-continuity}
    \left|
        \langle\mu_t-\mu_s,\varphi\rangle
    \right|
    \leq
    |H(t)-H(s)|
\end{equation}
for every \(s,t\in[0,T]\) and every \(\varphi\in U\).

For such a path, the derivative
\[
    \partial_t\mu_t
    :=
    \lim_{h\to0}
        \frac{\mu_{t+h}-\mu_t}{h}
\]
exists in \(\mathcal D'(\T)\) for almost every \(t\); see
\cite[Lemma~4.2]{DawsonGartner1987}.  This almost-everywhere
derivative represents the time derivative of \(\mu\) in the
space--time distributional sense.  Since
\(\langle\mu_t,1\rangle=1\), it satisfies
\(
    \langle\partial_t\mu_t,1\rangle=0
\)
for almost every \(t\).

Define the action by
\begin{equation}
\label{eq:action}
    I_{[0,T]}(\mu)
    :=
    \begin{cases}
    \displaystyle
    \frac{\beta}{4}
    \int_0^T
        \left\|
            \partial_t\mu_t-\Gg(\mu_t)
        \right\|_{\dot H^{-1}(\mu_t)}^2
    \,\dd t,
    &\text{if \(\mu\) is distributionally absolutely continuous,}
    \\[4mm]
    +\infty,
    &\text{otherwise.}
    \end{cases}
\end{equation}
It can be checked that  \eqref{eq:action} is precisely the specialization of
\cite[equations~(4.7)--(4.9) and Section~5.2]
{DawsonGartner1987} to the present model.

The action also admits an equivalent control representation.  For
\(\mu\in\mathcal X_T\), define
$$
L^2(\dd t\,\mu_t) := \left\lbrace u:[0,T]\times\T\longrightarrow\Rr \ : \  \int_0^T\int_\T
        |u_t(x)|^2\,\mu_t(\dd x)\,\dd t
    <\infty\right\rbrace.
$$
Let \(\mathcal U_T(\mu)\) denote the
collection of $u \in L^2(\dd t\,\mu_t)$
such that
\begin{equation}
\label{eq:controlled-MV-distributional}
    \partial_t\mu_t
    =
    \Gg(\mu_t)-\partial_x(\mu_tu_t)
\end{equation}
in space--time distributions.  Equivalently, for every
\(\varphi\in C^\infty(\T)\) and \(t\in[0,T]\),
\begin{align}
\label{eq:controlled-MV-weak}
    \int_\T\varphi\,\dd\mu_t
    -
    \int_\T\varphi\,\dd\mu_0
    =
    \int_0^t
    \Bigg[
        \beta^{-1}\int_\T\varphi''\,\dd\mu_s
        -
        \int_\T
            \varphi'(W'*\mu_s)\,\dd\mu_s
        +
        \int_\T\varphi'u_s\,\dd\mu_s
    \Bigg]\dd s.
\end{align}

The minimal-flux characterization
\eqref{eq:weighted-Hminus1-flux}, applied in the space--time Hilbert
space \(L^2(\dd t\,\mu_t)\), gives
\begin{equation}
\label{eq:action-control-representation}
    I_{[0,T]}(\mu)
    =
    \frac{\beta}{4}
    \inf_{u\in\mathcal U_T(\mu)}
        \int_0^T\int_\T
            |u_t(x)|^2\,\mu_t(\dd x)\,\dd t,
\end{equation}
where the infimum over the empty set is \(+\infty\).  Whenever the
action is finite, then again using the Riesz representation theorem, the infimum is attained by a unique minimal control (as an element of $L^2(\dd t\,\mu_t)$)
in
\[
    \overline{
        \left\{
            \partial_x\varphi:
            \varphi\in C^\infty([0,T]\times\T)
        \right\}
    }^{\,L^2(\dd t\,\mu_t)}.
\]
In particular,
\[
    \partial_t\mu_t-\Gg(\mu_t)
    =
    -\partial_x(\mu_tu_t)
\]
for almost every \(t\), and
\[
    \left\|
        \partial_t\mu_t-\Gg(\mu_t)
    \right\|_{\dot H^{-1}(\mu_t)}^2
    =
    \int_\T|u_t|^2\,\dd\mu_t
\]
for the minimal control $u$.


The mean-field drift of the associated McKean--Vlasov diffusion is
\[
    b(x,\mu):=-(W'*\mu)(x).
\]
Since \(W''\) is bounded,
\begin{equation}
\label{eq:drift-Lipschitz}
    |b(x,\mu)-b(y,\nu)|
    \leq
    \norm{W''}_\infty
    \bigl(
        d_\T(x,y)+\mathsf W_1(\mu,\nu)
    \bigr).
\end{equation}
Thus the coefficients are globally bounded and Lipschitz on the
compact state space. The hypotheses of the weakly interacting
diffusion LDP in
\cite[Theorem~3.1]{BudhirajaDupuisFischer2012}
are therefore satisfied. These regularity properties allow us to invoke the finite-horizon empirical-process LDP. We state its sequential form below and derive the uniform consequences in the proof; see also
\cite[Theorem~5.9]{GvalaniSchlichting2020}.
\begin{proposition}[Path-space LDP and uniform consequences]
\label{prop:uniform-LDP}
Fix \(T<\infty\), and equip
\[
    \mathcal X_T:=C([0,T];\Pp(\T))
\]
with the uniform Wasserstein topology.  For \(\nu\in\Pp(\T)\), define
\[
    I_{\nu,T}(\mu)
    :=
    \begin{cases}
       I_{[0,T]}(\mu),&\mu_0=\nu,\\
       +\infty,&\mu_0\neq\nu.
    \end{cases}
\]
If \(\nu_N\in\PN(\T)\) and \(\nu_N\to\nu\), then for every closed
\(F\subset\mathcal X_T\) and open \(O\subset\mathcal X_T\),
\begin{align}
\label{eq:sequential-LDP-upper}
    \limsup_{N\to\infty}\frac1N
    \log\mathbb P_{\nu_N}(\mu^N_\cdot\in F)
    &\leq
    -\inf_{\mu\in F}I_{\nu,T}(\mu),
    \\
\label{eq:sequential-LDP-lower}
    \liminf_{N\to\infty}\frac1N
    \log\mathbb P_{\nu_N}(\mu^N_\cdot\in O)
    &\geq
    -\inf_{\mu\in O}I_{\nu,T}(\mu).
\end{align}
For every $A<\infty$, the action sublevel
\(
 \{\mu\in\mathcal X_T:I_{[0,T]}(\mu)\le A\}
\)
is compact.
Moreover, the large deviation principle is uniform in the following sense:
\begin{enumerate}[label=\textup{(\roman*)}]
\item
For every closed \(F\subset\mathcal X_T\),
\begin{equation}
\label{eq:global-uniform-upper}
    \limsup_{N\to\infty}\frac1N
    \log
    \sup_{\eta\in\PN(\T)}
    \mathbb P_\eta(\mu^N_\cdot\in F)
    \leq
    -\inf_{\mu\in F}I_{[0,T]}(\mu).
\end{equation}
\item
Let \(\bar\mu\in\mathcal X_T\) have
\(\bar\mu_0=\nu\) and finite action, and let \(O\) be an open
neighborhood of \(\bar\mu\).  For every \(\varepsilon>0\), there are
a neighborhood \(V\) of \(\nu\) and \(N_0<\infty\) such that
\begin{equation}
\label{eq:local-uniform-lower}
    \inf_{\eta\in V\cap\PN(\T)}
    \mathbb P_\eta(\mu^N_\cdot\in O)
    \geq
    \exp\left\{
       -N\bigl(I_{[0,T]}(\bar\mu)+\varepsilon\bigr)
    \right\}
\end{equation}
for every \(N\geq N_0\).
\end{enumerate}
\end{proposition}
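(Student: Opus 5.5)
The plan is to deduce the statement from the Dawson--G\"artner / Budhiraja--Dupuis--Fischer path-space LDP, in its sequential form for initial data $\nu_N\to\nu$. The uniform parts (i)--(ii) and compactness of sublevel sets then follow by compactness and contradiction arguments. I would first verify the hypotheses: the drift $b(x,\mu)=-(W'*\mu)(x)$ is bounded and Lipschitz by \eqref{eq:drift-Lipschitz}, and the diffusion coefficient is constant. Hence \cite[Theorem~3.1]{BudhirajaDupuisFischer2012} gives an LDP for $\mu^N_\cdot$ in $C([0,T];\Pp(\T))$ under the uniform weak topology, for deterministic initial empirical measures converging weakly to $\nu$. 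Its rate function is the control formula \eqref{eq:action-control-representation} with $\mu_0=\nu$. On compact $\T$, $\Wass$ metrizes weak convergence, so the uniform weak and uniform Wasserstein path topologies coincide. The identification of the control rate function with the $\dot H^{-1}(\mu_t)$ form \eqref{eq:action} is exactly the minimal-flux identity \eqref{eq:weighted-Hminus1-flux}, applied in $L^2(\dd t\,\mu_t)$ as explained above. This yields \eqref{eq:sequential-LDP-upper}--\eqref{eq:sequential-LDP-lower}. Goodness of $I_{\nu,T}$ is part of the cited theorem. For compactness of $\{I_{[0,T]}\le A\}$ without fixing $\mu_0$, I would note that this set is the union over $\nu\in\Pp(\T)$ of the sublevels of $I_{\nu,T}$. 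I would then prove closedness and equicontinuity directly. Equicontinuity: by Cauchy--Schwarz, $|\langle\mu_t-\mu_s,\varphi\rangle|\le C_\varphi|t-s|+\|\varphi'\|_\infty\sqrt{|t-s|}\,(4A/\beta)^{1/2}$ from \eqref{eq:controlled-MV-weak}. Arzel\`a--Ascoli in the compact space $\Pp(\T)$ then applies. Closedness follows from lower semicontinuity of the sup-representation \eqref{eq:Hminus1}, written in space-time form as a supremum of continuous functionals of the path.

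For (i), I would argue by contradiction. Suppose the uniform upper bound fails for some closed $F$. Then there exist $\gamma<\inf_F I_{[0,T]}$, a subsequence $N_k$, and $\eta_k\in\mathcal P_{N_k}(\T)$ with $\mathbb P_{\eta_k}(\mu^{N_k}\in F)\ge e^{-N_k\gamma}$. Since $\Pp(\T)$ is compact, pass to a further subsequence with $\eta_k\to\nu$. The sequential upper bound (valid along subsequences, since the cited LDP holds for any converging sequence of initial conditions) gives
\[\limsup\tfrac1{N_k}\log\mathbb P_{\eta_k}(F)\le-\inf_F I_{\nu,T}\le-\inf_FI_{[0,T]}<-\gamma,\]
which is a contradiction. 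The same reasoning handles $\inf_F I_{[0,T]}=\infty$ by taking $\gamma$ arbitrary.

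For (ii), I expect the main obstacle to be that a lower bound with rate $I_{[0,T]}(\bar\mu)$ needs, for nearby starting points $\eta\ne\nu$, a path starting at $\eta$ inside $O$ with action close to $I(\bar\mu)$. The sequential lower bound uses $I_{\eta,T}$, which is infinite on $\bar\mu$. Argue again by contradiction: if no neighborhood $V$ works, there are $\eta_k\to\nu$ (with $N_k\to\infty$) violating \eqref{eq:local-uniform-lower}. The sequential lower bound along $\eta_k\to\nu$ applies with limit $\nu$, and $\bar\mu$ is admissible for $I_{\nu,T}$. Hence
\[\liminf\tfrac1{N_k}\log\mathbb P_{\eta_k}(O)\ge-\inf_OI_{\nu,T}\ge-I(\bar\mu)>-(I(\bar\mu)+\varepsilon),\]
which contradicts the assumed violation for large $k$. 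The key observation is that the sequential LDP already absorbs the initial perturbation: the rate is evaluated at the limit $\nu$, so no explicit connector path is needed. The one point I would double-check is that the cited LDP is indeed stated for arbitrary deterministic (atomic) initial data converging weakly, without entropy assumptions. This holds in \cite{BudhirajaDupuisFischer2012} and \cite[Theorem~5.9]{GvalaniSchlichting2020}; otherwise I would reduce to it via the exchangeable relabeling and permutation invariance noted before the theorem.
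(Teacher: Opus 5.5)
Your proposal is correct and follows the paper's proof essentially step for step. The sequential bounds are imported from the path-space LDP, and (i) and (ii) are obtained by the same contradiction arguments along subsequences, using compactness of $\Pp(\T)$ and the fact that the sequential rate is evaluated at the limiting initial measure; the one difference is that you prove compactness of the action sublevels directly (equicontinuity from \eqref{eq:controlled-MV-weak}, Arzel\`a--Ascoli, and lower semicontinuity of the space--time supremum), whereas the paper cites \cite[Theorem~5.1(c)]{DawsonGartner1987}.

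One caveat on sourcing. \cite[Theorem~3.1]{BudhirajaDupuisFischer2012} is an LDP for the empirical measure of particle trajectories with a relaxed-control rate function, so reaching \eqref{eq:action} from it needs a contraction to the marginal flow plus a Markovian-projection/superposition argument, not just the minimal-flux identity. Citing the distributional Dawson--G\"artner form directly, as the paper does, avoids this.
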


\begin{proof}
The sequential bounds
\eqref{eq:sequential-LDP-upper}--\eqref{eq:sequential-LDP-lower}
follow from the Dawson--G\"artner path-space LDP
\cite{DawsonGartner1987}, with the distributional action
\eqref{eq:action}. An alternative control formulation is given
in \cite[Theorem~3.1]{BudhirajaDupuisFischer2012}. 
The compactness assertion follows from
\cite[Theorem~5.1(c)]{DawsonGartner1987},
specialized to the torus, since $\Pp(\T)$ is compact.

We prove the two uniform consequences.

\smallskip
\noindent
\emph{Proof of \textup{(i)}.}
Suppose that \eqref{eq:global-uniform-upper} fails.  Then there
exist a closed set \(F\), a number \(\delta>0\), a subsequence
\(N_k\to\infty\), and initial empirical measures
\(\eta_{N_k}\in\mathcal P_{N_k}(\T)\) such that
\begin{equation}
\label{eq:uniform-upper-contradiction}
    \frac1{N_k}
    \log
    \mathbb P_{\eta_{N_k}}
    \bigl(\mu^{N_k}_{\cdot}\in F\bigr)
    >
    -\inf_{\rho\in F}I_{[0,T]}(\rho)+\delta.
\end{equation}

Since \(\Pp(\T)\) is compact, after passing to a further
subsequence there exists \(\eta\in\Pp(\T)\) such that
\(
    \eta_{N_k}\longrightarrow\eta.
\)
Applying the sequential upper bound
\eqref{eq:sequential-LDP-upper} along this subsequence gives
\[
\begin{aligned}
    \limsup_{k\to\infty}
    \frac1{N_k}
    \log
    \mathbb P_{\eta_{N_k}}
    \bigl(\mu^{N_k}_{\cdot}\in F\bigr)
    \le
    -\inf_{\rho\in F}
    I_{\eta,T}(\rho)
    \le
    -\inf_{\rho\in F}
    I_{[0,T]}(\rho),
\end{aligned}
\]
contradicting \eqref{eq:uniform-upper-contradiction}.  This
proves \eqref{eq:global-uniform-upper}.

\smallskip
\noindent
\emph{Proof of \textup{(ii)}.}
Fix \(\varepsilon>0\).  Suppose that no neighborhood \(V\) and
integer \(N_0\) satisfy \eqref{eq:local-uniform-lower}.  For
each \(k\ge1\), we may then choose \(N_k\ge k\) and
\(\eta_{N_k}\in\mathcal P_{N_k}(\T)\) such that
\(
    \Wass(\eta_{N_k},\nu)<\frac1k
\)
and
\[
    \mathbb P_{\eta_{N_k}}
    \bigl(\mu^{N_k}_{\cdot}\in O\bigr)
    <
    \exp\left\{
        -N_k\bigl(I_{[0,T]}(\bar\mu)+\varepsilon\bigr)
    \right\}.
\]
Thus \(\eta_{N_k}\to\nu\).  The sequential lower bound
\eqref{eq:sequential-LDP-lower} yields
\[
\begin{aligned}
    \liminf_{k\to\infty}
    \frac1{N_k}
    \log
    \mathbb P_{\eta_{N_k}}
    \bigl(\mu^{N_k}_{\cdot}\in O\bigr)
    &\ge
    -\inf_{\rho\in O}I_{\nu,T}(\rho)
    \\
    &\ge
    -I_{[0,T]}(\bar\mu),
\end{aligned}
\]
because \(\bar\mu\in O\) and \(\bar\mu_0=\nu\). This
contradicts the preceding strict upper bound and proves
\eqref{eq:local-uniform-lower}.
\end{proof}

\begin{corollary}[Uniform lower bound from robust controlled paths]
\label{cor:finite-cover-lower}
Let \(K\subset\Pp(\T)\) be compact, and let
\(E\subset\mathcal X_T\) be an event.  Suppose that there is
\(B<\infty\) such that, for every \(\nu\in K\), one can find a
path \(\rho^\nu\in\mathcal X_T\) and a radius \(r_\nu>0\)
satisfying
\[
    \rho^\nu_0=\nu,
    \qquad
    I_{[0,T]}(\rho^\nu)\le B,
\]
and
\[
    B_{\mathcal X_T}(\rho^\nu,r_\nu)\subset E.
\]
Then, for every \(\varepsilon>0\), there exists \(N_0<\infty\)
such that
\[
    \inf_{\eta\in K\cap\PN(\T)}
    \mathbb P_\eta
    \bigl(\mu^N_{\cdot}\in E\bigr)
    \ge
    e^{-N(B+\varepsilon)}
\]
for every \(N\ge N_0\).
\end{corollary}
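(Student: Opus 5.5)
The approach is a compactness argument: cover $K$ by finitely many sets on which the local uniform lower bound of Proposition~\ref{prop:uniform-LDP}\textup{(ii)} applies, then take the minimum over the finite cover.

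Fix $\varepsilon>0$. For each $\nu\in K$, consider the given path $\rho^\nu$ with $\rho^\nu_0=\nu$, $I_{[0,T]}(\rho^\nu)\le B$, and the open ball $O_\nu:=B_{\mathcal X_T}(\rho^\nu,r_\nu)\subset E$. This ball is an open neighborhood of $\rho^\nu$ in $\mathcal X_T$, and $\rho^\nu$ has finite action. Proposition~\ref{prop:uniform-LDP}\textup{(ii)}, applied with $\bar\mu=\rho^\nu$, $O=O_\nu$ and the given $\varepsilon$, gives an open neighborhood $V_\nu$ of $\nu$ in $\Pp(\T)$ and $N_\nu<\infty$ such that
\[
 \inf_{\eta\in V_\nu\cap\PN(\T)}\mathbb P_\eta(\mu^N_\cdot\in O_\nu)
 \ge e^{-N(I_{[0,T]}(\rho^\nu)+\varepsilon)}
 \ge e^{-N(B+\varepsilon)}
\]
for all $N\ge N_\nu$. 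Since $O_\nu\subset E$, the same lower bound holds for $\mathbb P_\eta(\mu^N_\cdot\in E)$. Measurability of $E$ is implicit in the statement; only the inclusion of events is used.

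The family $\{V_\nu\}_{\nu\in K}$ is an open cover of the compact set $K$, so finitely many sets $V_{\nu_1},\dots,V_{\nu_m}$ cover $K$. Set $N_0:=\max_k N_{\nu_k}$. Every $\eta\in K\cap\PN(\T)$ lies in some $V_{\nu_k}\cap\PN(\T)$, and therefore $\mathbb P_\eta(\mu^N_\cdot\in E)\ge e^{-N(B+\varepsilon)}$ for all $N\ge N_0$.

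There is no real obstacle here. The one point to check is that the neighborhood $V$ in Proposition~\ref{prop:uniform-LDP}\textup{(ii)} may be taken open in $\Pp(\T)$ so that the cover argument applies; shrinking it to an open ball around $\nu$ achieves this without loss. The uniform radius $r_\nu$ is what makes $O_\nu$ a genuine open neighborhood contained in $E$.
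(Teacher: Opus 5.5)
Your proposal is correct and follows essentially the same route as the paper: apply Proposition~\ref{prop:uniform-LDP}\textup{(ii)} to each tube $B_{\mathcal X_T}(\rho^\nu,r_\nu)\subset E$ with error $\varepsilon$, extract a finite subcover of $K$ from the resulting neighborhoods, and take $N_0$ as the maximum of the finitely many thresholds. Your remark about passing to open subneighborhoods makes explicit a point the paper leaves implicit, and it is harmless, since the infimum over a smaller set can only be larger.
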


\begin{proof}
For each \(\nu\in K\), apply
Proposition~\ref{prop:uniform-LDP}\textup{(ii)} to the open
tube
\[
    O_\nu
    :=
    B_{\mathcal X_T}(\rho^\nu,r_\nu)
\]
with error \(\varepsilon\).  This gives a neighborhood
\(V_\nu\) of \(\nu\) and an integer \(N_\nu\) such that
\[
    \inf_{\eta\in V_\nu\cap\PN(\T)}
    \mathbb P_\eta
    \bigl(\mu^N_{\cdot}\in O_\nu\bigr)
    \ge
    e^{-N(B+\varepsilon)}
\]
for all \(N\ge N_\nu\).

The sets \((V_\nu)_{\nu\in K}\) cover \(K\).  By compactness,
there exist \(\nu_1,\ldots,\nu_m\) such that
\[
    K\subset\bigcup_{j=1}^mV_{\nu_j}.
\]
Taking \(N_0:=\max_{1\le j\le m}N_{\nu_j}\), and using
\(O_{\nu_j}\subset E\), proves the claim.
\end{proof}

\paragraph{Action on the rotation quotient.}
Let $p:\Pp(\T)\to\Qq$ be the quotient map. For
$q\in C([0,T];\Qq)$, define the contraction action
\begin{equation}\label{eq:quotient-action}
 \overline I_{[0,T]}(q)
 :=\inf\bigl\{I_{[0,T]}(\mu):
       \mu\in\mathcal X_T,\ p(\mu_t)=q_t\ (0\le t\le T)\bigr\},
\end{equation}
with the infimum of the empty set equal to $+\infty$.
The map $\mu_\cdot\mapsto p(\mu_\cdot)$ is continuous in the
uniform path topology. The contraction principle therefore gives
the corresponding quotient LDP and the uniform consequences
above. By rotating the entire lifted path by a fixed angle, we can
make it start at any chosen representative of the initial orbit,
without changing its quotient path or its action.

Compactness of the measure-valued action sublevel sets shows that
the infimum in \eqref{eq:quotient-action} is attained whenever
it is finite, and that the quotient action sublevel sets are compact.
From now on we suppress the bar and write $I(q)$ for this
contracted action. Whenever a control, distributional derivative
or PDE calculation is written for a quotient path, it is
performed on an action-minimizing measure-valued lift, with the
same path symbol used for that lift during the calculation.
The resulting path is then projected onto $\Qq$.
This convention does not identify the actions of arbitrary
time-dependent rotations of a lift.

The quotient action is additive under concatenation. Indeed,
any lift of a concatenated path restricts to lifts of its two
pieces. Its action is therefore at least the sum of the quotient
actions of those pieces. Conversely, when both piece actions
are finite, choose an action-minimizing lift of each piece.
Rotate the second lift by a fixed angle so that its initial
measure coincides with the terminal measure of the first lift.
Concatenating these lifts gives a lift of the full path whose
action equals the sum of the two piecewise actions, proving the reverse inequality.

Free-energy is independent of the lift. Consequently, the
action--energy lower bound and the time-reversal identity given in Lemma~\ref{lem:action-energy} below
also hold on $\Qq$, by taking infima over lifts.

\section{The action--energy identity and quasipotentials}
\label{sec:action-energy}

For a smooth positive density \(\mu=\rho\lambda\), set
\begin{equation}
\label{eq:Psi}
    \Psi_\mu
    :=
    \beta^{-1}\log\rho+W*\mu.
\end{equation}

Let $\mu$ be a finite-action path on $[0,T]$ with $\F(\mu_0)<\infty$,
and let $u$ be its minimal control. The finite-initial-energy
condition upgrades distributional absolute continuity to the
regularity needed for the Wasserstein chain rule. More precisely,
for almost every $t$ one has $\mu_t=\rho_t\lambda$ and
\[
 \xi_t:=\beta^{-1}\partial_x\log\rho_t+W'*\mu_t
 \in L^2(\mu_t),\qquad
 \int_0^T\int_\T|\xi_t|^2\,\dd\mu_t\,\dd t
 \le 2\bigl(\F(\mu_0)-F_{\min}\bigr)
       +\int_0^T\int_\T|u_t|^2\,\dd\mu_t\,\dd t.
\]
Here $\rho_t|\partial_x\log\rho_t|^2$ is interpreted as
$4|\partial_x\sqrt{\rho_t}|^2$ on the domain of the Fisher
information. These assertions follow by a mollification scheme, lower semicontinuity of the free-energy and Fisher information, and Young's inequality.
Moreover, the interaction part has a bounded smooth derivative.
These facts imply that the velocity $v_t=u_t-\xi_t$ is square integrable and
satisfies $\partial_t\mu_t+\partial_x(\mu_tv_t)=0$.
In particular, $\mu\in AC^2([0,T];\Pp(\T))$, the space of absolutely continuous curves
with square-integrable metric speed; see \cite[Definition~1.1.1 and Theorem~1.1.2]{AGS2008}.

The Wasserstein chain rule now implies that $t\mapsto\F(\mu_t)$
is absolutely continuous, and
\begin{equation}\label{eq:free-energy-chain-rule}
 \F(\mu_T)-\F(\mu_0)
 =\int_0^T\int_\T \xi_t v_t\,\dd\mu_t\,\dd t
 =\int_0^T\langle\Psi_{\mu_t},\partial_t\mu_t\rangle\,\dd t.
\end{equation}
Here the pairing $\langle\Psi_{\mu_t},\partial_t\mu_t\rangle$
is defined as $\int_\T \xi_t v_t\,\dd\mu_t$.
For the chain rule, see
\cite[\S10.1.2(E) and Theorem~10.4.13]{AGS2008} and
\cite[proof of Theorem~5.10]{GvalaniSchlichting2020}.

\begin{lemma}[Energy dissipation]
\label{lem:energy-dissipation}
For every positive-time solution of \eqref{eq:MV},
\begin{equation}
\label{eq:dissipation}
    \frac{\dd}{\dd t}\F(\mu_t)
    =
    -\int_\T
       \abs{\partial_x\Psi_{\mu_t}}^2\,\mu_t(\dd x)
    \leq0.
\end{equation}
Equality holds at a time \(t>0\) if and only if \(\mu_t\) is
stationary.
\end{lemma}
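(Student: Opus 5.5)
The plan is to first establish enough parabolic regularity for positive-time solutions of \eqref{eq:MV}, then differentiate $\F(\mu_t)$ directly, and finally characterize the equality case.

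\emph{Step 1: regularity.} For $t>0$ I will show that $\rho_t$ is smooth and strictly positive, with $\rho\in C^1((0,\infty);C^2(\T))$. Since $W\in C^4$, the drift $W'*\mu_t$ is $C^3$ in $x$ uniformly in time. So \eqref{eq:MV} can be viewed as a linear uniformly parabolic equation $\partial_t\rho=\beta^{-1}\rho''+(\rho\, b_t)'$ with coefficients that are smooth in $x$ and continuous in $t$. Standard heat-kernel and Schauder theory then gives instantaneous smoothing. The strong maximum principle (a Harnack inequality on the compact circle) gives $\rho_t>0$ for every $t>0$. In particular $\Psi_{\mu_t}$ is $C^2$ and $\Ent(\mu_t\mid\lambda)<\infty$.

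\emph{Step 2: differentiate the free energy.} Write $\Gg(\mu_t)=\partial_x(\rho_t\partial_x\Psi_{\mu_t})$. The first variation of $\F$ at $\rho$ is $\Psi_\mu$ up to an additive constant, using $\int\partial_t\rho=0$ and the evenness of $W$ for the symmetric interaction term. Hence
\[
\tfrac{\dd}{\dd t}\F(\mu_t)=\int_\T\Psi_{\mu_t}\,\partial_t\rho_t\,\dd\lambda
=-\int_\T\rho_t|\partial_x\Psi_{\mu_t}|^2\dd\lambda .
\]
The last equality is an integration by parts on $\T$, so there are no boundary terms. To justify exchanging the time derivative with the integral, I will use the $C^1$-in-time smoothness from Step 1 together with $\log\rho_t$ being bounded on compact time intervals in $(0,\infty)$. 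Alternatively, this is the chain rule \eqref{eq:free-energy-chain-rule} with $u=0$, $\xi_t=\partial_x\Psi_{\mu_t}$ and $v_t=-\xi_t$; the smoothness is only needed to obtain pointwise rather than a.e. differentiability.

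\emph{Step 3: the equality case.} Suppose the dissipation vanishes at time $t$. Since $\rho_t>0$ and $\partial_x\Psi_{\mu_t}$ is continuous, we get $\partial_x\Psi_{\mu_t}\equiv0$, so $\Psi_{\mu_t}$ is constant. Then $\mu_t\in\Crit(\F)$, which is stationary for \eqref{eq:MV}. By uniqueness of solutions of \eqref{eq:MV}, which follows from the Lipschitz drift \eqref{eq:drift-Lipschitz}, we obtain $\mu_s=\mu_t$ for all $s\ge t$. Conversely, if $\mu_t$ is stationary then $\Gg(\mu_t)=0$. Pairing this with $\Psi_{\mu_t}$ gives $\int\rho_t|\partial_x\Psi_{\mu_t}|^2=0$, so equality holds.

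The main obstacle is Step 1. Positivity and smoothness are needed at every positive time, including for atomic initial data; the remaining steps are routine once this is in place.
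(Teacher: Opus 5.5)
Your proposal is correct and follows essentially the same route as the paper: positive-time parabolic regularity and positivity, differentiation of $\F$ using evenness of $W$ to obtain $\langle\Psi_{\mu_t},\partial_t\mu_t\rangle$, integration by parts on $\T$, and identification of vanishing dissipation with the Euler--Lagrange condition $\partial_x\Psi_{\mu_t}=0$. Your appeal to uniqueness of solutions of \eqref{eq:MV} in the equality case is harmless but unnecessary. The paper identifies stationary measures directly with $\Crit(\F)$, which is exactly the condition $\partial_x\Psi_{\mu_t}=0$ for a positive $C^2$ density.
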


\begin{proof}
Positive-time parabolic regularity gives a smooth strictly positive
density.  Evenness of \(W\), differentiation of
\eqref{eq:free-energy}, and integration by parts give
\[
    \frac{\dd}{\dd t}\F(\mu_t)
    =
    \ip{\Psi_{\mu_t}}{\partial_t\mu_t}
    =
    -\int_\T
       \abs{\partial_x\Psi_{\mu_t}}^2\,\dd\mu_t.
\]
The final integral vanishes exactly when
\(\partial_x\Psi_{\mu_t}=0\), which is the stationary
Euler--Lagrange equation.
\end{proof}

\begin{lemma}[Action--energy decomposition]
\label{lem:action-energy}
Let \(\mu\) be a finite-action path on \([0,T]\) with \(\mathcal F_\beta(\mu_0)<\infty\).  Then
\begin{equation}
\label{eq:action-energy-identity}
    I_{[0,T]}(\mu)
    =
    \frac{\beta}{4}\int_0^T
       \norm{\partial_t\mu_t+\Gg(\mu_t)}
             _{\dot H^{-1}(\mu_t)}^2\,\dd t
    +
    \beta\bigl[\F(\mu_T)-\F(\mu_0)\bigr].
\end{equation}
Consequently,
\begin{equation}
\label{eq:action-lower-energy}
    I_{[0,T]}(\mu)
    \geq
    \beta
    \max_{0\leq t\leq T}
       \bigl\{\F(\mu_t)-\F(\mu_0)\bigr\}.
\end{equation}
The reversal of a deterministic trajectory segment has action equal
to \(\beta\) times its free-energy increase.
\end{lemma}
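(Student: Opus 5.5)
The plan is to expand the square in the control representation. Let $u$ be the minimal control of $\mu$, so that $\partial_t\mu_t=\Gg(\mu_t)-\partial_x(\mu_tu_t)$ and $I_{[0,T]}(\mu)=\frac\beta4\int_0^T\int_\T|u_t|^2\,\dd\mu_t\,\dd t$. By the regularity discussion preceding Lemma~\ref{lem:energy-dissipation}, for a.e.\ $t$ we have $\mu_t=\rho_t\lambda$ with $\xi_t=\partial_x\Psi_{\mu_t}\in L^2(\mu_t)$, and $\int_0^T\int|\xi_t|^2\,\dd\mu_t\,\dd t<\infty$. Moreover $\Gg(\mu_t)=\partial_x(\mu_t\xi_t)$ in distributions, so $\Gg(\mu_t)=-\partial_x(\mu_t(-\xi_t))$, and the Wasserstein velocity is $v_t=u_t-\xi_t$.

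Next I express both $\dot H^{-1}$ norms through fluxes. We have $\partial_t\mu_t-\Gg(\mu_t)=-\partial_x(\mu_tu_t)$ and $\partial_t\mu_t+\Gg(\mu_t)=-\partial_x(\mu_t(u_t-2\xi_t))$. The key point is that $\xi_t$ lies in the tangent space $\operatorname{Tan}_{\mu_t}\Pp_2(\T)$. It is an $L^2(\mu_t)$-limit of gradients of smooth functions, obtained by mollifying $\Psi$ (or, more carefully, from the Fisher information domain with $\sqrt{\rho_t}\in H^1$). The minimal control $u_t$ also lies in the tangent space. Hence $u_t-2\xi_t$ is the minimal flux in \eqref{eq:weighted-Hminus1-flux}, and
\[
 \norm{\partial_t\mu_t+\Gg(\mu_t)}^2_{\dot H^{-1}(\mu_t)}=\int|u_t-2\xi_t|^2\dd\mu_t
 =\int|u_t|^2\dd\mu_t-4\int(u_t-\xi_t)\xi_t\,\dd\mu_t .
\]
The chain rule \eqref{eq:free-energy-chain-rule} gives $\int_0^T\int\xi_tv_t\,\dd\mu_t\,\dd t=\F(\mu_T)-\F(\mu_0)$. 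Multiplying by $\beta/4$ and integrating in time then yields \eqref{eq:action-energy-identity}. All integrals are finite by the $L^2$ bounds above, so the rearrangement is legitimate. Finally, a.e.\ minimality of $u_t$ pointwise in time follows from minimality in $L^2(\dd t\,\mu_t)$, using a standard disintegration/measurable-projection argument.

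For \eqref{eq:action-lower-energy}, I apply the identity on $[0,t]$ for each $t\le T$. The restriction has finite action and the same initial energy, and the first term is nonnegative. Additivity of the action on $[0,t]\cup[t,T]$ together with nonnegativity gives $I_{[0,T]}\ge I_{[0,t]}\ge\beta(\F(\mu_t)-\F(\mu_0))$. Taking the maximum over $t$ gives the bound, and $t=0$ shows the maximum is nonnegative.

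For the reversal statement, let $\mu$ solve \eqref{eq:MV} on $[0,T]$ and set $\hat\mu_t=\mu_{T-t}$. Then $\partial_t\hat\mu_t+\Gg(\hat\mu_t)=0$, so the first term vanishes and $I(\hat\mu)=\beta(\F(\mu_0)-\F(\mu_T))$. If the segment starts at time $0$ from a measure with infinite energy, I apply the result on $[\delta,T]$ and let $\delta\downarrow0$. The main obstacle is the tangency of $\xi_t$ for measures that are merely in the Fisher-information domain rather than smooth, since that is what justifies the minimal-flux identification. I would handle it by approximating $\log\rho_t$ by $\log(\rho_t*\eta_\epsilon)$ and using $L^2(\mu_t)$ convergence of the resulting gradients, as in \cite[\S10.4]{AGS2008}.
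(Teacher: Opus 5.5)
Your proposal is correct and follows essentially the paper's argument. The paper expands the square through the pairing $\langle\partial_t\mu_t,\Gg(\mu_t)\rangle_{\dot H^{-1}(\mu_t)}=-\langle\Psi_{\mu_t},\partial_t\mu_t\rangle$ combined with the chain rule \eqref{eq:free-energy-chain-rule}, which is your flux computation with $u_t$ and $u_t-2\xi_t$ written differently; it then applies the identity on $[0,t_*]$ for the lower bound and notes that the square vanishes for a reversed deterministic segment, and your explicit check that $\xi_t$ is tangent (so that $u_t-2\xi_t$ is the minimal flux) makes precise a point the paper leaves implicit.
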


\begin{proof}
The chain rule gives
\[
    \F(\mu_T)-\F(\mu_0)
    =
    \int_0^T\ip{\Psi_{\mu_t}}{\partial_t\mu_t}\,\dd t.
\]
For the Wasserstein free-energy chain rule, see
\cite[Section~10.1.2]{AGS2008}; its use for the McKean--Vlasov
action is also discussed in \cite{GvalaniSchlichting2020}.
In the weighted negative Sobolev pairing,
\[
    \ip{\partial_t\mu}{\Gg(\mu)}
       _{\dot H^{-1}(\mu)}
    =
    -\ip{\Psi_\mu}{\partial_t\mu}.
\]
Expanding the square in \eqref{eq:action} therefore gives
\eqref{eq:action-energy-identity}.  Apply the identity on
\([0,t_*]\), where \(t_*\) maximizes \(t\mapsto\F(\mu_t)\), to
obtain \eqref{eq:action-lower-energy}.  If a path is the reversal of
a deterministic segment, then
\(\partial_t\mu=-\Gg(\mu)\), so the square in
\eqref{eq:action-energy-identity} vanishes.
\end{proof}

\begin{definition}[Quasipotential]
\label{def:quasipotential}
For nonempty sets \(A,B\subset\Qq\), define
\begin{equation}
\label{eq:quasipotential}
    \Vq(A,B)
    :=
    \inf_{T>0}
   \inf_{\substack{\mu\in C([0,T];\Qq)\\
                    \mu(0)\in A,\ \mu(T)\in B}}
       I_{[0,T]}(\mu).
\end{equation}
\end{definition}

Lemma \ref{lem:action-energy} implies
\begin{equation}
\label{eq:V-lower}
    \Vq(\mathcal M,B)
    \geq
    \beta\bigl[
       \Hh(\mathcal M,B)-\F(\mathcal M)
    \bigr]
\end{equation}
when \(\mathcal M\) is a stationary orbit.

\begin{proposition}[Local reversible quasipotential in one basin]
\label{prop:local-quasipotential}
Let \(\mathcal M\) be an attracting local-minimum orbit, and let
\(U\) be an open neighborhood satisfying
\[
    \mathcal M\Subset U,
    \qquad
    \overline U
    \ \text{is contained in the deterministic basin of }\mathcal M.
\]
Then
\begin{equation}
\label{eq:local-quasipotential}
    \Vq(\mathcal M,U^c)
    =
    \beta\bigl[
       \Hh(\mathcal M,U^c)-\F(\mathcal M)
    \bigr].
\end{equation}
\end{proposition}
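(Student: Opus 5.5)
The lower bound $\Vq(\mathcal M,U^c)\ge\beta[\Hh(\mathcal M,U^c)-\F(\mathcal M)]$ is already \eqref{eq:V-lower}, a consequence of Lemma~\ref{lem:action-energy} applied to an action-minimizing lift of any finite-action quotient path from $\mathcal M$ to $U^c$. So the work is the upper bound: for every $\varepsilon>0$ I will build a finite-time path from $\mathcal M$ to $U^c$ with action at most $\beta[\Hh(\mathcal M,U^c)-\F(\mathcal M)]+\varepsilon$. Set $h:=\Hh(\mathcal M,U^c)$.

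The first step is to choose a good point on $\partial U$. Take a continuous path $\gamma$ from $\mathcal M$ to $U^c$ with $\sup\F\circ\gamma\le h+\varepsilon$. Let $q:=\gamma(s_*)$, where $s_*$ is the first time $\gamma$ hits $U^c$, so $q\in\partial U\subset\overline U$ and $\F(q)\le h+\varepsilon$ (in particular $q$ has finite entropy). Since $\overline U$ lies in the deterministic basin, $S_tq\to\mathcal M$. The idea is to reverse the deterministic trajectory from $q$. By Lemma~\ref{lem:action-energy}, reversing the segment from $S_Tq$ back to $q$ costs exactly $\beta[\F(q)-\F(S_Tq)]\le\beta[h+\varepsilon-\F(S_Tq)]$. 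Moreover $\F(S_Tq)\to\F(\mathcal M)$, by the continuity of energy along the flow at positive times together with parabolic smoothing and convergence in $C^1$ to $\mathcal M$.

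The second step is to close the gap between $\mathcal M$ and $S_Tq$ with a short connector of small action. For $T$ large, $S_Tq$ is close to a representative $m\in\mathcal M$ in a strong (say $C^2$) topology, by parabolic regularity and attraction. I connect $m$ to $S_Tq$ by the linear interpolation of densities $\rho^\theta=(1-\theta)\rho_m+\theta\rho_{S_Tq}$ over a unit time interval. The minimal control solves $-\partial_x(\rho^\theta u)=\partial_t\rho^\theta-\Gg(\rho^\theta)$. Both terms are small in $\dot H^{-1}(\rho^\theta)$, because the densities are bounded below and $C^2$-close, and $\Gg(m)=0$ while $\Gg$ is continuous in $C^2$. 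So the connector's action tends to $0$ as $T\to\infty$. Concatenating connector, reversed trajectory, and nothing else (the path ends at $q\in U^c$) gives total action at most $\beta[h-\F(\mathcal M)]+2\beta\varepsilon+o(1)$, using additivity of quotient actions under concatenation.

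The main obstacle is the regularity needed in these two steps. First, $q$ may only lie in $\overline U$ with finite energy rather than in the interior; here I use that the hypothesis covers $\overline U$ in the basin. Second, the chain-rule identity is only established for paths with $\F(\mu_0)<\infty$. It must be applied to the reversed path, whose initial point $S_Tq$ is smooth, so this is fine. The delicate part is justifying $\F(S_tq)\to\F(\mathcal M)$ and the $C^2$ convergence. I would derive these from Wasserstein convergence to $\mathcal M$ combined with uniform positive-time parabolic estimates on the compact circle, which bootstrap weak convergence of $S_{t}q$ to convergence of $S_{t+1}q$ in $C^2$, and hence of energies.
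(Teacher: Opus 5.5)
Your proposal is correct and follows essentially the same route as the paper. The lower bound is \eqref{eq:V-lower}, and the upper bound comes from reversing the deterministic trajectory from a finite-energy point $q\in\partial U$ and prepending a small-action affine connector from $\mathcal M$ to $S_T q$, letting $T\to\infty$. The only minor difference is how $q$ is chosen: the paper takes a minimizer of $\F$ on $\partial U$ and first proves $\Hh(\mathcal M,U^c)=\min_{\partial U}\F$ using lower semicontinuity and the flow path back to $\mathcal M$, whereas you take the first exit point of an $\varepsilon$-optimal path, which skips that identification at the cost of an $\varepsilon$.
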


\begin{proof}
In view of \eqref{eq:V-lower}, it remains to prove 
\begin{equation}\label{eq:Vupper}
\mathsf V(\mathcal M,U^c)
    \le
    \beta\left[
        \mathsf H(\mathcal M,U^c)
        -
        \mathcal F_\beta(\mathcal M)
    \right].
    \end{equation} 

If $U=\Qq$, both sides are $+\infty$, with the convention that
an infimum over an empty target set is $+\infty$.
Suppose therefore that $U$ is proper, and set
\[
 m_U:=\min_{\nu\in\partial U}\F(\nu)\in\Rr\cup\{+\infty\}.
\]
The boundary is nonempty since $\Qq$ is connected and $U$ is nonempty,
open and proper. Compactness of $\partial U$ and lower
semicontinuity of \(\mathcal F_\beta\) imply that the minimum above is attained. 
If $m_U=+\infty$, \eqref{eq:V-lower} proves the result.
In the rest of the proof assume $m_U<\infty$.

We first identify the communication height.  Every continuous path from
\(\mathcal M\subset U\) to \(U^c\) meets \(\partial U\).  Consequently,
\[
    \mathsf H(\mathcal M,U^c)\geq m_U.
\]
For the reverse inequality, fix \(\nu\in\partial U\) such that
\(\mathcal F_\beta(\nu)=m_U\).  Since
\(\overline U\) is contained in the basin of \(\mathcal M\),
\[
    S_t\nu\longrightarrow\mathcal M
    \qquad\text{in }\mathcal Q
\]
as \(t\to\infty\).  The curve
\[
    \gamma_\nu(s)
    :=
    \begin{cases}
        \mathcal M,
        &s=0,\\[1mm]
        S_{(1-s)/s}\nu,
        &0<s\leq1,
    \end{cases}
\]
is therefore a continuous path in \(\mathcal Q\) from
\(\mathcal M\) to \(\nu\).  Since the deterministic flow does not
increase the free-energy,
\[
    \max_{s\in[0,1]}
        \mathcal F_\beta(\gamma_\nu(s))
    =
    \mathcal F_\beta(\nu)
    =
    m_U.
\]
As \(\nu\in\partial U\subset U^c\), this gives
\[
    \mathsf H(\mathcal M,U^c)\leq m_U.
\]
Hence
\begin{equation}
\label{eq:height-boundary-minimum}
    \mathsf H(\mathcal M,U^c)
    =
    \min_{\nu\in\partial U}\mathcal F_\beta(\nu).
\end{equation}
Now we prove \eqref{eq:Vupper}.  Fix again a minimizer
\(\nu\in\partial U\).  For \(T>0\), consider the reversal of the
deterministic trajectory segment from \(\nu\) to \(S_T\nu\):
\[
    \widehat\mu_t^T:=S_{T-t}\nu,
    \qquad 0\leq t\leq T.
\]
This path runs from \(S_T\nu\) to \(\nu\).  By Lemma \ref{lem:action-energy},
\begin{equation}
\label{eq:reversed-segment-action}
    I_{[0,T]}(\widehat\mu^T)
    =
    \beta\left[
        \mathcal F_\beta(\nu)
        -
        \mathcal F_\beta(S_T\nu)
    \right].
\end{equation}
Choose a smooth positive representative
\(\bar\mu=\bar\rho\,\lambda\in\mathcal M\).
Since \(S_T\nu\to\mathcal M\), modulo rotations, as $T \to \infty$, parabolic regularity
and compactness yield rotations \(R_{\theta_T}\) such that
\[
    R_{\theta_T\#}S_T\nu=\rho_T\lambda,
    \qquad
    \rho_T\longrightarrow\bar\rho
    \quad\text{in }C^1(\T).
\]
For \(T\) large, the affine interpolation
\[
    \rho_s^T=(1-s)\bar\rho+s\rho_T,
    \qquad 0\le s\le1,
\]
is uniformly bounded away from zero.  Since
\(\Gg(\bar\mu)=0\), the definition of the weighted
\(\dot H^{-1}\)-norm gives
\[
    I_{[0,1]}(\rho^T\lambda)
    \le C\|\rho_T-\bar\rho\|_{C^1(\T)}^2
    \longrightarrow0.
\]
After rotating this interpolation back by \(R_{-\theta_T}\), we obtain
a connector from some representative of \(\mathcal M\) to \(S_T\nu\)
whose action tends to zero. Consequently, there
are paths \(\alpha^T\) from \(\mathcal M\) to \(S_T\nu\) such that
\begin{equation}
\label{eq:small-connector}
    I(\alpha^T)\longrightarrow0
    \qquad\text{as }T\to\infty.
\end{equation}
This is the standard small-action connector near a smooth positive
stationary component.

Concatenating \(\alpha^T\) with \(\widehat\mu^T\) gives a path from
\(\mathcal M\) to \(\nu\in U^c\), and therefore
\[
\begin{aligned}
    \mathsf V(\mathcal M,U^c)
    &\leq
    I(\alpha^T)
    +
    \beta\left[
        \mathcal F_\beta(\nu)
        -
        \mathcal F_\beta(S_T\nu)
    \right].
\end{aligned}
\]
Since
\[
    I(\alpha^T)\longrightarrow0,
    \qquad
    \mathcal F_\beta(S_T\nu)
    \longrightarrow
    \mathcal F_\beta(\mathcal M),
\]
letting \(T\to\infty\) gives
\[
    \mathsf V(\mathcal M,U^c)
    \leq
    \beta\left[
        m_U-\mathcal F_\beta(\mathcal M)
    \right].
\]
Combining the two bounds and using
\eqref{eq:height-boundary-minimum} proves
\[
    \mathsf V(\mathcal M,U^c)
    =
    \beta\left[
        \mathsf H(\mathcal M,U^c)
        -
        \mathcal F_\beta(\mathcal M)
    \right].
\]
The small-connector and reversed-trajectory construction is the local
reversible quasipotential argument of \cite{DawsonGartner1989}.
\end{proof}

\begin{remark}[Role of reversibility]
\label{rem:false-global-qp}
For general nonreversible diffusions in the Freidlin--Wentzell
theory \cite{FreidlinWentzell2012}, the quasipotential is determined
by minimizing the dynamical action and generally has no simple
expression in terms of an energy barrier. In the present model, the reversible gradient structure,
together with the quadratic form of the action, yields the
action--energy identity \eqref{eq:action-energy-identity}. Reversing deterministic
paths and adding connectors of arbitrarily small action then gives
\eqref{eq:local-quasipotential}: under the hypotheses of
Proposition~\ref{prop:local-quasipotential}, the escape cost is
exactly the free-energy barrier multiplied by \(\beta\).
The restriction to one attraction basin is essential. A path crossing several wells may make several separate uphill
excursions, whose action costs add, whereas the communication height
records only the largest free-energy level visited. Thus, in general, for $i \neq j$,
\[
    \Vq(\mathcal M_i,\mathcal M_j)
    \neq
    \beta\bigl[\Hh(\mathcal M_i,\mathcal M_j)-f_i\bigr].
\]
\end{remark}

\section{Deterministic recurrence properties}
\label{sec:deterministic}

In this section, we record recurrence properties of solution trajectories to the McKean--Vlasov quotient semiflow \eqref{eq:MV}.

\begin{lemma}[Convergence to one critical orbit]
\label{lem:deterministic-convergence}
Every solution of \eqref{eq:MV} converges in \(\Qq\) to a single
critical orbit.
\end{lemma}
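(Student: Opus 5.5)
The plan is a LaSalle-type argument on the compact quotient $\Qq$, using the standing hypothesis \eqref{eq:finite-critical-orbits}, the energy dissipation of Lemma~\ref{lem:energy-dissipation}, and parabolic smoothing.

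\emph{Step 1: relative compactness in a stronger topology.} Fix $\mu_0\in\Pp(\T)$ and write $\mu_t=S_t\mu_0=\rho_t\lambda$. For $t\ge1$, positive-time parabolic regularity for \eqref{eq:MV} on $\T$ should give $\rho_t$ bounded in $C^2(\T)$ and bounded below by a positive constant, uniformly in $t\ge1$. The reason is that the drift $W'*\mu_t$ is bounded in $C^3$ uniformly in $t$ and $\mu_0$, since $W\in C^4$. Hence $\{\rho_t:t\ge1\}$ is relatively compact in $C^1(\T)$, and in particular $\F(\mu_t)$ is finite and bounded for $t\ge1$.

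\emph{Step 2: the $\omega$-limit set consists of critical orbits.} By Lemma~\ref{lem:energy-dissipation}, $t\mapsto\F(\mu_t)$ is nonincreasing on $[1,\infty)$ and bounded below by $F_{\min}$, so it converges to some $F_\infty$. Integrating the dissipation gives
\[
\int_1^\infty\int_\T|\partial_x\Psi_{\mu_t}|^2\,\dd\mu_t\,\dd t<\infty .
\]
Let $[\mu_{t_n}]\to q$ along some sequence $t_n\to\infty$. By Step 1, after rotations, $\rho_{t_n}\to\rho$ in $C^1$. Continuity of the semiflow then shows that $\mu_{t_n+s}\to S_s\rho\lambda$ for $s\in[0,1]$. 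Moreover $\F(S_s\rho\lambda)=F_\infty$ for all $s$, because $\F$ is continuous under $C^1$ convergence of positive densities. The equality case of Lemma~\ref{lem:energy-dissipation} then forces $\rho\lambda$ to be stationary. So the $\omega$-limit set $\omega([\mu_0])$ is contained in $\Crit(\F)/\T$.

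\emph{Step 3: a single orbit.} The set $\omega([\mu_0])$ is nonempty and compact because $\Qq$ is compact. It is also connected, since it is the $\omega$-limit set of a continuous curve in a compact metric space. By \eqref{eq:finite-critical-orbits} it is a connected subset of a finite set, hence a single point $\mathcal K$. Since every subsequential limit equals $\mathcal K$ and $\Qq$ is compact, $d_{\Qq}([\mu_t],\mathcal K)\to0$.

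The main obstacle is Step 2: passing the equality case of the dissipation identity through the limit. This requires the uniform $C^1$ compactness and positivity from Step 1, because Wasserstein convergence alone does not control $\F$. I would prove those uniform regularity bounds first, through Duhamel estimates for the linear heat equation with a bounded smooth drift. Connectedness of $\omega([\mu_0])$ and finiteness of the critical orbits then finish the argument with no Łojasiewicz inequality needed.
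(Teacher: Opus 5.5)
Your proposal is correct and follows essentially the same route as the paper: uniform positive-time parabolic regularity gives precompactness in a strong topology, energy dissipation forces every $\omega$-limit point to be stationary, and the connected $\omega$-limit set then lies in the finite set $\Crit(\F)/\T$, so it is a single orbit. The only differences are cosmetic. The paper works with $C^3$ bounds and an $\omega$-limit set in $C^2$, and it invokes forward invariance of that set, which is exactly what your sequential argument over $s\in[0,1]$ establishes.
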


\begin{proof}
Fix \(t_0>0\). Uniform parabolic regularity implies that, for
a fixed regularity index larger than two,
\[
    \sup_{t\ge t_0}\|\rho_t\|_{C^3(\T)}<\infty.
\]
Hence the positive-time orbit is precompact in \(C^2(\T)\).
Its \(\omega\)-limit set
\[
    \omega(\rho_0)
    :=
    \bigcap_{T>0}
    \overline{\{\rho_t:t\ge T\}}^{\,C^2}
\]
is therefore nonempty and compact. It is connected because each
tail \(\{\rho_t:t\ge T\}\) is connected and the \(\omega\)-limit
set is the nested intersection of the closures of these tails.

By Lemma \ref{lem:energy-dissipation},
the function \(t\mapsto\F(\rho_t)\) decreases to some limit
\(\ell_\infty\). If \(\bar\rho\in\omega(\rho_0)\), then
\(\rho_{t_n}\to\bar\rho\) for some \(t_n\to\infty\), and hence
\[
    \F(\bar\rho)=\ell_\infty.
\]
Moreover, the \(\omega\)-limit set is forward invariant:
\(S_s\bar\rho\in\omega(\rho_0)\) for every \(s\ge0\).
Consequently,
\[
    \F(S_s\bar\rho)=\F(\bar\rho)
    \qquad\text{for every }s\ge0.
\]
Applying the energy-dissipation identity to the trajectory
\(S_s\bar\rho\) gives
\[
    0
    =
    \F(S_s\bar\rho)-\F(\bar\rho)
    =-\int_0^s \int_\T
       \abs{\partial_x\Psi_{S_r\bar \rho}}^2\,\dd S_r\bar\rho\,\dd r.
\]
Thus \(\int_\T
       \abs{\partial_x\Psi_{\bar \rho}}^2\,\dd \bar\rho=0\), and therefore
\(\partial_x\Psi_{\bar\rho}=0\). Hence \(\bar\rho\) is stationary.

It follows that the image of \(\omega(\rho_0)\) in
\(\Pp(\T)/\T\) is a connected subset of
\(\Crit(\F)/\T\). Since the latter set is finite, this image is
a singleton. Precompactness then implies that \(\rho_t\)
converges in the quotient to this single critical orbit.
\end{proof}

The following lemma exhibits the existence of arbitrarily small stable gates around the isolated local minima.

\begin{lemma}[Stability of the local minima]
\label{lem:poc-minimum-stability}
Under \eqref{eq:finite-critical-orbits}, every local-minimum orbit
$\mathcal M_i$ is asymptotically stable in $\Qq$: it is Lyapunov
stable and attracts a neighborhood under the McKean--Vlasov quotient semiflow \eqref{eq:MV}. Attraction is uniform on compact
subsets of its basin $\mathcal A_i$, and it admits arbitrarily small
stable gates in the sense of Definition~\ref{def:stable-gate}.
\end{lemma}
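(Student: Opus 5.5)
We use $\F$ as a Lyapunov function on the compact space $\Qq$, together with the isolation of critical orbits from \eqref{eq:finite-critical-orbits} and Lemma~\ref{lem:deterministic-convergence}. The main difficulty is that $\F$ is not continuous in the Wasserstein topology. Sublevel sets of $\F$ are therefore not Wasserstein neighborhoods, and this is the obstacle to handle carefully.

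\textbf{Step 1: a Lyapunov-stable neighborhood via smoothing.} Fix $\mathcal M_i$ and $r>0$ smaller than one third of the distance from $\mathcal M_i$ to the other critical orbits. Since $\mathcal M_i$ is a local minimum of $\F$ and critical orbits are isolated, we can shrink $r$ so that $\F>f_i$ on $\{0<d_{\Qq}(q,\mathcal M_i)\le r\}$. We also aim for a uniform gap $\F\ge f_i+\kappa$ on the annulus $\{r/2\le d_{\Qq}(q,\mathcal M_i)\le r\}$. Lower semicontinuity and compactness give a minimizer $q^*$ of $\F$ on this closed annulus. If $\F(q^*)=f_i$, then $q^*$ would be a local minimizer not lying in $\mathcal M_i$ at the minimal level, contradicting strictness; so we may take $\kappa=\F(q^*)-f_i>0$.

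We cannot directly take the sublevel set $\{\F<f_i+\kappa\}$, because initial data Wasserstein-close to $\mathcal M_i$ may have infinite energy. Instead we use short-time parabolic smoothing. For fixed small $t_0>0$, the map $q\mapsto S_{t_0}q$ is continuous from $(\Qq,d_{\Qq})$ into $C^2$ densities, by uniform parabolic regularity and Lipschitz dependence for the MV equation. Hence $q\mapsto\F(S_{t_0}q)$ is continuous. On $\mathcal M_i$ it equals $f_i$, so for $q$ in a small Wasserstein ball $B_\delta(\mathcal M_i)$ we have $\F(S_{t_0}q)<f_i+\kappa$.

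By finite-time continuity, we also have $d_{\Qq}(S_tq,\mathcal M_i)<r/2$ for $t\le t_0$ once $\delta$ is small. Energy dissipation (Lemma~\ref{lem:energy-dissipation}) keeps $\F(S_tq)<f_i+\kappa$ for $t\ge t_0$. The trajectory is continuous, starts inside $\{d<r/2\}$ at time $t_0$, and cannot cross the annulus, where $\F\ge f_i+\kappa$. So it stays in $B_r(\mathcal M_i)$ for all time, which proves Lyapunov stability.

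\textbf{Step 2: attraction and uniformity.} By Lemma~\ref{lem:deterministic-convergence}, each trajectory from $B_\delta$ converges to one critical orbit. Since it stays in $B_r$, that orbit must be $\mathcal M_i$, so $B_\delta\subset\mathcal A_i$.

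For uniform attraction on a compact $K\subset\mathcal A_i$, given a neighborhood $O\supset B_{r'}$, choose $\delta'$ by Step 1 for $r'$. Each $q\in K$ reaches $B_{\delta'/2}$ at some finite time $t_q$. By continuity of $S_{t_q}$, this holds on a neighborhood of $q$ with $B_{\delta'}$ in place of $B_{\delta'/2}$. A finite subcover gives $T=\max t_q$, and Lyapunov stability then keeps every trajectory in $O$ for $t\ge T$. One must check that $\mathcal A_i$ is open, which follows the same way: a point of $\mathcal A_i$ enters $B_\delta$, and so does a neighborhood of it.

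\textbf{Step 3: stable gates.} Given an open $U$ with $\mathcal M_i\Subset U\subset\mathcal A_i$, pick $r$ with $B_{2r}(\mathcal M_i)\subset U$, and let $\delta$ be as in Step 1. Set $G=\{q:d_{\Qq}(q,\mathcal M_i)<\delta'\}$ for some $\delta'<\delta$.

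We check the gate properties. The closure $\overline G\subset\overline{B_{\delta'}}\subset B_\delta$ is compact, since $\Qq$ is compact. Lyapunov stability gives $S_t(\overline G)\subset B_r\subset U$ for all $t\ge0$. The boundary of $G$ avoids the critical orbits because the only critical orbit within distance $r$ of $\mathcal M_i$ is $\mathcal M_i$ itself, which lies at distance $0<\delta'$. Uniform attraction over $\overline G$ follows from Step 2 with $K=\overline G$.

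Since $\delta'$ can be taken arbitrarily small, this produces arbitrarily small stable gates. The step needing the most care is the continuity of $q\mapsto\F(S_{t_0}q)$ in Wasserstein distance, uniformly near $\mathcal M_i$. I would establish it via $C^2$ parabolic estimates depending only on $t_0$, together with stability of the MV equation in $\Wass$ (Gronwall for the nonlinear McKean coupling), upgraded to smooth convergence of densities by interpolation with the uniform $C^3$ bounds.
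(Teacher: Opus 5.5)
Your proof is correct and follows essentially the same route as the paper. Both arguments obtain an energy gap $\F\ge f_i+\kappa$ away from $\mathcal M_i$ (the paper on a sphere $\{d_{\Qq}(q,\mathcal M_i)=R\}$, you on an annulus) from lower semicontinuity, compactness and isolation of critical orbits. Both then use positive-time smoothing to make the energy close to $f_i$ even for infinite-energy initial data, energy dissipation to trap trajectories, Lemma~\ref{lem:deterministic-convergence} for attraction, a finite-subcover argument for uniform attraction on compacts, and small balls as stable gates.
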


\begin{proof}
Throughout this proof,
$B_r:=\{q\in\Qq:d_{\Qq}(q,\mathcal M_i)<r\}$.
Choose $R>0$ sufficiently small that $\F\ge f_i$ on
$\overline B_{2R}(\mathcal M_i)$ and this closed ball contains no
other critical orbit. Compactness and lower semicontinuity give
\[
 \ell_R:=
 \inf_{\{q:d_{\Qq}(q,\mathcal M_i)=R\}}\F(q)>f_i.
\]
Indeed, a point on this sphere with energy $f_i$ would itself be a
local minimum, hence a different critical orbit. Choose a finite
$h$ with $f_i<h<\ell_R$.

Continuity of the semiflow on $[0,1]$ ensures that trajectories
starting sufficiently close to $\mathcal M_i$ remain in $B_R$
during this interval. Positive-time smoothing implies that, as $q\to\mathcal M_i$,
the densities representing $\overline S_1q$, after suitable
rotations, converge in $C^1$ to a stationary density representing
$\mathcal M_i$. Hence
$\F(\overline S_1q)\to f_i$. Shrinking the initial
neighborhood thus makes $\F(\overline S_1q)<h$. Energy dissipation prevents
any subsequent exit from $B_R$, since reaching its boundary would
require energy at least $\ell_R$. This argument is valid even when
$\F(q)=+\infty$ at time zero.

The radius $R$ can be arbitrarily small, which proves Lyapunov
stability. Lemma~\ref{lem:deterministic-convergence} and the absence
of other critical orbits in $\overline B_R$ give convergence to
$\mathcal M_i$ from the chosen neighborhood.
To obtain uniform attraction on a compact subset of $\mathcal A_i$, fix a compact $F\subset\mathcal A_i$ and a neighborhood $O$ of
$\mathcal M_i$. Lyapunov stability gives an open neighborhood $V$ with
$\overline S_t(V)\subset O$ for all $t\ge0$. The flow started from each $q\in F$ enters $V$
at some time $t_q$, and continuity gives a neighborhood $N_q$
of $q$ such that $\overline S_{t_q}(N_q)\subset V$.
Choose a finite subcover $N_{q_1},\ldots,N_{q_m}$ of $F$ and set
$T=\max_j t_{q_j}$. Every trajectory from $F$ thus enters $V$
by time $T$ and remains in $O$ thereafter. This proves uniform attraction.

Together with isolation of $\mathcal M_i$, these properties imply
that, inside any prescribed neighborhood $U\subset\mathcal A_i$
of $\mathcal M_i$, sufficiently small balls about $\mathcal M_i$
are stable gates in the sense of Definition~\ref{def:stable-gate}.
\end{proof}

Write $\Crit(\F)/\T=\{\mathcal K_1,\ldots,\mathcal K_m\}$.
For each $\mathcal K_i$, choose an open neighborhood $V_i$ so
small that $\overline V_i$ contains no other critical orbit.
For such a neighborhood define the local exit cost
\begin{equation}\label{eq:local-exit-cost}
 d_{\mathrm{loc}}(\mathcal K_i;V_i):=\Vq(\mathcal K_i,V_i^c).
\end{equation}

\begin{lemma}[Non-minimizing components have zero local exit cost]
\label{lem:zero-local-exit-cost}
If \(\mathcal K_i\) is not a local minimum of \(\F\) on \(\Qq\),
then, for every sufficiently small neighborhood \(V_i\) whose closure contains no other critical orbit,
\[
    d_{\mathrm{loc}}(\mathcal K_i;V_i)=0.
\]
\end{lemma}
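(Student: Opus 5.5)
The idea is to find, arbitrarily close to $\mathcal K_i$, a point $\nu$ with $\F(\nu)<\F(\mathcal K_i)$, and then build a cheap path from $\mathcal K_i$ through a neighborhood of $\nu$ out of $V_i$. Along the way we use that the deterministic flow is free and that uphill reversals cost $\beta$ times the energy gain (Lemma~\ref{lem:action-energy}). Concretely, fix $V_i$ with $\overline V_i$ containing no other critical orbit, and let $\varepsilon>0$. We want a path from $\mathcal K_i$ to $V_i^c$ of action at most $\varepsilon$.

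\emph{Step 1 (a nearby lower point).} Since $\mathcal K_i$ is not a local minimum on $\Qq$, there are points $\nu_n\to\mathcal K_i$ with $\F(\nu_n)<k_i:=\F(\mathcal K_i)$. We need the first segment of the path to be cheap, so we upgrade these to smooth points close in $C^1$. Replace $\nu_n$ by $\overline S_1\nu_n$: energy only decreases by Lemma~\ref{lem:energy-dissipation}, and by positive-time smoothing and continuity of the flow (as in the proof of Lemma~\ref{lem:poc-minimum-stability}), suitable rotations of the densities of $\overline S_1\nu_n$ converge in $C^1$ to the stationary density $\bar\rho$ of $\mathcal K_i$.

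\emph{Step 2 (cheap departure).} Connect $\bar\rho\lambda$ to $\overline S_1\nu_n$ by the affine interpolation on $[0,1]$, exactly as in the connector step of Proposition~\ref{prop:local-quasipotential}. This gives action $\le C\|\rho_n-\bar\rho\|_{C^1}^2<\varepsilon$ for $n$ large.

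\emph{Step 3 (free exit).} Run the deterministic flow from $q_n:=\overline S_1\nu_n$; this costs zero action. By Lemma~\ref{lem:deterministic-convergence} the trajectory converges to a single critical orbit $\mathcal K'$, and energy dissipation gives $\F(\mathcal K')\le \F(q_n)<k_i$, so $\mathcal K'\ne\mathcal K_i$. Since $\overline V_i$ contains no critical orbit other than $\mathcal K_i$, $\mathcal K'\notin\overline V_i$. The trajectory therefore leaves $V_i$ in finite time, and we may stop it at the first exit time. Concatenating Steps 2 and 3, using additivity of the quotient action, gives $\Vq(\mathcal K_i,V_i^c)\le\varepsilon$, and $\varepsilon$ is arbitrary.

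\emph{Main obstacle: Step 1.} The subtle point is which topology "local minimum" refers to. If non-minimality gives lower points only in the Wasserstein/quotient topology, the smoothing argument above handles it, provided continuity of $\overline S_1$ at $\mathcal K_i$ yields $C^1$ convergence after rotation. I would prove this by combining parabolic regularization bounds with the fact that $\overline S_1\mathcal K_i=\mathcal K_i$. I would also check the degenerate possibility that $\F(\overline S_1\nu_n)=k_i$ fails to be strict; this cannot happen, because $\F(\nu_n)<k_i$ already and the flow is nonincreasing in energy. The phrase "sufficiently small $V_i$" only needs isolation of $\mathcal K_i$ among critical orbits, which is available from \eqref{eq:finite-critical-orbits}.
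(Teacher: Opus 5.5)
Your proposal is correct and follows essentially the same route as the paper. Both arguments regularize lower-energy approximants by a short run of the flow, join $\mathcal K_i$ to them by a cheap affine connector, and then follow the zero-action deterministic trajectory, which must leave $V_i$ because its limiting critical orbit has energy strictly below $\F(\mathcal K_i)$ and $\overline V_i$ contains no other critical orbit. The only difference is cosmetic: you use a unit-time interpolation with the quadratic bound $C\|\rho_n-\bar\rho\|_{C^1}^2$ that comes from $\Gg(\bar\rho\lambda)=0$, whereas the paper interpolates over duration $t_n=d_n$ and uses the cruder bound $C(d_n^2/t_n+t_n)$.
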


\begin{proof}
Because \(\mathcal K_i\) is not a local minimum, there are smooth
strictly positive measures \(\nu_n\to\mathcal K_i\) in \(V_i\) such
that
\[
    \F(\nu_n)<\F(\mathcal K_i).
\]
To argue the smoothness and positivity, first choose arbitrary
lower-energy measures converging to \(\mathcal K_i\), and then evolve
all of them via \eqref{eq:MV} for one fixed, sufficiently small positive time
\(t_0\).  Continuity and smoothing of \(S_{t_0}\) imply convergence
to \(S_{t_0}\mathcal K_i=\mathcal K_i\) in the $C^2$-norm,
while energy dissipation preserves the strict lower-energy
inequality.  Relabel these regularized measures as \(\nu_n\).

In a small smooth positive neighborhood of the stationary orbit, the
drift and the weighted \(\dot H^{-1}\)-metric depend continuously on
the density.  A short interpolation between sufficiently close
densities therefore has arbitrarily small action.  Indeed, choose \(\bar\nu_n\in\mathcal K_i\) and representatives
\(\bar\nu_n=\bar\rho_n\lambda\), \(\nu_n=\rho_n\lambda\) such that
\[
    d_n:=
    \|(\rho_n-\bar\rho_n)\lambda\|_{\dot H^{-1}(\lambda)}
    \longrightarrow0.
\]
Interpolating linearly from \(\bar\rho_n\) to \(\rho_n\) over a time
\(t_n\), uniform positivity and smoothness give
\[
    I_{[0,t_n]}
    \le C\left(\frac{d_n^2}{t_n}+t_n\right).
\]
Taking \(t_n=d_n\) produces paths from \(\mathcal K_i\) to \(\nu_n\)
whose actions tend to zero.  Hence
\begin{equation}\label{eq:nearby-small-action}
    \Vq(\mathcal K_i,\{\nu_n\})\longrightarrow0.
\end{equation}
The deterministic trajectory from \(\nu_n\) must leave \(V_i\) in
finite time.  Otherwise, Lemma \ref{lem:deterministic-convergence} and the absence of other critical orbits in $\overline V_i$ would force it to converge to \(\mathcal K_i\).
This is impossible because
\[
    \F(S_t\nu_n)
    \leq
    \F(\nu_n)
    <
    \F(\mathcal K_i)
    \qquad(t\geq0).
\]
Concatenate a near-minimizing path from \(\mathcal K_i\) to \(\nu_n\) (with action arbitrarily close to $\Vq(\mathcal K_i,\{\nu_n\})$)
with the zero-action deterministic segment that exits \(V_i\).
Then
\[
    \Vq(\mathcal K_i,V_i^c)
    \leq
    \Vq(\mathcal K_i,\{\nu_n\}),
\]
and \eqref{eq:nearby-small-action} proves the claim.
\end{proof}

Thus non-minimizing critical components can be transition gates or
saddles, but they do not form positive-depth traps by themselves.
The bottom of every positive-depth reversible cycle is made of
local-minimum components.

\section{Formal construction of the reversible cycle hierarchy}
\label{sec:cycle-hierarchy}

In this section, we adapt the cycle hierarchy of Freidlin--Wentzell \cite[Chapter~6]{FreidlinWentzell2012} to our reversible infinite-dimensional setup. An energy cycle groups local minimum orbits such that the energy-barrier (communication height) to go from one orbit to another within the cycle is strictly lower than that to go outside it from any internal orbit. The key quantity that governs exit times of the empirical measure process $[\mu^N]$ from neighborhoods of the cycle-indexed orbits is the \emph{cycle depth}, introduced in Definition~\ref{def:cycle-depth}. This is quantified in the crucial result Theorem~\ref{prop:two-sided-physical-cycles} which, in turn, dictates all the metastability and propagation-of-chaos results.

Recall the collection
\[
    \mathfrak M=\{\mathcal M_1,\ldots,\mathcal M_J\}
\]
of local-minimum orbits of \(\F\) in \(\Qq\).
Assume first that \(J\geq2\).  For distinct \(i,j\), recall
\begin{equation*}
    \Phi_{ij}:=\Hh(\mathcal M_i,\mathcal M_j).
\end{equation*}

\begin{lemma}[Ultrametric inequality for communication heights]
\label{lem:communication-strong-triangle}
For distinct \(i,j,k\),
\begin{equation}
\label{eq:communication-strong-triangle}
    \Phi_{ik}
    \leq
    \max\{\Phi_{ij},\Phi_{jk}\}.
\end{equation}
Moreover,
\begin{equation}
\label{eq:communication-endpoint-lower}
    \Phi_{ij}\geq\max\{f_i,f_j\}.
\end{equation}
\end{lemma}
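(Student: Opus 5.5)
Both inequalities follow directly from the definition \eqref{eq:pairwise-communication-height} of $\Phi_{ij}$ as a minimax over continuous paths in $\Qq$, so the plan is purely path-theoretic.

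\textbf{Ultrametric inequality.} Fix $\varepsilon>0$. We may assume $\Phi_{ij},\Phi_{jk}<\infty$; otherwise \eqref{eq:communication-strong-triangle} is trivial. Choose $\gamma_1\in C([0,1];\Qq)$ with $\gamma_1(0)=\mathcal M_i$, $\gamma_1(1)=\mathcal M_j$ and $\sup_s\F(\gamma_1(s))\le\Phi_{ij}+\varepsilon$. Likewise choose $\gamma_2$ from $\mathcal M_j$ to $\mathcal M_k$ with $\sup_s\F(\gamma_2(s))\le\Phi_{jk}+\varepsilon$.

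Since each $\mathcal M_j$ is a single point of $\Qq$, the endpoint $\gamma_1(1)$ coincides with the starting point $\gamma_2(0)$. Hence the concatenation
\[
 \gamma(s)=\gamma_1(2s)\ \text{ for } s\le\tfrac12,
 \qquad
 \gamma(s)=\gamma_2(2s-1)\ \text{ for } s\ge\tfrac12,
\]
is continuous on $[0,1]$ and joins $\mathcal M_i$ to $\mathcal M_k$. Its maximal free energy is at most $\max\{\Phi_{ij},\Phi_{jk}\}+\varepsilon$, so $\Phi_{ik}\le\max\{\Phi_{ij},\Phi_{jk}\}+\varepsilon$. Letting $\varepsilon\downarrow0$ proves the first claim.

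One subtlety is the quotient convention. The endpoint condition $\gamma(0)=\mathcal M_i$ is read in $\Qq$, where an orbit is a single point, so no rotation mismatch can arise at the junction. If one prefers to work with lifts, a fixed rotation of the second lift aligns the endpoints, and this rotation leaves $\F$ unchanged.

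\textbf{Endpoint lower bound.} Every admissible path $\gamma$ has $\gamma(0)=\mathcal M_i$ and $\gamma(1)=\mathcal M_j$. Therefore
\[
 \sup_s\F(\gamma(s))\ \ge\ \max\{\F(\gamma(0)),\F(\gamma(1))\}=\max\{f_i,f_j\}.
\]
Taking the infimum over all admissible $\gamma$ gives \eqref{eq:communication-endpoint-lower}. If no path has finite energy, the infimum is $+\infty$ and the inequality holds trivially.

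There is no real obstacle here. The only points needing care are that $\Qq$-paths concatenate continuously at the single point $\mathcal M_j$, and that $\F$ is well defined on $\Qq$ by rotation invariance, which was noted in Section~\ref{sec:model}.
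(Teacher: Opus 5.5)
Your proposal is correct and follows essentially the same route as the paper: concatenate near-optimal paths through $\mathcal M_j$ and let $\varepsilon\downarrow0$, then read off the endpoint bound from the fact that every admissible path contains both endpoints. Your extra remarks on each orbit being a single point of $\Qq$, the rotation invariance of $\F$, and the infinite cases fill in details the paper leaves implicit but do not change the argument.
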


\begin{proof}
Concatenate paths from \(\mathcal M_i\) to \(\mathcal M_j\) and from
\(\mathcal M_j\) to \(\mathcal M_k\) whose maximal energies are
within \(\varepsilon\) of the corresponding communication heights.
This gives
\[
    \Phi_{ik}
    \leq
    \max\{\Phi_{ij},\Phi_{jk}\}+\varepsilon.
\]
Let \(\varepsilon\downarrow0\).  The endpoint bound follows because
every admissible path contains both endpoints.
\end{proof}

Let
\[
    \Lambda
    :=
    \{\Phi_{ij}:1\leq i<j\leq J\}.
\]
For \(h\in\Rr\setminus\Lambda\), define
\begin{equation}
\label{eq:height-equivalence-relation}
    i\sim_h j
    \quad\Longleftrightarrow\quad
    i=j
    \ \text{or}\
    \Phi_{ij}<h.
\end{equation}

\begin{lemma}[Height partitions]
\label{lem:height-partitions}
For every \(h\notin\Lambda\), the relation \(\sim_h\) is an
equivalence relation.  Let \(\Pi(h)\) denote its partition of
\(\{1,\ldots,J\}\).  If \(h_1<h_2\), every block of \(\Pi(h_1)\)
is contained in a unique block of \(\Pi(h_2)\).
\end{lemma}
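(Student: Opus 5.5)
The plan is to verify the three axioms of an equivalence relation directly from the definition \eqref{eq:height-equivalence-relation} and Lemma~\ref{lem:communication-strong-triangle}, and then to derive the nesting statement from the monotonicity of the defining strict inequality in $h$.

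\emph{Reflexivity} is immediate from the clause $i=j$.

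\emph{Symmetry}: reversing a path $\gamma$ by $s\mapsto\gamma(1-s)$ preserves continuity and the maximal free-energy along the path. Hence $\Phi_{ij}=\Phi_{ji}$ from \eqref{eq:pairwise-communication-height}, so $\sim_h$ is symmetric.

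\emph{Transitivity}: suppose $i\sim_h j$ and $j\sim_h k$. If any two of $i,j,k$ coincide, the conclusion is trivial or follows from symmetry; for instance, if $i=k$ then $i\sim_h k$ by reflexivity. Otherwise the three indices are distinct, and the ultrametric inequality \eqref{eq:communication-strong-triangle} gives
\[
\Phi_{ik}\le\max\{\Phi_{ij},\Phi_{jk}\}<h,
\]
so $i\sim_h k$.

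The hypothesis $h\notin\Lambda$ is not actually needed for these three axioms. Its role is only to ensure that no pair sits exactly at level $h$, so that the strict inequality defining $\sim_h$ is unambiguous for later use; I will note this in passing.

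\emph{Nesting}: let $h_1<h_2$, both outside $\Lambda$. If $i\sim_{h_1}j$, then either $i=j$ or $\Phi_{ij}<h_1<h_2$, and in both cases $i\sim_{h_2}j$. So $\sim_{h_1}$ refines $\sim_{h_2}$. A block $B$ of $\Pi(h_1)$ is an equivalence class of $\sim_{h_1}$, hence all its elements are pairwise $\sim_{h_2}$-equivalent, so $B$ lies in the $\sim_{h_2}$-class of any of its elements. That class is unique because the blocks of $\Pi(h_2)$ are disjoint and $B$ is nonempty.

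There is no real obstacle here. The only point needing care is the case split in transitivity, since Lemma~\ref{lem:communication-strong-triangle} is stated for distinct indices and $\Phi_{ii}$ is never used.
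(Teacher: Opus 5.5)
Your proof is correct and follows the paper's argument: transitivity comes from the ultrametric inequality of Lemma~\ref{lem:communication-strong-triangle}, and nesting comes from the implication $\Phi_{ij}<h_1\Rightarrow\Phi_{ij}<h_2$. Your extra steps (symmetry via path reversal, the degenerate cases of transitivity, and the observation that $h\notin\Lambda$ is not needed for the equivalence-relation axioms) fill in details the paper leaves implicit.
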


\begin{proof}
Only transitivity is nontrivial, and it follows immediately from
\eqref{eq:communication-strong-triangle}.  Monotonicity follows from
the implication \(\Phi_{ij}<h_1\Rightarrow\Phi_{ij}<h_2\).
\end{proof}

\begin{definition}[Energy cycle]
\label{def:energy-cycle}
A nonempty set \(\mathscr C\subset\{1,\ldots,J\}\) is an
\emph{energy cycle} if it is a block of \(\Pi(h)\) for some
\(h\notin\Lambda\).  The same notation is used for the corresponding
collection of local-minimum orbits.
\end{definition}

\begin{proposition}[Nestedness and finite recursion]
\label{prop:cycles-nested}
Any two energy cycles are disjoint or one contains the other.
\end{proposition}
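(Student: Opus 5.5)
The plan is to use only the ultrametric inequality of Lemma~\ref{lem:communication-strong-triangle} together with the monotonicity in Lemma~\ref{lem:height-partitions}. Let \(\mathscr C_1\) be a block of \(\Pi(h_1)\) and \(\mathscr C_2\) a block of \(\Pi(h_2)\), with \(h_1,h_2\notin\Lambda\). The case \(h_1=h_2\) is immediate: two blocks of one partition are either disjoint or equal. Otherwise we may assume \(h_1<h_2\) after relabelling.

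Suppose \(\mathscr C_1\cap\mathscr C_2\neq\varnothing\), and pick an index \(k\) in the intersection. By Lemma~\ref{lem:height-partitions}, \(\mathscr C_1\) lies inside a unique block \(\mathscr B\) of \(\Pi(h_2)\). Since \(k\in\mathscr C_1\subset\mathscr B\) and also \(k\in\mathscr C_2\), the two blocks \(\mathscr B\) and \(\mathscr C_2\) of the same partition \(\Pi(h_2)\) share the element \(k\). They must therefore coincide, which gives \(\mathscr C_1\subset\mathscr C_2\).

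To make this self-contained, I would check the inclusion directly. Take \(j\in\mathscr C_1\) with \(j\ne k\). Then \(\Phi_{jk}<h_1<h_2\), so \(j\sim_{h_2}k\) and hence \(j\in\mathscr C_2\). This argument uses only the definition \eqref{eq:height-equivalence-relation} and the fact, from Lemma~\ref{lem:height-partitions}, that \(\sim_{h_2}\) is an equivalence relation; transitivity is where the ultrametric inequality \eqref{eq:communication-strong-triangle} enters.

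A subtle point needs attention. A single set \(\mathscr C\) may be a block at several heights, so ``contains'' should be read as set inclusion, not as a comparison of the heights attached to the blocks. The argument above is insensitive to which height represents each cycle.

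The statement's title also mentions ``finite recursion''. I would add a remark that, because \(\{1,\ldots,J\}\) is finite and \(\Lambda\) is a finite set, the partitions \(\Pi(h)\) take only finitely many values. They change only as \(h\) crosses a point of \(\Lambda\). Consequently there are finitely many energy cycles, and they form a finite rooted tree under inclusion. The leaves are the singletons \(\{i\}\), obtained for \(h<\min\Lambda\). The root is the whole index set, obtained for \(h>\max\Lambda\).

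There is no real obstacle here. The only care needed is the case \(J=1\), where \(\Lambda=\varnothing\) and the only cycle is \(\{1\}\); this case is trivial.
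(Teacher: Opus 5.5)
Your proof is correct and follows the paper's argument. You assume $h_1\le h_2$, use Lemma~\ref{lem:height-partitions} to place $\mathscr C_1$ inside a unique block of $\Pi(h_2)$, and conclude that $\mathscr C_1$ is either contained in $\mathscr C_2$ or disjoint from it. Your direct inclusion check and the finite-recursion remark are harmless additions; the paper states the latter in the paragraph following the proposition.
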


\begin{proof}
If \(\mathscr C_r\in\Pi(h_r)\), assume \(h_1\leq h_2\).  By
Lemma \ref{lem:height-partitions}, \(\mathscr C_1\) lies in a unique block
of \(\Pi(h_2)\); hence it is either contained in
\(\mathscr C_2\) or disjoint from it.
\end{proof}

This mechanism gives a hierarchy that has only finitely many levels.  Indeed,
\(\Pi(h)\) can change only when \(h\) crosses one of the finitely many
communication heights in \(\Lambda\).  At every nontrivial change, at
least two existing blocks merge, while no block can split.  Thus the
number of blocks strictly decreases from \(J\) toward \(1\), so there
are at most \(J-1\) nontrivial merger levels.

More precisely, list the distinct values in \(\Lambda\) as
\(
    \lambda_1<\cdots<\lambda_L.
\)
Choose one threshold below \(\lambda_1\), one strictly between each
successive pair, and one above \(\lambda_L\).  The associated
partitions begin with the singleton partition and end with
\(\{\{1,\ldots,J\}\}\).  At level \(\lambda_r\), precisely those
blocks joined at that communication level merge.  Since \(L<\infty\),
the recursion terminates.

If $J=1$, the hierarchy consists only of $\{1\}$ and there
are no proper cycles.  The cycle-exit statements in our main results are then vacuous.

\subsection{Internal levels, exit levels, and bottom sets}

Let $\mathscr C$ be a proper energy cycle:
\[
    \varnothing\neq\mathscr C
    \subsetneq
    \{1,\ldots,J\}.
\]
  Define
\begin{equation}
\label{eq:cycle-internal-level}
    \Lambda_{\mathrm{in}}(\mathscr C)
    :=
    \begin{cases}
       \displaystyle
       \max_{\substack{i,j\in\mathscr C\\i\neq j}}\Phi_{ij},
       &|\mathscr C|\geq2,\\[2mm]
       -\infty,&|\mathscr C|=1,
    \end{cases}
\end{equation}
and its exit communication level
\begin{equation}
\label{eq:cycle-exit-level}
    \Gamma(\mathscr C)
    :=
    \min_{\substack{i\in\mathscr C\\j\notin\mathscr C}}
       \Phi_{ij}.
\end{equation}

\begin{lemma}[Separation of internal and external levels]
\label{lem:cycle-level-separation}
Every proper energy cycle satisfies
\begin{equation}
\label{eq:cycle-level-strict-separation}
    \Lambda_{\mathrm{in}}(\mathscr C)
    <
    \Gamma(\mathscr C).
\end{equation}
\end{lemma}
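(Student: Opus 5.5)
We argue directly from the definition of an energy cycle as a block of a height partition. Let $\mathscr C$ be a proper energy cycle, so $\mathscr C$ is a block of $\Pi(h)$ for some $h\notin\Lambda$. By \eqref{eq:height-equivalence-relation}, two distinct indices lie in the same block of $\Pi(h)$ exactly when their communication height is strictly below $h$. The strategy is to show that $h$ separates the internal level from the exit level:
\[
\Lambda_{\mathrm{in}}(\mathscr C)<h<\Gamma(\mathscr C).
\]
The claimed inequality \eqref{eq:cycle-level-strict-separation} then follows at once.

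For the internal level, first suppose $|\mathscr C|\ge2$. For any distinct $i,j\in\mathscr C$ we have $i\sim_h j$, and since $i\ne j$ this means $\Phi_{ij}<h$. The maximum over the finitely many such pairs is attained, so $\Lambda_{\mathrm{in}}(\mathscr C)<h$ strictly. If instead $|\mathscr C|=1$, then $\Lambda_{\mathrm{in}}(\mathscr C)=-\infty<h$ trivially.

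For the exit level, note that $\mathscr C$ is proper, so the set of pairs $(i,j)$ with $i\in\mathscr C$ and $j\notin\mathscr C$ is nonempty and finite. The minimum defining $\Gamma(\mathscr C)$ is therefore attained and finite. For any such pair, $i$ and $j$ lie in different blocks of $\Pi(h)$, so $i\not\sim_h j$, which gives $\Phi_{ij}\ge h$. Since $h\notin\Lambda$ and $\Phi_{ij}\in\Lambda$, this inequality is strict, $\Phi_{ij}>h$. Taking the minimum gives $\Gamma(\mathscr C)>h$.

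Finiteness of the $\Phi_{ij}$ is not needed here. If some $\Phi_{ij}=+\infty$, the inequalities still hold in the extended reals. This cannot occur in any case, because smooth interpolations give finite-energy paths.

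No step is a real obstacle. The only point needing care is the strictness of the exit inequality, and it comes from the choice $h\notin\Lambda$ in Definition~\ref{def:energy-cycle}. Lemma~\ref{lem:height-partitions} guarantees that $\Pi(h)$ is a genuine partition, so "different blocks" is well defined.
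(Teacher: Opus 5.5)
Your proof is correct and follows essentially the same argument as the paper: you choose $h\notin\Lambda$ with $\mathscr C$ a block of $\Pi(h)$, so internal pairs have $\Phi_{ij}<h$, and external pairs have $\Phi_{ij}\ge h$, with equality excluded because $h\notin\Lambda$. Your extra remarks on the singleton case and on attainment of the finite max/min are harmless additions. The identification $\Phi_{ij}\in\Lambda$ for $i>j$ implicitly uses the symmetry $\Phi_{ij}=\Phi_{ji}$, which holds by path reversal.
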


\begin{proof}
Choose \(h\notin\Lambda\) for which \(\mathscr C\) is a block of
\(\Pi(h)\).  Internal pairs satisfy \(\Phi_{ij}<h\), while every
pair with one index outside satisfies \(\Phi_{ij}>h\); equality is
excluded because \(h\notin\Lambda\).
\end{proof}

Define the bottom energy by
\begin{equation}
\label{eq:cycle-bottom-energy}
    F_{\mathscr C}:=\min_{i\in\mathscr C}f_i.
\end{equation}
We call the orbits minimizing the free-energy among those indexed by $\mathscr C$ as the bottom orbits of the cycle. 

\begin{definition}[Cycle depth]
\label{def:cycle-depth}
The \emph{action-scale depth} of a proper energy cycle is
\begin{equation}
\label{eq:cycle-depth-height}
    d(\mathscr C)
    :=
    \beta\bigl[
       \Gamma(\mathscr C)-F_{\mathscr C}
    \bigr].
\end{equation}
\end{definition}

The depth \(d(\mathscr C)\) governs the exponential scale
of exit times of the empirical measure process $[\mu^N]$ from neighborhoods of the critical orbits indexed by $\mathscr C$. Any finite-action path from a bottom orbit of $\mathscr C$ to a minimum outside \(\mathscr C\) must reach
free-energy at least \(\Gamma(\mathscr C)\). Hence
Lemma~\ref{lem:action-energy} bounds its action below by
\(d(\mathscr C)\). As noted in Remark~\ref{rem:false-global-qp}, this lower bound need not be sharp: a path
crossing several wells may incur successive uphill costs whose cumulative action exceeds the cycle depth. However, the stochastic process $\mu^N$ exhibits \emph{multiscale behavior} where it typically makes many transitions among
lower-rank subcycles before a successful exit from a neighborhood indexed by $\mathscr C$.
The Freidlin--Wentzell cycle recursion
\cite[Chapter~6]{FreidlinWentzell2012} accounts for these repeated
attempts and exhibits the surprising phenomenon that the cycle-exit time indeed behaves like \(\exp\{N d(\mathscr C)\}\) in their finite dimensional small-noise diffusion setting, even when it is smaller than the minimum
action required for a single transition from the cycle bottom
to an outside minimum. In our infinite-dimensional setting,
Theorem~\ref{prop:two-sided-physical-cycles} establishes the
precise exit-time estimates on the exponential scale
\(\exp\{N d(\mathscr C)\}\).

\begin{remark}[A two-well illustration]
\label{rem:two-well-illustration}
Consider a cycle consisting of two local minima
\(\mathcal M_1\) and \(\mathcal M_2\), with
\[
    \F(\mathcal M_1)=0,
    \qquad
    \F(\mathcal M_2)=9.
\]
Suppose that the two wells communicate through a saddle of height
\(10\), and that the cycle can be left through a gate of height \(11\)
accessible from \(\mathcal M_2\).  A natural one-shot path starting
from \(\mathcal M_1\) first climbs to level \(10\) via the time-reversal of a deterministic trajectory (with small added connectors), descends
deterministically into \(\mathcal M_2\), and then climbs from level
\(9\) to the exit gate at level \(11\).  Its action is therefore close
to
\[
    \beta\bigl[(10-0)+(11-9)\bigr]=12\beta.
\]

The stochastic exit mechanism has a smaller exponent.  A transition
from \(\mathcal M_1\) to \(\mathcal M_2\) occurs on the time scale
\(\e^{10\beta N}\).  Once in \(\mathcal M_2\), returning to
\(\mathcal M_1\) requires crossing a barrier of height \(10-9=1\),
whereas exiting the cycle requires crossing a barrier of height
\(11-9=2\).  Hence the probability of exiting before returning is
\[
    \e^{-\beta N+o(N)},
\]
so approximately \(\e^{\beta N}\) toggles between \(\mathcal M_1\) and \(\mathcal M_2\) are
needed for one successful exit.  The resulting exit-time scale is
therefore
\[
    \e^{10\beta N}\e^{\beta N}
    =
    \e^{11\beta N},
\]
corresponding to the cycle depth
\[
    d(\mathscr C)
    =
    \beta\bigl[\Gamma(\mathscr C)-F_{\mathscr C}\bigr]
    =
    11\beta.
\]
Thus the cycle depth records the cumulative effect of repeated
transition attempts and need not coincide with the action of a single
continuous path.
\end{remark}

The following key lemma connects the quasipotential cost between two local minima with the cycle depth.

\begin{lemma}[Exit elevation of an energy cycle]
\label{lem:singleton-exit-cost}
Assume $J \ge 2$. Let $\mathscr C$ be any proper energy cycle. Then
\begin{equation}
\label{eq:cycle-exit-elevation}
    \min_{\substack{i\in\mathscr C\\ j\notin\mathscr C}}
    \left\{
        \beta(f_i-F_{\mathscr C})
        +
        \Vq(\mathcal M_i,\mathcal M_j)
    \right\}
    =
    \beta\bigl(
        \Gamma(\mathscr C)-F_{\mathscr C}
    \bigr)
    =
    d(\mathscr C).
\end{equation}
Equivalently,
\begin{equation}
\label{eq:absolute-cycle-exit-elevation}
    \min_{\substack{i\in\mathscr C\\ j\notin\mathscr C}}
    \left\{
        \beta f_i+\Vq(\mathcal M_i,\mathcal M_j)
    \right\}
    =
    \beta\Gamma(\mathscr C).
\end{equation}
In particular, if $\mathscr C=\{i\}$ is a proper singleton cycle,
then $F_{\mathscr C}=f_i$ and
\begin{equation}
\label{eq:singleton-exit-cost}
    \min_{j\ne i}
    \Vq(\mathcal M_i,\mathcal M_j)
    =
    \beta\bigl(
        \Gamma(\{i\})-f_i
    \bigr)
    =
    d(\{i\}).
\end{equation}
\end{lemma}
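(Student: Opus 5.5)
Only \eqref{eq:absolute-cycle-exit-elevation} needs proof: subtracting $\beta F_{\mathscr C}$ from both sides gives \eqref{eq:cycle-exit-elevation}, and the singleton case is the specialization $F_{\mathscr C}=f_i$. The argument splits into a lower bound and an upper bound.

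\emph{Lower bound.} Fix $i\in\mathscr C$ and $j\notin\mathscr C$, and take any finite-action path from $\mathcal M_i$ to $\mathcal M_j$. This path starts at free-energy $f_i<\infty$, and it is a continuous path in $\Qq$ (finite action gives Wasserstein continuity). By the definition of $\Phi_{ij}$ it must reach free-energy at least $\Phi_{ij}\ge\Gamma(\mathscr C)$. Then \eqref{eq:action-lower-energy} of Lemma~\ref{lem:action-energy} bounds its action below by $\beta(\Gamma(\mathscr C)-f_i)$. Taking infima gives $\beta f_i+\Vq(\mathcal M_i,\mathcal M_j)\ge\beta\Gamma(\mathscr C)$.

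\emph{Upper bound.} The plan is to exhibit one pair $(i,j)$ and a path of action at most $\beta(\Gamma(\mathscr C)-f_i)+\varepsilon$. The natural choice is a pair $(i,j)$, $i\in\mathscr C$, $j\notin\mathscr C$, realizing $\Phi_{ij}=\Gamma(\mathscr C)$. A near-optimal continuous path $\gamma$ for $\Phi_{ij}$, however, may wander through other wells and climb several times, so its action need not be small. To avoid this, I would use a mountain-pass/saddle characterization. By finiteness of $\Crit(\F)/\T$ and compactness, the minimax level $\Gamma(\mathscr C)$ separating the sublevel component containing the cycle from $\mathcal M_j$ is attained at a critical (saddle) orbit $\mathcal K$ with $\F(\mathcal K)=\Gamma(\mathscr C)$. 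This can be justified by a deformation argument via the semiflow $S_t$ and Lemma~\ref{lem:deterministic-convergence}, in the spirit of \cite{GvalaniSchlichting2020}. The saddle $\mathcal K$ has one side flowing down into a well $\mathcal M_{i'}$ with $i'\in\mathscr C$ and another side flowing down into some well $\mathcal M_{j'}$ with $j'\notin\mathscr C$.

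Given this saddle, I build the path in three steps. First, run the reversal of a deterministic trajectory from near $\mathcal M_{i'}$ up to a point near $\mathcal K$. By Lemma~\ref{lem:action-energy} this costs $\beta$ times the energy increase, up to small connectors. These connectors are built as in Proposition~\ref{prop:local-quasipotential}, using $C^1$ proximity after parabolic smoothing: a small affine interpolation at a smooth positive stationary density. Second, pass through a small neighbourhood of $\mathcal K$ at near-zero cost, using the interpolation of Lemma~\ref{lem:zero-local-exit-cost}. Third, follow the zero-action forward flow down into $\mathcal M_{j'}$. The total cost is $\beta(\Gamma(\mathscr C)-f_{i'})+\varepsilon$, with $i'\in\mathscr C$ and $j'\notin\mathscr C$, which gives the upper bound after taking the minimum over pairs.

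\emph{Possible failure of the saddle step.} If the flow from the unstable side of $\mathcal K$ enters a well inside $\mathscr C$ rather than $\mathcal M_{i'}$, I would handle it by induction on the cycle hierarchy. Replacing $i'$ by the lowest well reached only lowers $f_{i'}$ and hence only helps. Alternatively, I would restate the upper bound directly as the minimum over pairs, which is all that is claimed. Since empirical-measure paths near $\mathcal K$ have infinite entropy, all constructions are done on smooth positive densities.

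\emph{Main obstacle.} The hardest step is the existence of an energy-minimal crossing that climbs only once: the mountain-pass identification of a critical orbit at level $\Gamma(\mathscr C)$ with the correct descending sides, together with the low-action approaches to it in the infinite-dimensional quotient space, where Wasserstein closeness does not control energy. I would first try the Gvalani--Schlichting mountain-pass theorem, combined with finiteness of critical orbits, to select $\mathcal K$.
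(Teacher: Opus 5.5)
\paragraph{Lower bound.} Your lower bound is correct and is the paper's argument.

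\paragraph{The gap in the upper bound.} The upper bound rests on an unproved step that, as you state it, is also ill-posed. You need a critical orbit $\mathcal K$ with $\F(\mathcal K)=\Gamma(\mathscr C)$ having ``one side'' that descends into a well of $\mathscr C$ and ``another side'' that descends into a well outside $\mathscr C$. This is not a technicality to defer: it is the entire content of the upper bound. No nondegeneracy of critical orbits is assumed, so there is no unstable manifold that would give $\mathcal K$ two sides. A mountain-pass or deformation argument via $S_t$ would at best tell you that the flowed path keeps some point lingering near a critical orbit at level $\Gamma(\mathscr C)$. By itself it says nothing about where nearby points descend.

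Your fallback paragraph treats only the harmless case, where the descent lands in a different well of $\mathscr C$; that is fine because the statement is a minimum over pairs. It does not treat the real failure mode, in which all nearby descents land on the same side of the cycle boundary.

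Separately, the forward flow from a point near $\mathcal K$ converges, by Lemma~\ref{lem:deterministic-convergence}, only to \emph{some} critical orbit, possibly another saddle. So ``follow the forward flow down into $\mathcal M_{j'}$'' needs an iterated descent. Flow to a critical orbit; if it is not a minimum, use Lemma~\ref{lem:zero-local-exit-cost} to move at vanishing cost to nearby smooth points of strictly lower energy, and flow again. This terminates by finiteness of $\Crit(\F)/\T$.

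\paragraph{The missing idea.} No critical point is needed: the ``saddle'' can be any switching point along a near-minimax path.

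\begin{itemize}
\item Take, as you do, $\bar i\in\mathscr C$ and $\bar j\notin\mathscr C$ with $\Phi_{\bar i\bar j}=\Gamma(\mathscr C)$. Choose a continuous path $\gamma$ from $\mathcal M_{\bar i}$ to $\mathcal M_{\bar j}$ with energy at most $\Gamma(\mathscr C)+\varepsilon$.
\item Replace $\gamma$ by $s\mapsto S_\tau\gamma(s)$. This keeps the endpoints, does not raise the energy, and yields a $C^2$-compact family, modulo rotations, of densities bounded away from zero.
\item By the iterated descent above, every point of this path has zero quasipotential to some well.
\item Let $A$ be the set of $s$ such that $\gamma(s)$ has zero quasipotential to a well in $\mathscr C$. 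Then $0\in A$.
\item If $1\notin A$, connectedness of $[0,1]$ gives $s_n\in A$ and $t_n\notin A$ with a common limit. Affine connectors between $\gamma(s_n)$ and $\gamma(t_n)$ have vanishing action, by $C^1$ closeness and uniform positivity.
\item Concatenate three pieces: the reversed descent from $\gamma(s_n)$ to some $\mathcal M_\ell$ with $\ell\in\mathscr C$; the connector; and the descent from $\gamma(t_n)$ to some $\mathcal M_k$ with $k\notin\mathscr C$. By the reversal identity, the first piece costs $\beta(\F(\gamma(s_n))-f_\ell)+o(1)\le\beta(\Gamma(\mathscr C)+\varepsilon-f_\ell)+o(1)$. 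Hence $\beta f_\ell+\Vq(\mathcal M_\ell,\mathcal M_k)\le\beta(\Gamma(\mathscr C)+\varepsilon)+o(1)$.
\item If $1\in A$, reverse the zero-cost path from $\mathcal M_{\bar j}$ to a well of $\mathscr C$ and use $f_{\bar j}\le\Gamma(\mathscr C)$.
\end{itemize}

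This is exactly where the minimum over pairs is used: the pair $(\ell,k)$ is selected by the switching point, not fixed in advance. The constructed path climbs only once, so your concern about near-minimax paths wandering through several wells does not arise.
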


\begin{proof}
We first record three consequences of the preceding results.

\medskip
\noindent
\emph{Step 1: Time reversal.}
Let $\phi$ be a finite-action path on $[0,T]$ with $\F(\phi_0)<\infty$, and define its time reversal by
\[
    \check\phi_t:=\phi_{T-t},
    \qquad 0\le t\le T.
\]
Expanding the two action functionals, or equivalently using the
action--energy decomposition, gives
\begin{equation}
\label{eq:action-reversal}
    I_{[0,T]}(\phi)-I_{[0,T]}(\check\phi)
    =
    \beta\left[
        \mathcal F_\beta(\phi_T)
        -
        \mathcal F_\beta(\phi_0)
    \right].
\end{equation}
In particular, if $\phi$ runs from $q$ to $q'$, then
\begin{equation}
\label{eq:reverse-action}
    I_{[0,T]}(\check\phi)
    =
    I_{[0,T]}(\phi)
    +
    \beta\left[
        \mathcal F_\beta(q)
        -
        \mathcal F_\beta(q')
    \right].
\end{equation}

\medskip
\noindent
\emph{Step 2: Small connectors on compact regular sets.}
Suppose that $q_n^-,q_n^+$ admit smooth positive representatives
$\rho_n^-\lambda,\rho_n^+\lambda$ such that, after suitable rotations,
\[
    \rho_n^+-\rho_n^-\longrightarrow0
    \qquad\text{in }C^1(\mathbb T),
\]
and that these densities are uniformly bounded away from zero and
uniformly bounded in $C^1(\mathbb T)$. Then
\begin{equation}
\label{eq:compact-small-connector}
    \Vq(q_n^-,q_n^+)\longrightarrow0.
\end{equation}
Indeed, set
\[
    r_n
    :=
    \left\|
        (\rho_n^+-\rho_n^-)\lambda
    \right\|_{\dot H^{-1}(\lambda)}
\]
and, when $r_n>0$, interpolate linearly on $[0,r_n]$:
\[
    \rho_t^{(n)}
    :=
    \left(1-\frac{t}{r_n}\right)\rho_n^-
    +
    \frac{t}{r_n}\rho_n^+.
\]
Uniform positivity makes the weighted negative Sobolev norms
uniformly equivalent to the corresponding norm weighted by
$\lambda$, while
$\Gg(\rho_t^{(n)}\lambda)$ is uniformly bounded in that norm.
Consequently,
\[
    I_{[0,r_n]}(\rho^{(n)}\lambda)
    \le
    C\left(
        \frac{r_n^2}{r_n}+r_n
    \right)
    =
    2Cr_n
    \longrightarrow0.
\]
The case $r_n=0$ is immediate.

\medskip
\noindent
\emph{Step 3: Zero-cost accessibility of a local minimum.}
For every smooth positive $q\in\Qq$, there exists a local-minimum
component $\mathcal M_k$ such that
\begin{equation}
\label{eq:zero-cost-accessibility}
    \Vq(q,\mathcal M_k)=0.
\end{equation}
Indeed, let $\mathcal K_0$ be the critical component approached by
the deterministic trajectory from $q$, as given by
Lemma~\ref{lem:deterministic-convergence}. Following the deterministic
trajectory has zero action, and parabolic regularity together with
Step~2 gives
\[
    \Vq(q,\mathcal K_0)=0.
\]
If $\mathcal K_0$ is a local-minimum component, the claim follows.
Otherwise, the proof of Lemma~\ref{lem:zero-local-exit-cost} provides
smooth points $\nu_n\to\mathcal K_0$ such that
\[
    \mathcal F_\beta(\nu_n)
    <
    \mathcal F_\beta(\mathcal K_0),
    \qquad
    \Vq(\mathcal K_0,\nu_n)\longrightarrow0.
\]
Let $\mathcal K(\nu_n)$ be the critical component approached by the
deterministic trajectory from $\nu_n$. Since there are only finitely
many critical components, after passing to a subsequence we may
assume that
\(
    \mathcal K(\nu_n)=\mathcal K_1
\)
is independent of $n$. Following the deterministic trajectory from
$\nu_n$ and using a small connector near its limit gives
\[
    \Vq(\mathcal K_0,\mathcal K_1)=0.
\]
Moreover,
\[
    \mathcal F_\beta(\mathcal K_1)
    \le
    \mathcal F_\beta(\nu_n)
    <
    \mathcal F_\beta(\mathcal K_0).
\]
If $\mathcal K_1$ is not a local minimum, repeat the argument. At
each step the critical value decreases strictly, so finiteness of
the critical components implies that the procedure terminates at
some local-minimum component $\mathcal M_k$. Concatenating the
arbitrarily small-action paths proves
\eqref{eq:zero-cost-accessibility}.

We now prove \eqref{eq:cycle-exit-elevation}. For every
$i\in\mathscr C$ and $j\notin\mathscr C$, the action--energy lower
bound in Lemma \ref{lem:action-energy} gives
\[
    \Vq(\mathcal M_i,\mathcal M_j)
    \ge
    \beta\bigl(\Phi_{ij}-f_i\bigr).
\]
Since $\Phi_{ij}\ge\Gamma(\mathscr C)$,
\begin{align*}
    \beta(f_i-F_{\mathscr C})
    +
    \Vq(\mathcal M_i,\mathcal M_j)
    &\ge
    \beta(f_i-F_{\mathscr C})
    +
    \beta(\Phi_{ij}-f_i)
    \\
    &=
    \beta(\Phi_{ij}-F_{\mathscr C})
     \ge
    \beta\bigl(
        \Gamma(\mathscr C)-F_{\mathscr C}
    \bigr)
    =
    d(\mathscr C).
\end{align*}
Therefore
\begin{equation}
\label{eq:cycle-exit-elevation-lower}
    \min_{\substack{i\in\mathscr C\\ j\notin\mathscr C}}
    \left\{
        \beta(f_i-F_{\mathscr C})
        +
        \Vq(\mathcal M_i,\mathcal M_j)
    \right\}
    \ge d(\mathscr C).
\end{equation}

For the converse inequality, choose
$\bar i\in\mathscr C$ and $\bar j\notin\mathscr C$ such that
\(
    \Phi_{\bar i \, \bar j}=\Gamma(\mathscr C),
\)
and choose $i_0\in\mathscr C$ such that
\(
    f_{i_0}=F_{\mathscr C}.
\)
If $i_0\ne\bar i$, Lemmas
\ref{lem:communication-strong-triangle} and
\ref{lem:cycle-level-separation} give
\[
    \Phi_{i_0\bar j}
    \le
    \max\{
        \Phi_{i_0\bar i},
        \Phi_{\bar i\bar j}
    \}
    =
    \Gamma(\mathscr C).
\]
The same conclusion is immediate if $i_0=\bar i$. Since
$i_0\in\mathscr C$ and $\bar j\notin\mathscr C$, the definition of
$\Gamma(\mathscr C)$ gives the reverse inequality. Hence
\begin{equation}
\label{eq:bottom-to-cycle-exit-height}
    \Phi_{i_0\bar j}=\Gamma(\mathscr C).
\end{equation}

Fix $\varepsilon>0$. By
\eqref{eq:bottom-to-cycle-exit-height}, there is a continuous path
$\gamma:[0,1]\to\Qq$ satisfying
\[
    \gamma(0)\in\mathcal M_{i_0},
    \qquad
    \gamma(1)\in\mathcal M_{\bar j},
    \qquad
   \sup_{0\le s\le1}
    \mathcal F_\beta(\gamma(s))
    \le
    \Gamma(\mathscr C)+\varepsilon.
\]
For a fixed $\tau>0$, replace $\gamma(s)$ by $S_\tau\gamma(s)$. The endpoints are unchanged, the free-energy does
not increase, and uniform parabolic regularity shows that the
regularized path is a compact $C^2$-continuous family, modulo
rotations, of smooth densities uniformly bounded away from zero. We
continue to denote this path by $\gamma$.

Define
\[
    A_{\mathscr C}
    :=
    \left\{
        s\in[0,1]:
        \Vq(\gamma(s),\mathcal M_\ell)=0
        \text{ for some }\ell\in\mathscr C
    \right\}.
\]
Clearly $0\in A_{\mathscr C}$.

Suppose first that $1\in A_{\mathscr C}$. Then, for some
$\ell\in\mathscr C$,
\[
    \Vq(\mathcal M_{\bar j},\mathcal M_\ell)=0.
\]
Choose paths from $\mathcal M_{\bar j}$ to $\mathcal M_\ell$ whose
actions tend to zero and reverse them. By
\eqref{eq:reverse-action},
\[
    \Vq(\mathcal M_\ell,\mathcal M_{\bar j})
    \le
    \beta(f_{\bar j}-f_\ell).
\]
Since
\[
    f_{\bar j}
    \le
    \Phi_{i_0\bar j}
    =
    \Gamma(\mathscr C),
\]
we obtain
\begin{align*}
    \beta(f_\ell-F_{\mathscr C})
    +
    \Vq(\mathcal M_\ell,\mathcal M_{\bar j})
    &\le
    \beta(f_{\bar j}-F_{\mathscr C})
    \\
    &\le
    \beta\bigl(
        \Gamma(\mathscr C)-F_{\mathscr C}
    \bigr)
    =
    d(\mathscr C).
\end{align*}
Together with \eqref{eq:cycle-exit-elevation-lower}, this proves the
claim in this case.

Suppose now that $1\notin A_{\mathscr C}$. Since
$0\in A_{\mathscr C}$ and $[0,1]$ is connected, there exist
$s_*\in[0,1]$ and sequences
\[
    s_n\in A_{\mathscr C},
    \qquad
    t_n\notin A_{\mathscr C},
    \qquad
    s_n,t_n\longrightarrow s_*.
\]
Set
\[
    q_n^-:=\gamma(s_n),
    \qquad
    q_n^+:=\gamma(t_n).
\]
By Step~2, there are connector paths $\chi_n$ from $q_n^-$ to
$q_n^+$ such that
\(
    I(\chi_n)\longrightarrow0.
\)
For each $n$, choose $\ell_n\in\mathscr C$ such that
\(
    \Vq(q_n^-,\mathcal M_{\ell_n})=0.
\)
By Step~3, choose $k_n$ such that
\[
    \Vq(q_n^+,\mathcal M_{k_n})=0.
\]
Since $t_n\notin A_{\mathscr C}$, necessarily
$k_n\notin\mathscr C$. Passing to a subsequence, we may assume that
\[
    \ell_n=\ell\in\mathscr C,
    \qquad
    k_n=k\notin\mathscr C
\]
are independent of $n$. Choose paths $\psi_n^-$ from $q_n^-$ to
$\mathcal M_\ell$ and paths $\psi_n^+$ from $q_n^+$ to
$\mathcal M_k$ such that
\[
    I(\psi_n^-)\longrightarrow0,
    \qquad
    I(\psi_n^+)\longrightarrow0.
\]

Concatenating $\check\psi_n^-$, $\chi_n$, and $\psi_n^+$ gives a
path from $\mathcal M_\ell$ to $\mathcal M_k$. By
\eqref{eq:reverse-action},
\begin{align*}
    \Vq(\mathcal M_\ell,\mathcal M_k)
    &\le
    I(\check\psi_n^-)
    +
    I(\chi_n)
    +
    I(\psi_n^+)
    \\
    &=
    I(\psi_n^-)
    +
    \beta\left[
        \mathcal F_\beta(q_n^-)-f_\ell
    \right]
    +
    I(\chi_n)
    +
    I(\psi_n^+)
    \\
    &\le
    \beta\left[
        \Gamma(\mathscr C)+\varepsilon-f_\ell
    \right]
    +o(1).
\end{align*}
It follows that
\[
    \beta(f_\ell-F_{\mathscr C})
    +
    \Vq(\mathcal M_\ell,\mathcal M_k)
    \le
    \beta\left[
        \Gamma(\mathscr C)+\varepsilon-F_{\mathscr C}
    \right]
    +o(1).
\]
Letting $n\to\infty$ and then $\varepsilon\downarrow0$ gives the
reverse inequality in
\eqref{eq:cycle-exit-elevation}. This proves
\eqref{eq:cycle-exit-elevation}, and
\eqref{eq:absolute-cycle-exit-elevation} follows by adding
$\beta F_{\mathscr C}$ to both sides. The singleton statement
\eqref{eq:singleton-exit-cost} is the special case
$\mathscr C=\{i\}$.
\end{proof}

\section{Gate transition estimates}
\label{sec:long-time}
In this section, we construct and analyze excursions of the empirical measure process between subsequent arbitrarily small stable gates around the free-energy minima (metastable wells). 
The uniform finite-horizon LDP, deterministic recurrence, and connector
constructions above are then used to give two-sided transition probability estimates in Lemma~\ref{lem:gate-transition-reduction} for the `gate label' process, which records the indices of the successively explored gates. A key technical input, required to utilize the fixed-horizon LDP results in obtaining these transition estimates, is to compare the action of arbitrarily long paths of bounded action with fixed-horizon paths. This is carried out via a `path-shortening' scheme described in Lemma~\ref{lem:uniform-shortening-arbitrary-gate-points} and its proof.

For suitably small $\rho_0>0$ and
\(0<\rho<\rho_0\), choose pairwise disjoint nested gates
\begin{equation}
\label{eq:minimum-gates}
    \mathcal M_i\Subset G_{i,\rho}^-\Subset G_{i,\rho}^+,
    \qquad 1\leq i\leq J,
\end{equation}
whose closures decrease to \(\mathcal M_i\) as \(\rho\downarrow0\)
and whose boundaries are disjoint from the critical orbit set \(\Crit(\F)/\T\).  They are
chosen inside deterministic attraction neighborhoods of the
corresponding local minima.  By Lemma~\ref{lem:poc-minimum-stability}, each inner gate
may be chosen to be a stable gate.
Set
\[
    K_\rho:=\bigcup_{i=1}^J K_{i,\rho},
    \qquad
    K_{i,\rho}:=\overline{G_{i,\rho}^-}.
\]
Let \((\mathcal F_t)_{t\ge0}\) denote the usual augmentation
of the natural filtration generated by $\mathbf X_0^N$ and the driving Brownian motions
\((B^1,\ldots,B^N)\), suppressing its dependence on \(N\).
All entrance and exit times below are stopping times with
respect to \((\mathcal F_t)\).
Write \(X_t^N=[\mu_t^N]\), and define
\[
    \tau_0:=\inf\{t\geq0:X_t^N\in K_\rho\}.
\]
If \(X_{\tau_n}^N\in K_{Y_n,\rho}\), let
\begin{equation}
\label{eq:gate-renewal-times}
\begin{aligned}
    \eta_n
    &:=
    \inf\{t\geq\tau_n:
        X_t^N\notin G_{Y_n,\rho}^+\},\\
    \tau_{n+1}
    &:=
    \inf\{t\geq\eta_n:
        X_t^N\in K_\rho\}.
\end{aligned}
\end{equation}
The label \(Y_{n+1}\) is the unique index such that
\[
    X_{\tau_{n+1}}^N\in K_{Y_{n+1},\rho}.
\]
We use the convention
\(\eta_{-1}:=0\). Once
\(\rho\) has been fixed, we often suppress it from the notation. 

The entrance-point chain
\(Z_n :=X_{\tau_n}^N\) (suppressing the superscript $N$ for notational convenience) is Markov on \(K\).  Its label process
\(Y_n\) need not itself be Markov, because the law of the next label
may depend on the point at which the present gate was entered.  The
uniform transition estimates below are precisely what is needed to
remove this dependence on the exponential scale.

For a path \(\phi \in C([0,\infty);\Qq)\) and $1 \le i, j \le J$, define the direct action cost to go from $\mathcal M_i$ to $\mathcal M_j$ by
\begin{equation}
\label{eq:limiting-direct-cost-variational}
\begin{aligned}
   \Delta_{ij}
    :=
    \inf \Bigl\{
        &I_{[0,T]}(\phi):
        T>0,\quad
        \phi_0 =\mathcal M_i,\quad
        \phi_T =\mathcal M_j,\\
        &\phi_t\notin \{\mathcal M_1, \dots, \mathcal M_J\}
        \text{ for every }0<t<T
    \Bigr\}.
\end{aligned}
\end{equation}

For $i\ne j$, the direct costs are finite. Smooth strictly
positive stationary representatives can be joined by an affine
path with a common positive lower bound for its densities and
bounded spatial derivatives; its action is finite.
Lemma~\ref{lem:critical-contact-removal} below removes any
interior critical contacts with arbitrarily small additional
action.

The quantities $\Delta_{ij}$ are the one-step costs used in the
gate estimates. Their definition imposes an avoidance constraint.
However, by Lemma~\ref{lem:critical-contact-removal}, this constraint does not change the infimum:
\begin{equation}\label{eq:DeltaeqV}
 \Delta_{ij}=\Vq(\mathcal M_i,\mathcal M_j),\qquad i\ne j.
\end{equation}
This equality does not identify the cost with a
communication-height difference or with a cycle depth; a
single path may still accumulate several uphill action costs.

The following lemma gives a lower bound on the gate transition probabilities in terms of the direct costs $\Delta_{ij}$.

\begin{lemma}[Direct lower bound for gate transitions]
\label{lem:direct-gate-transition-lower}
Let \(\Delta_{ij}\) denote the direct
variational cost in \eqref{eq:limiting-direct-cost-variational}.
For every $\alpha>0$, one can choose a shrinking family of nested attracting gates
\[
    \mathcal M_i\Subset G_{i,\rho}^-\Subset G_{i,\rho}^+,
    \qquad
    K_{i,\rho}:=\overline{G_{i,\rho}^-},
    \qquad
    K_\rho:=\bigcup_{i=1}^J K_{i,\rho},
\]
so that the following holds for every fixed sufficiently small $\rho>0$. Write \(X_t^N:=[\mu_t^N]\) with
\(X_0^N = q\in K_{i,\rho}\cap\PN(\T)\), and set
\[
    \sigma_{i,\rho}^N
    :=\inf\{t\geq0:X_t^N\notin G_{i,\rho}^+\},
    \qquad
    \vartheta_{i,\rho}^N
    :=\inf\{t\geq\sigma_{i,\rho}^N:X_t^N\in K_\rho\},
\]
and
\[
    p_{ij}^{N,\rho}(q)
    :=\mathbb P_q\bigl(
        \vartheta_{i,\rho}^N<\infty,\quad
        X_{\vartheta_{i,\rho}^N}^N\in K_{j,\rho}
    \bigr).
\]
For each such fixed choice of gates and fixed $\rho$, there exists $N_0=N_0(\alpha,\rho,\text{gates})<\infty$ such that, simultaneously for every \(i\neq j\), \(1 \le i,j \le J\),
and \(N\geq N_0\),
\begin{equation}
\label{eq:direct-gate-transition-lower}
    \inf_{q\in K_{i,\rho}\cap\PN(\T)}
        p_{ij}^{N,\rho}(q)
    \geq
    \exp\{-N(\Delta_{ij}+\alpha)\}.
\end{equation}
\end{lemma}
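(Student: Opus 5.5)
The plan is to build, for each ordered pair $i\neq j$, a single near-optimal controlled path of fixed finite duration. This path starts from an arbitrary point of $K_{i,\rho}$, leaves $G^+_{i,\rho}$, and enters the interior of $G^-_{j,\rho}$ without touching $K_\rho$ in between. We then apply Corollary~\ref{cor:finite-cover-lower} to a tube around it. Finiteness of the index set gives a common $N_0$ for all pairs.

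\textbf{Near-optimal direct path.} Fix $\alpha>0$. By \eqref{eq:limiting-direct-cost-variational}, choose a path $\phi$ on $[0,T]$ from $\mathcal M_i$ to $\mathcal M_j$ with $I(\phi)\le\Delta_{ij}+\alpha/4$ that avoids all minimum orbits at interior times. Apply the parabolic regularization used in Step~2 of Lemma~\ref{lem:singleton-exit-cost}, if needed via Lemma~\ref{lem:critical-contact-removal}, so that $\phi$ consists of smooth positive densities. By continuity, $\phi$ stays a positive distance from $\bigcup_{k\ne i,j}\mathcal M_k$ on $[0,T]$. Let $t_1$ be its last time in a small closed neighbourhood of $\mathcal M_i$, and $t_2$ its first later time near $\mathcal M_j$.

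We then choose the gates depending on $\phi$ (hence on $\alpha$). This is allowed, since the statement lets the family be chosen for each $\alpha$. Take $\rho$ small enough that $G^+_{i,\rho}$, $G^+_{j,\rho}$ are inside these neighbourhoods and all other $K_{k,\rho}$ avoid $\phi$. Then $\phi|_{[t_1,t_2]}$ exits $G^+_{i,\rho}$, travels away from $K_\rho$, and lands on $\mathcal M_j\Subset G^-_{j,\rho}$. Extend it by a short deterministic segment (zero action) so that it ends strictly inside $G^-_{j,\rho}$ and remains there for a unit time.

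\textbf{Uniform starting points.} The path must start from every $q\in K_{i,\rho}$, including atomic empirical measures with infinite energy. For such $q$, first run the deterministic flow $S_t$ for a fixed time $\tau$ (zero action). Because $K_{i,\rho}$ is a stable gate, $S_\tau q$ is uniformly $C^1$-close to $\mathcal M_i$ after rotation. Then use a Step~2 connector of action $o(1)$ to a representative of $\phi_{t_1}$'s starting orbit, i.e.\ first to $\mathcal M_i$ and then along $\phi$ from its last visit near $\mathcal M_i$. This gives $\rho^q$ with $\rho^q_0=q$, fixed horizon $T'$, and action at most $\Delta_{ij}+\alpha/2$.

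The exit event $E$ is the set of paths that leave $G^+_{i,\rho}$, avoid $K_\rho$ until entering $K_{j,\rho}$, and enter $\operatorname{int}K_{j,\rho}$. On the deterministic initial segment the path stays in $U\supset G^+$ before exit, which is harmless. Because distances to the relevant compact sets are strictly positive along $\rho^q$, except at the initial phase where the event does not constrain $K_{i,\rho}$-visits, a tube $B_{\mathcal X_{T'}}(\rho^q,r_q)$ lies in $E$. Lifting through the quotient action then lets Corollary~\ref{cor:finite-cover-lower}, applied with $K=K_{i,\rho}$ (compact) and $B=\Delta_{ij}+\alpha/2$, give probability at least $e^{-N(\Delta_{ij}+\alpha)}$.

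\textbf{Main obstacle.} The hardest step is making the construction uniform over $q\in K_{i,\rho}$ while keeping the tube inside $E$. In particular, the path from $q$ must not be forced to re-enter $K_{i,\rho}$ after leaving $G^+_{i,\rho}$. I would handle this by having the deterministic-smoothing and connector phase happen entirely inside $G^+_{i,\rho}$; revisits to $K_{i,\rho}$ before $\sigma$ do not matter. After that, follow only the post-$t_1$ portion of $\phi$, which lies outside the neighbourhood of $\mathcal M_i$ containing $G^+_{i,\rho}$ once it exits.
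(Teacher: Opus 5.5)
Your overall architecture is the right one: a near-optimal admissible path for $\Delta_{ij}$, a low-action lead-in from each $q\in K_{i,\rho}$ to $\mathcal M_i$, an open tube, the locally uniform LDP lower bound, and a finite cover of $K_{i,\rho}$. However, the step you flag as the main obstacle is not actually closed.

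\textbf{The gap.} The concatenated path, after it first leaves $G^+_{i,\rho}$, must stay a positive distance from $K_{k,\rho}$ for every $k\neq j$, \emph{including} $k=i$. Otherwise the tube contains paths whose first post-exit entrance is into $K_{i,\rho}$. Your gate conditions only require that $G^+_{i,\rho}$ lies inside the neighbourhood $N_i$ used to define $t_1$, and that the $K_{k,\rho}$, $k\neq i,j$, avoid $\phi$. This leaves two options, and neither works as stated.
\begin{enumerate}[label=(\alph*)]
\item If you follow $\phi$ from time $0$, nothing prevents $\phi|_{[0,t_1]}$ from leaving $G^+_{i,\rho}$ and later returning into $K_{i,\rho}\subset N_i$.
\item If you follow only $\phi|_{[t_1,T]}$, as your last paragraph says, you must bridge from $\mathcal M_i$ (or $S_\tau q$) to $\phi_{t_1}$ at cost $o(1)$. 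That needs $\phi_{t_1}$ to be $C^1$-close to $\mathcal M_i$, which Wasserstein closeness does not give. Any such bridge costs at least $\beta(\F(\phi_{t_1})-f_i)$, which need not be small.
\end{enumerate}
The smoothing you invoke to supply this $C^1$-closeness is not available. In the proof of Lemma~\ref{lem:singleton-exit-cost}, $S_\tau$ is applied to a path whose only relevant feature is its free-energy profile. Nothing in the paper or in your proposal bounds the action of $t\mapsto S_\tau\phi_t$ by that of $\phi$ up to $\alpha/4$. Lemma~\ref{lem:critical-contact-removal} does not produce smooth densities either, since it keeps the retained pieces of the original path.

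\textbf{The fix.} Order the choices as the paper does, and drop $t_1$, $t_2$ and the smoothing of $\phi$ altogether.
\begin{enumerate}[label=(\arabic*)]
\item Fix outer gates with pairwise disjoint closures.
\item Pick $a\in(0,T)$ with $\phi([0,a])\subset G^+_{i,\rho}$.
\item Only then shrink the inner gates so that $\phi([a,T])\cap\bigcup_{k\neq j}K_{k,\rho}=\varnothing$. This is possible because $\phi([a,T])$ is compact and, by the admissibility constraint in \eqref{eq:limiting-direct-cost-variational}, misses every $\mathcal M_k$ with $k\neq j$. The case $k=i$ is exactly where the avoidance constraint is used.
\end{enumerate}
Now concatenate your lead-in (deterministic flow plus affine connector, kept inside $G^+_{i,\rho}$) with the \emph{whole} of $\phi$. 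The resulting path stays in the open set $G^+_{i,\rho}$ until (lead-in duration $+\,a$). Afterwards it avoids the closed sets $K_{k,\rho}$, $k\neq j$, and it ends at $\mathcal M_j\subset G^-_{j,\rho}$, which lies outside $G^+_{i,\rho}$ by disjointness. All three properties survive small uniform perturbations, so a tube lies in the transition event, and your finite-cover argument then goes through unchanged. No regularity of $\phi$ is needed. The deterministic extension near $\mathcal M_j$ is also unnecessary, since $\phi_T\in\mathcal M_j$ already lies in the open inner gate.
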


\begin{proof}
We use the open-tube argument of
\cite[Chapter~6, \S2, proof of Lemma~2.1]{FreidlinWentzell2012}.
For each \(i\neq j\), recalling \(\Delta_{ij}<\infty\), choose a path
\(\phi^{ij}:[0,T_{ij}]\to\Qq\) admissible for
\eqref{eq:limiting-direct-cost-variational}, with
\[
    I_{[0,T_{ij}]}(\phi^{ij})
    \leq \Delta_{ij}+\frac{\alpha}{3}.
\]
Choose arbitrarily small outer gates with pairwise disjoint closures.
For each path, choose \(a_{ij}\in(0,T_{ij})\) such that
\(\phi^{ij}([0,a_{ij}])\subset G_{i,\rho}^+\).
The compact tail \(\phi^{ij}([a_{ij},T_{ij}])\) avoids
\(\mathcal M_i\), and the full path avoids every
\(\mathcal M_k\), \(k\notin\{i,j\}\).  Thus the inner gates
can be chosen sufficiently small, simultaneously for all these paths,
that
\[
    \phi^{ij}([a_{ij},T_{ij}])
    \cap\bigcup_{k\neq j}K_{k,\rho}=\varnothing.
\]
Choose them also so that deterministic
trajectories starting in \(K_{i,\rho}\) remain in
\(G_{i,\rho}^+\) and converge to \(\mathcal M_i\).

For each \(q\in K_{i,\rho}\), follow its zero-action deterministic
trajectory and then append the small affine connector from
\eqref{eq:compact-small-connector}.  Attraction and positive-time
smoothing give a path \(\gamma^q: [0,S_q] \rightarrow \Qq\) from \(q\) to
\(\mathcal M_i\), contained in \(G_{i,\rho}^+\), with action
less than \(\alpha/3\).  Concatenate it with \(\phi^{ij}\),
aligning representatives by a rotation if necessary.  The resulting
path \(\psi^q\), of duration \(L_q\), satisfies
\[
    I_{[0,L_q]}(\psi^q)
    \leq \Delta_{ij}+\frac{2\alpha}{3}.
\]
Its initial segment, through time \(S_q+a_{ij}\), lies in
\(G_{i,\rho}^+\);
its remaining segment avoids every \(K_{k,\rho}\), \(k\neq j\),
and its endpoint lies in \(G_{j,\rho}^-\).
By compactness of these path segments, every path in a sufficiently small open
uniform tube \(B_q\) around \(\psi^q\) has the same properties.
Every path in this tube therefore makes a successful
\(j\)-transition by time \(L_q\).

Proposition~\ref{prop:uniform-LDP}\textup{(ii)}, applied to the
rotation-invariant lift of \(B_q\), gives a neighborhood \(V_q\)
of \(q\) such that, for all sufficiently large \(N\) and every
\(\nu\in V_q\cap K_{i,\rho}\cap\PN(\T)\),
\[
\begin{aligned}
    p_{ij}^{N,\rho}(\nu)
    &\geq\mathbb P_\nu\bigl(X^N|_{[0,L_q]}\in B_q\bigr)\\
    &\geq\exp\left\{-N\left(
        I_{[0,L_q]}(\psi^q)+\frac{\alpha}{3}\right)\right\}
    \geq\exp\{-N(\Delta_{ij}+\alpha)\}.
\end{aligned}
\]
A finite subcover of the compact set \(K_{i,\rho}\), followed by
a maximum over the finitely many ordered pairs, gives a single
threshold $N_0$, depending on the fixed gates. No uniformity
of $N_0$ as $\rho\downarrow0$ is asserted.
\end{proof}

Now, we work towards a comparable upper bound, which is considerably more involved.
The following `path-shortening' lemma is key to our analysis. It enables us to replace paths of uniformly bounded action by paths of uniformly bounded duration.

Recall \(\operatorname{Crit}(\mathcal F)/\mathbb T=\{\mathcal K_1,\ldots,\mathcal K_m\}\), and set
\[
e_\alpha:=\mathcal F(\mathcal K_\alpha),\qquad 1\le\alpha\le m.
\]

\begin{lemma}[Path-shortening]
\label{lem:uniform-shortening-arbitrary-gate-points}
Assume that
\(
    \operatorname{Crit}(\mathcal F)/\mathbb T
\)
is finite.
For every \(A<\infty\) and \(\varepsilon>0\), there exists
\(T_*=T_*(A,\varepsilon)<\infty\) such that the following holds.

Let
\[
    \phi:[0,T]\longrightarrow\mathcal Q
\]
be any finite-action path satisfying
\[
    I_{[0,T]}(\phi)\leq A.
\]
Then there is a finite-action path
\[
    \widetilde\phi:
    [0,\widetilde T]\longrightarrow\mathcal Q
\]
such that
\[
    \widetilde\phi_0=\phi_0,
    \qquad
    \widetilde\phi_{\widetilde T}=\phi_T,
    \qquad
    \widetilde T\leq T_*,
\]
and
\[
    I_{[0,\widetilde T]}(\widetilde\phi)
    \leq
    I_{[0,T]}(\phi)+\varepsilon.
\]
\end{lemma}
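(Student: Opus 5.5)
The plan is to show that a path of action at most $A$ can spend only a bounded amount of time away from small neighborhoods of the critical orbits. Time spent near critical orbits is then excised and replaced by short connectors, in the spirit of \cite[Chapter~6]{FreidlinWentzell2012}.

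The first step is to fix small neighborhoods $V_\alpha$ of the critical orbits $\mathcal K_\alpha$, with pairwise disjoint closures. Let $O$ be the complement of their union. The key estimate is a \emph{low-action dissipation bound}: there exist $\kappa>0$ and $t_0>0$ such that every path segment of duration $t_0$ lying entirely in $O$ has action at least $\kappa$. I would prove this by contradiction, using compactness of action sublevel sets (Proposition~\ref{prop:uniform-LDP}). A sequence of such segments with action tending to zero would have a limit that is a zero-action path, hence a deterministic trajectory of duration $t_0$. That trajectory would lie in $\overline O$, so it avoids the critical orbits; this contradicts nothing by itself, because deterministic trajectories may spend time $t_0$ away from critical orbits.

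The correct statement is therefore quantitative. The energy dissipation (Lemma~\ref{lem:energy-dissipation}), combined with parabolic smoothing and compactness, gives a uniform lower bound $\int_\T|\partial_x\Psi|^2\,\dd\mu\ge c>0$ for smooth measures at positive time that stay in $O$. Along a finite-action path with finite initial energy, the action--energy identity (Lemma~\ref{lem:action-energy}) then yields
\[
 c\,|\{t:\phi_t\in O\}| \;\lesssim\; A+\beta\,(\F(\phi_0)-\F(\phi_T)).
\]
The obstacle here is that the endpoints may be empirical (atomic) measures with infinite energy, as the lemma allows arbitrary endpoints. I would handle this by working with the control representation and the a~priori Fisher-information bound stated before Lemma~\ref{lem:energy-dissipation}, after waiting a short fixed time $\tau$. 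Alternatively, I would argue through the LDP-compactness contradiction on segments of fixed length $t_0$ starting at times $\ge\tau$, so that the relevant limits are smooth. Either route gives a bound $T_O(A)$ on the total time the path spends in $O$, at least after an initial segment of length $\tau$.

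The second step is excision near critical orbits. The path enters and leaves the neighborhoods $V_\alpha$ at most $M(A)$ times. The reason is that each crossing between a small ball $V_\alpha'\Subset V_\alpha$ and $\partial V_\alpha$ consumes either time in $O$ or action, and the time in $O$ is bounded by the first step. For each maximal sojourn in $V_\alpha$ whose duration exceeds a fixed $s_0$, I would replace the middle portion by a short path. Let $t_1$ and $t_2$ be the times shortly after entry and shortly before exit; by smoothing, $\phi_{t_1}$ and $\phi_{t_2}$ are smooth positive densities. The replacement follows the deterministic flow from $\phi_{t_1}$, which costs zero action and descends in energy, then uses the affine $C^1$ connectors of Step~2 in the proof of Lemma~\ref{lem:singleton-exit-cost}, and finally climbs back to $\phi_{t_2}$. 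The cost of this detour must be compared with the original segment's action, and the comparison comes from the reversal identity \eqref{eq:reverse-action} and the action--energy lower bound \eqref{eq:action-lower-energy}. The net extra cost is $O(\text{energy oscillation within }V_\alpha)+o(1)$, and this oscillation can be made smaller than $\varepsilon/M(A)$ by shrinking $V_\alpha$, since $\F$ is continuous in $C^1$ near smooth stationary orbits.

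I expect the main obstacle to be this comparison at saddle-type orbits. A path may pass near a saddle using a tiny action and a long time, so deterministic descent from $\phi_{t_1}$ may exit $V_\alpha$ in the wrong direction. To avoid this, I would join $\phi_{t_1}$ directly to a point of $\mathcal K_\alpha$ and then to $\phi_{t_2}$ with affine connectors. The cost of this route is controlled by the $C^1$ distances of $\phi_{t_1}$ and $\phi_{t_2}$ to $\mathcal K_\alpha$, which are small after smoothing and a choice of small $V_\alpha$. Summing over at most $M(A)$ excisions, together with the bounded time in $O$ and the fixed lengths $\tau$ and $s_0$, gives $T_*(A,\varepsilon)$.
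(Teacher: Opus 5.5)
Your first step is essentially sound, modulo the energy bound discussed below. The dissipation gap does hold on the complement of any open neighborhood of the critical set, and integrating the controlled energy inequality then bounds the time spent there. The excision step, however, has a genuine gap.

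\textbf{Connector cost.} You claim that the oscillation of $\F$ over a small Wasserstein neighborhood $V_\alpha$ can be made small, and that the cut points $\phi_{t_1},\phi_{t_2}$ are $C^1$-close to $\mathcal K_\alpha$ after smoothing. Both claims are false. $\F$ is only lower semicontinuous for $\Wass$, so every Wasserstein neighborhood of $\mathcal K_\alpha$ contains measures of arbitrarily large free energy. Points of a controlled path are also not regularized by the flow; only $\overline S_1\phi_t$ is, so Wasserstein proximity gives no $C^1$ proximity. By Lemma~\ref{lem:action-energy}, any path from $\mathcal K_\alpha$ to $\phi_{t_2}$ costs at least $\beta(\F(\phi_{t_2})-e_\alpha)$. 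Hence your affine route $\phi_{t_1}\to\mathcal K_\alpha\to\phi_{t_2}$ does not have small action, and you give no comparison with the original segment for it. For the other route (flow, affine piece, reversed flow), comparing with the original segment leaves an excess of order $\beta(\F(\phi_{t_1})-e_\alpha)$. Nothing in your setup controls that quantity.

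The missing idea is to cut at energy-restricted cores $L_{\alpha,\eta}=\{q\in\overline U_\alpha:\F(q)\le e_\alpha+\eta\}$ rather than at Wasserstein neighborhoods. For $q,q'\in L_{\alpha,\eta}$, the connector $q\to\overline S_1q\to\overline S_1q'\to q'$ costs at most $2\beta\eta+\delta$. This uses only lower semicontinuity at $\overline S_1q'$ and $\overline S_t(\overline U_\alpha)\subset V_\alpha$ for $t\le1$, so saddles cause no difficulty. Crucially, the dissipation gap survives outside the cores, not just outside $\bigcup V_\alpha$. If $\mathcal D(q_n)\to0$, then $\sqrt{\rho_n}$ is bounded in $H^1$, the densities converge uniformly to a stationary one, and hence $\F(q_n)\to e_\alpha$.

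\textbf{Counting and the initial energy.} Two further ingredients are missing. First, your sojourn count $M(A)$ depends on the sizes of $V'_\alpha\Subset V_\alpha$, through the dissipation gap and the crossing estimates, both of which degenerate as these sets shrink. Shrinking $V_\alpha$ until the per-excision cost is below $\varepsilon/M(A)$ is therefore circular. Also, the annulus $V_\alpha\setminus V'_\alpha$ lies in $V_\alpha$, not in $O$, so crossings do not consume time in $O$ as you claim. The paper avoids counting altogether by chronological loop erasure. It cuts from the first entrance into any core to the \emph{last} visit of that same core, so each core is used at most once. This gives at most $m$ replacements, independently of $\eta$ and $\delta$, and one then takes $m(2\beta\eta+\delta)\le\varepsilon$.

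Second, bounding the time outside the cores requires $\F(\phi_1)\le C_{\mathrm{reg}}(A)$ uniformly over arbitrary, possibly atomic, initial points $\phi_0$. The a~priori Fisher-information bound you invoke presupposes $\F(\phi_0)<\infty$ and is not uniform. The paper writes $\phi$ as the marginal flow of a controlled diffusion via the superposition principle. It bounds the path-space relative entropy against the uncontrolled diffusion by $A$, then uses Aronson's upper bound on the time-one density of the uncontrolled diffusion to get $\Ent(\phi_1\mid\lambda)\le A+\log C_{\mathrm{hk}}$. With cores, loop erasure, and this regularization in place, the inequality $\frac{d_\eta}{2}(b-a)\le\F(\psi_a)-\F(\psi_b)+\frac2\beta I_{[a,b]}(\psi)$, applied to the at most $m+1$ retained excursions, yields $T_*(A,\varepsilon)$.
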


\begin{proof}
The proof is divided into four steps.

\medskip
\noindent
\emph{Step 1: Low-energy cores and local connectors.}
For each critical component \(\mathcal K_\alpha\), choose a smooth
strictly positive stationary representative
\(\bar\rho_\alpha\lambda\), and choose pairwise disjoint
neighborhoods
\[
    \mathcal K_\alpha
    \Subset U_\alpha
    \Subset V_\alpha
    \Subset\mathcal Q
\]
containing no other critical component.  These neighborhoods will be
made smaller below.

Fix \(\eta,\delta>0\).  By lower semicontinuity of \(\mathcal F\),
the \(V_\alpha\) may be chosen so that
\begin{equation}
\label{eq:shortening-energy-lower-near-critical}
    \mathcal F(q)\geq e_\alpha-\eta,
    \qquad q\in\overline V_\alpha.
\end{equation}
By continuity of the quotient semiflow and stationarity of
\(\mathcal K_\alpha\), the \(U_\alpha\) may then be chosen so that
\begin{equation}
\label{eq:shortening-flow-stays-local}
    \overline S_t(\overline U_\alpha)
    \subset V_\alpha,
    \qquad 0\leq t\leq1.
\end{equation}

Positive-time parabolic smoothing implies that
\(
    q\longmapsto \overline S_1q
\)
is continuous from the Wasserstein topology into the quotient
\(C^2\)-topology.  Consequently, after shrinking \(U_\alpha\), every
\(\overline S_1q\), \(q\in\overline U_\alpha\), has a representative
whose density is arbitrarily close in \(C^2\) to
\(\bar\rho_\alpha\), after applying a suitable rotation.

We use the following elementary local fact. If two smooth positive
densities \(\rho^0,\rho^1\) are sufficiently close in \(C^1\) to the
same stationary density \(\bar\rho_\alpha\), then they can be joined
in time \(1\), inside \(V_\alpha\), with action at most \(\delta\).
Indeed, consider
\[
    \rho_t:=(1-t)\rho^0+t\rho^1,
    \qquad 0\leq t\leq1.
\]
If the \(C^1\)-neighborhood is sufficiently small, then
\(\rho_t\geq c_\alpha>0\) and \([\rho_t\lambda]\in V_\alpha\).
Set
\[
    s_t
    :=
    \partial_t(\rho_t\lambda)
    -
    \mathcal G(\rho_t\lambda).
\]
This is a smooth signed distribution of total mass zero.  In one
dimension, choose the mean-zero periodic primitive \(J_t\) satisfying
\(
    -\partial_xJ_t=s_t
\)
and set
\(
    u_t:=\frac{J_t}{\rho_t}.
\)
Then
\[
    s_t=-\partial_x(\rho_tu_t)\lambda
\]
and
\[
    \|s_t\|_{\dot H^{-1}(\rho_t\lambda)}^2
    \leq
    \int_{\mathbb T}\frac{|J_t|^2}{\rho_t}\,\dd\lambda
    \leq
    c_\alpha^{-1}\|J_t\|_{L^2(\lambda)}^2.
\]
Since \(\bar\rho_\alpha\) is stationary, the right-hand side tends
uniformly to zero as
\[
    \|\rho^0-\bar\rho_\alpha\|_{C^1}
    +
    \|\rho^1-\bar\rho_\alpha\|_{C^1}
    \longrightarrow0.
\]
Thus the interpolation has action at most \(\delta\).

Define the low-energy core
\[
    L_{\alpha,\eta}
    :=
    \left\{
        q\in\overline U_\alpha:
        \mathcal F(q)\leq e_\alpha+\eta
    \right\}.
\]
We claim that any two points \(q,q'\in L_{\alpha,\eta}\) can be
joined inside \(V_\alpha\), in time at most \(3\), with action at
most
\begin{equation}
\label{eq:shortening-local-connector-cost}
    2\beta\eta+\delta.
\end{equation}
Indeed, concatenate the following three paths:
\[
    q\longrightarrow\overline S_1q,
    \qquad
    \overline S_1q\longrightarrow\overline S_1q',
    \qquad
    \overline S_1q'\longrightarrow q'.
\]
The first is a deterministic trajectory and therefore has zero
action.  The middle path is the preceding smooth interpolation and
has action at most \(\delta\).  The last path is the reversal of the
deterministic segment from \(q'\) to \(\overline S_1q'\), and hence
has action
\[
    \beta\left[
        \mathcal F(q')
        -
        \mathcal F(\overline S_1q')
    \right].
\]
By the definition of \(L_{\alpha,\eta}\) and
\eqref{eq:shortening-energy-lower-near-critical},
\[
    \mathcal F(q')\leq e_\alpha+\eta,
    \qquad
    \mathcal F(\overline S_1q')\geq e_\alpha-\eta.
\]
The reversed segment therefore has action at most \(2\beta\eta\),
which proves \eqref{eq:shortening-local-connector-cost}.  By
\eqref{eq:shortening-flow-stays-local}, the entire connector remains
inside \(V_\alpha\).  Notice that this argument uses no attracting
property of \(\mathcal K_\alpha\), and therefore applies equally to
local minima and saddles.

\medskip
\noindent
\emph{Step 2: A positive dissipation gap outside the cores.}
For \(\mu\in\mathcal P(\mathbb T)\), define
\begin{equation}
\label{eq:dissipation-extended-definition}
    \mathcal D(\mu)
    :=
    \begin{cases}
    \displaystyle
    \int_{\mathbb T}
        \left|
            2\beta^{-1}\partial_x\sqrt\rho
            +
            \sqrt\rho\,(W'*\mu)
        \right|^2
        \,\dd\lambda,
    &
    \begin{array}{l}
    \mu=\rho\lambda,\\[-1mm]
    \sqrt\rho\in H^1(\mathbb T),
    \end{array}
    \\[6mm]
    +\infty,
    &\text{otherwise.}
    \end{cases}
\end{equation}
We claim that
\begin{equation}
\label{eq:shortening-dissipation-gap}
    d_\eta
    :=
    \inf_{
        q\in\mathcal Q\setminus
        \bigcup_{\alpha=1}^mL_{\alpha,\eta}
    }
    \mathcal D(q)
    >0.
\end{equation}
Suppose otherwise.  There would exist
\(
    q_n=[\rho_n\lambda]
    \notin\bigcup_{\alpha=1}^mL_{\alpha,\eta}
\)
such that
\(
    \mathcal D(q_n)\longrightarrow0.
\)
The identity
\[
    2\beta^{-1}\partial_x\sqrt{\rho_n}
    =
    \sqrt{\rho_n}
    \left(
        \beta^{-1}\partial_x\log\rho_n
        +W'*\rho_n
        -W'*\rho_n
    \right)
\]
gives
\begin{equation}
\label{eq:shortening-fisher-bound}
    4\beta^{-2}
    \int_{\mathbb T}
        |\partial_x\sqrt{\rho_n}|^2\,\dd\lambda
    \leq
    2\mathcal D(q_n)+2\|W'\|_\infty^2.
\end{equation}
Thus \((\sqrt{\rho_n})\) is bounded in \(H^1(\mathbb T)\).  Since
\(\mathbb T\) is one-dimensional, after passing to a subsequence,
\(
    \sqrt{\rho_n}\longrightarrow f
    \qquad\text{uniformly on }\mathbb T.
\)
Hence
\[
    \rho_n\longrightarrow\rho:=f^2
    \qquad\text{uniformly},
\]
and \(\int\rho\,\dd\lambda=1\).
Let
\[
    J_n
    :=
    \beta^{-1}\partial_x\rho_n
    +
    \rho_n(W'*\rho_n)
    =
    \rho_n
    \left(
        \beta^{-1}\partial_x\log\rho_n
        +W'*\rho_n
    \right).
\]
By Cauchy--Schwarz,
\[
    \|J_n\|_{L^1(\lambda)}
    \leq
    \mathcal D(q_n)^{1/2}
    \longrightarrow0.
\]
Passing to the limit in the distributional identity defining \(J_n\)
gives
\[
    \beta^{-1}\partial_x\rho
    +
    \rho(W'*\rho)
    =0.
\]
Thus \(\rho\lambda\) is stationary.  In particular,
\(
    q:=[\rho\lambda]=\mathcal K_\alpha
\)
for some \(\alpha\).

Moreover, uniform convergence of the densities gives
\[
    \operatorname{Ent}(\rho_n\lambda\mid\lambda)
    \longrightarrow
    \operatorname{Ent}(\rho\lambda\mid\lambda),
\]
because \(r\mapsto r\log r\) is continuous on bounded intervals.
The interaction energies also converge, so
\[
    \mathcal F(q_n)\longrightarrow
    \mathcal F(q)=e_\alpha.
\]
Since \(q_n\to\mathcal K_\alpha\), we have
\(q_n\in U_\alpha\) for all sufficiently large \(n\).  But
\(q_n\notin L_{\alpha,\eta}\), and therefore
\[
    \mathcal F(q_n)>e_\alpha+\eta,
\]
contradicting \(\mathcal F(q_n)\to e_\alpha\).  This proves
\eqref{eq:shortening-dissipation-gap}.

\medskip
\noindent
\emph{Step 3: Positive-time energy regularization.}
The preceding energy argument cannot be started directly at an
arbitrary gate point, because \(\mathcal F\) is not bounded above on
a Wasserstein neighborhood.  We therefore prove that bounded action
gives a uniform positive-time energy bound.

There exists \(C_{\mathrm{reg}}(A)<\infty\) such that
\begin{equation}
\label{eq:shortening-positive-time-regularization}
    T\geq1,\quad I_{[0,T]}(\phi)\leq A
    \quad\Longrightarrow\quad
    \mathcal F(\phi_1)\leq C_{\mathrm{reg}}(A).
\end{equation}
Indeed, let \(u\) denote the minimal control representing \(\phi\):
\begin{equation}\label{eq:control}
    \partial_t\phi_t
    =
    \mathcal G(\phi_t)
    -
    \partial_x(\phi_tu_t),
    \qquad
    \frac{\beta}{4}
    \int_0^T\int_{\mathbb T}|u_t|^2\,\dd\phi_t\,\dd t
    =
    I_{[0,T]}(\phi).
\end{equation}
By the superposition principle, \(\phi_t\) is the time-\(t\)
marginal of a weak solution of
\[
    \dd X_t
    =
    -(W'*\phi_t)(X_t)\,\dd t
    +u_t(X_t)\,\dd t
    +\sqrt{2\beta^{-1}}\,\dd B_t.
\]
Let \(P\) denote its path law on \([0,1]\), and let \(R\) be the law
of the diffusion with the same initial distribution and with drift
\(
    b_t(x):=-(W'*\phi_t)(x),
\)
(i.e. with \(u_t=0\)).  By the Brownian variational representation and entropy duality \cite[Theorem 8.3 and Lemma 2.4(a)]{BudhirajaDupuis2019}, followed by contraction under the solution map, we have
\[
    \operatorname{Ent}(P\mid R)
    \leq
    \frac{\beta}{4}
    \int_0^1\int_{\mathbb T}|u_t|^2
    \,\dd\phi_t\,\dd t
    \leq A.
\]
By contraction of relative entropy under the endpoint map,
\[
    \operatorname{Ent}(\phi_1\mid r_1)\leq A,
\]
where \(r_1\) is the time-one marginal of \(R\).

Since
\[
    |b_t(x)|\leq\|W'\|_\infty,
\]
by the Aronson estimate for uniformly elliptic diffusions with bounded
measurable drift \cite{Aronson1967}, the time-one transition density of \(R\) is bounded above by a
constant \(C_{\mathrm{hk}}\) depending only on
\(\beta\), \(\|W'\|_\infty\), and \(\mathbb T\).  Thus
\[
    \frac{\dd r_1}{\dd\lambda}\leq C_{\mathrm{hk}},
\]
and consequently
\[
\begin{aligned}
    \operatorname{Ent}(\phi_1\mid\lambda)
    &=
    \operatorname{Ent}(\phi_1\mid r_1)
    +
    \int_{\mathbb T}
        \log\left(\frac{\dd r_1}{\dd\lambda}\right)
        \,\dd\phi_1                                      \\
    &\leq A+\log C_{\mathrm{hk}}.
\end{aligned}
\]
Since the interaction part of \(\mathcal F\) is bounded above by
\(\frac12\|W\|_\infty\), we may take
\begin{equation}\label{eq:creg}
    C_{\mathrm{reg}}(A)
    :=
    \beta^{-1}(A+\log C_{\mathrm{hk}})
    +\frac12\|W\|_\infty.
\end{equation}
This proves \eqref{eq:shortening-positive-time-regularization}.

\medskip
\noindent
\emph{Step 4: Chronological loop erasure and the duration bound.}
Choose \(\eta,\delta>0\) sufficiently small that
\begin{equation}
\label{eq:shortening-total-connector-cost}
    m(2\beta\eta+\delta)\leq\varepsilon.
\end{equation}
If \(T\leq1\), take \(\widetilde\phi=\phi\).  Suppose \(T>1\), and
leave the initial portion \(\phi|_{[0,1]}\) unchanged.

We now shorten \(\phi|_{[1,T]}\).  If this path never meets a core,
there is nothing to erase.  Otherwise, let \(a_1\) be its first
entrance time into the union of the cores, and let
\(\alpha_1\) be the corresponding label.  Define
\[
    b_1
    :=
    \sup\left\{
        t\in[a_1,T]:
        \phi_t\in L_{\alpha_1,\eta}
    \right\}.
\]
Since \(L_{\alpha_1,\eta}\) is closed,
\(\phi_{b_1}\in L_{\alpha_1,\eta}\).  Replace the portion
\(\phi|_{[a_1,b_1]}\) by the local connector from
\(\phi_{a_1}\) to \(\phi_{b_1}\) constructed in Step~1.

Continue from \(b_1\).  Let \(a_2\) be the first subsequent entrance
into any core and replace the path up to its last visit to that same
core.  Continuing inductively gives labels
\(
    \alpha_1,\ldots,\alpha_r
\)
that are all distinct: once the portion ending at the last visit to
\(L_{\alpha_\ell,\eta}\) has been replaced, that core is never
visited again.  Hence
\[
    r\leq m.
\]
The inserted connectors have total duration at most \(3m\) and,
by \eqref{eq:shortening-total-connector-cost}, total action at most
\(\varepsilon\).

The retained pieces of the original path form at most \(m+1\)
excursions through
\[
    \mathcal C_\eta
    :=
    \mathcal Q\setminus
    \bigcup_{\alpha=1}^mL_{\alpha,\eta}.
\]
Consider one such retained excursion
\[
    \psi:[a,b]\longrightarrow\mathcal C_\eta,
\]
allowing its endpoints to lie on the boundaries of the cores.  The
controlled free-energy identity with control $u$ (that is, using \eqref{eq:control} with $\psi$ in place of $\phi$) gives
\[
\begin{aligned}
    \mathcal F(\psi_b)-\mathcal F(\psi_a)
    &=
    -\int_a^b\mathcal D(\psi_t)\,\dd t
    +
    \int_a^b\int_{\mathbb T}
        \partial_x\Psi_{\psi_t}\,u_t
        \,\dd\psi_t\,\dd t                              \\
    &\leq
    -\frac12\int_a^b\mathcal D(\psi_t)\,\dd t
    +
    \frac12\int_a^b\int_{\mathbb T}|u_t|^2
        \,\dd\psi_t\,\dd t                              \\
    &=
    -\frac12\int_a^b\mathcal D(\psi_t)\,\dd t
    +
    \frac{2}{\beta}I_{[a,b]}(\psi).
\end{aligned}
\]
Using \(\mathcal D\geq d_\eta\) on \(\mathcal C_\eta\), obtained in Step 2, we obtain
\begin{equation}
\label{eq:shortening-one-excursion}
    \frac{d_\eta}{2}(b-a)
    \leq
    \mathcal F(\psi_a)-\mathcal F(\psi_b)
    +
    \frac{2}{\beta}I_{[a,b]}(\psi).
\end{equation}

The first retained excursion starts either at \(\phi_1\) or at a
core.  By Step~3,
\[
    \mathcal F(\phi_1)\leq C_{\mathrm{reg}}(A).
\]
Every other retained excursion starts in some core and therefore has
initial energy at most
\[
    C_{\mathrm{core}}
    :=
    \max_{1\leq\alpha\leq m}(e_\alpha+\eta).
\]
Finally,
\[
    F_{\min}:=\inf_{\mathcal Q}\mathcal F>-\infty.
\]
Consequently, if
\[
    C_A:=\max\{C_{\mathrm{reg}}(A),C_{\mathrm{core}}\},
\]
then summing \eqref{eq:shortening-one-excursion} over the at most
\(m+1\) retained excursions yields
\[
    T_{\mathrm{ext}}
    \leq
    \frac{2}{d_\eta}
    \left[
        (m+1)(C_A-F_{\min})
        +\frac{2A}{\beta}
    \right].
\]
The retained portions of the original path have total action at most
\(I_{[0,T]}(\phi)\), while the inserted connectors have total action
at most \(\varepsilon\).  Thus
\[
    I_{[0,\widetilde T]}(\widetilde\phi)
    \leq
    I_{[0,T]}(\phi)+\varepsilon.
\]
Moreover,
\[
    \widetilde T
    \leq
    1+3m+
    \frac{2}{d_\eta}
    \left[
        (m+1)(C_A-F_{\min})
        +\frac{2A}{\beta}
    \right]
    =:T_*(A,\varepsilon).
\]
By construction, the first and last points of the path have not been
changed.
\end{proof}

\begin{remark}[Comparison with Freidlin--Wentzell]\label{rem:FWcomp}
The replacement by paths of uniformly bounded duration in
Lemma~\ref{lem:uniform-shortening-arbitrary-gate-points}
has a finite-dimensional counterpart in
\cite[Chapter~6, \S1, Lemma~1.2]{FreidlinWentzell2012}.
There, a finite net of endpoint pairs and the small-action
connectors of their Lemma~1.1 give a uniform duration bound
for near-optimal paths. In the present setting, Wasserstein
closeness alone does not control free-energy or provide
analogous connectors between arbitrary nearby measures.
Our proof therefore is considerably more involved. It combines low-energy cores, positive-time
regularization, and dissipation estimates outside the cores
to obtain the required uniform duration bound.
\end{remark}

Step~1 of Lemma~\ref{lem:uniform-shortening-arbitrary-gate-points}
constructs low-action connectors between points in the same
low-energy core around a critical orbit $\mathcal K_\alpha$.
The following lemma shows that, when their endpoints are
noncritical, these connectors can be perturbed to avoid
$\mathcal K_\alpha$. Consequently, near-minimizing paths between
distinct minima $\mathcal M_i$ and $\mathcal M_j$ can be replaced
by paths admissible in
\eqref{eq:limiting-direct-cost-variational}, with arbitrarily
small additional action. In particular, this establishes \eqref{eq:DeltaeqV}.

\begin{lemma}[Removal of critical contacts]
\label{lem:critical-contact-removal}
Assume \eqref{eq:finite-critical-orbits}.
For any $\eta,\delta>0$, the neighborhoods and cores in Step~1
of Lemma~\ref{lem:uniform-shortening-arbitrary-gate-points}
can be chosen so that any two noncritical points of
$L_{\alpha,\eta}$ can be joined inside $V_\alpha$, in time $3$,
by a path of action at most $2\beta\eta+\delta$ whose interior
avoids every critical orbit.

Consequently, for every finite-action path $\phi$ between
distinct critical orbits and every $\varepsilon>0$, there is
a finite-duration path with the same endpoints, no critical
contacts in its interior, and action at most
$I(\phi)+\varepsilon$.
\end{lemma}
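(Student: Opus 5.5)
The plan is to modify the middle leg of the Step~1 connector. That connector runs from $q$ to $\overline S_1q$, then interpolates from $\overline S_1q$ to $\overline S_1q'$, then reverses the flow from $q'$ to $\overline S_1q'$. We then use the first part of the lemma inside the loop-erasure of Lemma~\ref{lem:uniform-shortening-arbitrary-gate-points}.

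\emph{First leg and reversed leg.} Let $q\in L_{\alpha,\eta}$ be noncritical. The deterministic trajectory $t\mapsto\overline S_tq$, $t\in(0,1]$, never meets any critical orbit. Indeed, if $\overline S_tq$ were stationary for some $t>0$, energy dissipation (Lemma~\ref{lem:energy-dissipation}) together with backward uniqueness for the parabolic equation \eqref{eq:MV} would force $q$ itself to be stationary. If one prefers to avoid backward uniqueness, the same conclusion follows from dissipation: $\F(\overline S_tq)<\F(q)$ strictly unless $q$ is critical, and a stationary point reached in finite time would give a nonconstant-energy trajectory ending at a fixed point. That is impossible for an analytic-in-time smooth flow, and this is the version I would check carefully. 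The same argument applies to the reversed segment ending at $q'$. So the only place a critical contact can occur is the affine interpolation $\rho_t=(1-t)\rho^0+t\rho^1$ between the smoothed densities, which are $C^2$-close to $\bar\rho_\alpha$.

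\emph{Middle leg (the main obstacle).} Inside $V_\alpha$ the only critical points are the rotations $R_\theta\bar\rho_\alpha$, which form a one-dimensional curve. The affine segment is a one-parameter family in the infinite-dimensional space of smooth densities, so generically it misses this curve. Concretely, I would fix a smooth mean-zero perturbation direction $g$, to be chosen from a countable dense family, and replace $\rho_t$ by $\rho_t+\kappa\,t(1-t)g$ with $\kappa$ small. The endpoints are unchanged, positivity is preserved, and the action estimate of Step~1 changes continuously in $\kappa$. The action bound $\delta$ therefore survives for $\kappa$ small, using the same $J_t/\rho_t$ primitive bound.

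For avoidance, the set of $(t,\theta,\kappa)$ with $\rho_t+\kappa t(1-t)g=R_\theta\bar\rho_\alpha$ is the zero set of a smooth map from a three-dimensional parameter space into $C^2(\T)$. We only need the bad set of $\kappa$ to avoid an open interval around $0$ minus $\{0\}$. The map is affine in $\kappa$, so for each fixed $(t,\theta)$ with $t\in(0,1)$ the coincidence holds for at most one $\kappa$, unless $\rho_t-R_\theta\bar\rho_\alpha$ is parallel to $g$ and also zero. Choose $g$ not lying in the span of $\{\rho^0,\rho^1,R_\theta\bar\rho_\alpha,\partial_\theta R_\theta\bar\rho_\alpha\}$, which lives in a finite-dimensional family over the two parameters $(t,\theta)$. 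Then by taking two Fourier coefficients, the bad $\kappa$ values form the image of a smooth map of two variables into one variable that is constrained by an extra independent equation. This makes the bad set at most countable (in fact finite near $0$ by compactness), so a small good $\kappa\neq0$ exists.

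\emph{Global statement.} Given $\phi$ between distinct critical orbits with finite action, first apply the shortening of Lemma~\ref{lem:uniform-shortening-arbitrary-gate-points} with $\varepsilon/2$, obtaining finite duration. Interior critical contacts can occur only inside cores, since outside the cores $\mathcal D>0$ by Step~2. Each core is replaced by a connector whose endpoints are the first-entrance and last-exit points. These lie on the core boundary, or are the path endpoints; they are noncritical except possibly at the terminal endpoints. I would perturb slightly, by going a tiny time into the retained excursion, to ensure the endpoints are noncritical. For the initial and final cores, where one endpoint is the critical orbit itself, I would use a flow segment from a nearby noncritical point followed by the avoiding connector, so that the critical point occurs only at the endpoint. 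Finally, use the avoiding connectors from the first part, which keeps the total extra action at most $m(2\beta\eta+\delta)\le\varepsilon$.
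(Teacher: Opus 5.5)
Your middle-leg idea is viable, but the argument as written is not complete.

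\emph{The deterministic legs.} Your approach needs the flow segments $t\mapsto\overline S_tq$ from a noncritical $q$ never to meet a critical orbit. Otherwise $\rho^0=\overline S_1q$ may itself be critical, and a perturbation of the middle leg that fixes its endpoints cannot help. That is a backward-uniqueness (or time-analyticity) property of \eqref{eq:MV}. It can be obtained from standard parabolic theory, but energy dissipation alone does not give it, and nothing in the paper provides it, so you would have to import it.

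The paper avoids the issue. Since the endpoints are noncritical, the set of contact times is a compact subset of $(0,3)$. The paper then perturbs all three legs at once by $h(t)\sum_{k=1}^3a_kf_k$, with $h\in C_c^\infty((0,3))$ positive on that set. A contact forces $a=\bigl(P(R_\theta\bar\rho_\alpha)-P\rho_t\bigr)/h(t)$, which lies in the image of a two-parameter locally Lipschitz map into $\Rr^3$, a Lebesgue-null set.

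\emph{The genericity count.} Your count via ``two Fourier coefficients'' and an ``extra independent equation'' does not by itself make the bad set of $\kappa$ countable: two equations in $(t,\theta)$ need not have discrete solutions. Your own condition on $g$ rescues this step. Suppose $g\notin\bigcup_\theta\operatorname{span}\{\rho^0,\rho^1,R_\theta\bar\rho_\alpha\}$; this union is described by four real parameters, so smooth mean-zero such $g$ exist. Then $(1-t)\rho^0+t\rho^1+\kappa t(1-t)g=R_\theta\bar\rho_\alpha$ with $\kappa\neq0$ and $0<t<1$ would put $g$ in that span. Hence every $\kappa\neq0$ is admissible.

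\emph{Noncriticality of the cut points.} Being a first-entrance or last-visit point of a core does not make a point noncritical. In the Wasserstein topology $\mathcal K_\gamma$ lies in the boundary of $L_{\gamma,\eta}$, because $\F$ is only lower semicontinuous and nearby atomic measures have infinite energy. The correct reason is that $t\mapsto\F(\phi_t)$ is continuous along a finite-action path starting at finite energy. Every contact time is therefore interior to the corresponding core-visit set. Your patch of stepping ``a tiny time into the retained excursion'' would instead produce a connector endpoint outside the core, where the first part does not apply.

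\emph{Contacts with the endpoint orbits.} This is the more serious hole: contacts with the initial and terminal orbits are not handled. A forward flow segment never leaves a critical orbit, so it cannot supply a contact-free piece starting on the initial orbit. Any connector from an orbit to a nearby point needs its own avoidance argument, which you do not give. The missing simple idea is restriction: stop $\phi$ at its first visit to the terminal orbit, then discard everything before its last visit to the initial orbit. This does not increase action and removes all interior contacts with the endpoint orbits. Only the cores of intermediate orbits then need erasure, with at most $m-2$ avoiding connectors.

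Finally, your preliminary shortening is unnecessary, since $\phi$ already has finite duration. As written it costs an extra $\varepsilon/2$ on top of $m(2\beta\eta+\delta)\le\varepsilon$.
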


\begin{proof}
Use the three-piece connector from Step~1 of
Lemma~\ref{lem:uniform-shortening-arbitrary-gate-points},
such that the action of the affine interpolation is at most $\delta/2$.
Choose aligned measure-valued lifts of its pieces and denote
the resulting density by $\rho_t$, $0<t<3$.
It is spatially regular and piecewise continuously
differentiable in time, with a positive lower bound on each
compact subinterval of $(0,3)$.

Since $V_\alpha$ contains no other critical orbit, it suffices
to avoid $\mathcal K_\alpha$. The endpoints are noncritical,
so the set of critical-contact times is compactly contained
in $(0,3)$. Choose a nonnegative
$h\in C_c^\infty((0,3))$ that is positive on this set.
Let $f_1,f_2,f_3$ be smooth, mean-zero, orthonormal real
functions in $L^2(\lambda)$, and set
\[
 \rho_t^a
 :=\rho_t+h(t)\sum_{k=1}^3a_kf_k,
 \qquad a = (a_1,a_2,a_3)' \in\Rr^3.
\]
For sufficiently small $a$, this remains a probability density,
agrees with the unperturbed connector near its endpoints, and stays
inside $V_\alpha$.

Define
\[
 P\rho
 :=\bigl(\langle\rho,f_k\rangle_{L^2(\lambda)}\bigr)_{k=1}^3.
\]
If $\rho_t^a=R_\theta\bar\rho_\alpha$ at a time with $h(t)>0$,
then necessarily
\[
 a=\frac{P(R_\theta\bar\rho_\alpha)-P\rho_t}{h(t)}.
\]
The right-hand side is locally Lipschitz in the two variables
$(t,\theta)$ on $\{h>0\}\times\T$. Its image therefore has
three-dimensional Lebesgue measure zero.
Thus arbitrarily small choices of $a$ avoid
$\mathcal K_\alpha$ wherever $h>0$; where $h=0$, the original
path is unchanged and already avoids it.

On the support of $h$, the densities have a common positive
lower bound, while the perturbation and its time derivative
tend to zero in the spatial $C^1(\T)$ norm as $a\to0$.
The minimal-flux representation of the action \eqref{eq:weighted-Hminus1-flux} therefore gives
\[
 I_{[0,3]}(\rho^a\lambda)
 \le I_{[0,3]}(\rho\lambda)+o(1)
 \le 2\beta\eta+\frac{\delta}{2}+o(1),
\]
as $a \to 0$.
Choosing an admissible $a$ sufficiently small proves the
first assertion.

For the second assertion, first stop $\phi$ at its first
visit to the terminal critical orbit. Then discard the
segment before its last visit to the initial critical orbit. This removes interior contacts with both endpoint orbits without increasing action.

Choose the cores so that
$m(2\beta\eta+\delta)<\varepsilon$.
Apply the first-to-last-visit replacement procedure of
Step~4 of Lemma~\ref{lem:uniform-shortening-arbitrary-gate-points}
to the entire restricted path, using only the cores of the
intermediate critical orbits and the connectors constructed
above. The cut points are noncritical: continuity of the path
and the free-energy along it ensures that every critical contact lies in
the time-interior of the corresponding core-visit set.
There are at most $m-2$ replacements, and the new connectors
introduce no critical contacts. The resulting path has the
required endpoints and avoidance property, with total
additional action less than $\varepsilon$.
\end{proof}

To obtain the upper bound for gate transition probabilities, introduce auxiliary gates around every
critical component. Without loss of generality, assume that the components are labeled so that
\(\mathcal K_i=\mathcal M_i\) for \(1\leq i\leq J\), and write
\[
    \mathscr K:= \Crit(\F)/\T = \{\mathcal K_1,\ldots,\mathcal K_m\},
    \qquad \mathcal S:=\{J+1,\ldots,m\}.
\]
Choose pairwise disjoint nested gates
\begin{equation}
\label{eq:all-critical-gates}
    \mathcal K_\alpha\Subset\widehat G_{\alpha,\rho}^-
    \Subset\widehat G_{\alpha,\rho}^+,
    \qquad 1\leq\alpha\leq m,
\end{equation}
with closures shrinking to their respective critical components, and
with \(\widehat G_{i,\rho}^{\pm}=G_{i,\rho}^{\pm}\) for
\(1\leq i\leq J\).  As in the proof of
Lemma~\ref{lem:direct-gate-transition-lower}, the inner gates may be
chosen smaller after the outer gates have been fixed.  Set
\[
    \widehat K_{\alpha,\rho}:=
        \overline{\widehat G_{\alpha,\rho}^-},
    \qquad
    \widehat K_\rho:=\bigcup_{\alpha=1}^m\widehat K_{\alpha,\rho},
    \qquad
    \widehat\Gamma_{\alpha,\rho}:=
        \partial\widehat G_{\alpha,\rho}^+.
\]
The auxiliary entrance-point chain is defined by
\eqref{eq:gate-renewal-times}, with \(K_\rho\) and \(G_\alpha^+\)
replaced by \(\widehat K_\rho\) and
\(\widehat G_\alpha^+\).  Denote its entrance points by
\(\widehat Z_n\) and its labels by \(\widehat Y_n\).  Starting
at a minimum gate, the next visit of this chain to a label in
\(\{1,\ldots,J\}\), allowing a return to the initial label, is
exactly the next entrance of the original minimum-gate chain.

For distinct critical components define
\begin{equation}
\label{eq:all-critical-direct-cost}
\begin{aligned}
    \widehat\Delta_{\alpha\gamma}:=
    \inf\bigl\{&I_{[0,T]}(\phi):T>0,\quad
        \phi_0=\mathcal K_\alpha,\quad
        \phi_T=\mathcal K_\gamma,\\
        &\phi_t\notin\mathscr K
        \text{ for every }0<t<T\bigr\},
    \qquad \alpha\ne\gamma,
\end{aligned}
\end{equation}
and set \(\widehat\Delta_{\alpha\alpha}=0\).

These costs are finite, since smooth positive stationary
representatives can be joined by a finite-action affine path,
to which Lemma~\ref{lem:critical-contact-removal} applies.
The same lemma and inclusion of the admissible path classes give
\[
 \widehat\Delta_{\alpha\gamma}
 =\Vq(\mathcal K_\alpha,\mathcal K_\gamma),\qquad\alpha\ne\gamma.
\]

\begin{lemma}[Uniform stopped-excursion estimate]
\label{lem:uniform-stopped-excursion}
For every \(\varepsilon>0\), the nested gates can be chosen
sufficiently small that, for all sufficiently large \(N\) and every
\(i\ne j\), \(1 \le i,j \le J\), the probabilities \(p_{ij}^{N,\rho}(q)\) defined in
Lemma~\ref{lem:direct-gate-transition-lower} satisfy
\begin{align}
\label{eq:stopped-excursion-lower-detailed}
    \inf_{q\in K_{i,\rho}\cap\PN(\T)}p_{ij}^{N,\rho}(q)
    &\geq \exp\{-N(\Delta_{ij}+\varepsilon)\},\\
\label{eq:stopped-excursion-upper-detailed}
    \sup_{q\in K_{i,\rho}\cap\PN(\T)}p_{ij}^{N,\rho}(q)
    &\leq \exp\{-N(\Delta_{ij}-\varepsilon)\}.
\end{align}
\end{lemma}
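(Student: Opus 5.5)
The lower bound \eqref{eq:stopped-excursion-lower-detailed} is Lemma~\ref{lem:direct-gate-transition-lower} with $\alpha=\varepsilon$, so all the work is in the upper bound \eqref{eq:stopped-excursion-upper-detailed}. The plan follows the Freidlin--Wentzell scheme \cite[Chapter~6, \S2]{FreidlinWentzell2012} applied to the auxiliary chain over all critical gates $\widehat K_\rho$. We first prove one-step bounds for that chain and then sum over chains of saddle visits.

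\emph{Step 1 (one-step bound).} For $\alpha\ne\gamma$ we aim to show
\[
\sup_{q\in\widehat K_{\alpha,\rho}\cap\PN(\T)}\mathbb P_q\bigl(\widehat Y_1=\gamma\bigr)\le e^{-N(\widehat\Delta_{\alpha\gamma}-\varepsilon/4)}.
\]
An excursion from $\widehat K_\alpha$ first reaches $\widehat\Gamma_{\alpha,\rho}$ and then enters $\widehat K_\gamma$ while avoiding the other inner gates. We split on its duration at a large fixed horizon $T_0$.

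\emph{Long excursions.} Consider paths that spend time $\ge T_0$ in $\Qq\setminus\widehat K_\rho$. The dissipation gap \eqref{eq:shortening-dissipation-gap}, combined with the controlled energy inequality \eqref{eq:shortening-one-excursion} and the positive-time regularization \eqref{eq:shortening-positive-time-regularization}, implies that every path on $[0,T_0]$ staying outside the cores has action $\ge A$ once $T_0=T_0(A)$ is large. We choose $A>\max_{\alpha\ne\gamma}\widehat\Delta_{\alpha\gamma}+1$. Two points need care here: the inner gates $\widehat G^-$ must contain the low-energy cores $L_{\alpha,\eta}$ intersected with a neighborhood, and the first unit of time is handled via the regularization estimate. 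With these, the uniform upper bound \eqref{eq:global-uniform-upper} on closed sets of paths, combined with the strong Markov property at multiples of $T_0$, gives probability at most $e^{-N(A-\varepsilon)}$ for such long excursions.

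\emph{Short excursions.} For excursions completed within time $T_0$, the event is contained in a closed set $F_{\alpha\gamma}$ of paths on $[0,T_0]$. These paths start in $\widehat K_\alpha$, touch $\widehat\Gamma_\alpha$, and hit $\widehat K_\gamma$ before any other $\widehat K_\delta$. We then need
\[
\liminf_{\rho\downarrow0}\ \inf_{F_{\alpha\gamma}}I\ \ge\ \widehat\Delta_{\alpha\gamma}.
\]
We argue by compactness of action sublevels. Suppose minimizers $\phi^\rho$ have bounded action as $\rho\downarrow0$. A subsequence converges to a path from $\mathcal K_\alpha$ to $\mathcal K_\gamma$ whose interior avoids, in the limit, all other critical orbits up to contacts. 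Interior contacts with $\mathcal K_\alpha$, or the two endpoint contacts, are handled by the energy and connector arguments: the endpoints are only near the critical orbits, and we connect them using Step~1 of Lemma~\ref{lem:uniform-shortening-arbitrary-gate-points}, whose extra cost is $2\beta\eta+\delta$. Lemma~\ref{lem:critical-contact-removal} then removes the remaining contacts at small cost. Hence the limit cost is at least $\widehat\Delta_{\alpha\gamma}-\varepsilon/8$. The key subtlety is that the starting point $q$ is only Wasserstein-close to $\mathcal K_\alpha$, so its energy can be infinite. We handle this with the positive-time regularization together with the reversed-flow connector of Proposition~\ref{prop:local-quasipotential}: the energy at time $1$ is close to $e_\alpha$ because action is small, which lets us attach an approximately zero-cost initial segment.

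\emph{Step 2 (summing over saddle visits).} A transition $i\to j$ of the minimum-gate chain is a sequence $i=\alpha_0,\alpha_1,\dots,\alpha_n=j$ of auxiliary labels whose intermediate labels lie in $\mathcal S$. By the strong Markov property and Step~1, the probability of a fixed sequence is at most $\exp\{-N(\sum_k\widehat\Delta_{\alpha_{k-1}\alpha_k}-n\varepsilon/4)\}$. The subadditivity $\sum_k\widehat\Delta\ge \Vq(\mathcal M_i,\mathcal M_j)=\Delta_{ij}$, from \eqref{eq:DeltaeqV}, controls the exponent, but we must also control the number of steps $n$. Saddle components have zero local exit cost (Lemma~\ref{lem:zero-local-exit-cost}) but cannot be revisited cheaply. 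Each $\widehat\Delta_{\sigma\sigma'}$ with $\sigma\ne\sigma'$ is bounded below by $\beta(\F(\mathcal K_{\sigma'})-\F(\mathcal K_\sigma))^+$. Moreover, any loop returning to a saddle after a descent costs a positive amount $c_0$, which comes from the finite set of positive $\widehat\Delta$'s among loops.

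Therefore a sequence of length $n$ costs at least roughly $\Delta_{ij}+c_0\lfloor n/m\rfloor$. Choosing $\varepsilon$ small relative to $c_0/m$ makes the geometric series converge, so the total probability is at most $e^{-N(\Delta_{ij}-\varepsilon)}$. We expect the main obstacle to be the short-excursion lower semicontinuity in Step~1, namely controlling contacts and the infinite-energy starting points uniformly as $\rho\downarrow0$. The loop-counting bound $c_0>0$ should be checked next; if it fails, we will instead cap $n$ at $N$-independent size and bound longer sequences using the long-excursion estimate.
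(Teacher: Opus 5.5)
Your lower bound, and your split of the upper bound into long excursions (controlled by a dissipation gap outside the inner gates) and short excursions (a closed path set plus the uniform LDP upper bound), match the paper. Both halves of your upper-bound argument, however, have genuine gaps.

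\emph{Short excursions.} The horizon $T_0$ cannot be chosen independently of $\rho$. The dissipation gap behind your long-excursion estimate is taken outside cores that must sit inside the inner gates $\widehat G^-_{\alpha,\rho}$. As these gates shrink, smooth densities just outside them have dissipation tending to zero, so the gap degenerates and $T_0=T_0(A,\rho)\to\infty$. Your compactness argument as $\rho\downarrow0$ therefore has no common finite horizon. Near-minimizers can linger near critical orbits, outside the shrinking inner gates, for longer and longer, so no limit path reaching $\mathcal K_\gamma$ in finite time is produced.

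This is exactly what Lemma~\ref{lem:uniform-shortening-arbitrary-gate-points} is for. The paper applies it with a fixed error $\theta/16$ to paths started on $\widehat\Gamma_{\alpha,\rho_n}$. This gives the same endpoints and duration at most $T_*$, independent of $n$. The paper then extends by the deterministic flow to $[0,T_*]$ and uses compactness and lower semicontinuity. With that in hand, the difficulties you single out disappear:
\begin{itemize}
\item The limit path starts on $\mathcal K_\alpha$ by lower semicontinuity, so no initial connector is needed. Your claim that the energy at time $1$ is close to $e_\alpha$ ``because action is small'' is also wrong: the action is close to $\widehat\Delta_{\alpha\gamma}$, not small.
\item No contact removal is needed, since $\widehat\Delta_{\alpha\gamma}=\Vq(\mathcal K_\alpha,\mathcal K_\gamma)$.
\end{itemize}
You should also apply the strong Markov property at the exit from $\widehat G^+_{\alpha,\rho}$ before splitting. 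Time spent inside $\widehat G^+_{\alpha,\rho}$ is not controlled by any dissipation gap, so your two cases do not exhaust excursions started in $\widehat K_{\alpha,\rho}$.

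\emph{Summing over saddle visits.} Write $\theta$ for your one-step error $\varepsilon/4$. The auxiliary label sequence can contain self-transitions $\sigma\to\sigma$ at a saddle gate: the process leaves $\widehat G^+_{\sigma,\rho}$ and re-enters $\widehat K_{\sigma,\rho}$, e.g.\ from exit points near the stable manifold of $\sigma$. Since $\widehat\Delta_{\sigma\sigma}=0$, your Step~1 gives no bound below $1$ for these. Hence a sequence of length $n$ need not cost $\Delta_{ij}+c_0\lfloor n/m\rfloor$, and the union bound over sequences, each losing $e^{N\theta}$ per step, diverges. Even for loops through distinct labels, $c_0>0$ would require a proof that $\widehat\Delta_{ab}>0$ whenever $\F(\mathcal K_b)\ge\F(\mathcal K_a)$.

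Your fallback of capping $n$ at an $N$-independent size also fails. The only available lower bound for descending from a saddle gate to a minimum gate in one step is $e^{-N\theta}$. So the number $L$ of auxiliary steps is controlled only on the scale $e^{N\theta}$.

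The missing idea is to accept this scale:
\begin{itemize}
\item By Step~3 of Lemma~\ref{lem:singleton-exit-cost}, each saddle $\alpha$ has a minimum $k(\alpha)$ with $\widehat\Delta_{\alpha k(\alpha)}=0$. The one-step lower bound then gives $\mathbb E L\le e^{N\theta}$ uniformly.
\item Loop-erase the label sequence into a simple route with at most $m-1$ edges. These edges are actual transitions occurring in route order.
\item By the strong Markov property, a given edge $a\to b$ occurs before a minimum is reached with probability at most $e^{N\theta}e^{-N(\widehat\Delta_{ab}-\theta)}$.
\item Chain this over the route's edges, sum over the finitely many simple routes, and use $\sum\widehat\Delta\ge\Delta_{ij}$.
\end{itemize}
This gives the upper bound with a loss of only $e^{O(m\theta N)}$.
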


\begin{proof}
The lower bound was obtained in
Lemma~\ref{lem:direct-gate-transition-lower}, with the same direct cost
$\Delta_{ij}$.  For the upper bound we first
prove the Freidlin--Wentzell estimate for the auxiliary chain and
then retain only its visits to minimum gates.

\smallskip
\noindent
\emph{Step 1: Duration outside all critical gates.}
Fix the gates and put
\[
    C_\rho:=\mathcal Q\setminus
        \bigcup_{\alpha=1}^m\widehat G_{\alpha,\rho}^-.
\]
This set is compact and contains no critical component. Choose the auxiliary neighborhoods $U_\alpha$ in Step~1 of
Lemma~\ref{lem:uniform-shortening-arbitrary-gate-points} so that
$\overline U_\alpha\subset\widehat G_{\alpha,\rho}^{-}$, and the corresponding low-energy cores
\[
    L_{\alpha}^{\rho}
    :=\{q\in\overline U_\alpha:
           \mathcal F(q)\le e_\alpha+\rho\}.
\]
Equation~\eqref{eq:shortening-dissipation-gap}
then gives
\[
    \mathcal D(q)\geq d_{\rho}>0,\qquad q\in C_\rho.
\]
Let \(\phi\) remain in \(C_\rho\) throughout \([0,T]\), with
\(T\geq1\) and \(A:=I_{[0,T]}(\phi)<\infty\).
By \eqref{eq:shortening-positive-time-regularization} and \eqref{eq:creg},
\(\mathcal F(\phi_1)\leq A/\beta+C_0\), where \(C_0\) is
independent of the initial measure.  The controlled energy
inequality used in \eqref{eq:shortening-one-excursion} yields
\[
\begin{aligned}
    \frac{d_\rho}{2}(T-1)
    &\leq \mathcal F(\phi_1)-\mathcal F(\phi_T)
          +\frac2\beta I_{[1,T]}(\phi)\\
    &\leq C_0-F_{\min}+\frac{3A}{\beta}.
\end{aligned}
\]
Consequently, for constants \(c_\rho>0\) and \(C_\rho'<\infty\),
\begin{equation}
\label{eq:all-critical-duration-action}
    \inf\{I_{[0,T]}(\phi):\phi_t\in C_\rho
                    \text{ for every }0\leq t\leq T\}
    \geq c_\rho T-C_\rho'.
\end{equation}
The path set in this display is closed.  Thus
Proposition~\ref{prop:uniform-LDP}\textup{(i)}, applied to its
rotation-invariant lift, shows that for every \(B<\infty\) there
is a fixed \(T_B<\infty\) (depending on $\rho$) such that
\begin{equation}
\label{eq:all-critical-duration-tail}
    \sup_{q\in C_\rho\cap\PN(\T)}
    \mathbb P_q(\widehat\tau^N>T_B)\leq e^{-NB},
    \qquad
    \widehat\tau^N:=\inf\{t\geq0:X_t^N\in\widehat K_\rho\},
\end{equation}
for all sufficiently large \(N\).  Iterating a fixed-block
estimate by the strong Markov property also gives the usual
exponential tail in the duration.  This is the analogue of
\cite[Chapter~6, Lemma~1.9]{FreidlinWentzell2012} needed below.

\smallskip
\noindent
\emph{Step 2: The auxiliary one-step estimates.}
Let \(\widehat p_{\alpha\gamma}^{N,\rho}(q)\) be the probability
that the next auxiliary entrance has label \(\gamma\), starting
from \(q\in\widehat K_{\alpha,\rho}\).  We show that, for every
\(\theta>0\), the gates can be chosen sufficiently small that
\begin{equation}
\label{eq:all-critical-transition-rates}
    e^{-N(\widehat\Delta_{\alpha\gamma}+\theta)}
    \leq\widehat p_{\alpha\gamma}^{N,\rho}(q)
    \leq e^{-N(\widehat\Delta_{\alpha\gamma}-\theta)},
    \qquad \alpha\ne\gamma,
\end{equation}
uniformly in \(q\) and for all sufficiently large \(N\).

For the lower bound, use the open-tube construction of
Lemma~\ref{lem:direct-gate-transition-lower}, with all critical
components in place of the minimum components.  Attraction is not
needed for its initial connector.  After the inner gate has been
chosen sufficiently small, the deterministic segment
\(q\mapsto \overline S_tq\), \(0\leq t\leq1\), stays in
\(\widehat G_\alpha^+\); positive-time smoothing makes
\(\overline S_1q\) uniformly close in \(C^1\), modulo rotation,
to the stationary representative of \(\mathcal K_\alpha\).
The affine connector in Step~1 of
Lemma~\ref{lem:uniform-shortening-arbitrary-gate-points} then joins
it to \(\mathcal K_\alpha\) at arbitrarily small action.
Append a fixed almost minimizing path for
\(\widehat\Delta_{\alpha\gamma}\).  Shrinking the inner gates
ensures that its tail avoids all other inner gates.  Open tubes and
a finite covering of the initial inner gate give the lower bound
in \eqref{eq:all-critical-transition-rates}.

For the upper bound, the strong Markov property at the first exit
from \(\widehat G_\alpha^+\) reduces the problem to starting points
\(z\in\widehat\Gamma_{\alpha,\rho}\).  We first record the
following uniform action bound.  For all sufficiently small gates,
every finite-action path starting on
\(\widehat\Gamma_{\alpha,\rho}\), ending in
\(\widehat K_{\gamma,\rho}\), and avoiding the open inner gates
\(\widehat G_{\ell,\rho}^-\), \(\ell\ne\gamma\), has action
at least \(\widehat\Delta_{\alpha\gamma}-\theta/4\).
Boundary contacts with the forbidden gates are allowed here.

To verify this assertion, suppose, to the contrary, that it fails
along a sequence of shrinking gates.  Then there exist
$\rho_n\downarrow0$ and finite-action paths
$\phi^n:[0,T_n]\to\mathcal Q$, satisfying the stated gate-avoidance
condition, such that
\[
    q_n:=\phi_0^n\in\widehat\Gamma_{\alpha,\rho_n},
    \qquad
    r_n:=\phi_{T_n}^n\in\widehat K_{\gamma,\rho_n},
    \qquad
    I_{[0,T_n]}(\phi^n)
    <\widehat\Delta_{\alpha\gamma}-\frac{\theta}{4}.
\]
Since the gates shrink to their respective critical components,
\[
    q_n\longrightarrow\mathcal K_\alpha,
    \qquad
    r_n\longrightarrow\mathcal K_\gamma
    \quad\text{in }\mathcal Q.
\]
The actions are uniformly bounded, but the durations $T_n$ need
not be.  Set $\varepsilon_0:=\theta/16$ and apply
Lemma~\ref{lem:uniform-shortening-arbitrary-gate-points}
with this fixed action error.  We obtain paths
$\psi^n:[0,S_n]\to\mathcal Q$ with the same endpoints and
\[
    S_n\le T_*<\infty,
    \qquad
    I_{[0,S_n]}(\psi^n)
    \le I_{[0,T_n]}(\phi^n)+\varepsilon_0
    <\widehat\Delta_{\alpha\gamma}-3\varepsilon_0,
\]
where $T_*$ is independent of $n$.

Extend each $\psi^n$ to $[0,T_*]$ by deterministic continuation:
\[
    \overline\psi_t^n:=
    \begin{cases}
        \psi_t^n,
        &0\le t\le S_n,\\[1mm]
        \overline S_{t-S_n}r_n,
        &S_n\le t\le T_*.
    \end{cases}
\]
The added segment has zero action.  Joint continuity of the
semiflow and stationarity of $\mathcal K_\gamma$ give
\[
    \sup_{0\le s\le T_*}
    d_{\mathcal Q}
    \bigl(\overline S_s r_n,\mathcal K_\gamma\bigr)
    \longrightarrow0.
\]
Thus
$\overline\psi_0^n\to\mathcal K_\alpha$ and
$\overline\psi_{T_*}^n\to\mathcal K_\gamma$.
Compactness of the finite-horizon action sublevel sets yields,
after passing to a subsequence,
\[
    \overline\psi^n\longrightarrow\psi
    \quad\text{in }C([0,T_*];\mathcal Q).
\]
By lower semicontinuity,
\[
    \psi_0=\mathcal K_\alpha,
    \qquad
    \psi_{T_*}=\mathcal K_\gamma,
    \qquad
    I_{[0,T_*]}(\psi)
    \le\liminf_{n\to\infty}
         I_{[0,T_*]}(\overline\psi^n)
    \le\widehat\Delta_{\alpha\gamma}-3\varepsilon_0.
\]

Since $\psi$ connects $\mathcal K_\alpha$ to $\mathcal K_\gamma$,
the previously established equality between the direct cost
and the quasipotential gives
\[
 \widehat\Delta_{\alpha\gamma}
 =\Vq(\mathcal K_\alpha,\mathcal K_\gamma)
 \le I_{[0,T_*]}(\psi)
 \le\widehat\Delta_{\alpha\gamma}-3\varepsilon_0,
\]
which is impossible since $\varepsilon_0>0$.

This contradiction proves the asserted uniform action bound.

For $T>0$, let $F_{\alpha\gamma}^{\rho,T}$ be the set of paths
$\phi\in C([0,T];\mathcal Q)$ such that
\(
    \phi_0\in\widehat\Gamma_{\alpha,\rho}
\)
and there exists $t\in[0,T]$ satisfying
\[
    \phi_t\in\widehat K_{\gamma,\rho},
    \qquad
    \phi_s\notin\widehat G_{\ell,\rho}^-
    \quad\text{for every }0\le s\le t
    \text{ and every }\ell\ne\gamma.
\]
There is no restriction on the path after the witnessing time $t$.
Boundary contacts with the forbidden inner gates are allowed.
The set $F_{\alpha\gamma}^{\rho,T}$ is closed in the uniform
path topology: along a uniformly convergent sequence, take a
convergent subsequence of the witnessing times and pass to the
limit using closedness of the initial boundary, the closure of the target gate,
and the complements of the forbidden open gates.

By the preceding action bound, applied up to the witnessing
time, and nonnegativity of the remaining action, for any $T>0$,
\[
    \inf_{\phi\in F_{\alpha\gamma}^{\rho,T}}
    I_{[0,T]}(\phi)
    \ge \widehat\Delta_{\alpha\gamma}-\frac{\theta}{4}.
\]

Now, we apply a fixed-time
decomposition as in
\cite[Chapter~6, proof of Lemma~2.1]{FreidlinWentzell2012}. Choose \(B>1+\max_{\alpha\ne\gamma}
\widehat\Delta_{\alpha\gamma}\) and use \(T=T_B\) from
\eqref{eq:all-critical-duration-tail}.  A successful transition to $\widehat K_{\gamma,\rho}$
starting on \(\widehat\Gamma_{\alpha,\rho}\) either lasts longer
than \(T_B\), or belongs to \(F_{\alpha\gamma}^{\rho,T_B}\).
Hence the uniform LDP upper bound gives
\begin{equation}
\label{eq:FW-auxiliary-split}
\begin{aligned}
    \sup_{z\in\widehat\Gamma_{\alpha,\rho}\cap\PN(\T)}
    \mathbb P_z(X_{\widehat\tau^N}^N\in\widehat K_{\gamma,\rho})
    &\leq e^{-NB}
       +\sup_z\mathbb P_z(X^N_\cdot\in F_{\alpha\gamma}^{\rho,T_B})\\
    &\leq e^{-NB}
       +e^{-N(\widehat\Delta_{\alpha\gamma}-\theta/2)}\\
    &\leq e^{-N(\widehat\Delta_{\alpha\gamma}-\theta)}
\end{aligned}
\end{equation}
for all sufficiently large \(N\). This completes the proof of \eqref{eq:all-critical-transition-rates}.

\smallskip
\noindent
\emph{Step 3: The costs after retaining only minimum gates.}
Let \(\mathfrak S_{ij}\) be the finite collection of simple routes
\[
    \mathfrak s=(a_0,\ldots,a_r),\qquad
    a_0=i,\quad a_r=j,\quad
    a_1,\ldots,a_{r-1}\in\mathcal S,
\]
where all labels are distinct; the direct route \((i,j)\) is
included. Then
\begin{equation}
\label{eq:critical-trace-cost}
    \Delta_{ij}
    =\min_{\mathfrak s\in\mathfrak S_{ij}}
        \sum_{\ell=1}^{r}
            \widehat\Delta_{a_{\ell-1}a_\ell}.
\end{equation}
Indeed, by \eqref{eq:DeltaeqV}, the identities
\(\widehat\Delta_{\alpha\gamma}
=\Vq(\mathcal K_\alpha,\mathcal K_\gamma)\), and the triangle
inequality for \(\Vq\), every such route satisfies
\[
    \Delta_{ij}
    =\Vq(\mathcal M_i,\mathcal M_j)
    \le \sum_{\ell=1}^{r}
        \widehat\Delta_{a_{\ell-1}a_\ell}.
\]
The direct route \((i,j)\) attains equality, proving
\eqref{eq:critical-trace-cost}.

Moreover, Step~3 of the proof of
Lemma~\ref{lem:singleton-exit-cost} shows that, for every
\(\alpha\in\mathcal S\), there is a minimum label
\(k(\alpha)\in\{1,\ldots,J\}\) such that
\[
    \widehat\Delta_{\alpha k(\alpha)}
    =\Vq(\mathcal K_\alpha,\mathcal M_{k(\alpha)})=0.
\]

\smallskip
\noindent
\emph{Step 4: The upper bound for the original chain.}
Since \(\widehat\Delta_{\alpha k(\alpha)}=0\),
the lower bound in \eqref{eq:all-critical-transition-rates}
shows that, from any nonminimum auxiliary entrance point,
the conditional probability that the next auxiliary entrance
has a minimum label is at least \(e^{-N\theta}\),
uniformly over the past.
If \(L\) denotes the number of auxiliary transitions until a
minimum is reached, successive conditioning gives
\[
    \mathbb P_q(L>\ell)
    \le (1-e^{-N\theta})^\ell,
    \qquad \ell\ge0.
\]
Consequently,
\begin{equation}
\label{eq:critical-trace-length}
    \sup_{\substack{\alpha\in\mathcal S\\
               q\in\widehat K_{\alpha,\rho}\cap\PN(\T)}}
        \mathbb E_q L
    \le M_N:=e^{N\theta}.
\end{equation}
If \(\mathcal S\) is empty, no intermediate transitions are
needed and we take \(M_N=1\).

On a successful \(i\)-to-\(j\) transition of the minimum-gate
chain, erase loops in the intervening auxiliary label sequence.
This gives a route
\(\mathfrak s=(a_0,\ldots,a_r)\in\mathfrak S_{ij}\), with
\(a_0=i\), \(a_r=j\), and all intermediate labels nonminimum.
Let \((\mathcal G_n)\) denote the natural filtration of the
auxiliary entrance-point chain \((\widehat Z_n)\), and set
\[
    \tau_{\min}
    :=\inf\{n\ge1:\widehat Y_n\in\{1,\ldots,J\}\}.
\]
Since a return to \(i\) also ends the excursion, a successful
excursion from $i$ to $j$ cannot revisit \(i\).  Thus loop erasure preserves
the first auxiliary transition, which must be \(a_0\to a_1\).

The preceding
bound on $L$ gives
\[
    \sup_{z:\,z\text{ lies in a nonminimum gate}}
    \mathbb E_z\tau_{\min}\le M_N.
\]
For an auxiliary edge \(a\to b\), with \(a\) nonminimum,
the above estimate, Markov property and the upper bound in \eqref{eq:all-critical-transition-rates} give, for all sufficiently large $N$ and every starting state $z$ in a nonminimum gate,
\begin{align}\label{eq:oppor}
 &\mathbb P_z\bigl(
   \exists\,0\le n<\tau_{\min}:
   (\widehat Y_n,\widehat Y_{n+1})=(a,b)\bigr)\notag\\
 &\quad\le
 \sum_{n\ge0}\mathbb E_z\!\left[
   \mathbf 1_{\{n<\tau_{\min},\,\widehat Y_n=a\}}
   \mathbb P_z(\widehat Y_{n+1}=b\mid\mathcal G_n)
 \right]\notag\\
 &\quad\le
 e^{-N(\widehat\Delta_{ab}-\theta)}
 \mathbb E_z\tau_{\min}
 \le M_Ne^{-N(\widehat\Delta_{ab}-\theta)}.
\end{align}
This estimate is uniform over starting states in nonminimum
gates, so it remains valid after restarting at any auxiliary
transition into such a gate.

Fix the route \(\mathfrak s\), and put
\[
    b_\ell
    :=e^{-N(\widehat\Delta_{a_{\ell-1}a_\ell}-\theta)},
    \qquad 1\le\ell\le r.
\]
Define successive first-occurrence indices by
\[
    S_1=
    \begin{cases}
      1,&\widehat Y_1=a_1,\\
      \infty,&\widehat Y_1\ne a_1,
    \end{cases}
\]
and, for \(2\le\ell\le r\),
\[
    S_\ell
    :=\inf\bigl\{
       n>S_{\ell-1}:n\le\tau_{\min},\
       (\widehat Y_{n-1},\widehat Y_n)=(a_{\ell-1},a_\ell)
    \bigr\},
\]
with \(\inf\varnothing=\infty\).
These are stopping times under the filtration \((\mathcal G_n)\).
On \(\{S_{\ell-1}<\infty\}\), for \(2\le\ell\le r\),
the entrance point \(\widehat Z_{S_{\ell-1}}\) lies in the
nonminimum gate labelled \(a_{\ell-1}\), and
\(S_{\ell-1}<\tau_{\min}\).
The strong Markov property at the corresponding auxiliary
transition time, together with the uniform
estimate \eqref{eq:oppor}, therefore gives
\[
    \mathbb P_q(S_\ell<\infty)
    \le M_Nb_\ell\,
        \mathbb P_q(S_{\ell-1}<\infty).
\]
Also, \(\mathbb P_q(S_1<\infty)\le b_1\).

The edges retained by loop erasure occur in their route order
in the original auxiliary sequence.  Consequently, if the
loop-erased route is \(\mathfrak s\), each of the above
successive searches succeeds.  Iterating the preceding
inequality yields
\begin{align*}
    \mathbb P_q(\text{the loop-erased route is }\mathfrak s)
    &\le \mathbb P_q(S_r<\infty)
    \le b_1\prod_{\ell=2}^r(M_Nb_\ell)\\
    &=M_N^{r-1}
    \exp\left\{-N\left(
        \sum_{\ell=1}^r\widehat\Delta_{a_{\ell-1}a_\ell}
        -r\theta\right)\right\}.
\end{align*}
By \eqref{eq:critical-trace-cost},
\[
    \sum_{\ell=1}^r
    \widehat\Delta_{a_{\ell-1}a_\ell}\ge\Delta_{ij}.
\]
Since \(r\le m-1\), the preceding bound on \(M_N\) gives,
uniformly over these route lengths,
\[
    M_N^{r-1}e^{Nr\theta}
    \le C_m e^{2Nm\theta}.
\]
Summing over the finite set \(\mathfrak S_{ij}\), and
enlarging \(C_m\) if necessary, therefore yields
\[
    \sup_{q\in K_{i,\rho}\cap\PN(\T)}
    p_{ij}^{N,\rho}(q)
    \le C_m e^{-N(\Delta_{ij}-2m\theta)}.
\]
Choose \(\theta \in (0, \varepsilon/(4m))\),
choose the gates accordingly, and increase \(N_0\) so that
\(C_m\le e^{N\varepsilon/2}\) for \(N\ge N_0\).
This proves \eqref{eq:stopped-excursion-upper-detailed}.
\end{proof}

\begin{lemma}[Gate-transition reduction]
\label{lem:gate-transition-reduction}
For every \(\varepsilon>0\), the gates can be chosen sufficiently
small that, for all sufficiently large \(N\) and every \(i\ne j\), $1 \le i,j \le J$,
\begin{equation}
\label{eq:uniform-gate-transition-rates}
\begin{aligned}
    \e^{-N(\Delta_{ij}+\varepsilon)}
    &\leq
    \mathbb P\left(
        Y_{n+1}=j
        \,\middle|\,
        \mathcal F_{\tau_n}
    \right)
    \leq
    \e^{-N(\Delta_{ij}-\varepsilon)}
\end{aligned}
\end{equation}
on \(\{Y_n=i\}\), uniformly in \(n\) and in the entrance point
\(X_{\tau_n}^N\in K_{i,\rho}\cap\PN(\T)\).
\end{lemma}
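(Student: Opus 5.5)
The plan is to reduce the conditional statement to the uniform one-step bounds of Lemma~\ref{lem:uniform-stopped-excursion}, using the strong Markov property of the particle system at the stopping time $\tau_n$.

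\emph{Step 1: identify the conditional law.}
The full particle vector $\mathbf X^N$ is a strong Markov process for $(\mathcal F_t)$, and $\tau_n$ is an $(\mathcal F_t)$-stopping time. I will use that $\eta_n$ and $\tau_{n+1}$ are obtained from the post-$\tau_n$ path by the same recipe that defines $\sigma^N_{i,\rho}$ and $\vartheta^N_{i,\rho}$ in Lemma~\ref{lem:direct-gate-transition-lower}, applied to the path started at $X^N_{\tau_n}$. On $\{Y_n=i,\ \tau_n<\infty\}$ the strong Markov property then gives
\[
 \mathbb P\bigl(Y_{n+1}=j\,\big|\,\mathcal F_{\tau_n}\bigr)
 =\mathbb P_{\mu^N_{\tau_n}}\bigl(\vartheta_{i,\rho}^N<\infty,\ X^N_{\vartheta_{i,\rho}^N}\in K_{j,\rho}\bigr)
 =p_{ij}^{N,\rho}\bigl(X^N_{\tau_n}\bigr)
\]
almost surely. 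Here I use two facts. First, by permutation symmetry the law of the empirical process depends only on $\mu^N_{\tau_n}\in\PN(\T)$. Second, by rotation equivariance of \eqref{eq:particle-system}, the law of the quotient process depends only on $[\mu^N_{\tau_n}]=X^N_{\tau_n}$. The function $p_{ij}^{N,\rho}$ must be a measurable function of the starting point for this identification to make sense. I expect this to follow from the Feller property of the diffusion together with the standard measurability of hitting-time functionals, after which the identification is routine.

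\emph{Step 2: apply the uniform bounds.}
Since $X^N_{\tau_n}\in K_{i,\rho}\cap\PN(\T)$ on $\{Y_n=i\}$, the two-sided inequality \eqref{eq:uniform-gate-transition-rates} follows by taking the infimum and supremum over the starting point in \eqref{eq:stopped-excursion-lower-detailed}--\eqref{eq:stopped-excursion-upper-detailed}. The gates and the threshold $N_0$ are those furnished by Lemma~\ref{lem:uniform-stopped-excursion} for the given $\varepsilon$. Because those bounds hold uniformly in $q$, the resulting estimates hold uniformly in $n$ and in the entrance point. There are finitely many ordered pairs $(i,j)$, so a single $N_0$ serves for all of them.

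\emph{Main obstacle: the event $\{\tau_{n+1}=\infty\}$.}
On this event $Y_{n+1}$ is undefined, and I must check that this does not spoil the lower bound. The definition of $p_{ij}$ already includes $\vartheta<\infty$, so the identity in Step 1 holds as written and the lower bound is unaffected. To remove any ambiguity, I will also note that Step~1 of Lemma~\ref{lem:uniform-stopped-excursion}, via the duration tail \eqref{eq:all-critical-duration-tail} and iteration with the strong Markov property, shows that $\tau_{n+1}<\infty$ almost surely. Consequently the labels are defined for all $n$ and no convention for undefined labels is needed.
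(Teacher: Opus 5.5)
Your proposal is correct and is essentially the paper's argument: the paper also proves the lemma by applying the strong Markov property at $\tau_n$, which turns the bounds of Lemma~\ref{lem:uniform-stopped-excursion} (uniform over starting points $q$) into bounds conditional on $\mathcal F_{\tau_n}$ on $\{Y_n=i\}$. Your final remark on $\{\tau_{n+1}=\infty\}$ is not needed, since $p_{ij}^{N,\rho}$ already includes $\vartheta_{i,\rho}^N<\infty$; if you keep it, note that \eqref{eq:all-critical-duration-tail} only controls hitting of the auxiliary set $\widehat K_\rho$ (which also contains the saddle gates), so almost sure finiteness of $\tau_{n+1}$ is better justified by iterating the block estimate behind \eqref{eq:one-gate-clock-tail} in Lemma~\ref{lem:physical-gate-clock}.
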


\begin{proof}
Lemma~\ref{lem:uniform-stopped-excursion} gives both bounds in
\eqref{eq:uniform-gate-transition-rates}.  Applying the strong
Markov property at \(\tau_n\) makes them uniform conditionally on
the entire past and over the entrance point in \(K_{i,\rho}\).
\end{proof}

\section{Two-sided cycle estimates in physical time}
\label{sec:two-sided-cycles}

In this section, we use the transition estimates for the gate label process obtained in Section~\ref{sec:long-time} to obtain physical time estimates for the empirical measure process $[\mu^N]$ to exit the neighborhood of minima indexed by a given cycle. Through Lemma~\ref{lem:direct-graph-heights} and Lemma~\ref{lem:conditional-FW}, we obtain upper and lower bounds on the cycle-exit time for the (discrete-time) gate label process.
Lemma~\ref{lem:physical-gate-clock} bounds the physical time elapsed during the time between successive outer-gate exits by $e^{\kappa N}$ for any prescribed $\kappa>0$,
after choosing sufficiently small fixed gates, simultaneously over
exponentially many excursions.
Lemma~\ref{lem:fast-gate-clock} bounds the number of excursions
completed by an exponentially growing observation time. The estimates in both Lemma~\ref{lem:physical-gate-clock} and Lemma~\ref{lem:fast-gate-clock} hold with superexponentially small failure
probabilities. All these estimates are then combined to obtain Theorem~\ref{prop:two-sided-physical-cycles}, our main result on physical cycle-exit times.

For \(i\ne j\), let \(\Omega_{ij}\) denote the set of finite
sequences \(\omega=(i_0,\ldots,i_r)\), with \(r\ge1\),
\(i_0=i\), \(i_r=j\), and consecutive labels distinct. The \emph{direct-cost graph} is the complete undirected graph
with vertex set \(\{1,\ldots,J\}\), where vertex \(i\)
represents the minimum orbit \(\mathcal M_i\).
The edge \(\{i,j\}\), \(i\ne j\), has height
\[
 E_{ij}:=H_i+\Delta_{ij}
       =H_j+\Delta_{ji}=E_{ji},
 \qquad H_i=\beta f_i.
\]
To see the equality above, we use the observation that reversal of a
direct gate-to-gate path preserves the avoidance constraint in \eqref{eq:limiting-direct-cost-variational}, along with \eqref{eq:action-reversal}.

A graph route is a finite sequence
\(\omega=(i_0,\ldots,i_r)\) with consecutive labels distinct.

\begin{lemma}[Communication heights of the direct-cost graph]
\label{lem:direct-graph-heights}
For $i \neq j$, one has
\begin{equation}\label{eq:graph-communication-height}
 \min_{\omega\in\Omega_{ij}}\max_{(k,\ell)\in\omega}E_{k\ell}
 =\beta\Phi_{ij}.
\end{equation}
\end{lemma}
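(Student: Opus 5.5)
We prove the two inequalities separately. The first step is an edge-level comparison: for every pair $k\neq\ell$ we show
\[
E_{k\ell}\ \ge\ \beta\Phi_{k\ell},
\]
with the reverse inequality holding not edge by edge but along suitable routes. The lower bound is immediate from Lemma~\ref{lem:action-energy}. A path from $\mathcal M_k$ to $\mathcal M_\ell$ with action close to $\Delta_{k\ell}$ exists by \eqref{eq:DeltaeqV}. Its maximal free-energy is at least $\Phi_{k\ell}$, since it is an admissible continuous path in \eqref{eq:pairwise-communication-height}, so \eqref{eq:action-lower-energy} gives $\Delta_{k\ell}\ge\beta(\Phi_{k\ell}-f_k)$, i.e.\ $E_{k\ell}\ge\beta\Phi_{k\ell}$.

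For the inequality ``$\ge$'' in \eqref{eq:graph-communication-height}, take any route $\omega=(i_0,\dots,i_r)\in\Omega_{ij}$. Iterating the ultrametric inequality \eqref{eq:communication-strong-triangle} (consecutive labels are distinct; if $i_0=i_r$ occurs mid-route just drop the loop) gives $\Phi_{ij}\le\max_s\Phi_{i_{s-1}i_s}$. Combining this with the edge-level bound shows $\beta\Phi_{ij}\le\max_{(k,\ell)\in\omega}E_{k\ell}$. Minimizing over $\omega$ yields this inequality.

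For the inequality ``$\le$'', we reuse the splitting argument from the proof of Lemma~\ref{lem:singleton-exit-cost}. Fix $\varepsilon>0$ and a path $\gamma$ from $\mathcal M_i$ to $\mathcal M_j$ with $\sup\F\circ\gamma\le\Phi_{ij}+\varepsilon$, regularized by $S_\tau$ so that it is a compact $C^2$ family of uniformly positive densities; this does not increase energy. By Step~3 of that proof, each $s\in[0,1]$ has a local-minimum label $k(s)$ with $\Vq(\gamma(s),\mathcal M_{k(s)})=0$, and $k(0)=i$, $k(1)=j$ are admissible choices. By uniform continuity and compactness, choose a fine partition $0=s_0<\dots<s_n=1$ so that consecutive points are joined by the small connectors of Step~2, each with action $<\varepsilon$. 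For consecutive labels $k=k(s_{p-1})$ and $\ell=k(s_p)$ with $k\neq\ell$, build a path $\mathcal M_k\to\gamma(s_{p-1})\to\gamma(s_p)\to\mathcal M_\ell$. Its first piece is the reversal of a near-zero-action path, which costs $\beta(\F(\gamma(s_{p-1}))-f_k)+o(1)$ by \eqref{eq:reverse-action}. Hence
\[
E_{k\ell}=\beta f_k+\Vq(\mathcal M_k,\mathcal M_\ell)\le\beta(\Phi_{ij}+\varepsilon)+2\varepsilon+o(1).
\]
Removing repeated consecutive labels gives a route in $\Omega_{ij}$, with $i\neq j$ guaranteeing $r\ge1$, whose maximal edge height is at most $\beta\Phi_{ij}+O(\varepsilon)$. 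Since $\Omega_{ij}$-minima range over a finite set of values once loops are erased (the $E_{k\ell}$ are finitely many), letting $\varepsilon\downarrow0$ gives the inequality.

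The main obstacle is making this last step uniform. The labels $k(s)$ are chosen non-constructively and the zero-cost connectors only have action tending to zero, so the $o(1)$ terms must be controlled at finitely many partition points at once. Finiteness of the partition handles this: for each of the $n+1$ points pick a connector with action $<\varepsilon$. The remaining delicate point is justifying that the Step~2 connectors between $\gamma(s_{p-1})$ and $\gamma(s_p)$ have small action, which needs the uniform $C^1$ closeness and positivity supplied by the $S_\tau$ regularization.
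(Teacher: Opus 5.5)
Your proof is correct and follows the paper's argument essentially step for step. For the lower bound you combine the edge-wise bound $E_{k\ell}\ge\beta\Phi_{k\ell}$ (from the action--energy inequality) with the ultrametric inequality; for the upper bound you use $S_\tau$-smoothing of a near-optimal path, a finite mesh joined by small connectors, zero-cost descent to minima from Step~3, reversal of those descents, and deletion of repeated labels. The identity $\Delta_{k\ell}=\Vq(\mathcal M_k,\mathcal M_\ell)$ from \eqref{eq:DeltaeqV} is what lets your concatenated path, which may touch other minima, bound $E_{k\ell}$; you use it implicitly, as the paper does explicitly.
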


\begin{proof}
The action--energy inequality \eqref{eq:action-lower-energy} gives
$\beta\Phi_{k\ell}\le H_k+\Delta_{k\ell}$ on every edge.
Concatenation and the ultrametric inequality therefore give the lower
bound in \eqref{eq:graph-communication-height}.

For the reverse bound, take a continuous path from $\mathcal M_i$ to
$\mathcal M_j$ of free-energy height at most $\Phi_{ij}+a$, $a>0$.
Apply the deterministic smoothing map $S_s$, with a fixed $s>0$, to
the whole path.  Its endpoints are unchanged, its energy does not
increase, and its image is compact in the quotient $C^2$ topology,
with a common positive lower bound for the densities.
On this compact set, two sufficiently close densities, after a rotational alignment,
can be joined with arbitrarily small action.  Indeed, as in the proof of Lemma~\ref{lem:zero-local-exit-cost}, affine
interpolation over duration $u$ has action at most
$C(\|\rho-\widetilde\rho\|_{C^1}^2/u+u)$; take
$u=\|\rho-\widetilde\rho\|_{C^1}$.

Fix $b>0$ and choose a finite mesh $q_0,\ldots,q_m$ along the smoothed
path so that consecutive points admit connectors of action at most $b$.
The small-action descent construction in Step~3 of the proof of
Lemma~\ref{lem:singleton-exit-cost} gives, from every $q_k$, a path
$\alpha_k$ to some minimum $\mathcal M_{j_k}$ with action at most $b$.
At the endpoints use the constant paths, so $j_0=i$ and $j_m=j$.
Reverse \(\alpha_k\), append the connector from \(q_k\) to
\(q_{k+1}\), and then append \(\alpha_{k+1}\). Denote the
resulting path from \(\mathcal M_{j_k}\) to
\(\mathcal M_{j_{k+1}}\) by \(\phi_k\).
Whenever \(j_k\ne j_{k+1}\), \eqref{eq:DeltaeqV},
additivity of the action, and action reversal give
\[
\begin{aligned}
    E_{j_kj_{k+1}}
    &=H_{j_k}
      +\Vq(\mathcal M_{j_k},\mathcal M_{j_{k+1}})
    \le H_{j_k}+I(\phi_k)\\
    &\le \beta\F(q_k)
         +I(\alpha_k)+b+I(\alpha_{k+1})
    \le \beta(\Phi_{ij}+a)+3b.
\end{aligned}
\]
Deleting consecutive repeated labels from
\((j_0,\ldots,j_m)\) produces a route
\(\omega\in\Omega_{ij}\), every edge of which has height
at most \(\beta(\Phi_{ij}+a)+3b\).
Letting \(a,b\downarrow0\) proves the reverse inequality
in \eqref{eq:graph-communication-height}.
\end{proof}

\begin{lemma}[Finite-state comparison for conditional gate kernels]
\label{lem:conditional-FW}
The following statement concerns the entrance-point chain
$Z_n$ and its gate label process $Y_n$ constructed above.
Set \(\mathcal H_n:=\mathcal F_{\tau_n}\). For every nonempty
\(A\subseteq\{1,\ldots,J\}\) and every \(n\ge0\), define
\[
 \widehat\tau_A^{(n)}
 :=\inf\{k\ge0:Y_{n+k}\in A\},
 \qquad
 \widehat\theta_C^{(n)}
 :=\widehat\tau_{C^c}^{(n)},
\]
where complements are taken in \(\{1,\ldots,J\}\).
When \(n=0\), we omit the superscript.

For every \(u>0\) and \(\delta>0\), there exist
\(
 \alpha_0>0, c>0, C_0<\infty, N_0\in\mathbb N,
\)
depending only on \(u,\delta\),
such that the following implication holds for every \(N\ge N_0\)
and every \(\alpha\in(0,\alpha_0]\).

Suppose that, for every \(n\ge0\) and every \(i\ne j\),
\[
 e^{-N(\Delta_{ij}+\alpha)}
 \le
 \mathbb P(Y_{n+1}=j\mid\mathcal H_n)
 \le
 e^{-N(\Delta_{ij}-\alpha)}
 \qquad\text{almost surely on }\{Y_n=i\}.
\]
Then the following estimates hold for every \(n\ge0\):
\begin{enumerate}[label=(\roman*)]
\item
For every proper cycle \(C\),
\[
 \mathbb E\!\left[
   \widehat\theta_C^{(n)}\mid\mathcal H_n
 \right]
 \le e^{N(d(C)+u)}
 \qquad\text{almost surely on }\{Y_n\in C\}.
\]

\item
For every proper cycle \(C\), almost surely on \(\{Y_n\in C\}\),
\begin{equation}\label{eq:gate-two-sided}
\begin{aligned}
 \mathbb P\!\left(
   \widehat\theta_C^{(n)}>e^{N(d(C)+\delta)}
   \,\middle|\,\mathcal H_n
 \right)
 &\le \exp\{-e^{cN}\},\\
 \mathbb P\!\left(
   \widehat\theta_C^{(n)}<e^{N(d(C)-\delta)}
   \,\middle|\,\mathcal H_n
 \right)
 &\le C_0e^{-cN}
 \qquad\text{if }\delta<d(C).
\end{aligned}
\end{equation}

\item
For every \(i\) such that \(A_i=\{j:f_j<f_i\}\) is nonempty,
recall
\begin{equation*}
 h_i=\min_{j\in A_i}\Phi_{ij},
 \qquad
 D_i=\beta(h_i-f_i).
\end{equation*}
Then
\[
 \mathbb E\!\left[
   \widehat\tau_{A_i}^{(n)}\mid\mathcal H_n
 \right]
 \le e^{N(D_i+u)}
 \qquad\text{almost surely on }\{Y_n=i\}.
\]
\end{enumerate}
The same constants apply to every renewal index, cycle, and label,
and to every initial law for which the conditional transition bounds
hold.
\end{lemma}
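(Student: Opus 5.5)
The proof runs entirely on the finite label set $\{1,\ldots,J\}$ and uses only the assumed conditional one-step bounds. The guiding idea is that the edge heights $E_{k\ell}=H_k+\Delta_{k\ell}$, with $H_k=\beta f_k$, are symmetric, so the comparison chain $Q_N$ is reversible. By Lemma~\ref{lem:direct-graph-heights} its min-max edge heights are $\beta\Phi_{ij}$, so the finite-state cycle structure built from the $E_{k\ell}$ coincides with the energy cycles of Section~\ref{sec:cycle-hierarchy}, and $d(C)=\beta\Gamma(C)-\beta F_C$. Markovianity of $Y$ is never used. Each estimate is obtained by successive conditioning on $\mathcal H_{n+k}$ at renewal indices, so every bound is uniform over the past and the initial entrance point. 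I will fix $\alpha\le\alpha_0(u,\delta)$, so that the accumulated error is at most $cJ^2\alpha$, smaller than $u$ and $\delta$.

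\emph{Step 1: part (i), by induction on the hierarchy.} I prove a strengthened statement by induction along the finitely many merger levels. For every proper cycle $C$ and every $k\in C$ the hypothesis has two parts.
\begin{enumerate}[label=(\alph*)]
\item The expected exit time is at most $e^{N(d(C)+u')}$.
\item For each $j\notin C$, the conditional probability that the exit step lands at $j$ is at least
\[
 \exp\bigl\{-N\bigl(\min_{\ell\in C}E_{\ell j}-\beta\Gamma(C)+u'\bigr)\bigr\}.
\]
\end{enumerate}
Here $u'$ grows by a fixed multiple of $\alpha$ at each level. Singletons are immediate from the one-step bounds: from $i$, the per-step exit probability is at least $e^{-N(\Delta_{ij}+\alpha)}=e^{-N(E_{ij}-H_i+\alpha)}$, and a geometric bound follows by conditioning.

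For a composite cycle $C$, I view it as a graph whose vertices are its maximal proper subcycles $C'$. Consider the ``renormalized'' transitions $C'\to C''$. By the induction hypothesis, each one costs at most $\beta\Phi_{C'C''}-\beta F_{C'}$ plus errors per exit of $C'$, and each exit of $C'$ takes at most $e^{N(d(C')+u')}$ expected steps. Every internal height satisfies $\Phi<\Gamma(C)$ (Lemma~\ref{lem:cycle-level-separation}). Hence, from any subcycle, one reaches the bottom subcycle within a bounded number of renormalized moves, each of which has probability at least $e^{-N(\beta\Lambda_{\rm in}(C)-\beta F_{C'}+u')}$. This takes expected time at most $e^{N(\beta\Lambda_{\rm in}(C)-\beta F_C+u')}$, which is $\ll e^{Nd(C)}$. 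From the bottom subcycle, the probability of exiting $C$ before returning is at least $e^{-N(d(C)-d(C_{\rm bot})+u')}$. Combining these via a geometric-trials argument gives (a), and the exit landing bound (b) is propagated at the same time.

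I expect this bookkeeping to be the main obstacle, specifically carrying the exit-distribution lower bound (b) through the induction with uniformity over the past.

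\emph{Step 2: part (ii).} For the upper tail, I apply (i) with $u=\delta/3$. Markov's inequality gives
\[
 \mathbb P\bigl(\text{no exit within }2e^{N(d(C)+u)}\text{ steps}\,\big|\,\mathcal H_{m}\bigr)\le\tfrac12
\]
uniformly on $\{Y_m\in C\}$. Chaining $\tfrac12e^{N(\delta-u)}$ such blocks by conditioning at block ends gives the bound $2^{-e^{N(\delta-u)}/2}$, which is at most $\exp\{-e^{cN}\}$.

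For the lower tail, I bound occupation numbers. Set $M=e^{N(d(C)-\delta)}$. I claim that, for each $k\in C$, the expected number of visits to $k$ among the first $M$ steps, counted before exit, is at most
\[
 C_J\bigl(1+Me^{-N(H_k-\beta F_C-J\alpha)}\bigr).
\]
To prove this, order the states of $C$ by decreasing $H$ and induct. An entry into $k$ from $\ell$ has conditional probability at most $e^{-N(E_{\ell k}-H_\ell-\alpha)}$, and $E_{\ell k}\ge\max(H_\ell,H_k)$, so summing $\mathbb E[\#\text{visits to }\ell]\cdot e^{-N(H_k-H_\ell-\alpha)_+}$ closes the recursion. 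Every exit edge from $k$ has $E\ge\beta\Gamma(C)$, so its probability is at most $e^{-N(\beta\Gamma(C)-H_k-\alpha)}$. Summing over $k$ and over steps bounds the exit probability within $M$ steps by
\[
 C\max_{k\in C}e^{-N(\beta\Gamma(C)-H_k-2J\alpha)}+CMe^{-N(d(C)-2J\alpha)}.
\]
The second term is at most $Ce^{-N(\delta-2J\alpha)}$. The first term is exponentially small because $f_k<\Gamma(C)$ for $k\in C$; this strict inequality follows from Lemma~\ref{lem:cycle-level-separation} together with strictness of local minima.

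\emph{Step 3: part (iii).} I treat $A_i$ as absorbing and apply the Step~1 argument to the landscape restricted to labels $k$ with $f_k\ge f_i$. By Lemma~\ref{lem:direct-graph-heights}, from $i$ there is a route to $A_i$ all of whose edge heights are at most $\beta h_i$. Every state $k$ with $\Phi_{ik}\le h_i$ also has such a route, by the ultrametric inequality (Lemma~\ref{lem:communication-strong-triangle}). Moreover, the chain started at $i$ stays within these states until it hits $A_i$, except on an event controlled as in Step~2.

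The relevant cycle containing $i$, relative to the absorbing set, has bottom $f_i$, because all its other states have $f\ge f_i$. Its exit height is $h_i$, so its depth is exactly $D_i$. Rerunning the Step~1 bound with exit set $A_i$ therefore gives $\mathbb E[\widehat\tau_{A_i}^{(n)}\mid\mathcal H_n]\le e^{N(D_i+u)}$. All constants depend only on $J$, the finitely many $\Delta_{ij}$, $u$ and $\delta$, so they are uniform over $n$, cycles and labels, as the lemma asserts.
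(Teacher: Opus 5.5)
Your approach (a direct hierarchical induction on the conditional label process, instead of the paper's comparison with a reference chain) is reasonable in principle, but as written the argument has genuine gaps. The most concrete is that your Step~2 occupation estimate is false, and so is the early-exit display built on it.

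\textbf{Step 2.} The hypothesis constrains only off-diagonal moves. On $\{Y_n=k\}$ it forces $\mathbb P(Y_{n+1}=k\mid\mathcal H_n)\ge1-Je^{-N(\min_{m\ne k}\Delta_{km}-\alpha)}$. So the label process is extremely lazy: started at $k$, it sits there for about $e^{N\min_{m\ne k}\Delta_{km}}$ renewal steps. A recursion over incoming transitions from $\ell\ne k$ therefore controls at best the number of \emph{entries} into $k$, yet you multiply by the \emph{per-step} exit probability. Counted in steps, the bound $C_J(1+Me^{-N(H_k-\beta F_C-J\alpha)})$ misses the holding time at a non-bottom starting label.

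Concretely, take a landscape like that of Remark~\ref{rem:two-well-illustration}: $H_1=0$, $H_2=9\beta$, $E_{12}=10\beta$, an exit edge $E_{2j}=11\beta$, so $d(C)=11\beta$. Let $M=e^{N(d(C)-\delta)}$ and start at label~$2$.
\begin{itemize}
\item The process spends about $\min\{M,e^{\beta N}\}$ steps at label~$2$ before first moving. Your occupation bound there is $C_J(1+e^{N(2\beta-\delta+J\alpha)})$.
\item The process leaves $C$ within $M$ steps with probability of order $\min\{Me^{-2\beta N},e^{-\beta N}\}$. Your final display is of order $e^{-2\beta N}+e^{-N\delta}$.
\end{itemize}
Both of your bounds fail for every $\delta\in(\beta,d(C))$ once $\alpha$ is small.

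Exponential smallness in (ii) is still true. However, the correct transient term is of order $e^{-N\beta(\Gamma(C)-\Lambda_{\mathrm{in}}(C))}$, which is small by Lemma~\ref{lem:cycle-level-separation}, not by $f_k<\Gamma(C)$. Obtaining it requires showing that, from a non-bottom start, the process reaches the bottom subcycle before leaving $C$ except with that probability. Your ordering-by-$H$ recursion cannot supply this hierarchical information. It also does not close, since it needs occupation bounds at labels on both sides of $H_k$.

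\textbf{Step 1.} The same missing ingredient breaks the induction. Your hypothesis carries only an upper bound on exit times and a \emph{lower} bound on exit distributions, and this does not propagate. To show that the exit from a composite $C$ lands at a given $j\notin C$ with probability at least $e^{-N(\min_{\ell\in C}E_{\ell j}-\beta\Gamma(C)+u')}$, you must compare that channel with the total probability of all competing exits per return to the bottom subcycle. That comparison requires \emph{upper} bounds on the exit distributions of the subcycles, or equivalently lower bounds on occupation before exit. Already for singletons, (b) uses the one-step upper bounds in the denominator; at higher levels the corresponding denominator is not controlled by (a)--(b). This is why the Freidlin--Wentzell recursion carries two-sided estimates for exit times and exit distributions simultaneously. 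You flagged this obstacle but gave no mechanism for it.

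\textbf{Step 3.} Exiting the cycle of depth $D_i$ around $i$ need not mean hitting $A_i$: the exit may land at a label $j\notin A_i$ with $f_j\ge f_i$. Since (iii) bounds a conditional mean, rare excursions toward labels $k$ with $\Phi_{ik}>h_i$ cannot simply be discarded as a small-probability event. Their contribution, namely probability times the possibly much longer time needed from there, must be bounded.

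\textbf{How the paper avoids this.} It compares the entrance-point chain with the reversible reference chain $Q_N$ through the general-state comparison lemmas of Freidlin--Wentzell (Chapter~6, Lemmas~3.3 and~3.4). These are applied to mean hitting times and, after independent geometric killing, to Laplace transforms, at the cost of a factor $R_N^{4^J}$ with $R_N=2Je^{N\alpha_0}$. The cycle asymptotics of $Q_N$ are then imported from Cirillo--Nardi--Sohier. Part (iii) follows from $\mathbb E_i^{Q_N}\tau_{A_i}\le\varpi_N(A_i^c)/\operatorname{cap}_N(i,A_i)$, with a Rayleigh-monotonicity lower bound on the capacity along a route whose edge heights are at most $\beta h_i$.
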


\begin{proof}
Fix \(u,\delta>0\).

\smallskip
\noindent\emph{Markov chain hitting time estimates.}
Consider the reversible matrix
\[
 Q_N(i,j)=(2J)^{-1}e^{-N\Delta_{ij}}\quad(i\ne j),
 \qquad
 Q_N(i,i)=1-\sum_{j\ne i}Q_N(i,j),
\]
whose invariant law is
\(\varpi_N(i)=Z_{H,N}^{-1}e^{-NH_i}\), where $Z_{H,N}$ is the associated normalizing factor.
Write \(\tau_A\) for the hitting time of \(A\) by this reference chain, and write the law and expectation of this chain starting from $x \in \{1,\dots,J\}$ respectively as $\mathbb P_x^{Q_N}$ and $\mathbb E_x^{Q_N}$.

Lemma~\ref{lem:direct-graph-heights} and
\cite[Propositions~3.10 and~3.15]{CirilloNardiSohier2015}
identify the cycles of \(Q_N\) and their depths with ours.
By \cite[Proposition~2.5, equation~(2.26), and
Proposition~3.11]{CirilloNardiSohier2015},
\[
 \lim_{N\to\infty}\frac1N
 \log\mathbb E_x^{Q_N}\tau_{C^c}=d(C)
\]
for every proper cycle \(C\) and every \(x\in C\).
Since there are finitely many such pairs \((C,x)\), there exists
\(N_1\) such that
\begin{equation}\label{eq:benu1}
 \mathbb E_x^{Q_N}\tau_{C^c}\le e^{N(d(C)+\eta)}
\end{equation}
for every \(N\ge N_1\), uniformly over these pairs.

Moreover, \cite[Proposition~3.18, item~2]{CirilloNardiSohier2015}
gives constants \(\gamma>0\) and \(K<\infty\) such that,
after increasing \(N_1\),
\begin{equation}\label{eq:bana}
 \mathbb P_x^{Q_N}
 \bigl(\tau_{C^c}\le e^{N(d(C)-\delta/2)}\bigr)
 \le K e^{-\gamma N}
\end{equation}
for every \(N\ge N_1\), every proper cycle \(C\) with
\(\delta<d(C)\), and every \(x\in C\).

Let \((X_n)\) be the reference chain with transition kernel \(Q_N\),
and write \(\tau_i^+:=\inf\{n\ge1:X_n=i\}\). Set
\[
 \sigma:=\tau_{A_i}\wedge\tau_i^+,
 \qquad
 p:=\mathbb P_i^{Q_N}(\tau_{A_i}<\tau_i^+).
\]
The strong Markov property at \(\sigma\) gives
\[
 \mathbb E_i^{Q_N}\tau_{A_i}
 =\mathbb E_i^{Q_N}\sigma
   +(1-p)\mathbb E_i^{Q_N}\tau_{A_i}.
\]
Every time before \(\sigma\) is spent outside \(A_i\). Hence the
occupation formula
\cite[Lemma~10.5 and Remark~10.6]{LevinPeres2017},
applied to the return time \(\tau_i^+\), yields
\[
 p\,\mathbb E_i^{Q_N}\tau_{A_i}
 =\mathbb E_i^{Q_N}\sigma
 \le
 \mathbb E_i^{Q_N}\!\left[
   \sum_{n=0}^{\tau_i^+-1}\mathbf 1_{\{X_n\notin A_i\}}
 \right]
 =\frac{\varpi_N(A_i^c)}{\varpi_N(i)}.
\]
Therefore,
\[
 \mathbb E_i^{Q_N}\tau_{A_i}
 \le \frac{\varpi_N(A_i^c)}
          {\operatorname{cap}_N(i,A_i)},
 \qquad
 \operatorname{cap}_N(i,A_i)
 :=\varpi_N(i)\mathbb P_i^{Q_N}(\tau_{A_i}<\tau_i^+).
\]

By Lemma~\ref{lem:direct-graph-heights}, there is a route from
\(i\) to \(A_i\) whose maximal edge elevation is \(\beta h_i\).
Stopping at its first visit to \(A_i\) and erasing loops gives
a simple path \(\omega=(i_0,\ldots,i_m)\), with \(i_0=i\),
\(i_m\in A_i\), and \(i_r\notin A_i\) for \(r<m\).
Since its vertices are distinct, \(m\le J-1\), and every edge
satisfies \(E_{i_{r-1}i_r}\le\beta h_i\).
The conductances of the reversible reference chain are
\[
 c_N(k,\ell)
 :=\varpi_N(k)Q_N(k,\ell)
 =\frac{e^{-N(H_k+\Delta_{k\ell})}}{2JZ_{H,N}}
 =\frac{e^{-NE_{k\ell}}}{2JZ_{H,N}},
 \qquad k\ne\ell.
\]
Thus, by the series law for resistances, the total resistance along this path is bounded by
\[
 \sum_{r=1}^{m}\frac{1}{c_N(i_{r-1},i_r)}
 \le 2JmZ_{H,N}e^{N\beta h_i}
 \le 2J^2Z_{H,N}e^{N\beta h_i}.
\]
By Rayleigh's monotonicity principle \cite[Theorem~9.12 and Corollary~9.13]{LevinPeres2017}, retaining only the edges
of this path can only decrease the effective conductance.
Thus,
\[
 \operatorname{cap}_N(i,A_i)
 \ge
 \left(
   \sum_{r=1}^{m}\frac{1}{c_N(i_{r-1},i_r)}
 \right)^{-1}
 \ge \frac{e^{-N\beta h_i}}{2J^2Z_{H,N}}.
\]
Since \(\varpi_N(A_i^c)\le JZ_{H,N}^{-1}e^{-N\beta f_i}\),
\begin{equation}\label{eq:reference-descent-mean}
 \mathbb E_i^{Q_N}\tau_{A_i}\le 2J^3e^{ND_i}.
\end{equation}
Increasing \(N_1\), this is at most \(e^{N(D_i+\eta)}\),
simultaneously for all \(i\) with \(A_i\ne\varnothing\).

\smallskip
\noindent\emph{Comparison estimates for the gate label process.}
We transfer the reference-chain estimates to the
conditional label process using the general-state-space
comparison lemmas of Freidlin--Wentzell
\cite[Chapter~6, \S3, Lemmas~3.3 and~3.4]{FreidlinWentzell2012}.
Recall that the entrance-point chain
\(Z_n=X_{\tau_n}^N\) is Markov, with its state space partitioned
by the gates \(K_{i,\rho}\). Denote its transition kernel by
\(\mathsf P_N\). For $\alpha_0>0$ and \(0<\alpha\le\alpha_0\), the uniform gate
transition bounds give
\[
 R_N^{-1}Q_N(i,j)
 \le \mathsf P_N(z,K_{j,\rho})
 \le R_NQ_N(i,j),
 \qquad z\in K_{i,\rho},\quad i\ne j,
 \qquad R_N:=2J e^{N\alpha_0},
\]
where \(z\) ranges over the finite-\(N\) states in the indicated gate.

Fix a nonempty proper set of target labels \(A\), and put
\(r:=J-|A|\). Recall that
\[
 \widehat\tau_A
 =\inf\left\{k\ge0:
       Z_k\in\bigcup_{j\in A}K_{j,\rho}\right\}.
\]
Here \(\mathbb E_z\) denotes expectation for the entrance-point
chain started at \(Z_0=z\), whereas
\(\mathbb E_i^{Q_N}\tau_A\) is the corresponding mean hitting
time for the finite reference chain started at label \(i\).

Applying \cite[Chapter~6, Lemma~3.4]{FreidlinWentzell2012}
with \(p_{ij}=Q_N(i,j)\) and \(a=R_N\) gives
\[
 R_N^{-4^r}\,\mathbb E_i^{Q_N}\tau_A
 \le
 \mathbb E_z\widehat\tau_A
 \le
 R_N^{4^r}\,\mathbb E_i^{Q_N}\tau_A,
 \qquad i\notin A,\quad z\in K_{i,\rho},
\]
uniformly over the finite-\(N\) entrance points in the indicated
gate. Since \(r\le J\), we may use the common factor
\(B_N:=R_N^{4^J}\), obtaining
\[
 \sup_{z\in K_{i,\rho}\cap\PN(\T)}
 \mathbb E_z\widehat\tau_A
 \le B_N\,\mathbb E_i^{Q_N}\tau_A,
 \qquad i\notin A.
\]

To control early hitting, we compare the Laplace transforms of
the hitting times. Fix \(m>0\) and independently
kill each chain before every transition with probability \(1-e^{-1/m}\).
Equivalently, introduce an independent geometric killing step
\(\zeta\) satisfying
\[
 \mathbb P(\zeta>k)=e^{-k/m},\qquad k\ge0.
\]
Conditioning on the hitting time gives
\[
\begin{aligned}
 \mathbb P_z(\widehat\tau_A<\zeta)
 =\mathbb E_z[e^{-\widehat\tau_A/m}], \qquad
 \mathbb P_i^{Q_N}(\tau_A<\zeta)
 =\mathbb E_i^{Q_N}[e^{-\tau_A/m}].
\end{aligned}
\]
Thus each Laplace transform is the probability of reaching
the target before being killed.

Apply \cite[Chapter~6, Lemma~3.3]{FreidlinWentzell2012}
to the augmented chains, stopping on reaching either a target
gate or the cemetery state \(\partial\).
The target labels in that lemma are \(A\cup\{\partial\}\);
summing its hitting-location bounds over \(j\in A\) gives
the probability of reaching \(A\) before \(\partial\).
The killing probabilities agree for the two chains, while the
other off-diagonal probabilities retain their multiplicative
comparison by \(R_N\).
There are still \(r=J-|A|\) non-target pieces. Consequently,
\[
 R_N^{-4^r}\,
 \mathbb E_i^{Q_N}[e^{-\tau_A/m}]
 \le
 \mathbb E_z[e^{-\widehat\tau_A/m}]
 \le
 R_N^{4^r}\,
 \mathbb E_i^{Q_N}[e^{-\tau_A/m}],
\]
uniformly for \(m>0\), \(i\notin A\), and all finite-\(N\)
entrance points \(z\in K_{i,\rho}\).
In particular, the upper bound holds with the common factor
\(B_N=R_N^{4^J}\).

Both estimates are uniform over the entrance point in each gate.
The strong Markov property at \(\tau_n\) therefore gives,
almost surely on \(\{Y_n\notin A\}\),
\begin{equation}\label{eq:comparison}
\begin{aligned}
 \mathbb E[
   \widehat\tau_A^{(n)}\mid\mathcal H_n]
 &\le B_N\,\mathbb E_{Y_n}^{Q_N}\tau_A,\\
 \mathbb E[
   e^{-\widehat\tau_A^{(n)}/m}\mid\mathcal H_n]
 &\le B_N\,\mathbb E_{Y_n}^{Q_N}[e^{-\tau_A/m}],
 \qquad m>0.
\end{aligned}
\end{equation}

\smallskip
\noindent\emph{Choice of the error and the mean bounds.}
Set \(\eta=\frac14\min\{u,\delta\}\), and let
\(K,\gamma>0\) be the constants in the reference-chain
early-exit estimate in \eqref{eq:bana}. If no cycle has depth greater
than \(\delta\), the lower-tail assertion is vacuous and
we may take \(K=\gamma=1\).
Choose \(\alpha_0>0\) so that
\[
 2(4^J)\alpha_0<\min\{\eta,\gamma/2\}.
\]
For \(N\) sufficiently large,
\[
 B_N=(2Je^{N\alpha_0})^{4^J}
 \le e^{N\min\{\eta,\gamma/2\}}.
\]
The first comparison bound in \eqref{eq:comparison}, together with \eqref{eq:benu1} and \eqref{eq:reference-descent-mean}, now give
\[
\begin{aligned}
 \mathbb E[
   \widehat\theta_C^{(n)}\mid\mathcal H_n]
 &\le e^{N(d(C)+2\eta)}
 &&\text{on }\{Y_n\in C\},\\
 \mathbb E[
   \widehat\tau_{A_i}^{(n)}\mid\mathcal H_n]
 &\le e^{N(D_i+2\eta)}
 &&\text{on }\{Y_n=i\}.
\end{aligned}
\]
Because \(2\eta\le u\), these prove (i) and (iii).

\smallskip
\noindent\emph{Early exit.}
Fix a cycle \(C\) with \(\delta<d(C)\), and set
\[
 m:=e^{N(d(C)-\delta)},
 \qquad t:=e^{N(d(C)-\delta/2)}.
\]
For any nonnegative random variable \(T\),
\[
 \mathbf1_{\{T\le m\}}\le e\,e^{-T/m}.
\]
Also, for every \(x\in C\), splitting according to whether
\(\tau_{C^c}\le t\) gives, using \eqref{eq:bana},
\[
\begin{aligned}
 \mathbb E_x^{Q_N}[e^{-\tau_{C^c}/m}]
 &\le
 \mathbb P_x^{Q_N}(\tau_{C^c}\le t)+e^{-t/m}\\
 &\le K e^{-\gamma N}
       +\exp\{-e^{N\delta/2}\}.
\end{aligned}
\]
The second comparison bound in \eqref{eq:comparison} yields, on \(\{Y_n\in C\}\),
\[
\begin{aligned}
 \mathbb P(
   \widehat\theta_C^{(n)}\le m\mid\mathcal H_n)
 \le
 eB_N\left(K e^{-\gamma N}
           +\exp\{-e^{N\delta/2}\}\right)
 \le C_0e^{-\gamma N/2},
\end{aligned}
\]
for all sufficiently large \(N\). This gives the second bound in part (ii).

\smallskip
\noindent\emph{Late exit.}
For any proper cycle \(C\), put
\[
 M_{C,N}:=e^{N(d(C)+2\eta)},
 \qquad L_{C,N}:=\lceil2M_{C,N}\rceil.
\]
The conditional mean bound for $\widehat\theta_C^{(n)}$ and Markov's inequality give
\[
 \mathbb P(
   \widehat\theta_C^{(n)}>L_{C,N}\mid\mathcal H_n)
 \le\frac12
 \qquad\text{on }\{Y_n\in C\}.
\]
On the event that the cycle has not yet been left, this same
bound applies at the beginning of the next block.
Iterating at the indices
\(n+kL_{C,N}\) consequently gives
\[
 \mathbb P(
   \widehat\theta_C^{(n)}>kL_{C,N}\mid\mathcal H_n)
 \le 2^{-k},
 \qquad k\ge0.
\]
Since \(\delta-2\eta\ge\delta/2\), it follows, for all
sufficiently large \(N\), that
\[
\begin{aligned}
 \mathbb P(
   \widehat\theta_C^{(n)}>e^{N(d(C)+\delta)}
   \mid\mathcal H_n)
 \le
 2^{-\left\lfloor
       e^{N(d(C)+\delta)}/L_{C,N}
      \right\rfloor}
 \le \exp\{-e^{N\delta/4}\}.
\end{aligned}
\]
Taking \(c=\min\{\gamma/2,\delta/4\}\) completes the proof of (ii).
All comparisons used \(R_N\) defined with \(\alpha_0\),
so the constants are uniform over \(0<\alpha\le\alpha_0\).
Finally, increase \(N_0\) to satisfy the finitely many
thresholds above simultaneously for all cycles and labels.
\end{proof}

\begin{lemma}[Physical gate clock]
\label{lem:physical-gate-clock}
For every \(A<\infty\) and \(\kappa>0\), the gates in
\eqref{eq:minimum-gates} can be chosen sufficiently small that we can find positive constants $c,c'$ such that for all sufficiently large $N$,
\begin{equation}
\label{eq:physical-gate-clock}
    \sup_{\nu\in\PN(\T)}
    \mathbb P_\nu\left(
        \max_{-1\leq n\leq\lfloor\e^{AN}\rfloor}
        (\eta_{n+1}-\eta_n)
        >\e^{\kappa N}
    \right)
    \le e^{-c\,e^{c'N}}.
\end{equation}
\end{lemma}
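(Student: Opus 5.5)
The plan is to split each interval $[\eta_n,\eta_{n+1}]$ into two phases and bound each phase by $\frac12 e^{\kappa N}$ with superexponentially small failure probability. The first phase is the travel from $\eta_n$ to the next entrance $\tau_{n+1}$ into $K_\rho$. The second is the residence from $\tau_{n+1}$ until the exit $\eta_{n+1}$ from $G^+_{Y_{n+1},\rho}$. For $n=-1$ the first phase is simply $[0,\tau_0]$. A union bound over the at most $e^{AN}+2$ indices costs only a factor $e^{AN}$, which is harmless against a bound of the form $\exp\{-e^{c'N}\}$. The strong Markov property at $\eta_n$ and $\tau_{n+1}$ reduces everything to two estimates that are uniform over starting points in $\PN(\T)$.

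\emph{Travel phase.} I would use the block argument behind \eqref{eq:all-critical-duration-tail}. The compact set $C'_\rho:=\Qq\setminus G^-_\rho$ (union of inner minimum gates) still contains the non-minimum critical orbits, so the dissipation gap does not apply directly. I would therefore work with the auxiliary gates. Step~1 of Lemma~\ref{lem:uniform-stopped-excursion} gives a fixed $T_B$ with $\sup_q\mathbb P_q(\widehat\tau^N>T_B)\le e^{-NB}$. From a saddle gate $\widehat K_{\alpha}$, the lower bound in \eqref{eq:all-critical-transition-rates} with $\widehat\Delta_{\alpha k(\alpha)}=0$ yields a transition to a minimum gate within a fixed time $T'$ with probability at least $e^{-N\theta}$. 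Here I use that the open-tube paths have bounded duration, and I must also ensure that the process leaves $\widehat G^+_\alpha$ in bounded time with high probability, which I expect to follow from the same tube lower bound. Concatenating these gives $p_N\ge e^{-2N\theta}$ for the probability of reaching $K_\rho$ within a fixed time $T_0$, uniformly from any start. Iterating over $\lfloor \frac12 e^{\kappa N}/T_0\rfloor$ blocks then gives a failure probability of at most $(1-e^{-2N\theta})^{e^{\kappa N}/(2T_0)}\le\exp\{-e^{N(\kappa-2\theta)}/(2T_0)\}$. This is superexponentially small once $\theta<\kappa/4$, which is why the gates must be chosen depending on $\kappa$.

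\emph{Residence phase.} Starting from $q\in K_{i,\rho}$, I need the exit time from $G^+_{i,\rho}$ to be at most $\frac12 e^{\kappa N}$. I would make $G^\pm_{i,\rho}$ so small that the local exit cost $\Vq(\mathcal M_i,(G^+_{i,\rho})^c)$ is below $\kappa/4$. This is possible by Proposition~\ref{prop:local-quasipotential}, since $\Hh(\mathcal M_i,U^c)\downarrow f_i$ as $U$ shrinks: the boundary minimum energy tends to $f_i$ by approximating $\mathcal M_i$ with smooth measures. The open-tube construction of Lemma~\ref{lem:direct-gate-transition-lower} (zero-action flow to $\mathcal M_i$, a small connector, then a reversed deterministic segment to $\partial G^+_{i,\rho}$) gives a fixed $T_1$ with $\inf_{q\in\overline{G^+_{i,\rho}}\cap\PN}\mathbb P_q(\text{exit by }T_1)\ge e^{-N\kappa/3}$. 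Uniformity must hold over all of $\overline{G^+_{i,\rho}}$, not just $K_{i,\rho}$, and I would get it by a finite cover of the compact set together with Corollary~\ref{cor:finite-cover-lower}, using Lemma~\ref{lem:poc-minimum-stability} to flow any point back near $\mathcal M_i$ first. Iterating blocks of length $T_1$ then gives a failure probability of at most $\exp\{-e^{N(\kappa-\kappa/3)}/(2T_1)\}$.

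\emph{Main obstacle.} The hardest step is the uniform lower bound for exiting the travel phase. The difficulty is ruling out long sojourns near saddle orbits, which are not covered by the dissipation-gap argument, uniformly over arbitrary (possibly atomic) empirical starting points. I would try first to derive it from \eqref{eq:all-critical-transition-rates} plus \eqref{eq:all-critical-duration-tail}, with $\theta$ chosen much smaller than $\kappa$. Combining the two phases with the union bound yields \eqref{eq:physical-gate-clock} with $c'<\kappa/2$.
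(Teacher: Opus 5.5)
Your proof is correct and shares the paper's skeleton: a uniform lower bound, with exponential rate below $\kappa$, on success within a block of fixed length, then geometric iteration and a union bound over at most $e^{AN}+2$ increments. The organization differs, though.

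The paper does not split each increment into travel and residence. It iterates a single block of length $T_{\mathrm{ent}}+T_{\mathrm{ex}}$. In that block the process, from an arbitrary configuration, enters some open inner gate $G_k^-$ and then leaves $G_k^+$. The success probability is at least $e^{-N(\ell_{\mathrm{gate}}+2\gamma)}$, where $\ell_{\mathrm{gate}}=\max_i\sup_{q\in K_i}\Vq(q,\Qq\setminus\overline{G_i^+})$ is made small by shrinking the gates. Because the exit attempt always starts at the entrance point, uniformity is needed only over $K_i$. Your separate residence phase instead forces uniformity over all of $\overline{G^+_{i,\rho}}$, which is why you need the flow-back step.

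More significantly, the step you flag as the main obstacle is handled in the paper without auxiliary saddle gates. After a zero-action smoothing step, Step~3 of the proof of Lemma~\ref{lem:singleton-exit-cost} gives paths of arbitrarily small action into some inner minimum gate. This holds from every point of $\Qq$, including points near saddle orbits. Proposition~\ref{prop:uniform-LDP}(ii), Corollary~\ref{cor:finite-cover-lower} and compactness of $\Qq$ then give $\inf_\nu\mathbb P_\nu(\tau^N_{G^-}\le T_{\mathrm{ent}})\ge e^{-\gamma N}$ directly.

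Your route through \eqref{eq:all-critical-duration-tail} and the lower bound in \eqref{eq:all-critical-transition-rates} with $\widehat\Delta_{\alpha k(\alpha)}=0$ also works. You do not need a separate high-probability exit estimate there: the open-tube event already contains the exit from $\widehat G^+_\alpha$ and the arrival in $K_{k(\alpha)}$ within bounded time. But it imports the dissipation-gap machinery and an extra layer of gate choices for what is a short consequence of zero-cost accessibility. Your version gives separate superexponential control of travel and residence times; the paper's gives economy.

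One small fix is needed in your residence phase. The tube must enter the open set $\Qq\setminus\overline{G^+_{i,\rho}}$ strictly, not merely end on $\partial G^+_{i,\rho}$, for the exit event to be open in path space. Apply Proposition~\ref{prop:local-quasipotential} to a slightly larger open $U$ with $\overline{G^+_{i,\rho}}\subset U$ and $\overline U\subset\mathcal A_i$, chosen so that $\min_{\partial U}\F$ is still close to $f_i$.
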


\begin{proof}
Fix \(\gamma>0\).  We first record uniform entry and exit estimates.

\smallskip
\noindent
\emph{Uniform entry into the gate set.}
For every \(q\in\Qq\), there exists $S_q>0$, a path $\gamma_q:[0,S_q] \rightarrow \Qq$ and a local minimum component $\mathcal{M}_k$ such that $\gamma_q(0)=q$, $\gamma_q(S_q) \in G_k^-$ and $ I_{[0,S_q]}(\gamma_q) < \gamma/2$.
To see this, for
an arbitrary initial measure, first follow the deterministic flow for
a positive time; this has zero action and gives a smooth strictly
positive density.  Step~3 in the proof of
Lemma~\ref{lem:singleton-exit-cost} then gives the required path to a
local-minimum component.  Proposition~\ref{prop:uniform-LDP}\textup{(ii)}, Corollary~\ref{cor:finite-cover-lower}
and compactness of \(\Qq\) therefore give
\(T_{\mathrm{ent}}<\infty\) such that
\begin{equation}
\label{eq:clock-uniform-entry}
    \inf_{\nu\in\PN(\T)}
    \mathbb P_\nu\left(
        \tau_{G^-}^N\leq T_{\mathrm{ent}}
    \right)
    \geq \e^{-\gamma N}
\end{equation}
for all sufficiently large \(N\).

\smallskip
\noindent
\emph{Uniform exit from the gates.}
The local cost that is needed uniformly over the entrance point is
\begin{equation}
\label{eq:uniform-local-gate-cost}
    \ell_{\mathrm{gate}}
    :=
    \max_{1\leq i\leq J}
    \sup_{q\in K_i}
 \Vq\bigl(q,\Qq\setminus\overline{G_i^+}\bigr).
\end{equation}

As in Step~1 of
Lemma~\ref{lem:uniform-shortening-arbitrary-gate-points},
zero-action smoothing followed by low-action affine
interpolation between densities close in $C^1$ to a stationary
representative of $\mathcal M_i$ shows that
$\ell_{\mathrm{gate}}$ can be made arbitrarily small by
suitably shrinking the gates.

The uniform LDP lower bound in Proposition~\ref{prop:uniform-LDP}\textup{(ii)} and a
finite covering of the finitely many compact sets
\(K_i=\overline{G_i^-}\) then give \(T_{\mathrm{ex}}<\infty\) such that
\begin{equation}\label{eq:clock-uniform-exit}
 \inf_{\substack{1\le i\le J\\q\in K_i\cap\PN(\T)}}
 \mathbb P_q\left(\tau_{(G_i^+)^c}^N\le T_{\mathrm{ex}}\right)
 \ge e^{-N(\ell_{\mathrm{gate}}+\gamma)}
\end{equation}
for all sufficiently large $N$.

Now, we combine the above two estimates. Let \(B:=T_{\mathrm{ent}}+T_{\mathrm{ex}}\).  By the strong Markov
property, from an arbitrary configuration and at an arbitrary
stopping time, the probability of entering an inner gate and then
leaving the corresponding outer gate during the next interval of
length \(B\) is at least
\[
    q_N:=\e^{-N(\ell_{\mathrm{gate}}+2\gamma)}.
\]
Applying this estimate in successive blocks of length \(B\), with
\(
    m_N:=\left\lfloor\frac{\e^{\kappa N}}{B}\right\rfloor,
\)
gives, uniformly in \(n\),
\begin{equation}
\label{eq:one-gate-clock-tail}
\begin{aligned}
    \mathbb P_\nu\left(
        \eta_{n+1}-\eta_n>\e^{\kappa N}
        \,\middle|\,
        \mathcal F_{\eta_n}
    \right)
    &\leq (1-q_N)^{m_N}\\
    &\leq
    \exp\left\{
        -c\e^{N(\kappa-\ell_{\mathrm{gate}}-2\gamma)}
    \right\}.
\end{aligned}
\end{equation}
The same estimate applies to \(\eta_0-\eta_{-1}=\eta_0\).
Choose the gates and then \(\gamma\) so that
\(\ell_{\mathrm{gate}}+2\gamma<\kappa\). Note that these bounds are also uniform in the starting measure $\nu \in \PN(\T)$. A union bound over at most
\(2+\e^{AN}\) increments proves \eqref{eq:physical-gate-clock}.
\end{proof}

\begin{lemma}[Excursion count bound]
\label{lem:fast-gate-clock}
Fix strictly nested gates, independently of \(N\), and let
\[
 \mathcal N_N(T):=\#\{n\ge0:\eta_n\le T\}
\]
be the number of completed inner-to-outer passages by time \(T\).
For every \(a\ge0\) and \(\kappa>0\), there exist \(c>0\) and
\(N_0\in\mathbb N\) such that, for every \(N\ge N_0\),
\begin{equation}\label{eq:fast-gate-clock}
 \sup_{\nu\in\PN(\T)}
 \mathbb P_\nu\bigl(
   \mathcal N_N(e^{aN})>e^{(a+\kappa)N}
 \bigr)
 \le \exp\{-cNe^{(a+\kappa)N}\}.
\end{equation}
\end{lemma}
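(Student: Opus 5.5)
The plan is to bound from below, uniformly, the time each inner-to-outer passage takes, and then use a large-deviation bound for sums of such durations. Since the gates are fixed with $K_i=\overline{G_i^-}\Subset G_i^+$, set
\[
d_0:=\min_i\dist_{\Qq}\bigl(K_i,(G_i^+)^c\bigr)>0 .
\]
Between $\tau_n$ (entrance into $K_{Y_n}$) and $\eta_n$ (exit from $G^+_{Y_n}$), the path $[\mu^N_t]$ must move a $d_{\Qq}$-distance at least $d_0$. So it suffices to show that, for some fixed $s>0$ and $p<1$, uniformly over starting configurations $\nu\in\PN(\T)$ with $[\nu]\in K_\rho$ and all large $N$,
\[
\mathbb P_\nu\Bigl(\sup_{0\le t\le s}d_{\Qq}([\mu_t^N],[\nu])\ge d_0\Bigr)\le p .
\]
In fact I would prove this probability is at most $e^{-c_1N}$. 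The first route is through the uniform LDP upper bound, Proposition~\ref{prop:uniform-LDP}(i), applied to the closed set of paths that move by at least $d_0$ within time $s$. Alternatively, and more elementarily, one can use a coupling bound: $\Wass(\mu_t^N,\mu_0^N)^2\le\frac1N\sum_i d_\T(X^i_t,X^i_0)^2$, the drift is bounded by $\|W'\|_\infty$, and independent Brownian increments give Gaussian concentration of $\frac1N\sum_i\sup_{t\le s}|B^i_t|^2$. This yields the bound $e^{-c_1N}$ once $s\|W'\|_\infty<d_0/2$ and $s$ is small.

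Next, define the passage durations $\xi_n:=\eta_n-\tau_n\le\eta_n-\eta_{n-1}$. By the strong Markov property at $\tau_n$, and uniformity over the entrance point, $\mathbb P(\xi_n< s\mid\mathcal F_{\tau_n})\le e^{-c_1N}=:p_N$. On the event $\{\mathcal N_N(T)>M\}$ with $M:=\lfloor e^{(a+\kappa)N}\rfloor$ and $T=e^{aN}$, we have $\sum_{n<M}\xi_n\le\eta_{M-1}\le T$. Hence at most $T/s$ of the first $M$ indices satisfy $\xi_n\ge s$, so at least $M-T/s\ge M/2$ indices (for large $N$) satisfy $\xi_n<s$.

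The indicators $\mathbf 1_{\{\xi_n<s\}}$ are $\mathcal F_{\eta_n}$-measurable, and each is conditionally dominated by a Bernoulli$(p_N)$ variable given $\mathcal F_{\tau_n}\supset\mathcal F_{\eta_{n-1}}$. Successive conditioning therefore gives
\[
\mathbb P\Bigl(\#\{n<M:\xi_n<s\}\ge M/2\Bigr)\le\binom{M}{\lceil M/2\rceil}p_N^{M/2}\le 2^M e^{-c_1NM/2}.
\]
This is at most $\exp\{-cNM\}$ with $c=c_1/4$ for large $N$, which is \eqref{eq:fast-gate-clock}. If some $\tau_n=\infty$, then fewer than $M$ passages occur and the event fails trivially.

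The main obstacle is the first step: obtaining the short-time displacement bound uniformly over all atomic starting configurations with an exponentially small (or merely $<1$) probability. I would use the direct coupling/concentration argument rather than the LDP, since it is transparent. Its only delicate point is the reduction of $d_{\Qq}$ to the particle-wise displacement, which follows by taking the identity coupling and $\theta=0$.
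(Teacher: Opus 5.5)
Your proposal is correct and is essentially the paper's proof. The paper likewise bounds $\mathbb P(\eta_n-\tau_n<s\mid\mathcal F_{\tau_n})\le e^{-c_0N}$ by coupling each particle to its position at $\tau_n$ and applying an exponential Chebyshev bound to $\frac1N\sum_j\sup_{v\le s}|B^j_{\tau_n+v}-B^j_{\tau_n}|^2$, observes that more than $2T/s$ completed passages force at least half of them to be short, and takes a union bound over subsets to get $2^me^{-c_0Nm/2}\le e^{-c_0Nm/4}$. One caveat on your closing aside: a fixed bound $p<1$ on the short-passage probability would at best give decay of order $e^{-cM}$, so it is precisely the $e^{-c_1N}$ smallness you do prove that produces the factor $N$ in $\exp\{-cNe^{(a+\kappa)N}\}$.
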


\begin{proof}
Write \(D_n:=\eta_n-\tau_n\) for the duration of the \(n\)-th
inner-to-outer passage. Strict nesting gives a constant \(b>0\)
such that the \(d_{\Qq}\)-distance from each closed inner gate
to the complement of its outer gate is at least \(b\).
Consequently, completing a passage requires displacement at
least \(b\) from its entrance point.

We first show that, for some fixed \(s,c_0>0\),
\begin{equation}\label{eq:short-gate-passage}
 \mathbb P_\nu(D_n<s\mid\mathcal F_{\tau_n})
 \le e^{-c_0N},
\end{equation}
uniformly over \(n\) and the entrance configuration.
Choose \(s>0\) with \(\|W'\|_\infty \, s\le b/2\).
Coupling each particle to its position at time \(\tau_n\) gives
\[
 \sup_{0\le v\le s}
 d_{\Qq}(X_{\tau_n+v}^N,X_{\tau_n}^N)
 \le \|W'\|_\infty \, s+\sqrt{\frac{2}{\beta}}
 \left(
   \frac1N\sum_{j=1}^N
   \sup_{0\le v\le s}
   |B_{\tau_n+v}^j-B_{\tau_n}^j|^2
 \right)^{1/2}.
\]
Thus \(D_n<s\) requires the average inside parentheses to be
at least \(\beta b^2/8\).
For a standard Brownian motion \(B\), choose \(\lambda>0\)
sufficiently small that
\[
 K_\lambda:=
 \mathbb E\exp\left\{
   \lambda\sup_{0\le v\le1}|B_v|^2
 \right\}<\infty.
\]
The strong Markov property of the driving Brownian motions,
Brownian scaling, and the exponential Markov inequality yield
\[
 \mathbb P_\nu(D_n<s\mid\mathcal F_{\tau_n})
 \le
 \exp\left\{
   -N\left(\frac{\lambda\beta b^2}{8s}
            -\log K_\lambda\right)
 \right\}.
\]
Reducing \(s\) makes the expression in parentheses a positive
constant \(c_0\), proving \eqref{eq:short-gate-passage}.

Now suppose that \(\mathcal N_N(T)\ge m\). Thus,
\(
 \sum_{n=0}^{m-1}D_n\le T.
\)
If \(m>2T/s\), at least \(\lceil m/2\rceil\)
of the first \(m\) passages have duration less than \(s\).

For any fixed set \(I\subseteq\{0,\ldots,m-1\}\), iterating
\eqref{eq:short-gate-passage} gives
\[
 \mathbb P_\nu(D_n<s\text{ for every }n\in I)
 \le e^{-c_0N|I|}.
\]
A union bound over subsets of size \(\lceil m/2\rceil\)
therefore gives, whenever \(m>2T/s\),
\[
 \mathbb P_\nu(\mathcal N_N(T)\ge m)
 \le
 \binom{m}{\lceil m/2\rceil}
 e^{-c_0N\lceil m/2\rceil}
 \le 2^m e^{-c_0Nm/2}
 \le e^{-c_0Nm/4}
\]
for all sufficiently large \(N\), uniformly in \(\nu\).

Finally, take
\[
 T=e^{aN},
 \qquad
 m=\lfloor e^{(a+\kappa)N}\rfloor+1.
\]
Then \(m>2T/s\) for all sufficiently large \(N\), and
\(\{\mathcal N_N(T)>e^{(a+\kappa)N}\}
 =\{\mathcal N_N(T)\ge m\}\).
The preceding bound proves \eqref{eq:fast-gate-clock}
with \(c=c_0/4\).
\end{proof}

\begin{theorem}[Two-sided physical cycle-exit times]
\label{prop:two-sided-physical-cycles}
For each proper cycle \(C\), set
\[
 K_C:=\bigcup_{i\in C}K_i,
 \qquad
 \theta_C^N
 :=\inf\{t\ge0:X_t^N\in K_{C^c}\}.
\]
For every \(\varepsilon>0\), the gates can be chosen sufficiently
small, independently of \(N\), so that there exist constants
\(c,C_0>0\) and \(N_0\in\mathbb N\) satisfying
\begin{equation}\label{eq:physical-two-sided-cycle}
 \sup_{\nu\in K_C\cap\PN(\T)}
 \mathbb P_\nu\left(
   \theta_C^N\notin
   [e^{N(d(C)-\varepsilon)},e^{N(d(C)+\varepsilon)}]
 \right)
 \le C_0e^{-cN}
\end{equation}
and
\[
 \sup_{\nu\in K_C\cap\PN(\T)}
 \mathbb P_\nu\left(
   \theta_C^N>e^{N(d(C)+\varepsilon)}
 \right)
 \le \exp\{-e^{cN}\}
\]
for every proper cycle \(C\) and every \(N\ge N_0\).
In particular, the gates and constants may be chosen simultaneously
for all proper cycles.
\end{theorem}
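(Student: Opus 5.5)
The plan is to convert the discrete cycle-exit estimates for the gate label process (Lemma~\ref{lem:conditional-FW}(ii)) into physical-time estimates, using the two clock lemmas. Fix $\varepsilon>0$ and set $\delta:=\varepsilon/2$. Take $u=\delta$ in Lemma~\ref{lem:conditional-FW}, which produces $\alpha_0$. Then use Lemma~\ref{lem:gate-transition-reduction} with error $\alpha_0$ to fix the gates. By Lemma~\ref{lem:gate-transition-reduction}, the label process $Y_n$ satisfies the conditional two-sided transition bounds required by Lemma~\ref{lem:conditional-FW}. This holds from any initial $\nu\in K_C\cap\PN(\T)$, for which $\tau_0=0$ and $Y_0\in C$. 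Afterwards the gates are shrunk further if Lemma~\ref{lem:physical-gate-clock} requires it, with $\kappa:=\varepsilon/4$ and $A:=\max_C d(C)+\varepsilon$. This step must be compatible with the earlier one, so I would check that every choice only asks for ``sufficiently small'' gates. Since there are finitely many proper cycles, all constants can then be taken uniform over cycles.

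Write $\widehat\theta_C$ for the number of gate renewals before the label first leaves $C$. The physical exit time then satisfies $\theta_C^N=\tau_{\widehat\theta_C}$. Indeed, by construction $X^N$ enters $K_{C^c}$ only at an entrance time $\tau_n$. A direct entry into some $K_j$ while inside $G_i^+$ with $j\ne i$ is impossible, because the gates are disjoint.

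\emph{Late exit.} Split the event $\{\theta_C^N>e^{N(d(C)+\varepsilon)}\}$ according to whether $\widehat\theta_C>e^{N(d(C)+\delta)}$. The first part has probability at most $\exp\{-e^{cN}\}$ by Lemma~\ref{lem:conditional-FW}(ii). On the complementary event, $\tau_{\widehat\theta_C}\le\eta_{\widehat\theta_C}$, and this is at most the sum of at most $e^{N(d(C)+\delta)}+1$ increments $\eta_{n+1}-\eta_n$ (including $\eta_0$). By Lemma~\ref{lem:physical-gate-clock}, each increment is at most $e^{\kappa N}$, outside an event of probability $e^{-ce^{c'N}}$. The sum is therefore at most $2e^{N(d(C)+\delta+\kappa)}<e^{N(d(C)+\varepsilon)}$ for large $N$. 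Together these give the doubly exponential upper-tail bound.

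\emph{Early exit.} Use Lemma~\ref{lem:fast-gate-clock} with $a:=d(C)-\varepsilon$ and $\kappa:=\varepsilon/4$. Suppose $\theta_C^N<e^{aN}$. Then the number of completed passages $\mathcal N_N(e^{aN})$ is at least $\widehat\theta_C-1$, since each renewal before time $\tau_{\widehat\theta_C}$ is followed by its exit $\eta_n\le\tau_{n+1}$. So either $\mathcal N_N(e^{aN})>e^{(a+\kappa)N}$, which has superexponentially small probability, or $\widehat\theta_C\le e^{N(d(C)-3\varepsilon/4)}+1<e^{N(d(C)-\delta)}$. By Lemma~\ref{lem:conditional-FW}(ii), the latter has probability at most $C_0e^{-cN}$ when $\delta<d(C)$. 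If $\varepsilon\ge d(C)$, the lower bound is trivial, because $e^{N(d(C)-\varepsilon)}\le1$ and the only contribution is from $\theta_C^N<1$. That case I would handle directly: exiting the outer gate within time $O(1)$ has probability at most $e^{-cN}$ by the short-passage estimate \eqref{eq:short-gate-passage}, or alternatively by noting that the condition is vacuous once one interprets the interval sensibly.

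\emph{Main obstacle.} The delicate point is bookkeeping the order in which the gates and constants are chosen. Lemma~\ref{lem:conditional-FW} needs $\alpha\le\alpha_0(u,\delta)$, and the clock lemma needs $\ell_{\rm gate}<\kappa$. All of these are satisfied by shrinking, and each condition is preserved under further shrinking only if the transition bounds of Lemma~\ref{lem:uniform-stopped-excursion} persist for smaller gates. I would check this by noting that its proof yields the bounds for all sufficiently small gates, not just a specific choice. A second point to verify is the identification $\theta_C^N=\tau_{\widehat\theta_C}$ for initial points in $K_C$ which may lie on a gate boundary.
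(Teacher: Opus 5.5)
Your proposal is correct and follows the paper's proof essentially step for step. The ingredients match: the identification $\theta_C^N=\tau_{\widehat\theta_C}$, Lemma~\ref{lem:conditional-FW}(ii) for the label process, Lemma~\ref{lem:physical-gate-clock} to turn the upper tail into physical time, and Lemma~\ref{lem:fast-gate-clock} for the lower tail via the number of passages completed before the exit time.

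One small repair is needed for $\varepsilon\ge d(C)$. The short-passage estimate \eqref{eq:short-gate-passage} only controls durations below a small fixed $s$, not up to time $1$, so it does not bound $\mathbb P_\nu(\theta_C^N<1)$. Instead, reduce to $\varepsilon<d(C)$ as the paper does: choose the gates for some $\varepsilon'<\min\{\varepsilon,d(C)\}$ and use that the permitted interval grows with $\varepsilon$.
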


\begin{proof}
Fix a proper cycle \(C\). It suffices to consider
\(0<\varepsilon<d(C)\). The estimates for larger \(\varepsilon\) follow because the
permitted exit-time interval increases with \(\varepsilon\).
All estimates below are uniform over
\(\nu\in K_C\cap\PN(\T)\).

Since the process starts in \(K_C\), we have \(\tau_0=0\).
Recalling
\(
 \widehat\theta_C:=\inf\{n\ge0:Y_n\notin C\},
\)
the construction of the gate times gives
\(
 \theta_C^N=\tau_{\widehat\theta_C}.
\)

For the upper tail, recall that \(\eta_{-1}=0\), and define
\[
 E_N:=
 \left\{
   \max_{0\le n\le \left\lceil e^{N(d(C)+\varepsilon/3)}\right\rceil}
   (\eta_n-\eta_{n-1})\le e^{N\varepsilon/3}
 \right\}.
\]
On \(\{\widehat\theta_C\le \left\lceil e^{N(d(C)+\varepsilon/3)}\right\rceil\}\cap E_N\),
\[
 \theta_C^N
 =\tau_{\widehat\theta_C}
 \le \eta_{\left\lceil e^{N(d(C)+\varepsilon/3)}\right\rceil}
 =\sum_{n=0}^{\left\lceil e^{N(d(C)+\varepsilon/3)}\right\rceil}(\eta_n-\eta_{n-1})
 \le (\left\lceil e^{N(d(C)+\varepsilon/3)}\right\rceil+1)e^{N\varepsilon/3}
 <e^{N(d(C)+\varepsilon)}
\]
for all sufficiently large \(N\).
The term \(n=0\) includes the initial interval from \(0\)
to \(\eta_0\). Consequently,
\[
 \{\theta_C^N>e^{N(d(C)+\varepsilon)}\}
 \subseteq
 \{\widehat\theta_C>\left\lceil e^{N(d(C)+\varepsilon/3)}\right\rceil\}\cup E_N^c.
\]
Lemma~\ref{lem:conditional-FW}(ii), with
\(\delta=\varepsilon/3\), bounds the first event.
Lemma~\ref{lem:physical-gate-clock} and
\eqref{eq:one-gate-clock-tail}, with
\(\kappa=\varepsilon/3\), bound each clock increment.
After choosing the gates sufficiently small, a union bound gives
constants \(c_1,c_2>0\) such that
\[
 \mathbb P_\nu(\theta_C^N>e^{N(d(C)+\varepsilon)})
 \le
 \exp\{-e^{c_1N}\}
 +\exp\{-e^{c_2N}\}
 \le \exp\{-e^{c_+N}\}
\]
for some \(c_+>0\) and all sufficiently large \(N\).

Now, we address the lower tail. If exit occurs at renewal index \(k\), then
\(\eta_{k-1}\le\tau_k=\theta_C^N\).
Thus at least \(k\) inner-to-outer passages have been
completed by the physical exit time. It follows that
\[
 \{\theta_C^N<e^{N(d(C)-\varepsilon)}\}
 \subseteq
 \{\widehat\theta_C\le  \left\lfloor e^{N(d(C)-\varepsilon/2)}\right\rfloor\}
 \cup
 \{\mathcal N_N(e^{N(d(C)-\varepsilon)})> \left\lfloor e^{N(d(C)-\varepsilon/2)}\right\rfloor\}.
\]
Lemma~\ref{lem:conditional-FW}(ii), again with
\(\delta=\varepsilon/3\), gives
\[
 \mathbb P_\nu(\widehat\theta_C\le \left\lfloor e^{N(d(C)-\varepsilon/2)}\right\rfloor)
 \le C_1e^{-c_-N}
\]
for some \(C_1,c_->0\).
For the second event, Lemma~\ref{lem:fast-gate-clock} with
\(a=d(C)-\varepsilon\) and \(\kappa=\varepsilon/2\) yields
\[
 \mathbb P_\nu(\mathcal N_N(e^{N(d(C)-\varepsilon)})>\left\lfloor e^{N(d(C)-\varepsilon/2)}\right\rfloor)
 \le
 \exp\{-c_3N e^{N(d(C)-\varepsilon/2)}\}
\]
for some \(c_3>0\). These two bounds prove the required
exponential estimate for the lower tail.

Combining the two tails proves the theorem. Since there
are finitely many proper cycles, all gate requirements can
be imposed simultaneously, and common constants follow
by decreasing \(c\) and increasing \(C_0,N_0\).
\end{proof}

\begin{lemma}[Physical time to a strictly lower minimum]
\label{lem:physical-descent}
For a nonglobal minimum $i$, use $A_i,h_i,D_i$ from
\eqref{eq:descent-height-definition} and set
$\sigma_i^N=\inf\{t\ge0:X_t^N\in\bigcup_{j\in A_i}K_j\}$.
For every $\varepsilon>0$, sufficiently small gates give $c,C_0>0$
such that, for all large $N$,
\begin{equation}\label{eq:physical-descent}
 \sup_{\nu\in K_i\cap\PN(\T)}
 \mathbb P_\nu\bigl(\sigma_i^N>e^{N(D_i+\varepsilon)}\bigr)
 \le C_0e^{-cN}.
\end{equation}
\end{lemma}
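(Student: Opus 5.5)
The plan mirrors the upper-tail half of Theorem~\ref{prop:two-sided-physical-cycles}, with the cycle exit index $\widehat\theta_C$ replaced by the label hitting index $\widehat\tau_{A_i}$. Since $\nu\in K_i$, we have $\tau_0=0$ and $Y_0=i$. By construction of the renewal times, $\sigma_i^N=\tau_{\widehat\tau_{A_i}}$: the first entrance into $\bigcup_{j\in A_i}K_j$ is an entrance time $\tau_n$, and it carries a label in $A_i$. So it suffices to bound the number of gate renewals before $A_i$ is hit, and then to convert renewals into physical time.

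\emph{Step 1: discrete hitting index.} Fix $\delta=\varepsilon/3$ and choose the gates so that Lemma~\ref{lem:gate-transition-reduction} holds with an error $\alpha\le\alpha_0(u,\delta)$ from Lemma~\ref{lem:conditional-FW}, where $u=\varepsilon/3$. Lemma~\ref{lem:conditional-FW}(iii) then gives
\[
\mathbb E[\widehat\tau_{A_i}\mid\mathcal H_0]\le e^{N(D_i+\varepsilon/3)}
\]
on $\{Y_0=i\}$. Markov's inequality then gives
\[
\mathbb P_\nu\bigl(\widehat\tau_{A_i}>e^{N(D_i+2\varepsilon/3)}\bigr)\le e^{-N\varepsilon/3}.
\]
This already has the required form $C_0e^{-cN}$. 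We need not upgrade to a superexponential bound. That upgrade would require restarting from states $Y_n\ne i$, where part (iii) gives no bound; this is exactly why the lemma only claims an exponential bound.

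\emph{Step 2: physical clock.} Set $n_N:=\lceil e^{N(D_i+2\varepsilon/3)}\rceil$ and
\[
E_N:=\Bigl\{\max_{0\le n\le n_N}(\eta_n-\eta_{n-1})\le e^{N\varepsilon/6}\Bigr\}.
\]
Lemma~\ref{lem:physical-gate-clock}, applied with $A=D_i+\varepsilon$ and $\kappa=\varepsilon/6$, gives $\mathbb P_\nu(E_N^c)\le e^{-ce^{c'N}}$ after shrinking the gates further. This is compatible with Step 1, because both requirements only ask the gates to be sufficiently small. On $E_N\cap\{\widehat\tau_{A_i}\le n_N\}$ we have
\[
\sigma_i^N=\tau_{\widehat\tau_{A_i}}\le\eta_{n_N}\le(n_N+1)e^{N\varepsilon/6}<e^{N(D_i+\varepsilon)}
\]
for large $N$. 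Here the term $n=0$ accounts for the interval $[0,\eta_0]$, using $\eta_{-1}=0$. A union bound over Steps 1 and 2 gives \eqref{eq:physical-descent}.

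The only delicate point is the compatibility of the gate choices. Lemma~\ref{lem:conditional-FW} requires the conditional transition bounds with some fixed $\alpha\le\alpha_0$. Lemma~\ref{lem:physical-gate-clock} requires $\ell_{\mathrm{gate}}<\kappa$. Both are monotone in shrinking the gates, so we choose the gates to satisfy both and then take $N$ large. The uniformity over $\nu\in K_i\cap\PN(\T)$ follows because every cited estimate is uniform over entrance points.
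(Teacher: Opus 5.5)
Your proof is correct and is essentially the paper's argument: identify $\sigma_i^N=\tau_{\widehat\tau_{A_i}}$, bound $\widehat\tau_{A_i}$ via Lemma~\ref{lem:conditional-FW}(iii) and Markov's inequality, and convert renewals to physical time on the clock event of Lemma~\ref{lem:physical-gate-clock}. The only differences are bookkeeping constants: the paper uses $u=\varepsilon/4$, the index threshold $\lceil e^{N(D_i+\varepsilon/2)}\rceil$, and $\kappa=\varepsilon/4$, where you use $\varepsilon/3$, $\lceil e^{N(D_i+2\varepsilon/3)}\rceil$, and $\varepsilon/6$.
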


\begin{proof}
Since the process starts in \(K_i\),
\(\sigma_i^N=\tau_{\widehat\tau_{A_i}}\).
Lemma~\ref{lem:conditional-FW}(iii), with 
\(u=\varepsilon/4\), and Markov's inequality give, uniformly
over \(\nu\in K_i\cap\PN(\T)\),
\[
 \mathbb P_\nu(\widehat\tau_{A_i}>\lceil e^{N(D_i+\varepsilon/2)}\rceil)
 \le \frac{e^{N(D_i+\varepsilon/4)}}{\lceil e^{N(D_i+\varepsilon/2)}\rceil}
 \le e^{-N\varepsilon/4}.
\]
Lemma~\ref{lem:physical-gate-clock}, with
\(\kappa=\varepsilon/4\), bounds every increment
\(\eta_n-\eta_{n-1}\), \(0\le n\le \lceil e^{N(D_i+\varepsilon/2)}\rceil\), by
\(e^{N\varepsilon/4}\), except on an event whose probability
is superexponentially small in \(N\).
On this clock event and \(\{\widehat\tau_{A_i}\le \lceil e^{N(D_i+\varepsilon/2)}\rceil\}\),
using \(\eta_{-1}=0\), we obtain
\[
 \sigma_i^N
 \le\eta_{\lceil e^{N(D_i+\varepsilon/2)}\rceil}
 =\sum_{n=0}^{\lceil e^{N(D_i+\varepsilon/2)}\rceil}(\eta_n-\eta_{n-1})
 \le(\lceil e^{N(D_i+\varepsilon/2)}\rceil+1)e^{N\varepsilon/4}
 <e^{N(D_i+\varepsilon)}
\]
for all sufficiently large \(N\). Combining the two
exceptional-event bounds proves the claim.
\end{proof}

\section{Deterministic time localization and the Gibbs comparison}
\label{sec:endpoint-lemmas}

In this section, we move from estimates on the random cycle-exit times to localization estimates at deterministic time points.

In Lemma~\ref{lem:local-block-endpoint} we show that the empirical measure process, started from a neighborhood of a local minimum, remains confined close to it with high probability over time horizons exponentially growing with $N$. In Lemma~\ref{lem:uniform-minimum-localization}, we use recurrence properties of $X^N$ along with the confinement estimate to show that the \emph{law} of $X^N_t$ puts most of its mass on the union of arbitrarily small neighborhoods of the local minima, regardless of its initialization, for every sufficiently large \emph{deterministic} time. In Lemma~\ref{lem:uniform-uphill-endpoint}, we use a comparison technique of the law of $X^N_t$ with the empirical projection of the stationary Gibbs measure of the joint particle motion to show that, for any $t \ge 0$, the probability of $X^N_t$, started near a local minimum, lying in a higher energy region is exponentially unlikely in $N$. For times $t\ge1$, this bound follows from the static Gibbs large deviations and a finite-time Girsanov comparison. The proof uses the finite-horizon path LDP only to treat $[0,1]$ and does not use the long-excursion analysis. Finally, in Lemma~\ref{lem:lower-energy-persistence}, we use Lemmas~\ref{lem:uniform-minimum-localization} and~\ref{lem:uniform-uphill-endpoint} to show that, for any $a \in \mathbb{R}$, if $X^N$ is started close to a local minimum orbit with energy less than $a$, then at all positive times, its law localizes solely around local minimum orbits whose energy levels are less than $a$.

\begin{lemma}[Local confinement from a stable gate]
\label{lem:local-block-endpoint}
For every attracting minimum \(\mathcal M_i\) and every sufficiently
small \(r>0\), there exist an open neighborhood \(V_i\) of
\(\mathcal M_i\), constants \(c,C>0\), and \(N_0\in\mathbb N\)
such that, for every \(N\ge N_0\) and \(T\ge0\),
\begin{equation}\label{eq:local-block-endpoint}
 \sup_{\nu\in V_i\cap\PN(\T)}
 \mathbb P_\nu\left(
   \sup_{0\le s\le T}d_{\Qq}(X_s^N,\mathcal M_i)\ge r
 \right)
 \le C(1+T)e^{-cN}.
\end{equation}
\end{lemma}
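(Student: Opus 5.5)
We prove \eqref{eq:local-block-endpoint} with a Freidlin--Wentzell renewal argument: split the horizon $[0,T]$ into blocks of a fixed length $L$, and show that within each block two bad things are exponentially unlikely. The first is reaching distance $r$. The second is failing to come back to a small core around $\mathcal M_i$.

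\textbf{Geometry.} Fix $r$ so small that $\overline B_{2r}(\mathcal M_i)$ lies in the basin $\mathcal A_i$ and contains no other critical orbit. By Lemma~\ref{lem:poc-minimum-stability}, choose nested stable gates
\[
\mathcal M_i\Subset V_i\Subset G\Subset B_r(\mathcal M_i).
\]
Uniform attraction over $\overline G$ gives a time $T_1$ with $S_t(\overline G)\subset V_i$ for all $t\ge T_1$. Stability of $G$ also gives $S_t(\overline G)\subset B_r$ for all $t\ge0$. Set $L:=T_1$.

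\textbf{One-block estimate.} We aim to show that
\[
\sup_{\nu\in \overline G\cap\PN(\T)}\mathbb P_\nu\Bigl(\sup_{0\le s\le L}d_{\Qq}(X^N_s,\mathcal M_i)\ge r\ \text{ or }\ X^N_L\notin V_i\Bigr)\le e^{-cN}.
\]
The event in question is contained in a closed set $F$ of paths of duration $L$ starting in $\overline G$, namely those that either reach the closed set $\{d_{\Qq}\ge r\}$ or end in $V_i^c$. By Proposition~\ref{prop:uniform-LDP}(i), applied to the rotation-invariant lift, it suffices to show $\inf_F I_{[0,L]}>0$.

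Suppose instead that there are paths $\phi^n\in F$ with $I(\phi^n)\to0$. Compactness of the action sublevel sets and lower semicontinuity of $I$ give a limit $\phi\in F$ with $I(\phi)=0$. A zero-action path is a deterministic McKean--Vlasov trajectory started in $\overline G$. Such a trajectory stays in $B_r$ and lies in $S_L(\overline G)\subset V_i$ at time $L$, which contradicts $\phi\in F$. The uniform upper bound therefore yields constants $c>0$ and $N_0$.

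\textbf{Iteration.} Start from $\nu\in V_i\subset\overline G$ and apply the Markov property at the times $kL$, $k=0,\dots,\lceil T/L\rceil$. On the event that every block succeeds, $X^N_{kL}\in V_i\subset \overline G$ at each of these times, so each block starts from a point where the one-block estimate applies. A union bound over the $\lceil T/L\rceil+1$ blocks gives
\[
\mathbb P_\nu\Bigl(\sup_{0\le s\le T}d_{\Qq}(X_s^N,\mathcal M_i)\ge r\Bigr)\le \bigl(T/L+2\bigr)e^{-cN}\le C(1+T)e^{-cN}.
\]

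\textbf{Main obstacle.} The delicate step is the zero-action compactness argument. One must check that $F$ is closed in the uniform path topology: the witnessing time of $d_{\Qq}\ge r$ can be passed to the limit along a subsequence, the terminal condition uses that $V_i^c$ is closed, and the initial set $\overline G$ is closed. One must also check that zero quotient action forces a deterministic trajectory. For the latter, take an action-minimizing lift, whose minimal control vanishes, so it solves \eqref{eq:MV}. No energy bound on the initial atomic measure is needed, because the LDP upper bound of Proposition~\ref{prop:uniform-LDP}(i) is uniform over all of $\PN(\T)$.
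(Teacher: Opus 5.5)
Your proof is correct and follows the paper's argument essentially step for step. You fix a block length; the closed set of paths that either reach distance $r$ or fail to return to $V_i$ contains no zero-action path, by stability and uniform attraction of a stable gate. Compactness of action sublevel sets and the uniform upper bound of Proposition~\ref{prop:uniform-LDP}(i) then give an $e^{-cN}$ block estimate, which the Markov property and a union bound over $O(1+T)$ blocks iterate. The only cosmetic difference is that you start each block from the larger closed gate $\overline G$ rather than from $\overline{V_i}$, which changes nothing.
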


\begin{proof}
We first justify stability of the deterministic semiflow $t \mapsto S_tq$ for $q$ sufficiently close to $\mathcal{M}_i$. Choose a stable gate \(G\) for \(\mathcal M_i\) inside
\(B_{r/2}\), as in Definition~\ref{def:stable-gate}.
Choose \(0<\eta<r/2\) so that
\(\overline V_i:=\overline{B_\eta}\subset G\).
The stable-gate property gives
\[
 S_t(\overline V_i)\subset B_{r/2},
 \qquad t\ge0.
\]
Uniform attraction on \(\overline G\) then gives a fixed
\(s_0\ge1\) such that
\[
 S_{s_0}(\overline V_i)\subset B_{\eta/2}.
\]
The closed set of paths on the time interval \([0,s_0]\), starting in \(\overline V_i\), that
leave \(B_r\) or end outside \(V_i\)
therefore contains no zero-action path.
Compactness of action sublevel sets and
Proposition~\ref{prop:uniform-LDP}(i) give, for some \(c>0\)
and all sufficiently large \(N\),
\[
 \sup_{\nu\in\overline V_i\cap\PN(\T)}
 \mathbb P_\nu\left(
   \sup_{0\le s\le s_0}d_{\Qq}(X_s^N,\mathcal M_i)\ge r
   \ \text{or}\ X_{s_0}^N\notin V_i
 \right)
 \le e^{-cN}.
\]
On the complementary event, the process stays in \(B_r\)
and returns to \(V_i\), so the same bound applies conditionally to the
next block $[s_0, 2s_0]$ by the Markov property.
Iterating over \(\lceil T/s_0\rceil\) full blocks, including
the block containing \(T\), gives an exit probability at most
\[
 \left\lceil\frac{T}{s_0}\right\rceil e^{-cN}
 \le (1+T)e^{-cN}.
\]
The case \(T=0\) is immediate.
\end{proof}
For the following lemma, recall the attraction basin $\mathcal A_i$ of $\mathcal M_i$ defined in \eqref{eq:doa}.

\begin{lemma}[Localization in law at every large enough deterministic observation time]
\label{lem:uniform-minimum-localization}
Let \(O_j\) be pairwise disjoint open neighborhoods of the local
minima, with \(\overline O_j\subset\mathcal A_j\).
For every \(\bar\kappa>0\), there exist closed neighborhoods
\(F_j\Subset O_j\) of \(\mathcal M_j\), a number
\(\kappa\in(0,\bar\kappa)\), and constants \(c,C>0\), \(N_0\),
with the following property.

Set \(F:=\bigcup_jF_j\) and \(L_N:=e^{\kappa N}\).
For every \(N\ge N_0\), initial state \(\nu\in\PN(\T)\),
and deterministic \(t\ge L_N\), define
\[
 \sigma_t^N:=\inf\{s\ge t-L_N:X_s^N\in F\}.
\]
With probability at least \(1-Ce^{-cN}\), one has
\(\sigma_t^N\le t-L_N/2\), and the process stays in \(O_j\)
throughout \([\sigma_t^N,t]\), where \(j\) is the unique
index satisfying \(X_{\sigma_t^N}^N\in F_j\).
In particular,
\begin{equation}\label{eq:uniform-minimum-localization}
 \sup_{\nu\in\PN(\T)}\sup_{t\ge L_N}
 \mathbb P_\nu\left(X_t^N\notin\bigcup_jO_j\right)
 \le Ce^{-cN}.
\end{equation}
Moreover, for every \(i\),
\[
 \sup_{\nu\in F_i\cap\PN(\T)}
 \mathbb P_\nu\left(
   X_s^N\notin O_i\text{ for some }s\in[0,L_N]
 \right)
 \le Ce^{-cN}.
\]
\end{lemma}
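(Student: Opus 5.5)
The plan is to combine three ingredients: a uniform recurrence estimate into the small cores \(F_j\), the local confinement of Lemma~\ref{lem:local-block-endpoint}, and a union bound over time windows of length \(L_N\). The order of choices matters.

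\emph{Choice of constants.} For each \(j\), pick \(r_j>0\) with \(\{q:d_{\Qq}(q,\mathcal M_j)<r_j\}\subset O_j\), small enough for Lemma~\ref{lem:local-block-endpoint}. That lemma gives a neighborhood \(V_j\), constants \(c_1,C_1\), and the bound \(C_1(1+T)e^{-c_1N}\) on leaving the \(r_j\)-ball before time \(T\), from \(V_j\). Take \(F_j\) to be a closed ball around \(\mathcal M_j\) contained in \(V_j\). Then choose \(\kappa<\min\{\bar\kappa,c_1/2\}\). With this choice, \(T=L_N=e^{\kappa N}\) still yields confinement probability at most \(C_1(1+e^{\kappa N})e^{-c_1N}\le Ce^{-cN}\), which is the final ``moreover'' assertion.

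\emph{Uniform fast entry into \(F\).} Next I would show there exist a fixed \(T_0\) and \(\gamma\), with \(\gamma\) as small as desired, such that
\[
 \inf_{\nu\in\PN(\T)}\mathbb P_\nu(\tau_F^N\le T_0)\ge e^{-\gamma N}.
\]
This is exactly the ``uniform entry into the gate set'' estimate \eqref{eq:clock-uniform-entry} from the proof of Lemma~\ref{lem:physical-gate-clock}, with the inner gates replaced by the interiors of the \(F_j\). The argument is the same: follow the deterministic flow for a positive time, use Step~3 of Lemma~\ref{lem:singleton-exit-cost} to reach some \(\mathcal M_k\) with small action, apply Corollary~\ref{cor:finite-cover-lower}, and use compactness of \(\Qq\).

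Now iterate over blocks of length \(T_0\) using the Markov property. Starting at time \(t-L_N\) from any state, the probability of not hitting \(F\) within \(L_N/2\) is at most
\[
 (1-e^{-\gamma N})^{\lfloor L_N/(2T_0)\rfloor}\le \exp\{-c\,e^{(\kappa-\gamma)N}\},
\]
which is superexponentially small once \(\gamma<\kappa\). This gives \(\sigma_t^N\le t-L_N/2\) with overwhelming probability, uniformly in \(\nu\) and \(t\ge L_N\).

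\emph{Confinement after entry.} Apply the strong Markov property at \(\sigma_t^N\). The state \(X^N_{\sigma_t^N}\) lies in some \(F_j\subset V_j\), and \(t-\sigma_t^N\le L_N\). By Lemma~\ref{lem:local-block-endpoint} with \(T=L_N\), the process stays in the \(r_j\)-ball, hence in \(O_j\), on \([\sigma_t^N,t]\) except with probability \(C_1(1+L_N)e^{-c_1N}\le Ce^{-cN}\), using \(\kappa<c_1/2\). Summing the two failure probabilities gives the main claim, and \eqref{eq:uniform-minimum-localization} follows directly. The index \(j\) is unique because the \(O_j\) are disjoint.

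\emph{Main obstacle.} The delicate step is the order of choices. The gates \(F_j\) must be chosen before \(\kappa\), and \(c_1\) depends on \(F_j\) through Lemma~\ref{lem:local-block-endpoint}. So \(\kappa\) has to be small relative to \(c_1\), while the entry cost \(\gamma\) has to be below \(\kappa\). This works because \(\gamma\) can be made arbitrarily small for fixed \(F\), at the price of a larger \(T_0\), which is harmless since \(T_0\) is independent of \(N\). The point to check carefully is that the small-action path to \(\mathcal M_k\) actually ends in the interior of \(F_k\), uniformly over starting points; this is where the finite cover in Corollary~\ref{cor:finite-cover-lower} is needed.
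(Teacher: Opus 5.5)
Your proposal is correct and follows essentially the paper's proof: take \(F_j\) inside the neighborhoods \(V_j\) from Lemma~\ref{lem:local-block-endpoint} (with radii chosen so that \(B_{r_j}(\mathcal M_j)\subset O_j\)), then choose \(\kappa<\min\{\bar\kappa,a/2\}\) relative to the confinement rate. Next, iterate the uniform-entry estimate \eqref{eq:clock-uniform-entry} with entry cost \(\gamma=\kappa/2\) over blocks to obtain \(\sigma_t^N\le t-L_N/2\) with superexponentially small failure probability, and finish with the strong Markov property at \(\sigma_t^N\) plus confinement over a window of length \(L_N\). Your remark on the order of choices (cores first, then \(\kappa\), then \(\gamma\) and the fixed entry horizon) is exactly the dependency structure the paper uses.
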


\begin{proof}
Apply Lemma~\ref{lem:local-block-endpoint} with radii \(r_j\)
such that \(B_{r_j}(\mathcal M_j)\subset O_j\).
Choose closed neighborhoods \(F_j\) inside the corresponding
initial neighborhoods \(V_j\). Taking common constants over
the finitely many minima gives \(a,C_1>0\) such that for all $1 \le j \le J$,
\begin{equation}\label{eq:confinement}
 \sup_{\nu\in F_j\cap\PN(\T)}
 \mathbb P_\nu\left(
   X_s^N\notin O_j\text{ for some }s\in[0,T]
 \right)
 \le C_1(1+T)e^{-aN},
 \qquad T\ge0.
\end{equation}
Choose \(0<\kappa<\min\{\bar\kappa,a/2\}\).

Recall
\(
 \tau_F^N:=\inf\{s\ge0:X_s^N\in F\}.
\) The uniform-entry argument \eqref{eq:clock-uniform-entry},
applied to the interiors of the \(F_j\) with
\(\gamma=\kappa/2\), gives a fixed \(S<\infty\) such that
\[
 \inf_{\nu\in\PN(\T)}
 \mathbb P_\nu(\tau_F^N\le S)\ge e^{-\kappa N/2}.
\]
Iteration in blocks of length \(S\) therefore yields
\[
 \sup_{\nu\in\PN(\T)}
 \mathbb P_\nu(\tau_F^N>L_N/2)
 \le
 (1-e^{-\kappa N/2})^{\lfloor L_N/(2S)\rfloor}
 \le \exp\{-c_1e^{\kappa N/2}\}
\]
for some \(c_1>0\) and all sufficiently large \(N\).

For \(t\ge L_N\), apply this estimate from time \(t-L_N\).
Except on the indicated exceptional event, the first entrance
time \(\sigma_t^N\) lies in \([t-L_N,t-L_N/2]\).
At this entrance time, the strong Markov property and the
confinement estimate \eqref{eq:confinement} above apply to whichever \(F_j\) is reached.
Since \(t-\sigma_t^N\le L_N\), the probability of failure
of either entrance or subsequent confinement is at most
\[
 \exp\{-c_1e^{\kappa N/2}\}
 +C_1(1+L_N)e^{-aN}
 \le Ce^{-cN}.
\]
This bound is uniform in the initial state and in \(t\).
Starting in \(F_i\), the final assertion follows directly
from the confinement estimate \eqref{eq:confinement} with \(T=L_N\).
\end{proof}

Let $\pi_N$ denote the configuration Gibbs law with Hamiltonian given by \eqref{eq:HN},
and let $\Pi_N$ be the law of $[\mu^N]$ under $\pi_N$.
Set
\[
 \mathcal W(\mu):=
 \frac12\iint_{\T^2}W(x-y)\,\mu(\dd x)\mu(\dd y).
\]
Sanov's theorem
gives an LDP for the empirical measure \(\mu^N\) constructed from particles distributed as \(\lambda^{\otimes N}\), with speed \(N\) and good rate
\(\Ent(\mu\mid\lambda)\).
The Gibbs law \(\pi_N\) is obtained by weighting this product
law by \(e^{-\beta N\mathcal W(\mu^N)}\) and normalizing.
Since \(\mathcal W\) is bounded and continuous, the exponential
tilting theorem gives the rate
\[
 \widetilde J(\mu)
 =
 \Ent(\mu\mid\lambda)+\beta\mathcal W(\mu)
 -
 \inf_{\nu\in\Pp(\T)}
 \bigl\{\Ent(\nu\mid\lambda)+\beta\mathcal W(\nu)\bigr\}
 =
 \beta\bigl(\F(\mu)-F_{\min}\bigr);
\]
see \cite[Theorems~3.2 and~1.2(ii)]{Rezakhanlou2017}.

On the compact space \(\T\), the weak and \(W_2\) topologies
coincide. The quotient map \(\mu\mapsto[\mu]\) is continuous,
and \(\F\) is rotation invariant. The contraction principle
\cite[Theorem~1.2(i)]{Rezakhanlou2017} therefore gives an LDP
for \(\Pi_N\) on \(\Qq\), with speed \(N\) and good rate
\begin{equation}\label{eq:gibbs-rate-endpoint}
 J(q)
 =\inf_{[\mu]=q}\widetilde J(\mu)
 =\beta\bigl(\F(q)-F_{\min}\bigr).
\end{equation}

Although every empirical measure is atomic and satisfies
\(\Ent(\mu^N\mid\lambda)=+\infty\), the LDP describes
probabilities of neighborhoods of candidate limiting measures.
The entropy term arises from Sanov's theorem; the finite-\(N\)
Gibbs weight involves only the interaction energy
\(\mathcal W(\mu^N)\).

\begin{lemma}[Uniform upward-energy bound]
\label{lem:uniform-uphill-endpoint}
Let $\mathcal M_i$ be a local-minimum orbit.  Let $A\subset\Qq$ be
closed and satisfy $\inf_A\F>f_i$.  There exist $\eta,c,C>0$ such that
\begin{equation}\label{eq:uniform-uphill-endpoint}
 \sup_{d_{\Qq}([\nu],\mathcal M_i)\le\eta}
 \sup_{t\ge0}\mathbb P_\nu(X_t^N\in A)\le Ce^{-cN}.
\end{equation}
The supremum over $\nu$ is restricted to $\PN(\T)$. This lemma assumes \eqref{eq:W-assumption} but does not require \eqref{eq:finite-critical-orbits}.
\end{lemma}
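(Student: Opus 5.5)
The plan is to split time into $[0,1]$ and $t\ge1$, exactly as announced at the beginning of this section. Fix $a$ with $f_i<a<\inf_A\F$.

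\emph{Short times, $0\le t\le1$.} Choose $\eta$ so small that, by the stable-gate and Lyapunov stability properties from Lemma~\ref{lem:poc-minimum-stability}, every deterministic trajectory on $[0,1]$ started within $\eta$ of $\mathcal M_i$ stays in a neighborhood $B$ of $\mathcal M_i$ with $\overline B\cap A=\varnothing$. We may take such a $B$ because lower semicontinuity of $\F$ and $\inf_A\F>f_i$ let us shrink $B$ so that it is disjoint from the closed set $A$. Consider the closed set of paths on $[0,1]$ that start in the closed $\eta$-ball and visit $A$. It contains no zero-action path. By compactness of action sublevel sets, its infimal action is therefore positive, and Proposition~\ref{prop:uniform-LDP}(i) gives the bound $e^{-cN}$ uniformly over $t\in[0,1]$.

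\emph{Long times, $t\ge1$.} Here we compare with the Gibbs law $\Pi_N$. Write the law at time $t$ as $P^\nu_{t-1}$ propagated by the time-one kernel. The Gibbs measure $\pi_N$ is invariant, so the natural route is a density comparison. We will show that for starting configurations $x$ whose empirical measure lies near $\mathcal M_i$, the law of $\mathbf X^N_1$ satisfies
\[
\frac{\dd\,\mathrm{Law}_x(\mathbf X^N_1)}{\dd\pi_N}\le e^{N(\beta(f_i+\epsilon)-\beta F_{\min})}.
\]
The reason is as follows. The time-one transition density of the $N$-particle diffusion with respect to $\lambda^{\otimes N}$ can be compared via Girsanov with independent Brownian motions on $\T^N$. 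The drift is bounded by $\|W'\|_\infty$ per particle, so Girsanov costs only $e^{O(N)}$, and this crude bound is not sharp enough.

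A better plan uses reversibility. The transition density $p_1(x,y)$ with respect to $\pi_N$ is symmetric, so $p_1(x,y)=p_1(y,x)$. Then, for $t\ge1$,
\[
\mathbb P_x(\mathbf X_t\in\tilde A)=\int p_{t-1}(x,z)\,\mathbb P_z(\mathbf X_1\in\tilde A)\,\dd z .
\]
Instead, the cleanest step is this. Start from the initial measure $\delta_x$, run for time $1$ to reach a law $m_1$ with bounded entropy relative to $\pi_N$ in the exponential scale. Then $\mathrm{Law}(\mathbf X_t)=m_1P_{t-1}$, and the sup-norm of $\dd(m_1P_{t-1})/\dd\pi_N$ is nonincreasing in $t$, since $P$ is Markov and $\pi_N$-invariant. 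Hence
\[
\mathbb P_x(X_t\in A)\le \Big\|\frac{\dd m_1}{\dd\pi_N}\Big\|_\infty\,\Pi_N(A).
\]
The Gibbs LDP \eqref{eq:gibbs-rate-endpoint} gives $\Pi_N(A)\le e^{-N\beta(\inf_A\F-F_{\min})+o(N)}$. It therefore suffices to prove
\[
\Big\|\frac{\dd m_1}{\dd\pi_N}\Big\|_\infty\le e^{N\beta(a-F_{\min})}
\]
for initial empirical measures within $\eta$ of $\mathcal M_i$.

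\textbf{Main obstacle.} Bounding the sup-norm above is where I expect the difficulty to lie. Write
\[
\frac{\dd m_1}{\dd\pi_N}(y)=Z_N\,e^{\beta N\mathcal W(\mu_y)}\,p^{\lambda}_1(x,y),
\]
where $p^\lambda_1$ is the density with respect to $\lambda^{\otimes N}$ and $Z_N=e^{-N\beta F_{\min}+o(N)}$ by the Gibbs LDP. One needs $p^\lambda_1(x,y)\le e^{N(\beta f_i-\beta\mathcal W(\mu_y)+\epsilon)}$, a pointwise large-deviation upper bound for the heat kernel. A pure sup bound seems too crude.

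I would therefore fall back on replacing the sup-norm by an $L^2(\pi_N)$ norm, which also contracts under $P_{t-1}$, combined with Cauchy--Schwarz:
\[
\mathbb P(X_t\in A)\le\|m_1\|_{L^2(\pi_N)}\,\Pi_N(A)^{1/2}.
\]
Here $\|m_1\|^2_{L^2(\pi_N)}=\int p_2(x,x)\,\dd\pi_N$-type quantities appear, namely $p_2(x,x)$ by Chapman--Kolmogorov and symmetry. It would then suffice to show $p_2(x,x)\lesssim e^{N\beta(f_i-F_{\min})+o(N)}$ for $x$ near $\mathcal M_i$. This would follow from a Varadhan-type upper bound on the on-diagonal kernel, obtained from the finite-horizon path LDP together with Girsanov against the product diffusion.

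This costs a factor $1/2$ in the exponent, so I would choose $A$-level margins accordingly, requiring $2\beta(a-f_i)<\beta(\inf_A\F-f_i)$, or iterate the estimate. I would first try this $L^2$ route. The choice of $a$ and of $\eta$ via lower semicontinuity of $\F$ near $\mathcal M_i$ would be made at this final stage.
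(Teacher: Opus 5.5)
\textbf{There is a genuine gap in the range $t\ge1$.} Your treatment of $[0,1]$ is fine, with one correction: do not invoke Lemma~\ref{lem:poc-minimum-stability}. That lemma relies on \eqref{eq:finite-critical-orbits}, which the present statement explicitly avoids, and continuity of the semiflow on $[0,1]$ at the stationary orbit $\mathcal M_i$ already gives what you need.

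For $t\ge1$, the $L^2(\pi_N)$ contraction and the inequality $\mathbb P_x(X_t^N\in A)\le p_2(x,x)^{1/2}\,\Pi_N(A)^{1/2}$ are correct. However, the estimate you reduce to, $p_2(x,x)\lesssim e^{N\beta(f_i-F_{\min})+o(N)}$ for the configuration-level kernel relative to $\pi_N$, is false. Take $W\equiv0$, which satisfies \eqref{eq:W-assumption}. Then $\pi_N=\lambda^{\otimes N}$, $\mathcal M_i=[\lambda]$ and $f_i=F_{\min}=0$, yet for every $x\in\T^N$
\[
 p_2(x,x)=\Bigl(\sum_{k\in\mathbb Z}e^{-2k^2/\beta}\Bigr)^N=e^{\kappa N},\qquad \kappa>0,
\]
whereas your target is $e^{o(N)}$. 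The obstruction is microscopic. Already in this free case, the time-one law started from a Dirac configuration is a product of $N$ nonuniform heat kernels, concentrated on a set of $\pi_N$-mass $e^{-\Theta(N)}$. Its density relative to $\pi_N$ therefore has exponentially large $L^2$ norm, at a rate unrelated to the free-energy landscape and not shrinking as $\eta\downarrow0$.

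Your route thus only yields $\mathbb P_x(X_t^N\in A)\le e^{N[\kappa-\beta(\inf_A\F-f_i)]/2+o(N)}$, which is useless when the energy gap is small. The loss is additive in the exponent, so adjusting margins or iterating cannot repair it. The empirical-measure path LDP also cannot supply a pointwise on-diagonal kernel bound on $\T^N$, since it does not see microscopic configurations. Replacing time one by a large fixed time would instead require a uniform microscopic equilibration estimate inside the well, which nothing here provides.

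\textbf{The missing idea} is never to compare a Dirac start directly with $\pi_N$, but with Gibbs-distributed starts that are \emph{macroscopically} close.

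\begin{enumerate}[label=\textup{(\arabic*)}]
\item \emph{Alignment.} Suppose $[\mu_x^N]$ and $[\mu_y^N]$ both lie within $\eta$ of $\mathcal M_i$. A permutation and a common rotation of $y$ leave $p_t(y)=\mathbb P_y(X_t^N\in A)$ unchanged and achieve $d_N(x,y)^2\le 4N\eta^2$, where $d_N$ is the $\ell^2$ distance on $\T^N$.
\item \emph{Merging coupling.} The coupling $\mathbf Y_s=\mathbf X_s+(1-s)_+(y-x)$ merges the two trajectories by time one. Girsanov gives $\mathbb E[R^{-1}]\le e^{c_0 d_N(x,y)^2}$, and Cauchy--Schwarz then yields $p_t(x)^2\le e^{4c_0N\eta^2}p_t(y)$ for $t\ge1$.
\item \emph{Averaging and the Gibbs LDP.} Average over $y\sim\pi_N$ restricted to $\{[\mu_y^N]\in B_\eta(\mathcal M_i)\}$ and use stationarity:
\[
 \mathbb P_x(X_t^N\in A)^2\le e^{4c_0N\eta^2}\,\frac{\Pi_N(A)}{\Pi_N(B_\eta(\mathcal M_i))}.
\]
Set $g:=\inf_A\F-f_i$. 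The Gibbs LDP bounds the ratio by $e^{-N(\beta g-2\zeta)}$ for arbitrarily small $\zeta>0$.
\end{enumerate}

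The point is that this comparison costs only $O(N\eta^2)$ in the exponent. Shrinking $\eta$ makes that cost a small fraction of $\beta g$, in contrast with the fixed cost $\kappa N$ incurred by bounding the density of a Dirac start.
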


\begin{proof}
First we compare two deterministic initial configurations.
Equip \(\T^N\) with
\[
 d_N(x,y)^2:=\sum_{j=1}^N d_\T(x_j,y_j)^2.
\]
Write \(b_N=(b_N^1,\ldots,b_N^N)\) for the configuration
drift in \eqref{eq:particle-system} of the joint angular motion (which is naturally identified with motion on $\T^N$), namely
\[
 b_N^j(x):=-\frac1N\sum_{k=1}^N W'(x_j-x_k),
 \qquad x\in\T^N,\quad 1\le j\le N.
\]
Thus \(b_N=-\nabla\mathcal H_N\), and it satisfies
\[
 |b_N(z)-b_N(z')|\le L|z-z'|,
 \qquad L:=2\|W''\|_\infty,
\]
with \(L\) independent of \(N\).

Choose two points $x,y \in \T^N$. Choose coordinate lifts in \(\mathbb R^N\), again denoted by $x,y$, such that
\(h:=y-x\) satisfies \(|h|=d_N(x,y)\).
Let \(\mathbf X\) solve the angular particle SDE \eqref{eq:particle-system} started from \(x\), and set
\[
 \mathbf Y_s:=\mathbf X_s+(1-s)_+h, \quad s \ge 0.
\]
Thus \(\mathbf Y_0=y\), and the two paths agree for every
\(s\ge1\). During \([0,1]\), the process \(\mathbf Y\) solves
the original SDE \eqref{eq:particle-system} with the additional drift
\[
 u_s:=b_N(\mathbf X_s)-b_N(\mathbf Y_s)-h,
 \qquad
 |u_s|\le [1+L(1-s)]|h|.
\]
Put \(v_s:=\sqrt{\beta/2}\,u_s\) and define
\[
 R:=
 \exp\left\{
   -\int_0^1v_s\cdot\dd B_s
   -\frac12\int_0^1|v_s|^2\,\dd s
 \right\}.
\]
The control is bounded, so Novikov's condition holds.
Under the probability measure \(R\,\dd\mathbb P\),
Girsanov's theorem makes
\[
 \widetilde B_s
 :=B_s+\int_0^{s\wedge1}v_u\,\dd u
\]
a Brownian motion. Substituting \(\dd B_s
=\dd\widetilde B_s-v_s\,\dd s\) removes the additional drift.
Consequently, under this changed probability,
\(\mathbf Y\) has the law of the original diffusion started at \(y\).

All expectations below are under the original probability.
The identity
\[
 R^{-1}
 =
 \exp\left\{
   \int_0^1v_s\cdot\dd B_s
   -\frac12\int_0^1|v_s|^2\,\dd s
 \right\}
 \exp\left\{\int_0^1|v_s|^2\,\dd s\right\}
\]
expresses \(R^{-1}\) as a mean-one stochastic exponential
times a factor bounded by \(e^{c_0|h|^2}\), where
\[
 \int_0^1|v_s|^2\,\dd s
 \le \frac\beta2|h|^2
       \int_0^1[1+L(1-s)]^2\,\dd s
 =c_0|h|^2,
 \qquad
 c_0:=\frac\beta2(1+L+L^2/3).
\]
Hence
\begin{equation}\label{eq:girsanov-second-moment}
 \mathbb E[R^{-1}]
 \le e^{c_0d_N(x,y)^2}.
\end{equation}
For an event \(E\) depending only on the path during
\([1,\infty)\), the equality of \(\mathbf X\) and \(\mathbf Y\)
on this interval and Cauchy--Schwarz give
\begin{equation}\label{eq:path-event-comparison}
\begin{aligned}
 \mathbb P_x(E)^2
 &=\left(
   \mathbb E[
     R^{1/2}\mathbf1_{\{\mathbf Y\in E\}}R^{-1/2}]
   \right)^2\\
 &\le
 \mathbb E[R\mathbf1_{\{\mathbf Y\in E\}}]\,
 \mathbb E[R^{-1}]
 \le e^{c_0d_N(x,y)^2}\mathbb P_y(E).
\end{aligned}
\end{equation}

We now apply this comparison to quotient empirical measures.
For \(x\in\T^N\) (identified with $\mathbb R^N$), write
\[
 \mu_x^N:=\frac1N\sum_{j=1}^N\delta_{x_j},
 \qquad
 p_t(x):=\mathbb P_x([\mu_{\mathbf X_t}^N]\in A).
\]
Permutations and common rotations preserve the dynamics and the
quotient empirical measure. Let \(\mathfrak S_N\) denote the group of permutations of
\(\{1,\ldots,N\}\). The group \(\mathfrak S_N\times\T\)
acts on configurations by relabeling and common rotation:
for \(g=(\sigma,\theta)\), set
\[
 (gx)_j:=x_{\sigma^{-1}(j)}+\theta \pmod{2\pi},
 \qquad 1\le j\le N.
\] Thus
\[
 p_t(gy)=p_t(y),
 \qquad g\in\mathfrak S_N\times\T.
\]
For each fixed rotation \(\theta\), an optimal coupling of
the two equal-weight empirical measures can be chosen as a
matching of their particles \cite[Proposition 2.1]{PeyreCuturi2019}. Hence
\[
 W_2^2(\mu_x^N,R_\theta\mu_y^N)
 =
 \frac1N\min_{\sigma\in\mathfrak S_N}
 \sum_{j=1}^N
 d_\T(x_j,y_{\sigma^{-1}(j)}+\theta)^2.
\]
Taking the infimum over \(\theta\) and using the definitions
of the group action and \(d_{\Qq}\), we obtain
\[
 \inf_{g\in\mathfrak S_N\times\T}d_N(x,gy)^2
 =
 N d_{\Qq}([\mu_x^N],[\mu_y^N])^2.
\]

Let \(G:=B_\eta(\mathcal M_i)\), and suppose
\(d_{\Qq}([\mu_x^N],\mathcal M_i)\le\eta\).
For every \(y\) with \([\mu_y^N]\in G\), the last display
and the triangle inequality give an alignment of $x,y$ (given as a permutation of its coordinates followed by a common rotation) with squared
distance at most \(4N\eta^2\).
Applying \eqref{eq:path-event-comparison} to this alignment,
and using \(p_t(gy)=p_t(y)\), yields
\[
 p_t(x)^2\le e^{4c_0N\eta^2}p_t(y),
 \qquad t\ge1.
\]
Integrate over such \(y\) against \(\pi_N\).
Stationarity gives
\(\int p_t(y)\,\pi_N(\dd y)=\Pi_N(A)\).
Therefore, for \(\nu=\mu_x^N\),
\begin{equation}\label{eq:gibbs-uphill-comparison}
\begin{aligned}
 \mathbb P_\nu(X_t^N\in A)^2
 &\le
 \frac{e^{4c_0N\eta^2}}{\Pi_N(G)}
 \int_{\{[\mu_y^N]\in G\}}p_t(y)\,\pi_N(\dd y)\\
 &\le
 e^{4c_0N\eta^2}\frac{\Pi_N(A)}{\Pi_N(G)},
 \qquad t\ge1.
\end{aligned}
\end{equation}

Choose \(g>0\) such that \(\inf_A\F\ge f_i+g\).
The static LDP upper bound, discussed just before the lemma, for the closed set \(A\), and
the lower bound for the open set \(G\) containing
\(\mathcal M_i\), give, for every fixed \(\zeta>0\)
and all sufficiently large \(N\),
\[
 \Pi_N(A)
 \le e^{-N[\beta(f_i+g-F_{\min})-\zeta]},
 \qquad
 \Pi_N(G)
 \ge e^{-N[\beta(f_i-F_{\min})+\zeta]}.
\]
Consequently,
\[
 \frac{\Pi_N(A)}{\Pi_N(G)}
 \le e^{-N(\beta g-2\zeta)}.
\]
Choose \(\eta\) and \(\zeta\) so that
\(4c_0\eta^2<\beta g/4\) and \(\zeta<\beta g/8\).
Taking square roots in \eqref{eq:gibbs-uphill-comparison}
then gives
\[
 \sup_{d_{\Qq}([\nu],\mathcal M_i)\le\eta}
 \sup_{t\ge1}
 \mathbb P_\nu(X_t^N\in A)
 \le e^{-N\beta g/4}
\]
for all sufficiently large \(N\).

It remains to treat \([0,1]\).
Since \(A\) is closed and excludes the stationary orbit
\(\mathcal M_i\), continuity of the deterministic flow gives
a sufficiently small closed initial neighborhood whose
deterministic trajectories avoid \(A\) throughout \([0,1]\).
The closed event of hitting \(A\) during this interval
therefore contains no zero-action path from that neighborhood.
Compactness of action sublevel sets and the uniform LDP upper
bound give an exponentially small hitting probability.
Restricting the initial neighborhood accordingly and combining
the two time ranges proves the claim.
\end{proof}

\begin{lemma}[Endpoint persistence below an energy threshold]
\label{lem:lower-energy-persistence}
Fix $a\in\mathbb R$ such that
$I_a=\{j:f_j<a\}$ is nonempty, and put
$\mathcal M_{<a}=\{\mathcal M_j:j\in I_a\}$.
For every $r>0$ there are neighborhoods $V_j$ of $\mathcal M_j$,
$j\in I_a$, and $c,C>0$ such that
\begin{equation}\label{eq:lower-energy-persistence}
 \max_{j\in I_a}\sup_{\nu\in V_j\cap\PN(\T)}\sup_{t\ge0}
 \mathbb P_\nu\left(\dist_{\Qq}(X_t^N,\mathcal M_{<a})>r\right)
 \le Ce^{-cN}.
\end{equation}
\end{lemma}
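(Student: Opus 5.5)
The plan is to split time into a short initial range and a long range, and to combine the uniform localization of Lemma~\ref{lem:uniform-minimum-localization} with the upward-energy bound of Lemma~\ref{lem:uniform-uphill-endpoint}. The idea is that after a short burn-in the law of $X_t^N$ is concentrated on small neighborhoods of the local minima. Minima with energy at least $a$ lie in a region of energy strictly above $f_j$ whenever $j\in I_a$, so the Gibbs comparison makes them exponentially unlikely.

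\emph{Step 1: choice of neighborhoods.} Fix $r>0$ and shrink it if needed. Choose pairwise disjoint open neighborhoods $O_k$ of each $\mathcal M_k$ with $\overline O_k\subset\mathcal A_k$. For $k\in I_a$ also require $O_k\subset B_r(\mathcal M_k)$. For $k\notin I_a$, require $\inf_{\overline O_k}\F>\max_{j\in I_a}f_j$. This is possible by lower semicontinuity, since $f_k\ge a>f_j$ and $\mathcal M_k$ is compact. Set
\[
A:=\bigcup_{k\notin I_a}\overline O_k\ \cup\ \Bigl(\Qq\setminus\bigcup_k O_k\Bigr).
\]
This set is closed. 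Then $\{\dist_{\Qq}(X_t^N,\mathcal M_{<a})>r\}\subset\{X_t^N\in A\}$.

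\emph{Step 2: large times.} Apply Lemma~\ref{lem:uniform-minimum-localization} with these $O_k$. For $t\ge L_N=e^{\kappa N}$ it gives $\mathbb P_\nu(X_t^N\notin\bigcup_kO_k)\le Ce^{-cN}$, uniformly in $\nu$. It remains to bound $\mathbb P_\nu(X_t^N\in\overline O_k)$ for $k\notin I_a$. Take $\nu$ within $\eta$ of $\mathcal M_j$, $j\in I_a$. The closed set $\bigcup_{k\notin I_a}\overline O_k$ has infimum energy greater than $f_j$. Lemma~\ref{lem:uniform-uphill-endpoint} therefore bounds this probability by $Ce^{-cN}$ for all $t\ge0$.

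\emph{Step 3: small and intermediate times.} For $t\le L_N$, use the last assertion of Lemma~\ref{lem:uniform-minimum-localization}. Starting from $F_j\ni\nu$, the process stays in $O_j\subset B_r(\mathcal M_j)$ throughout $[0,L_N]$, except with probability $Ce^{-cN}$. Take $V_j:=\operatorname{int}F_j\cap B_\eta(\mathcal M_j)$, using the $\eta$ from Lemma~\ref{lem:uniform-uphill-endpoint}. Then take the maximum over the finitely many $j\in I_a$ and combine the two time ranges.

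\emph{Main obstacle.} The main obstacle is the energy separation in Step 1. The sets $\overline O_k$ for $k\notin I_a$ must have energy infimum strictly above every $f_j$ with $j\in I_a$. This relies on lower semicontinuity of $\F$ on the compact $\Qq$ together with the fact that $\mathcal M_k$ minimizes $\F$ locally. Shrinking $O_k$ makes $\inf_{\overline O_k}\F$ close to $f_k$ from below only up to an arbitrarily small error, which suffices because $f_k-f_j>0$ with finitely many indices. One can alternatively intersect with the sublevel complement $\{\F\ge (a+\max_{j\in I_a}f_j)/2\}$ if $f_k=a$ exactly.
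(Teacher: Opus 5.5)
Your proposal is correct and follows essentially the same route as the paper. Both split time at $L_N=e^{\kappa N}$ and use the confinement part of Lemma~\ref{lem:uniform-minimum-localization} on $[0,L_N]$. For $t\ge L_N$, both bound $\{X_t^N\notin\bigcup_kO_k\}$ by localization and $\{X_t^N\in\bigcup_{k\notin I_a}\overline O_k\}$ by Lemma~\ref{lem:uniform-uphill-endpoint}.

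The one minor difference is how the energy separation is obtained. The paper gets $\F\ge f_k\ge a$ on all of $\overline O_k$ directly from $\overline O_k\subset\mathcal A_k$, using energy dissipation along the flow and lower semicontinuity. You instead shrink $O_k$ and use lower semicontinuity at $\mathcal M_k$. Both suffice, and your fallback for the case $f_k=a$ is unnecessary, since $f_k\ge a>f_j$ already gives strict separation.
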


\begin{proof}
Choose disjoint neighborhoods $O_k$ of all minima such that
\[
 \overline O_k\subset\mathcal A_k,
 \qquad O_k\subset B_r(\mathcal M_k).
\]
Let $F_k\Subset O_k$ be the closed neighborhoods and
$L_N=e^{\kappa N}$ the time scale supplied by
Lemma~\ref{lem:uniform-minimum-localization}.
Choose neighborhoods $V_j$ of $\mathcal M_j$, $j\in I_a$,
with $\overline V_j\subset\operatorname{int}F_j$.

Suppose first that $I_a$ does not contain every minimum, and set
\[
 A:=\bigcup_{k\notin I_a}\overline O_k.
\]
For $q\in\overline O_k\subset\mathcal A_k$, energy dissipation,
convergence to $\mathcal M_k$, and lower semicontinuity imply
$\F(q)\ge f_k$; this is immediate if $\F(q)=+\infty$.
Consequently, $A$ is closed and
\[
 \inf_A\F\ge\min_{k\notin I_a}f_k\ge a>f_j,
 \qquad j\in I_a.
\]
By Lemma~\ref{lem:uniform-uphill-endpoint}, we may therefore
shrink the $V_j$ so that, for all sufficiently large $N$,
\[
 \max_{j\in I_a}
 \sup_{\nu\in\PN(\T):[\nu]\in V_j}\sup_{t\ge0}
 \mathbb P_\nu(X_t^N\in A)
 \le C_1e^{-c_1N}.
\]
If $I_a$ contains every minimum, take $A=\varnothing$ and
omit this estimate.

For $0\le t\le L_N$, the confinement assertion of
Lemma~\ref{lem:uniform-minimum-localization} keeps the process
in $O_j\subset B_r(\mathcal M_j)$, starting from $V_j$,
except with exponentially small probability.

For $t\ge L_N$, observe that
\[
 \{\dist_{\Qq}(X_t^N,\mathcal M_{<a})>r\}
 \subset
 \left\{X_t^N\notin\bigcup_k O_k\right\}
 \cup\{X_t^N\in A\}.
\]
Indeed, membership in any $O_j$ with $j\in I_a$ makes the
distance on the left smaller than $r$.
Lemma~\ref{lem:uniform-minimum-localization} bounds the first event uniformly in
the initial state and in $t\ge L_N$, while the preceding
upward-energy estimate bounds the second uniformly in time.
Combining the two time ranges proves the result.
Since there are finitely many minima, the constants and the
lower bound on $N$ can be chosen uniformly over $j\in I_a$.
\end{proof}

\section{Metastability and phase transition: Proof of Theorem~\ref{prop:verified-MT}}
\label{sec:verify-assumption-one}
We are now ready to prove Theorem~\ref{prop:verified-MT}.

For this section suppose the minimum values $f_1,\ldots,f_J$ are
pairwise distinct. 
For each nonglobal minimum define $h_i,D_i$ by
\eqref{eq:descent-height-definition} and $C^{max}_i$ by \eqref{eq:maximal-bottom-cycle}.

\begin{lemma}[The maximal cycle with prescribed bottom]
\label{lem:maximal-bottom-cycle}
The set $C^{max}_i$ is the maximal energy cycle with bottom $\mathcal M_i$,
$\Gamma(C^{max}_i)=h_i$, and $d(C^{max}_i)=D_i$.  Every other minimum in $C^{max}_i$
has energy strictly greater than $f_i$.
\end{lemma}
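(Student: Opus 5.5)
The plan is to exhibit $C^{max}_i$ as a block of a height partition $\Pi(h)$, and then read off $\Gamma$, the bottom and the depth from the ultrametric structure.

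\emph{Step 1: $C^{max}_i$ is a cycle.} The set $\Lambda$ is finite. Choose $h\notin\Lambda$ with $h<h_i$ and no element of $\Lambda$ in $[h,h_i)$. The block of $\Pi(h)$ containing $i$ is $\{i\}\cup\{j:\Phi_{ij}<h\}$, and by the choice of $h$ this equals $\{i\}\cup\{j:\Phi_{ij}<h_i\}=C^{max}_i$. Hence $C^{max}_i$ is an energy cycle by Definition~\ref{def:energy-cycle}. It is proper, since $A_i\neq\varnothing$ and every $j\in A_i$ has $\Phi_{ij}\ge h_i$, so $j\notin C^{max}_i$.

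\emph{Step 2: bottom and exit level.} Let $j\in C^{max}_i$ with $j\ne i$. If $f_j<f_i$, then $j\in A_i$, so $\Phi_{ij}\ge h_i$, contradicting membership. Distinctness of the minimum values excludes $f_j=f_i$, so $f_j>f_i$. This gives $F_{C^{max}_i}=f_i$ with $\mathcal M_i$ the unique bottom. Only \eqref{eq:pairrelax} is used here, as Remark~\ref{rem:poc-distinctness} indicates.

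For the exit level, every $k\notin C^{max}_i$ satisfies $\Phi_{ik}\ge h_i$. If $\ell\in C^{max}_i$, then $\Phi_{\ell i}<h_i$, and the ultrametric inequality of Lemma~\ref{lem:communication-strong-triangle} gives $\Phi_{\ell k}\ge h_i$. Indeed, if $\Phi_{\ell k}<h_i$, then $\Phi_{ik}\le\max\{\Phi_{i\ell},\Phi_{\ell k}\}<h_i$, which is a contradiction. So $\Gamma(C^{max}_i)\ge h_i$. Conversely, take $j\in A_i$ attaining $h_i=\Phi_{ij}$. Then $j\notin C^{max}_i$, so $\Gamma\le h_i$. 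Hence $\Gamma=h_i$, and $d(C^{max}_i)=\beta(h_i-f_i)=D_i$.

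\emph{Step 3: maximality.} Let $C'$ be any energy cycle containing $i$ whose bottom is $\mathcal M_i$. Suppose $C'\supsetneq C^{max}_i$; if not, we are done by nestedness (Proposition~\ref{prop:cycles-nested}). Then $C'$ contains some $k$ with $\Phi_{ik}\ge h_i$. Since $C'$ is a block of some $\Pi(h')$ with $h'\notin\Lambda$, we get $h'>\Phi_{ik}\ge h_i$. Let $j\in A_i$ attain $h_i=\Phi_{ij}$. Then $\Phi_{ij}=h_i<h'$, so $j\in C'$. But $f_j<f_i$, so $\mathcal M_i$ is not the bottom of $C'$, a contradiction.

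The argument is purely combinatorial given Lemmas~\ref{lem:communication-strong-triangle} and~\ref{lem:height-partitions}. The only delicate point is Step~1: the threshold $h$ must be chosen in the gap of the finite set $\Lambda$ just below $h_i$, so that the strict inequality $\Phi_{ij}<h_i$ in \eqref{eq:maximal-bottom-cycle} matches the partition definition.
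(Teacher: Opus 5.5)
Your proof is correct and follows the same route as the paper's. You identify $C^{max}_i$ as the block of $i$ in the height partition just below $h_i$, exclude lower and equal-energy minima via the definition of $h_i$ and distinctness, obtain $\Gamma(C^{max}_i)=h_i$ from the ultrametric inequality together with a minimizer $j\in A_i$, and note that any strictly larger cycle containing $i$ absorbs that lower minimum; you simply spell out the details the paper's short proof leaves implicit, such as choosing $h\notin\Lambda$ in the gap below $h_i$ and applying the ultrametric argument to every $\ell\in C^{max}_i$.
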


\begin{proof}
The ultrametric inequality makes $C^{max}_i$ the component of $i$ immediately
below level $h_i$.  No strictly lower minimum belongs to it by the
definition of $h_i$, and no equal-energy minimum exists by the present
distinctness condition.  A lower minimum is joined at level $h_i$.
Thus the exit height is $h_i$, and the next larger cycle containing
$i$ has a different, lower bottom.  The depth formula \eqref{eq:cycle-depth-height} gives $D_i$.
\end{proof}

\begin{proof}[Proof of Theorem~\ref{prop:verified-MT}]
By Lemma~\ref{lem:maximal-bottom-cycle}, $C^{max}_i$ is a proper cycle
with unique bottom $\mathcal M_i$ and depth $d(C^{max}_i)=D_i$.

For \eqref{eq:verified-M1}, fix $r>0$ and
$0<\varepsilon<D_i$. Choose the excursion gates $K_j$ so that
Theorem~\ref{prop:two-sided-physical-cycles} applies with
margin $\varepsilon/2$. Then choose disjoint open neighborhoods
$O_j$ of the minima such that
\[
 \overline O_j\subset\operatorname{int}K_j\cap\mathcal A_j,
 \qquad O_i\subset B_r(\mathcal M_i).
\]
Apply Lemma~\ref{lem:uniform-minimum-localization} to these
neighborhoods. Denote its closed inner neighborhoods by $F_j$,
and choose its exponent so that
\[
 0<\kappa<D_i-\varepsilon,\qquad L_N=e^{\kappa N}.
\]

Set
\[
 A:=\bigcup_{j\in C^{max}_i\setminus\{i\}}\overline O_j.
\]
On $\overline O_j\subset\mathcal A_j$, energy dissipation,
convergence, and lower semicontinuity give $\F\ge f_j$.
Since $f_j>f_i$ for every $j\in C^{max}_i\setminus\{i\}$, the
closed set $A$ satisfies $\inf_A\F>f_i$ whenever it is nonempty.
Choose $\eta>0$ sufficiently small that
$\overline B_\eta(\mathcal M_i)\subset\operatorname{int}F_i$
and Lemma~\ref{lem:uniform-uphill-endpoint} gives
\[
 \mathbb P_\nu(X_t^N\in A)\le C_1e^{-c_1N},
 \qquad t\ge0,
\]
uniformly over the initial measures in
\eqref{eq:verified-M1}. If $A=\varnothing$, this probability
is zero.

For $0\le t\le L_N$, the confinement assertion of Lemma~\ref{lem:uniform-minimum-localization} gives the desired bound, since the
initial state belongs to $F_i$ and $O_i\subset B_r(\mathcal M_i)$.

Now let $L_N\le t\le e^{N(D_i-\varepsilon)}$. If
$\theta_{C^{max}_i}^N>t$ and $X_t^N\in O_j$, then $j\in C^{max}_i$:
otherwise $O_j\subset K_j$ would imply
$\theta_{C^{max}_i}^N\le t$. If also $X_t^N\notin A$, this label
must be $i$. Hence
\[
 \begin{split}
 \mathbb P_\nu\bigl(d_{\Qq}(X_t^N,\mathcal M_i)>r\bigr)
 \le{}&
 \mathbb P_\nu(\theta_{C^{max}_i}^N\le t)
 +\mathbb P_\nu(X_t^N\in A)
 +\mathbb P_\nu\left(X_t^N\notin\bigcup_jO_j\right).
 \end{split}
\]
The first term is exponentially small by \eqref{eq:physical-two-sided-cycle}, because
\[
 t\le e^{N(D_i-\varepsilon)}
   <e^{N(d(C^{max}_i)-\varepsilon/2)}.
\]
The second term is controlled by the upward-energy estimate in Lemma~\ref{lem:uniform-uphill-endpoint},
and the third by the localization result Lemma~\ref{lem:uniform-minimum-localization}. This proves
\eqref{eq:verified-M1} uniformly over the stated time interval.

For \eqref{eq:verified-M2}, fix $r,\varepsilon>0$. Apply
Lemma~\ref{lem:lower-energy-persistence} with $a=f_i$, obtaining
starting neighborhoods $V_j$ for all $j$ with $f_j<f_i$.
Choose the excursion gates sufficiently small that
\[
 K_j\subset V_j\quad(f_j<f_i)
\]
and Lemma~\ref{lem:physical-descent} applies with margin
$\varepsilon/2$. Choose $\eta>0$ with
$\overline B_\eta(\mathcal M_i)\subset K_i$, and recall
\[
 \sigma_i^N
 :=\inf\left\{s\ge0:X_s^N\in\bigcup_{j:f_j<f_i}K_j\right\}.
\]
Lemma~\ref{lem:physical-descent} gives
\[
 \mathbb P_\nu(\sigma_i^N>e^{N(D_i+\varepsilon/2)})\le C_2e^{-c_2N},
\]
uniformly over the required initial measures.

Fix any deterministic $t\ge e^{N(D_i+\varepsilon/2)}$. On
$\{\sigma_i^N\le e^{N(D_i+\varepsilon/2)}\}$, the state at $\sigma_i^N$ belongs to
some $K_j\subset V_j$ with $f_j<f_i$. The remaining time
$t-\sigma_i^N$ is random but nonnegative. Since the persistence
estimate in Lemma~\ref{lem:lower-energy-persistence} is uniform over all starting states in these
neighborhoods and all elapsed times, the strong Markov property
gives
\[
 \mathbb P_\nu\left(
   \dist_{\Qq}(X_t^N,\mathcal L_i)>r,\,
   \sigma_i^N\le e^{N(D_i+\varepsilon/2)}\right)
 \le C_3e^{-c_3N}.
\]
Consequently,
\[
 \mathbb P_\nu\bigl(
   \dist_{\Qq}(X_t^N,\mathcal L_i)>r\bigr)
 \le C_2e^{-c_2N}+C_3e^{-c_3N},
 \qquad t\ge e^{N(D_i+\varepsilon/2)}.
\]
The constants are independent of $t$. This proves
\eqref{eq:verified-M2}.
\end{proof}

\section{Metastable propagation-of-chaos}
\label{sec:metastable-poc}

In this section, we prove Theorems~\ref{thm:poc-metastable}, \ref{thm:poc-global}, \ref{thm:poc-pathwise}, \ref{thm:poc-running-averages} and Corollary~\ref{cor:poc-averaged-measures}.

Throughout this section, assume \eqref{eq:W-assumption}. The initial
configuration may be random, and is independent of the driving
Brownian motions. We assume \eqref{eq:poc-initial}, namely,
\begin{equation*}
 \Wass(\mu_0^N,\mu_0)\xrightarrow{\mathbb P}0
 \quad\text{for a deterministic }\mu_0\in\Pp(\T),
\end{equation*}
and write $\mu_t=S_t\mu_0$ and $q_t=[\mu_t]$.
No independence or exchangeability of the initial particles is
required; in particular, both deterministic configurations and
i.i.d.\ initialization are allowed.

Recall that $X_t^N=[\mu_t^N]$. To avoid notational overload, write
\begin{equation}\label{eq:poc-error-average}
 \Delta_N(t):=d_{\Qq}(X_t^N,q_t),
 \qquad
 A_N(T):=\frac1T\int_0^T\Delta_N(t)^2\,\dd t,\quad T>0.
\end{equation}

Set $B_0:=\operatorname{diam}(\Qq)<\infty$, so that
$0\le\Delta_N(t)^2,A_N(T)\le B_0^2$.
The paths of $\Delta_N$ are continuous, as are $T\mapsto A_N(T)$
for $T>0$; the time suprema below are therefore measurable.
All expectations below include the randomness of the initial
configuration. 

\subsection{Finite-time approximation and deterministic stability}

\begin{lemma}[Finite-time approximation from empirical convergence]
\label{lem:poc-finite-time}
Under \eqref{eq:poc-initial}, for every fixed $S<\infty$,
\begin{equation}\label{eq:poc-finite-time}
 \mathbb E\left[
   \sup_{0\le t\le S}\Wass(\mu_t^N,\mu_t)^2
 \right]\longrightarrow0.
\end{equation}
\end{lemma}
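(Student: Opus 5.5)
A synchronous coupling with the McKean--Vlasov nonlinear diffusion, together with a stability estimate for the limit flow in its initial datum, should give the result. The time horizon $S$ is fixed, so none of the long-time machinery is needed. Since $\Wass^2\le \pi^2$ on $\T$, the integrand is bounded. Convergence in expectation therefore follows from convergence in probability of $\sup_{t\le S}\Wass(\mu_t^N,\mu_t)$, and it suffices to show the latter.

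\textbf{Step 1 (conditioning on the initial data).} The initial configuration is independent of the Brownian motions. I therefore condition on $\mathbf X_0^N=x$ and prove the deterministic-initial-data bound
\[
 \mathbb E_x\Bigl[\sup_{t\le S}\Wass(\mu_t^N,\mu_t)^2\Bigr]\le C_S\bigl(\Wass(\mu_x^N,\mu_0)^2+\epsilon_N\bigr),
\]
with $\epsilon_N\to0$ uniformly in $x$. Taking expectations and using bounded convergence together with \eqref{eq:poc-initial} then concludes.

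\textbf{Step 2 (coupling and stability).} Set $\bar\mu_t:=S_t\mu_x^N$, the McKean--Vlasov solution started from the empirical measure itself. This is well defined for atomic data, because the nonlinear SDE has Lipschitz coefficients by \eqref{eq:drift-Lipschitz}. I split
\[
 \Wass(\mu_t^N,\mu_t)\le\Wass(\mu_t^N,\bar\mu_t)+\Wass(\bar\mu_t,\mu_t).
\]
For the second term, I couple two nonlinear diffusions driven by the same Brownian motion, started from an optimal coupling of $\mu_x^N$ and $\mu_0$. By \eqref{eq:drift-Lipschitz} and Gronwall on lifts to $\Rr$ (with $d_\T\le$ the lifted distance), this gives
\[
 \sup_{t\le S}\Wass(\bar\mu_t,\mu_t)\le e^{2\|W''\|_\infty S}\,\Wass(\mu_x^N,\mu_0).
\]
For the first term, I use Sznitman's synchronous coupling. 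Let $\bar X^i$ solve the nonlinear SDE with drift $b(\cdot,\bar\mu_t)$, the same $B^i$, and $\bar X^i_0=x_i$. Then each $\bar X^i$ has law $\delta_{x_i}$ propagated by the linear (frozen-$\bar\mu$) dynamics, and $\bar\mu_t=\frac1N\sum_i\mathrm{Law}(\bar X^i_t)$. The particles $\bar X^i$ are independent but not identically distributed. The standard Gronwall estimate gives
\[
 \mathbb E\sup_{t\le S}\frac1N\sum_i|X^i_t-\bar X^i_t|^2\le C_S\,\sup_{t\le S}\mathbb E\,\bigl\|W'*(\bar\nu^N_t-\bar\mu_t)\bigr\|_{L^2(\bar\nu_t^N)}^2 ,
\]
where $\bar\nu^N_t$ is the empirical measure of the $\bar X^i$. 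The right side is $O(1/N)$, as a variance of an average of independent bounded terms. For this I would write $W'(\bar X^i-\bar X^j)$ minus its conditional mean in $\bar X^j$, and use independence for $j\ne i$.

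\textbf{Step 3 (empirical of independent non-i.i.d.\ particles).} It remains to bound $\mathbb E\sup_{t\le S}\Wass(\bar\nu_t^N,\bar\mu_t)^2\to0$ uniformly in $x$. I expect this to be the main obstacle, since the particles are not i.i.d.\ and a supremum in time is needed. My plan is as follows. On $\T$, $\Wass^2\le C\,\mathsf W_1$, and $\mathsf W_1$ is bounded by $\sum_{k\ne0}|\hat\nu(k)-\hat\mu(k)|/|k|$ (after truncating at frequency $K$ and controlling the tail via the smoothness of $x\mapsto e^{ikx}$). Each Fourier mode $\frac1N\sum_i e^{ik\bar X^i_t}$ minus its mean is an average of independent centered bounded terms, so it is $O(N^{-1/2})$ in $L^2$ at fixed $t$. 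For the supremum over a time grid of mesh $h$, I use a union/maximal bound. Between grid points I control the oscillation by the modulus of continuity of Brownian increments, exactly as in the proof of Lemma~\ref{lem:fast-gate-clock}. Letting $N\to\infty$, then $h\to0$ and $K\to\infty$, gives a bound $\epsilon_N\to0$ uniform in $x$, which completes Step 1.
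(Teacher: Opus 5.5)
Your proof is correct, but it takes a genuinely different route from the paper. The paper does not couple at all. It uses continuity of the semiflow to choose $\xi>0$ such that trajectories started in the closed $\xi$-ball around $\mu_0$ stay within $a/2$ of $\mu_\cdot$ on $[0,S]$. It then notes that the closed set of paths starting in that ball and deviating from $\mu_\cdot$ by at least $a$ contains no zero-action path. Compactness of action sublevel sets and the uniform LDP upper bound of Proposition~\ref{prop:uniform-LDP}(i) then give
\[
 \mathbb P\Bigl(\sup_{0\le t\le S}\Wass(\mu_t^N,\mu_t)\ge a\Bigr)\le\mathbb P\bigl(\Wass(\mu_0^N,\mu_0)>\xi\bigr)+Ce^{-cN},
\]
and boundedness of $\Wass$ finishes the proof.

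The comparison is as follows. The paper's argument is two lines given the Dawson--G\"artner machinery it needs anyway. It yields exponentially small deviation probabilities, uniformly over initial data in the ball, in keeping with the rest of the paper. However, it is non-quantitative in $a$ and rests on a heavy external theorem. Your route combines three ingredients: Sznitman's synchronous coupling around $\bar\mu_t=S_t\mu_x^N$, the Lipschitz stability estimate $e^{2\|W''\|_\infty S}\Wass(\mu_x^N,\mu_0)$, and a time-uniform law of large numbers for independent, non-identically distributed particles. It is elementary and self-contained. It also gives an explicit bound $C_S\bigl(\mathbb E\,\Wass(\mu_0^N,\mu_0)^2+\epsilon_N\bigr)$ with computable $\epsilon_N$, but only $L^2$-type rather than exponential control of the dynamical error. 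The identity $\bar\mu_t=\frac1N\sum_i\mathrm{Law}(\bar X^i_t)$, the $O(1/N)$ variance bound, and the pathwise Gronwall step (there is no martingale part in $X^i-\bar X^i$) all check out.

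One point in Step~3 should be stated correctly. The Fourier tail is controlled by the test function, not by smoothness of $e^{ikx}$. For a $1$-Lipschitz $f$ on $\T$, Parseval and Cauchy--Schwarz give
\[
 \sum_{|k|>K}|\hat f(k)|\le\|f'\|_{L^2(\lambda)}\Bigl(\sum_{|k|>K}k^{-2}\Bigr)^{1/2}\le CK^{-1/2}.
\]
Hence, uniformly over all pairs of probability measures,
\[
 \mathsf W_1(\nu,\mu)\le\sum_{0<|k|\le K}\frac{|\hat\nu(k)-\hat\mu(k)|}{|k|}+2CK^{-1/2}.
\]
Alternatively, mollify at scale $\epsilon$ at a cost of $2\epsilon$ in $\mathsf W_1$. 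With that, your order of limits $N\to\infty$, then $h\to0$, then $K\to\infty$ goes through uniformly in the initial configuration.
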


\begin{proof}
Fix $a>0$. Continuity of the deterministic semiflow allows us to
choose $\xi>0$ such that
\[
 \sup_{0\le t\le S}\Wass(S_t\nu,\mu_t)<a/2
 \quad\text{whenever }\Wass(\nu,\mu_0)\le\xi.
\]
The closed set of paths starting in this closed $\xi$-ball and
having distance at least $a$ from $\mu_\cdot$ somewhere on $[0,S]$
contains no zero-action path. Compactness of action sublevel sets
and Proposition~\ref{prop:uniform-LDP}(i) therefore give constants
$c,C>0$ such that
\[
 \mathbb P\left(
   \sup_{0\le t\le S}\Wass(\mu_t^N,\mu_t)\ge a
 \right)
 \le
 \mathbb P\bigl(\Wass(\mu_0^N,\mu_0)>\xi\bigr)+Ce^{-cN}
\]
for all sufficiently large $N$. The right-hand side
tends to zero by \eqref{eq:poc-initial}. Boundedness of the
Wasserstein distance on $\Pp(\T)$ proves \eqref{eq:poc-finite-time}.
\end{proof}

\subsection{Pointwise propagation-of-chaos results near metastable wells: Proofs of Theorems~\ref{thm:poc-metastable}, \ref{thm:poc-global} and \ref{thm:poc-pathwise}}

\begin{proof}[Proof of Theorem~\ref{thm:poc-metastable}]
For (i), fix $0<\varepsilon<D_i$ and $r>0$.
Apply \eqref{eq:verified-M1} with radius $r/2$ and margin
$\varepsilon$, and let $\eta>0$ be its initial radius.
Since $q_t\to\mathcal M_i$, choose a fixed $t_0$ such that
\[
 d_{\Qq}(q_{t_0},\mathcal M_i)<\eta/2,
 \qquad
 \sup_{t\ge t_0}d_{\Qq}(q_t,\mathcal M_i)<r/2.
\]
Define
\[
 E_N:=\{d_{\Qq}(X_{t_0}^N,\mathcal M_i)\le\eta\}.
\]
The triangle inequality and Lemma~\ref{lem:poc-finite-time} give
\[
 \mathbb P(E_N^c)
 \le \mathbb P\bigl(\Delta_N(t_0)>\eta/2\bigr)
 \longrightarrow0.
\]

On $E_N$, the random empirical measure at time $t_0$ satisfies
the initial-state condition in \eqref{eq:verified-M1}.
Since that estimate is uniform over all such initial measures,
the Markov property gives, for every $t_0\le t\le e^{N(D_i-\varepsilon)}$,
\[
 \mathbb P\left(
   d_{\Qq}(X_t^N,\mathcal M_i)>r/2,\ E_N
 \right)\le Ce^{-cN}.
\]
Here the estimate is applied for the elapsed time $t-t_0$, which
is at most $e^{N(D_i-\varepsilon)}$.
Moreover, the deterministic reference is within $r/2$ of
$\mathcal M_i$, so $\Delta_N(t)>r$ requires
$d_{\Qq}(X_t^N,\mathcal M_i)>r/2$. Consequently,
\[
 \sup_{t_0\le t\le e^{N(D_i-\varepsilon)}}
 \mathbb P\bigl(\Delta_N(t)>r\bigr)
 \le \mathbb P(E_N^c)+Ce^{-cN}\longrightarrow0.
\]
Lemma~\ref{lem:poc-finite-time} handles $[0,t_0]$, proving
convergence in probability uniformly over deterministic observation times in $[0,e^{N(D_i-\varepsilon)}]$.
Finally,
\[
 \sup_{0\le t\le e^{N(D_i-\varepsilon)}}\mathbb E\,\Delta_N(t)^2
 \le r^2+B_0^2
       \sup_{0\le t\le e^{N(D_i-\varepsilon)}}\mathbb P(\Delta_N(t)>r).
\]
Taking the limit superior and then letting $r\downarrow0$
proves \eqref{eq:poc-metastable-before}.

For (ii), write
\[
 q_i:=\dist_{\Qq}(\mathcal M_i,\mathcal L_i)
     =4\delta_i>0.
\]
Apply \eqref{eq:verified-M2} with radius $q_i/4$ and margin
$\varepsilon/2$, and let $\eta$ be its initial radius.
Choose a fixed $t_0$ such that
\[
 d_{\Qq}(q_{t_0},\mathcal M_i)<\eta/2,
 \qquad
 \sup_{t\ge t_0}d_{\Qq}(q_t,\mathcal M_i)<q_i/4.
\]
Define $E_N$ as above using this $\eta$ and $t_0$.
Again, $\mathbb P(E_N^c)\to0$.

For all
sufficiently large $N$,
\(
 t\ge e^{N(D_i+\varepsilon)}
 \Rightarrow
 t-t_0\ge e^{N(D_i+\varepsilon/2)}.
\)
Conditioning at $t_0$, the Markov property and the uniform
estimate \eqref{eq:verified-M2} therefore give
\[
 \sup_{t\ge e^{N(D_i+\varepsilon)}}
 \mathbb P\left(
   \dist_{\Qq}(X_t^N,\mathcal L_i)>q_i/4
 \right)
 \le \mathbb P(E_N^c)+Ce^{-cN}.
\]
If $\dist_{\Qq}(X_t^N,\mathcal L_i)\le q_i/4$, the triangle
inequality implies
\[
 \Delta_N(t)\ge
 q_i-\dist_{\Qq}(X_t^N,\mathcal L_i)
     -d_{\Qq}(q_t,\mathcal M_i)
 \ge q_i/2>\delta_i.
\]
Thus $\{\Delta_N(t)\le\delta_i\}$ is contained in the exceptional
event bounded above. This proves
\eqref{eq:poc-metastable-after}, uniformly over all the stated
deterministic times.
\end{proof}

\begin{remark}[The cycle determining the threshold]
\label{rem:poc-maximal-cycle}
If a proper cycle $C$ has bottom $\mathcal M_i$, then
$C\subset C_i^{max}$ and $d(C)\le D_i$. Hence
\eqref{eq:poc-metastable-before} also holds with $d(C)$ in place
of $D_i$ whenever $0<\varepsilon<d(C)$. A matching failure
statement at depth $d(C)$ generally requires $C=C_i^{max}$.
No conclusion is asserted at the critical scale $D_i$.
\end{remark}

\begin{proof}[Proof of Theorem~\ref{thm:poc-global}]
Fix $r>0$. The closed set
$A_r:=\{q:d_{\Qq}(q,\mathcal M_*)\ge r/2\}$ has
$\inf_{A_r}\F>F_{\min}$, unless it is empty. This follows from
compactness, lower semicontinuity, and uniqueness of the global
orbit. Lemma~\ref{lem:uniform-uphill-endpoint} therefore controls
the probability of $A_r$ at every time, started from any $\nu \in \mathcal P_N(\T)$ with $d_{\Qq}([\nu],\mathcal M_*) \le \eta$.
Its proof uses the Gibbs LDP and the path comparison, and does
not require \eqref{eq:finite-critical-orbits}.
Choose a fixed $t_0$ so that $d_{\Qq}(q_{t_0},\mathcal M_*)<\eta/2$ and
$d_{\Qq}(q_t,\mathcal M_*)<r/2$ for every $t\ge t_0$.
Lemma~\ref{lem:poc-finite-time} places $X_{t_0}^N$ in the
required neighborhood with probability tending to one.
Conditioning at $t_0$ and using the triangle inequality now gives
\[
 \sup_{t\ge t_0}\mathbb P(\Delta_N(t)>r)\longrightarrow0.
\]
Lemma~\ref{lem:poc-finite-time} treats $[0,t_0]$. Letting $r$ decrease to
zero and using $\Delta_N\le B_0$ proves \eqref{eq:poc-global}.
\end{proof}

\begin{proof}[Proof of Theorem~\ref{thm:poc-pathwise}]
Fix $r>0$ and use Lemma~\ref{lem:local-block-endpoint} with an
outer radius smaller than $r/2$. Choose a fixed $t_0$ so that
$q_{t_0}$ lies in the interior of its initial neighborhood and
$d_{\Qq}(q_t,\mathcal M_i)<r/2$ for all $t\ge t_0$.
By Lemma~\ref{lem:poc-finite-time}, the approximation on $[0,t_0]$
is good and $X_{t_0}^N$ belongs to that neighborhood with probability
tending to one. Conditional on this entrance event, the probability
of leaving the prescribed outer ball before time $T_N$ is at most
\[
 C(1+T_N)e^{-cN}\longrightarrow0.
\]
The triangle inequality proves convergence in probability of the
supremum in \eqref{eq:poc-pathwise}; boundedness gives convergence
of its expectation. If $T_N<t_0$, Lemma~\ref{lem:poc-finite-time} suffices.
\end{proof}

\begin{remark}[Placement of the supremum]
Theorems~\ref{thm:poc-metastable} and~\ref{thm:poc-global} concern
each deterministic observation time, with uniform error bounds.
Theorem~\ref{thm:poc-pathwise} controls an entire trajectory on the
specified interval, which requires subexponentially growing time horizons, as compared to the exponential ones in the other two results. 
\end{remark}

\subsection{Maximal estimates for running time averages}

We present two estimates involving time-averaged processes that will be used in Section \ref{sec:pathavproof} to prove Theorem~\ref{thm:poc-running-averages} and Corollary~\ref{cor:poc-averaged-measures}.

\begin{lemma}[A maximal time-average bound on a finite horizon]
\label{lem:poc-dyadic}
Let $Z(t)$ be a jointly measurable process with $0\le Z(t)\le K$,
where $K>0$.
For $H\ge1$ and $a>0$,
\begin{equation}\label{eq:poc-dyadic}
 \mathbb P\left(
   \sup_{1\le T\le H}\frac1T\int_0^T Z(t)\,\dd t>a
 \right)
 \le
 \frac{4K}{a}\bigl(1+\lceil\log_2 H\rceil\bigr)
 \sup_{0\le t\le H}\mathbb P(Z(t)>a/2).
\end{equation}
\end{lemma}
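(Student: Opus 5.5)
The plan is a dyadic reduction followed by Markov's inequality applied to the integral of an indicator. Set \(M:=\lceil\log_2 H\rceil\) and cover \([1,H]\) by the blocks \([2^{k},2^{k+1}]\), \(0\le k\le M\) (the last block may overshoot \(H\); this is handled by truncating). For \(T\in[2^k,2^{k+1}]\cap[1,H]\), set \(H_k:=\min\{2^{k+1},H\}\). Since \(Z\ge0\),
\[
 \frac1T\int_0^T Z\,\dd t\le \frac{1}{2^k}\int_0^{H_k} Z\,\dd t .
\]
Hence the event in \eqref{eq:poc-dyadic} is contained in the union over \(k\) of the events
\[
 \Bigl\{\frac1{2^k}\int_0^{H_k}Z(t)\,\dd t>a\Bigr\}.
\]
Because there are \(1+M\) such events, a union bound produces the factor \(1+\lceil\log_2H\rceil\).

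For each fixed \(k\), split \(Z\le a/4+K\mathbf 1_{\{Z>a/4\}}\). On the event above this gives
\[
 a\,2^k<\int_0^{H_k}Z\le \frac a4 H_k+K\int_0^{H_k}\mathbf 1_{\{Z(t)>a/4\}}\,\dd t .
\]
This threshold does not quite match the \(a/2\) in the claim. A finer split is better: on the block, \(\frac1{2^k}\int_0^{H_k}\) differs from the honest average by at most a factor \(2\). So use instead the exact average at \(T\) and a supremum argument, or simply choose the split level \(a/2\) together with the bound \(H_k\le 2^{k+1}\).

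I will write it with level \(a/2\) as follows. On \(\{\frac1T\int_0^TZ>a\}\),
\[
 \int_0^T\mathbf 1_{\{Z>a/2\}}\,\dd t>\frac{aT}{2K}.
\]
Then, using \(T\ge 2^k\) and monotonicity in \(T\), the event forces
\[
 Y_k:=\int_0^{H_k}\mathbf 1_{\{Z(t)>a/2\}}\,\dd t>\frac{a2^k}{2K}.
\]
Markov's inequality and Fubini, the latter using joint measurability, give
\[
 \mathbb P\Bigl(Y_k>\frac{a2^k}{2K}\Bigr)\le \frac{2K}{a2^k}\,H_k\sup_{0\le t\le H}\mathbb P(Z(t)>a/2)\le\frac{4K}{a}\sup_{0\le t\le H}\mathbb P(Z(t)>a/2).
\]
Summing over the \(1+M\) blocks yields \eqref{eq:poc-dyadic}.

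The only delicate point is the monotonicity step: the event must be transferred from a random \(T\) to the fixed horizon \(H_k\). This works because the indicator integral is nondecreasing in its upper limit, and \(T\ge 2^k\) converts the threshold \(aT/(2K)\) into the deterministic threshold \(a2^k/(2K)\). Measurability of the supremum follows from continuity of \(T\mapsto\frac1T\int_0^TZ\).
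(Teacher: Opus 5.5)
Your final argument is correct and is essentially the paper's proof: you split $Z\le a/2+K\mathbf 1_{\{Z>a/2\}}$ at level $a/2$ on the average up to $T$, then pass to a deterministic horizon within a factor $2$ of $T$. From there, Markov's inequality with Fubini and a union bound over $1+\lceil\log_2 H\rceil$ dyadic pieces finish it, with identical constants. The only cosmetic differences are that the paper normalizes by a dyadic endpoint $u\in[T,2T]$ where you use the block's left endpoint $2^k$ with horizon $H_k$, and that the abandoned level-$a/4$ detour in the middle of your write-up should simply be deleted.
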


\begin{proof}
Choose at most $1+\lceil\log_2H\rceil$ endpoints consisting of
successive powers of two and the final endpoint $H$.
Every $T\in[1,H]$ has an endpoint $u$ with $T\le u\le2T$.
For a nonnegative function, its average up to $T$ is at most
twice its average up to $u$. On the event on the left, there exists $T\in[1,H]$ such that
$T^{-1}\int_0^T Z(t)\,\dd t>a$. Since
$
 Z(t)\le \frac a2+K\mathbf1_{\{Z(t)>a/2\}},
$
we have
\[
 \frac1T\int_0^T\mathbf1_{\{Z(t)>a/2\}}\,\dd t
 >\frac{a}{2K},
\]
and the above comparison gives
\[
 \frac1u\int_0^u\mathbf1_{\{Z(t)>a/2\}}\,\dd t
 >\frac{a}{4K}.
\]
Thus exceeding the original threshold at any time $T$ forces
this indicator-average bound at one of the selected endpoints.

Markov's inequality, Fubini's theorem, and a union bound give
\eqref{eq:poc-dyadic}.
\end{proof}

\begin{lemma}[Maximal averages from a neighborhood of the global orbit]
\label{lem:poc-global-maximal}
Assume the unique-global-orbit hypothesis of
Theorem~\ref{thm:poc-global}, and write
$r_*(q):=d_{\Qq}(q,\mathcal M_*)$.
For every $a>0$ there exist $\eta,c,C>0$ and $N_0$ such that,
for all $N\ge N_0$,
\begin{equation}\label{eq:poc-global-maximal}
 \sup_{\substack{\nu\in\PN(\T)\\
                  d_{\Qq}([\nu],\mathcal M_*)\le\eta}}
 \mathbb P_\nu\left(
   \sup_{T>0}\frac1T\int_0^T r_*(X_t^N)^2\,\dd t>a
 \right)
 \le Ce^{-cN}.
\end{equation}
No finiteness assumption \eqref{eq:finite-critical-orbits} is needed for this estimate.
\end{lemma}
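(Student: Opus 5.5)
The plan is to split the supremum over $T$ into short horizons $T\le 1$, a dyadic range $1\le T\le H_N$ with $H_N:=e^{\kappa N}$ for a small $\kappa>0$, and long horizons $T> H_N$. On each range we apply a maximal estimate driven by the pointwise uphill bound of Lemma~\ref{lem:uniform-uphill-endpoint}. Throughout, set $Z(t):=r_*(X_t^N)^2\le B_0^2$ and fix $a>0$.

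\emph{Pointwise input.} The closed set $A:=\{q:r_*(q)^2\ge a/2\}$ (more precisely $\{r_*\ge\sqrt{a/2}\}$) satisfies $\inf_A\F>F_{\min}$ when it is nonempty. This follows from compactness, lower semicontinuity of $\F$, and uniqueness of $\mathcal M_*$, exactly as in the proof of Theorem~\ref{thm:poc-global}. Lemma~\ref{lem:uniform-uphill-endpoint} then gives $\eta,c_1,C_1$ with $\sup_{t\ge0}\mathbb P_\nu(Z(t)>a/2)\le C_1e^{-c_1N}$, uniformly over $\nu\in\PN(\T)$ with $d_{\Qq}([\nu],\mathcal M_*)\le\eta$. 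No finiteness of critical orbits enters here.

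\emph{Short and intermediate horizons.} For $T\le1$, I would bound $\frac1T\int_0^TZ\le\sup_{0\le t\le1}Z(t)$. Shrinking $\eta$ so that $\sqrt\eta$ is small relative to $a$, the finite-horizon uniform LDP upper bound (Proposition~\ref{prop:uniform-LDP}(i)) gives
\[
\mathbb P_\nu\Bigl(\sup_{t\le1}r_*(X_t^N)^2>a\Bigr)\le e^{-cN}.
\]
This is the same argument as the $[0,1]$ part of Lemma~\ref{lem:uniform-uphill-endpoint}: deterministic trajectories from the small ball stay close to the stationary orbit on $[0,1]$. For $1\le T\le H_N$, Lemma~\ref{lem:poc-dyadic} with $K=B_0^2$ gives the bound
\[
\frac{4B_0^2}{a}\bigl(1+\lceil\kappa N/\log 2\rceil\bigr)C_1e^{-c_1N},
\]
which is still exponentially small.

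\emph{Long horizons (main obstacle).} Lemma~\ref{lem:poc-dyadic} loses a factor $\log H$, so it cannot by itself handle all $T>0$. For $T\ge H_N$ I would instead apply it to blocks. Partition $[0,\infty)$ into dyadic endpoints $u_k=2^k$. The event $\{\frac1{u}\int_0^{u}\mathbf 1_{\{Z>a/2\}}>a/(4K)\}$ at endpoint $u_k$ has probability at most $\frac{4K}{a}C_1e^{-c_1N}$ by Markov's inequality. Summing over infinitely many $k$ diverges, so the endpoint probabilities must instead decay in $k$.

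To obtain that decay, I would use a concentration bound for the occupation fraction, based on a blocking and Markov argument. Cut $[0,u_k]$ into blocks of fixed length $s_0$. Each block starts at a stopping time from which, by Lemma~\ref{lem:uniform-uphill-endpoint}, the law is controlled only if the block starts near $\mathcal M_*$. This is the delicate point. I would handle it by restarting at blocks of length $L=e^{\kappa N}$. By Lemma~\ref{lem:uniform-uphill-endpoint}, each block of length $L$ begins in the $\eta$-ball except with probability $C_1e^{-c_1N}$. Conditioned on this, Lemma~\ref{lem:local-block-endpoint}, applied with $\kappa<c/2$, keeps the path in a small ball for the entire block except with probability $e^{-cN/2}$. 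The block failures are therefore conditionally dominated by independent Bernoulli variables with success probability $p_N\le e^{-c'N}$. A Chernoff bound then shows that the fraction of failed blocks up to $u_k$ exceeds $a/(8K)$ with probability at most $\exp\{-c''N\,u_k/L\}$. For $u_k\ge H_N=L$, this is summable in $k$, with total bounded by $Ce^{-c''N}$.

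One issue remains with this conditioning: a failed block must not spoil subsequent blocks. The restart is justified because Lemma~\ref{lem:uniform-uphill-endpoint} applies from the initial condition for every $t$, not from the block start. So I would phrase the failure indicator of block $k$ as ``$X_{kL}$ is outside the $\eta$-ball, or $X_{kL}$ is in it and the path exits a small ball during the block.'' The first part has unconditional probability at most $C_1e^{-c_1N}$ but is not independent across blocks. I would therefore fall back on Markov's inequality at the endpoints, combined with the crude $\log$ count, only up to $e^{AN}$, and for $T\ge e^{AN}$ I would still need this independence-type bound. This is the step I expect to require the most care.
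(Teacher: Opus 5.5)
Your handling of $T\le 1$ (finite-horizon LDP) and of $1\le T\le H_N$ (Lemma~\ref{lem:poc-dyadic} fed by Lemma~\ref{lem:uniform-uphill-endpoint}) is fine. The $\log H$ loss is harmless even for $\log H_N\le e^{(c_1-c)N}$. The statement, however, requires $\sup_{T>0}$, and your long-horizon step does not close.

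Lemma~\ref{lem:uniform-uphill-endpoint} controls $\mathbb P_\nu(X_t^N\in A)$ only from the original initial condition, not conditionally on $\mathcal F_{kL}$. So the block failures are not dominated by independent Bernoulli variables. After a failure, the process may sit near another local minimum for times far exceeding $L$; an example is the uniform state when $\mathcal M_*$ is nonuniform, and no depth information is available under the hypotheses here. Failures are therefore strongly correlated, and no Chernoff bound follows.

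More fundamentally, one-time bounds $\sup_t\mathbb P(Z(t)>a/2)\le p$ cannot control $\sup_{T>0}$ at all. Take $Z=K\mathbf 1_{[S,2S]}$ with $\log S$ uniform on $[0,M]$. Then $\mathbb P(Z(t)>0)\le(\log 2)/M$ for every $t$, yet $\sup_T T^{-1}\int_0^T Z\,\dd t\ge K/2$ surely. So the logarithmic loss in Lemma~\ref{lem:poc-dyadic} is intrinsic to arguments built on marginals plus Markov's inequality, and a genuinely dynamical input is needed.

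The missing idea, which is how the paper argues, is stationarity. Let $\mathbf Q_N$ be the law of the system started from the Gibbs measure $\pi_N$. Under $\mathbf Q_N$, $t\mapsto r_*(X^N_{1+t})^2$ is stationary with $X_t^N\sim\Pi_N$ for every $t$. The maximal ergodic theorem, applied to block averages with the block length then sent to zero, gives $\mathbf Q_N(\sup_{T>0}T^{-1}\int_0^T Z_t\,\dd t>b)\le \mathbb E Z_0/b$ for all horizons simultaneously, with no logarithmic loss.

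Now use $r_*^2\le a/4+B_0^2\mathbf 1_{\{r_*>\sqrt{a}/2\}}$ and the static Gibbs LDP bound $\Pi_N(r_*\ge\rho)\le e^{-c_\rho N}$. Together they show that the path event $E:=\{\sup_{U>0}U^{-1}\int_1^{1+U}r_*(X_t^N)^2\,\dd t>a/2\}$ satisfies $\mathbf Q_N(E)\le 4B_0^2a^{-1}e^{-c_\rho N}$.

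You then transfer this to deterministic starts using the Girsanov/Cauchy--Schwarz comparison \eqref{eq:path-event-comparison} from the proof of Lemma~\ref{lem:uniform-uphill-endpoint}. That comparison holds for any event measurable with respect to the path after time one, not only one-time events. Invariance under permutations and common rotations, plus integration over Gibbs starts in $B_\eta(\mathcal M_*)$, gives $\mathbb P_\nu(E)^2\le e^{4c_0N\eta^2}\mathbf Q_N(E)/\Pi_N(B_\eta(\mathcal M_*))$. Fix $c_\rho$ first, then choose $\eta$ with $4c_0\eta^2<c_\rho/2$; this makes the right-hand side exponentially small. Combined with your $[0,1]$ estimate, this proves the lemma, and your intermediate dyadic range becomes unnecessary.
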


\begin{proof}
Let $\mathbf Q_N$ be the law of the full configuration path
started with Gibbs distribution $\pi_N$. Under $\mathbf Q_N$,
the quotient empirical measure process is stationary with marginal law $\Pi_N$.
For $\rho>0$, let
\[
 A_\rho:=\{q\in\Qq:r_*(q)\ge\rho\}.
\]
Suppose $A_\rho$ is nonempty. It is compact and excludes
$\mathcal M_*$, the unique zero of the rate function
$J(q)=\beta(\F(q)-F_{\min})$. Lower semicontinuity therefore gives
\(
 \gamma_\rho:=\inf_{q\in A_\rho}J(q)>0.
\)
The static LDP upper bound (see \eqref{eq:gibbs-rate-endpoint}) now yields
\[
 \limsup_{N\to\infty}\frac1N\log\Pi_N(A_\rho)
 \le-\gamma_\rho.
\]
Choosing any $0<c_\rho<\gamma_\rho$, we obtain, for all
sufficiently large $N$,
\begin{equation}\label{eq:poc-global-static}
 \Pi_N(r_*\ge\rho)\le e^{-c_\rho N}.
\end{equation}
If $A_\rho$ is empty, this bound is immediate.

Apply the maximal ergodic theorem
\cite[p.~248]{KeanePetersen2006} to the nonnegative stationary sequence
\[
 Z_k^{(h)}:=\frac1h\int_{kh}^{(k+1)h}Z(t)\,\dd t,
 \qquad k\ge0.
\]
This gives
\[
 \mathbb P\left(
 \sup_{n\ge1}\frac1{nh}\int_0^{nh}Z(t)\,\dd t>b
 \right)
 \le\frac{\mathbb E Z(0)}{b}.
\]
Taking $h=2^{-m}\downarrow0$ and using continuity of the
time averages as functions of their positive upper endpoint
gives the same bound with the supremum over all $T>0$:
\begin{equation}\label{eq:poc-stationary-maximal}
 \mathbb P\left(
   \sup_{T>0}\frac1T\int_0^T Z_t\,\dd t>b
 \right)\le\frac{\mathbb E Z_0}{b},\qquad b>0.
\end{equation}

It suffices to consider $0<a<B_0^2$. Set $\rho=\sqrt{a/4}$ and
\[
 E_a^{(1)}:=
 \left\{\sup_{U>0}\frac1U
       \int_1^{1+U}r_*(X_t^N)^2\,\dd t>a/2\right\}.
\]
Since $r_*^2\le a/4+B_0^2\mathbf1_{\{r_*>\rho\}}$,
stationarity, \eqref{eq:poc-stationary-maximal}, and
\eqref{eq:poc-global-static} imply
\begin{equation}\label{eq:poc-stationary-average-tail}
 \mathbf Q_N(E_a^{(1)})
 \le \frac{4B_0^2}{a}e^{-c_\rho N}.
\end{equation}

We next transfer this path-event estimate to starts near
$\mathcal M_*$. For any event $E$ depending only on the quotient
path after time one, the matching and integration argument used
in \eqref{eq:gibbs-uphill-comparison}, now applied to
$\mathbb P_x(E)$, gives
\begin{equation}\label{eq:poc-gibbs-path-comparison}
 \mathbb P_\nu(E)^2
 \le
 e^{4c_0N\eta^2}
 \frac{\mathbf Q_N(E)}{\Pi_N(B_\eta(\mathcal M_*))},
 \qquad d_{\Qq}([\nu],\mathcal M_*)\le\eta.
\end{equation}
Indeed, \eqref{eq:path-event-comparison} applies to such events,
their probabilities are unchanged by permutations and common
rotations, and integrating over Gibbs starting configurations in
$B_\eta(\mathcal M_*)$ gives the displayed denominator.
Here $c_0$ is the constant in \eqref{eq:girsanov-second-moment}.
For every fixed $\eta>0$, the denominator tends to one by the
Gibbs LDP. Choose $\eta$ so small that
$4c_0\eta^2<c_\rho/2$. Equations
\eqref{eq:poc-stationary-average-tail} and
\eqref{eq:poc-gibbs-path-comparison} then give an exponentially
small bound for $\mathbb P_\nu(E_a^{(1)})$ for any initial law $\nu$ satisfying $d_{\Qq}([\nu],\mathcal M_*)\le\eta$.

Finally, shrink $\eta$ so that deterministic trajectories from
its closed ball have $r_*^2<a/4$ throughout $[0,1]$.
The uniform finite-horizon LDP bounds
$\mathbb P_\nu(\sup_{0\le t\le1}r_*(X_t^N)^2>a/2)$
exponentially. Splitting the integral at time one gives
\[
 \sup_{T>0}\frac1T\int_0^T r_*(X_t^N)^2\,\dd t
 \le
 \sup_{0\le t\le1}r_*(X_t^N)^2
 +\sup_{U>0}\frac1U\int_1^{1+U}r_*(X_t^N)^2\,\dd t.
\]
The two preceding bounds prove \eqref{eq:poc-global-maximal}.
\end{proof}

\begin{remark}[The entropy formulation]
\label{rem:poc-entropy}
The same stationary estimates give an entropy version.
Write $\operatorname H(P\mid Q)$ for relative entropy. If an
initial configuration law $P^N$ satisfies
$h_N:=\operatorname H(P^N\mid\pi_N)=o(N)$, its full path law
$\mathbf P_N$ satisfies
$\operatorname H(\mathbf P_N\mid\mathbf Q_N)=h_N$:
the conditional dynamics given the initial configuration are
identical under the two laws. The binary entropy inequality
\[
 \mathbf P_N(E)\log\frac1{\mathbf Q_N(E)}
 \le h_N+\log2
\]
therefore transfers every exponentially small stationary event
above to an event of probability tending to zero. In particular,
both the endpoint distance to $\mathcal M_*$ and the supremum
of its running squared-distance averages converge to zero in
expectation.
Writing
$Z_N:=\int_{\T^N}e^{-\beta\mathcal H_N(x)}
                   \lambda^{\otimes N}(\dd x)$,
direct calculation for a representative minimizer $\mu_*$ gives
\[
 \operatorname H(\mu_*^{\otimes N}\mid\pi_N)
 =\beta NF_{\min}+\log Z_N
 +\frac{\beta}{2}
   \left(W(0)-\iint W(x-y)\,\mu_*(\dd x)\mu_*(\dd y)\right)
 =o(N),
\]
using $N^{-1}\log Z_N\to-\beta F_{\min}$.
Condition \eqref{eq:poc-initial} alone need not give finite
initial entropy. The path comparison
\eqref{eq:poc-gibbs-path-comparison} is why the basin results
below also cover deterministic initial configurations.
\end{remark}

\subsection{Sharp pathwise propagation-of-chaos results for running averages: Proofs of Theorem~\ref{thm:poc-running-averages} and Corollary~\ref{cor:poc-averaged-measures}}\label{sec:pathavproof}

\begin{proof}[Proof of Theorem~\ref{thm:poc-running-averages}]
For (i), use \eqref{eq:verified-M1} with radius $r=\sqrt{a/16}$
and margin $\varepsilon/2$, and denote its initial radius by
$\eta$. Choose a fixed $t_0$ with
\[
 d_{\Qq}(q_{t_0},\mathcal M_i)<\eta/2,\qquad
 \sup_{t\ge t_0}d_{\Qq}(q_t,\mathcal M_i)^2<a/8.
\]
By Lemma~\ref{lem:poc-finite-time}, the entrance event
$E_N=\{d_{\Qq}(X_{t_0}^N,\mathcal M_i)\le\eta\}$ has
probability tending to one. Conditional on $E_N$, the Markov
property and \eqref{eq:verified-M1} give
\[
 \sup_{0\le u\le e^{N(D_i-\varepsilon)}}
 \mathbb P\left(
   d_{\Qq}(X_{t_0+u}^N,\mathcal M_i)^2>a/16
   \,\middle|\,E_N\right)
 \le Ce^{-cN}.
\]
Apply Lemma~\ref{lem:poc-dyadic} under this conditional law,
with $Z(u)=d_{\Qq}(X_{t_0+u}^N,\mathcal M_i)^2$ and
threshold $a/8$, which gives
\[
 \mathbb P\left(
   \sup_{1\le U\le e^{N(D_i-\varepsilon)}}\frac1U
   \int_{t_0}^{t_0+U}d_{\Qq}(X_t^N,\mathcal M_i)^2\,\dd t>a/8
   \,\middle|\,E_N\right)
 \le C_a(1+N)e^{-cN}\longrightarrow0.
\]

Splitting at $t_0$ and using the triangle inequality gives
\begin{align*}
 \sup_{0<T\le e^{N(D_i-\varepsilon)}}A_N(T)
 &\le \sup_{0\le t\le t_0+1}\Delta_N(t)^2
 +2\sup_{1\le U\le e^{N(D_i-\varepsilon)}}\frac1U
     \int_{t_0}^{t_0+U}d_{\Qq}(X_t^N,\mathcal M_i)^2\,\dd t\\
 &\quad+2\sup_{t\ge t_0}d_{\Qq}(q_t,\mathcal M_i)^2.
\end{align*}
Lemma~\ref{lem:poc-finite-time} gives $\sup_{0\le t\le t_0+1}\Delta_N(t)^2\to0$ in probability.
Thus the probability that the left-hand side exceeds $a$
tends to zero. Boundedness by $B_0^2$ proves (i).

For (ii), by
\eqref{eq:poc-metastable-after},
\[
 p_N:=\sup_{t\ge e^{N(D_i+\varepsilon/2)}}
       \mathbb P(\Delta_N(t)\le\delta_i)\longrightarrow0.
\]
For every $T\ge e^{N(D_i+\varepsilon)}$,
\[
 A_N(T)\ge\delta_i^2\left[
   1-\frac{e^{N(D_i+\varepsilon/2)}}{T}
   -\frac1T\int_{e^{N(D_i+\varepsilon/2)}}^T
       \mathbf1_{\{\Delta_N(t)\le\delta_i\}}\,\dd t
 \right].
\]
The expected value of the last term in brackets is at most $p_N$,
uniformly over these $T$, and $e^{N(D_i+\varepsilon/2)}/T\le e^{-N\varepsilon/2}$.
For all large $N$, the event $A_N(T)\le\delta_i^2/2$ therefore
requires this last term to be at least $1/4$.
Markov's inequality bounds its probability by $4p_N$, proving (ii).

For (iii), fix $a>0$ and use Lemma~\ref{lem:poc-global-maximal}
with threshold $a/8$. Choose a fixed $t_0$ so that $q_{t_0}$
lies within half the resulting initial radius and
$\sup_{t\ge t_0}d_{\Qq}(q_t,\mathcal M_*)^2<a/8$.
Lemma~\ref{lem:poc-finite-time} again gives an entrance event $E_N$
with probability tending to one. Conditional on $E_N$,
Lemma~\ref{lem:poc-global-maximal} bounds the probability that
\[
 \sup_{U>0}\frac1U\int_{t_0}^{t_0+U}
          d_{\Qq}(X_t^N,\mathcal M_*)^2\,\dd t>a/8
\]
by $Ce^{-cN}$. The same splitting inequality as in (i), now
with an unrestricted tail supremum, proves convergence in
probability of $\sup_{T>0}A_N(T)$ to zero.
Boundedness proves \eqref{eq:poc-average-global}.
\end{proof}

\begin{proof}[Proof of Corollary~\ref{cor:poc-averaged-measures}]
Choose a measurable optimal coupling $\gamma^N_t$ of
$\widetilde\mu_t^N$ and $\mu_t$. By \cite[Corollary~5.22]{Villani2009},
$\gamma^N_t$ can be chosen jointly measurably in $(t,\omega)$. The measure $T^{-1}\int_0^T\gamma^N_t\,\dd t$ couples their time averages,
and its quadratic transport cost is $A_N(T)$.
This proves \eqref{eq:poc-aligned-averages}, including the
quotient bound since taking an infimum over rotations can only
decrease the distance.
For \eqref{eq:poc-occupation-averages}, the measure
\[
 \frac1T\int_0^T\delta_{(X_t^N,q_t)}\,\dd t
\]
couples $\Xi_T^N$ and $\Xi_T$ and has cost $A_N(T)$.
Apply Theorem~\ref{thm:poc-running-averages}.
\end{proof}

\section{Applications and explicit interaction potentials: proofs}
\label{sec:applicationsproof}

\begin{proof}[Proof of Proposition~\ref{prop:app-finite}]
Every stationary density satisfies the Gibbs equation
\begin{equation}\label{eq:app-gibbs}
 \rho(x)=\frac{\exp\{-\beta W*(\rho\lambda)(x)\}}
 {\int_\T\exp\{-\beta W*(\rho\lambda)(y)\}\,\lambda(\dd y)};
\end{equation}
conversely, every solution is stationary and critical for $\F$.
See \cite[Theorem~2.3 and Proposition~2.4]{CarrilloEtAl2020}.
We solve the higher-mode consistency equations by contraction,
leaving one scalar analytic equation for the first mode.

Put $b_k=\beta|a_k|$ and $e_k(x)=(\cos(kx),\sin(kx))$.
Omit indices with $b_k=0$ and use the real Hilbert space
\[
 \mathcal H=\left\{z=(z_k)_{k\ge2}:
 z_k\in\Rr^2,\quad
 \|z\|_{\mathcal H}^2=\sum_{k\ge2}\frac{|z_k|^2}{b_k}<\infty
 \right\},
 \qquad
 \mathcal K=\{z\in\mathcal H:|z_k|\le b_k\}.
\]
An empty set of retained indices is interpreted as the zero space.
For $|u|\le A$ and $z\in\mathcal H$, define
\begin{equation}\label{eq:app-tail-map}
 \rho_{u,z}(x)=Z(u,z)^{-1}
 \exp\left\{u\cos x+\sum_{k\ge2}z_k\cdot e_k(x)\right\},
 \qquad
 (T_uz)_k=\beta a_k\int_\T e_k\rho_{u,z}\,\dd\lambda.
\end{equation}
Observe that $T_uz$ gives the Fourier coefficients of $-\beta W*(\rho_{u,z}\lambda)$ of order $\ge 2$.
The series is uniformly convergent for $z\in\mathcal H$, by
Cauchy--Schwarz and $\sum b_k<\infty$, and $T_u$ maps
the closed convex set $\mathcal K$ into itself.  

We compute the Fr\'echet derivative of $T_u$. Define the isometry
\[
 J:\mathcal H\longrightarrow\ell^2,
 \qquad (Jz)_k=\frac{z_k}{\sqrt{b_k}},
\]
where indices with $b_k=0$ are omitted throughout. Set
\[
 G(x)=(\sqrt{b_k}\,e_k(x))_{k\ge2},
 \qquad
 S=\operatorname{diag}
 \bigl(\operatorname{sgn}(a_k)\mathrm{Id}_2\bigr).
\]
Since $|e_k(x)|=1$, we have
\[
 \|G(x)\|_{\ell^2}^2=\sum_{k\ge2}b_k=B+R<\infty.
\]

Fix $u$, write $y=Jz$, and abbreviate
$\rho_y=\rho_{u,J^{-1}y}$. In these coordinates,
\[
 \rho_y(x)
 =Z(y)^{-1}
   \exp\{u\cos x+\langle y,G(x)\rangle_{\ell^2}\},
 \qquad
 Z(y)=\int_\T
   \exp\{u\cos x+\langle y,G(x)\rangle_{\ell^2}\}\,\dd\lambda(x).
\]
Write $\mathbb E_y$ for expectation under $\rho_y\lambda$.
The transformed map $\widehat T_u:=JT_uJ^{-1}$ satisfies
\[
 (\widehat T_u(y))_k
 =\frac{\beta a_k}{\sqrt{b_k}}\,\mathbb E_y[e_k]
 =\operatorname{sgn}(a_k)\,\mathbb E_y[G_k],
\]
and hence
\[
 \widehat T_u(y)=S\,\mathbb E_y[G].
\]
Note that, for a direction $v\in\ell^2$,
\[
 D\rho_y[v](x)
 =\rho_y(x)\left(
   \langle v,G(x)\rangle_{\ell^2}
   -\mathbb E_y\langle v,G\rangle_{\ell^2}
 \right).
\]
Thus, writing
$m_y:=\mathbb E_y[G]$, we obtain
\[
 \begin{aligned}
 D\widehat T_u(y)[v]
 &=S\left(
    \mathbb E_y[G\langle v,G\rangle_{\ell^2}]
    -m_y\langle v,m_y\rangle_{\ell^2}
   \right)
 =S\,\mathbb E_y\left[
    (G-m_y)\langle G-m_y,v\rangle_{\ell^2}
   \right].
 \end{aligned}
\]
These differentiations under the integral are justified by the
uniform bound on $\|G(x)\|_{\ell^2}$. 
Hence, defining the covariance operator by
\[
 C_yv:=\operatorname{Cov}_{\rho_y\lambda}(G)v
 :=\mathbb E_y\left[
   (G-m_y)\langle G-m_y,v\rangle_{\ell^2}
 \right],
\]
we obtain
\[
 J(D_zT_u)J^{-1}=S C_y.
\]

The operator $S: \ell^2 \rightarrow \ell^2$ is an isometry, and $C_y: \ell^2 \rightarrow \ell^2$ is positive semidefinite and
self-adjoint. Therefore
\begin{equation}\label{eq:app-covariance-bound}
 \begin{aligned}
 \|D_zT_u\|_{\mathcal H\to\mathcal H} =\|C_y\|_{\mathrm{op}} =\sup_{\|v\|_{\ell^2}\le1}
   \operatorname{Var}_y(\langle v,G\rangle_{\ell^2})
 \le\mathbb E_y\|G\|_{\ell^2}^2
 =B+R.
 \end{aligned}
\end{equation}
Thus, under \eqref{eq:app-simple-condition}, $T_u$ is a
contraction on $\mathcal H$, with contraction constant
$B+R<1$, uniformly over $|u|\le A$.

For contractivity under \eqref{eq:app-refined-condition}, first remove all modes $k\ge3$ and let
\[
 \rho^{(0)}(x)=Z_0^{-1}\exp\{u\cos x+z_2\cdot e_2(x)\},
 \qquad |u|\le A, \,|z_2|\le B.
\]
If $B=0$, take $z_2=0$. Pairing $x$ and $x+\pi$ gives
\begin{equation}\label{eq:app-Z0-bound}
 Z_0=\int_\T\cosh(u\cos x)e^{z_2\cdot e_2(x)}\,\dd\lambda
 \le\cosh(A)I_0(B).
\end{equation}
For every unit vector $v\in\Rr^2$, the function
$f(x)=v\cdot e_2(x)$ satisfies
\begin{equation}\label{eq:app-second-mode-bound}
 \int_\T (1-f^2)e^{u\cos x+z_2\cdot e_2(x)}\,\dd\lambda
 \ge\frac12.
\end{equation}
Indeed, first pair $x$ and $x+\pi$ and use $\cosh(u\cos x)\ge1$.
Then pair $x$ and $x+\pi/2$; the factor $1-f^2$ is unchanged,
while $z_2\cdot e_2$ changes sign. A second use of $\cosh\ge1$
gives a lower bound $\int(1-f^2)\,\dd\lambda=1/2$. Hence
\[
 B\operatorname{Var}_{\rho^{(0)}\lambda}(v\cdot e_2)
 \le B\left(1-\frac1{2Z_0}\right)\le q_0.
\]
For a unit vector $\xi=(\xi_2,\xi_{\ge3})$ in $\ell^2$, the triangle inequality for standard deviations
and $\sum_{k\ge3}b_k=R$ now imply
\[
 \operatorname{Var}_{\rho^{(0)}\lambda}(\langle\xi,G\rangle_{\ell^2})
 \le\left(\sqrt{q_0}\,|\xi_2|
             +\sqrt R\,\|\xi_{\ge3}\|\right)^2
 \le q_0+R.
\]
Noting that $\rho_{u,z}/\rho^{(0)}\le e^{2R}$, and, using
$\operatorname{Var}(f)=\inf_c\mathbb E(f-c)^2$, we obtain under \eqref{eq:app-refined-condition},
\begin{equation}\label{eq:app-refined-contraction}
 \begin{aligned}
 \|D_zT_u\|_{\mathcal H\to\mathcal H}
 &=\sup_{\|\xi\|_{\ell^2}\le1}
   \operatorname{Var}_\rho(\langle\xi,G\rangle_{\ell^2})\\
 &\le e^{2R}\sup_{\|\xi\|_{\ell^2}\le1}
    \operatorname{Var}_{\rho^{(0)}\lambda}(\langle\xi,G\rangle_{\ell^2})
 \le e^{2R}(q_0+R)<1,
 \end{aligned}
\end{equation}
uniformly over $|u|\le A$ and $z\in\mathcal K$.

Under either hypothesis, Banach's fixed-point theorem gives a
unique $z(u)\in\mathcal K$ with $T_uz(u)=z(u)$.
The map $(u,z)\mapsto T_uz$ is real analytic on
$\Rr\times\mathcal H$: the map
$z\mapsto\sum z_k\cdot e_k$ is bounded into $C(\T)$,
the exponential is analytic there, and its normalizing integral
is positive. The contraction bound makes $I-D_zT_u$ invertible
at every fixed point. The analytic implicit function theorem therefore shows that $z(u)$ is analytic
on a neighborhood of $[-A,A]$.

Reflection $x\mapsto-x$ preserves the fixed-point equations.
Uniqueness implies that all sine components of $z(u)$ vanish.
Also $z(0)=0$, since $T_0(0)=0$. Write
$\rho_u=\rho_{u,z(u)}$ and consider
\begin{equation}\label{eq:app-scalar-reduction}
 h(u):=u-A\int_\T\cos x\,\rho_u(x)\,\lambda(\dd x),
 \qquad 0\le u\le A.
\end{equation}
After rotating a stationary density so its first moment is real
and nonnegative, \eqref{eq:app-gibbs} gives precisely a zero of
$h$ (as the higher order Fourier relations are satisfied by the fixed-point property). Conversely, a zero gives a stationary density, since the
sine consistency equation vanishes by reflection symmetry.
The zero $u=0$ gives only $\lambda$.
Finally, $h$ is real analytic on an open neighborhood of $[0,A]$ and
\[
 h(A)=A\left(1-\int_\T\cos x\,\rho_A(x)\,\lambda(\dd x)\right)>0,
\]
because $\rho_A$ is strictly positive. Thus $h$ is not identically
zero and has finitely many zeros on this compact interval.
Each zero determines at most one rotation orbit, proving the claim.
\end{proof}

\begin{proof}[Proof of Proposition~\ref{prop:app-coexistence}]
We first justify local minimality of $\lambda$ for
$\beta<\beta_*$ in the Wasserstein topology. This is also
consistent with the local-attraction result of
\cite[Theorem~2.11]{ChayesPanferov2010}.
Let
\[
 K_+(x):=\sum_{k\ge1}a_k^+\cos(kx),\quad
 a_k^+:=\max(a_k,0),\quad
 Q:=\sum_{k\ge1}a_k^+|m_k(\mu)|^2,\quad
 g:=\frac2aK_+*(\mu-\lambda),
\]
for some probability measure $\mu$.
As $\mu\to\lambda$ weakly, $\|g\|_\infty\to0$, since $K_+$
is continuous. Moreover,
\[
 \int g\,\dd\lambda=0,\qquad
 \int g\,\dd\mu=\frac{2Q}{a},\qquad
 \int g^2\,\dd\lambda\le\frac{2Q}{a},
\]
where the last bound follows from routine calculations using the orthogonality in the Fourier decomposition.
For any $\eta>0$, the entropy variational formula and a Taylor
bound, valid when $\|g\|_\infty$ is sufficiently small, give
\[
 \Ent(\mu\mid\lambda)
 \ge\int g\,\dd\mu-\log\int e^g\,\dd\lambda
 \ge\frac{2Q}{a}-\frac{1+\eta}{2}\int g^2\,\dd\lambda
 \ge\frac{1-\eta}{a}Q.
\]
Consequently,
\begin{equation}\label{eq:app-uniform-local}
 \mathcal E_\beta(\mu)
 \ge\left(\frac{1-\eta}{a}-\frac\beta2\right)Q.
\end{equation}
For $\beta<2/a$, choose $\eta$ so the coefficient is positive.
The inequality is then strict if $Q>0$. If $Q=0$ and
$\mu\ne\lambda$, strict positivity follows from
$\Ent(\mu\mid\lambda)>0$ and
$\sum_k a_k|m_k(\mu)|^2\le0$.
Thus $\lambda$ is a strict Wasserstein local minimum.

We use the following well-known variational facts. For a continuous
transition, $\beta_c=\beta_*$, and by the definition of the transition point $\beta_c$, $\lambda$ is the unique global minimizer for
$0<\beta\le\beta_c$. For a discontinuous
transition, there is coexistence of the uniform (global) minimizer and a
nonuniform global-minimum orbit at $\beta_c$; see
\cite[Propositions~2.12 and~2.13]{ChayesPanferov2010} and
\cite[Propositions~5.6 and~5.7]{CarrilloEtAl2020}.
Moreover, if $\beta_c < \beta_*$, then the transition is necessarily discontinuous, and the entropy estimate above shows that $\lambda$ is a
strict local minimum in the Wasserstein topology for every
$0<\beta<\beta_*$. Consequently, if
$\beta_c<\beta_*$, it is a nonglobal local minimum throughout
$(\beta_c,\beta_*)$, where a nonuniform global minimum also
exists. These variational conclusions do not require finiteness
of the critical set of minimizers.

Finiteness of the critical orbits now permits application of
Lemma~\ref{lem:poc-minimum-stability} to every such minimum.
All the stated propagation-of-chaos and cycle-exit conclusions
then follow from the results cited in the proposition.
\end{proof}

\begin{proof}[Proof of Corollary~\ref{cor:app-explicit-family}]
Condition \eqref{eq:app-discontinuous-fourier} holds, so the cited
phase-transition criterion gives $\beta_c<2/a_1$.
For $0<\beta\le2/a_1$, the quantities in
Proposition~\ref{prop:app-finite} satisfy
\[
 A\le2,\qquad B\le\frac{11}{10},\qquad R\le\frac1{500}.
\]
The left-hand side of \eqref{eq:app-refined-condition} increases
in these three nonnegative variables and is therefore at most
\begin{equation}\label{eq:app-numerical-bound}
 e^{1/250}\left[
 \frac{11}{10}\left(1-\frac1{2\cosh(2)I_0(11/10)}\right)
 +\frac1{500}\right]<0.996<1.
\end{equation}
For example, the series bounds
$\cosh(2)<3.763$, $I_0(11/10)<1.327$, and
$e^{1/250}<1.00401$ imply the displayed inequality.
Apply Propositions~\ref{prop:app-finite}
and~\ref{prop:app-coexistence}.
\end{proof}

\subsection*{Acknowledgements}

The author is partially supported by the NSF-CAREER award DMS-2141621 and the NSF RTG grant DMS-2134107. The author thanks Amarjit Budhiraja, Benjamin Seeger, Pierre Monmarch\'e and Andr\'e Schlichting for several useful discussions related to the project.

\subsection*{AI usage declaration}

The author used ChatGPT (OpenAI) to assist with literature searches, improve the exposition, and explore and refine mathematical arguments. The author takes full responsibility for the final manuscript, including all mathematical statements, proofs, and references.

\bibliographystyle{plain}
\bibliography{references_rev}

\end{document}